\documentclass[12pt, letterpaper]{amsart}
\usepackage{amsmath}

\usepackage[svgnames]{xcolor}

\usepackage{tikz}
\usetikzlibrary{matrix,arrows,decorations.pathmorphing} 
\usepackage{graphicx}
\usepackage{amssymb}
\usepackage{mathrsfs}
\usepackage{latexsym}
\usepackage[hypertexnames=false, colorlinks, citecolor=DarkBlue, linkcolor=DarkBlue]{hyperref}
\hypersetup{bookmarksdepth=3}

\usepackage[obeyFinal]{todonotes}

	\normalbaselines						  %

\newcommand{\fdv}{finite-dimensionally-valued }

\newcommand{\R}{{\mathbb  R}}

\newcommand{\D}{{\mathbb  D}}

\newcommand{\T}{\mathbb{T}}

\newcommand{\Z}{{\mathbb  Z}}
\newcommand{\N}{{\mathbb  N}}
\newcommand{\C}{{\mathbb  C}}

\newcommand{\Leb}{\mathfrak{L}}

\newcommand{\OZ}{{\mathbf{0}}}
\newcommand{\dd}{{\mathrm{d}}}

\newcommand{\OID}{{\mathbf{I}}}
\newcommand{\bI}{\mathbf{I}}
\newcommand{\fL}{\mathfrak{L}}

\newcommand{\fdot}{\,\cdot\,}

\newcommand{\Cay}{\operatorname{Cay}}
\newcommand{\jap}[1]{\langle#1\rangle}
\newcommand{\Jap}[1]{\left\langle#1\right\rangle}

\newcommand{\fX}{\mathfrak{X}}

\newcommand{\ciG}{\ci\Gamma}

\newcommand{\cC}{\mathcal{C}}
\newcommand{\cT}{\mathcal{T}}
\newcommand{\cS}{\mathcal{S}}

\newcommand{\tup}[1]{\textup#1}

\newcommand{\I}{\mathbf{I}}

\newcommand{\bO}{\mathbf{0}}

\newcommand{\bp}{\mathbf{p}}
\newcommand{\bA}{\mathbf{A}}
\newcommand{\bC}{\mathbf{C}}
\newcommand{\bB}{\mathbf{B}}
\newcommand{\bE}{\mathbf{E}}
\newcommand{\bM}{\mathbf{M}}
\newcommand{\bN}{\mathbf{N}}
\newcommand{\be}{\mathbf{e}}

\newcommand{\bx}{\mathbf{x}}
\newcommand{\by}{\mathbf{y}}

\newcommand{\wt}{\widetilde}

\newcommand{\cM}{\mathcal{M}}
\newcommand{\cK}{{\mathcal K}}

\newcommand{\cH}{\mathcal{H}}

\newcommand{\cB}{\mathcal{B}}
\newcommand{\cX}{\mathcal{X}}

\newcommand{\cA}{\mathcal{A}}

\newcommand{\cF}{\mathcal{F}}
\newcommand{\1}{\mathbf{1}}

\newcommand{\f}{\varphi}

\newcommand{\e}{\varepsilon}

\DeclareMathOperator{\cspn}{\overline{span}}

\DeclareMathOperator{\id}{id}
\DeclareMathOperator{\tr}{tr}
\DeclareMathOperator{\Ker}{Ker}
\DeclareMathOperator{\Ran}{Ran}
\DeclareMathOperator{\cRan}{\overline{Ran}}

\DeclareMathOperator{\im}{Im}
\DeclareMathOperator{\re}{Re}
\DeclareMathOperator{\supp}{supp}

\DeclareMathOperator{\rank}{rank}

\newcommand{\ci}[1]{_{_{\scriptstyle #1}}}

\newcommand{\ti}[1]{_{\scriptstyle \text{\rm #1}}}

\newcommand{\ut}[1]{^{\scriptstyle \text{\rm #1}}}

\renewcommand{\labelenumi}{(\roman{enumi})}

\newenvironment{entry}
{\begin{list}{X}%
		{%
			\setlength{\labelwidth}{55pt}%
			\setlength{\leftmargin}{\labelwidth}%\labelsep}%
			\addtolength{\leftmargin}{\labelsep}%
			\setlength{\itemsep}{.4pc}%
		}%
	}%
	{\end{list}}  

\newcounter{vremennyj}

\newcommand\cond[1]{\setcounter{vremennyj}{\theenumi}\setcounter{enumi}{#1}\labelenumi\setcounter{enumi}{\thevremennyj}}

\numberwithin{equation}{section}

\theoremstyle{plain}
\newtheorem{theo}{Theorem}[section]
\newtheorem*{theo*}{Theorem}
\newtheorem{cor}[theo]{Corollary}
\newtheorem{lem}[theo]{Lemma}
\newtheorem{prop}[theo]{Proposition}

\theoremstyle{definition}
\newtheorem{defn}[theo]{Definition}

\theoremstyle{remark}
\newtheorem*{ex*}{Example}
\newtheorem*{exs*}{Examples}
\newtheorem{rem}[theo]{Remark}
\newtheorem*{rem*}{Remark}
\newtheorem*{rems*}{Remarks}

\title[Singular Spectrum for General Perturbations]
{Analysis of the Singular Spectrum for General Perturbations}

\author{Constanze~Liaw}
\address{C.~Liaw: Department of Mathematical Sciences, University of Delaware, Newark, 
DE 19716, USA; and 
CASPER, Baylor University, One Bear Place \#97328,      
Waco, TX  76798, USA}
\email{liaw@udel.edu}
\author{Eero~Saksman}
\address{E.~Saksman: Department of Mathematics and Statistics, University of Helsinki,  
P.O. Box 68, FI-00014 University of Helsinki, Finland}
\email{eero.saksman@helsinki.fi}

\author{Sergei~Treil}
\address{S.~Treil: Department of Mathematics, Brown University   
151 Thayer
Str./Box 1917,      
Providence, RI  02912, USA}

\email{treil@math.brown.edu}

\thanks{The work of C.~Liaw is supported by the National Science Foundation under 
the  DMS grant 2452894. Work of E.~Saksman has been supported by the Finnish Academy Center of 
Excellence FiRST.  Work of S.~Treil was 
supported by the National Science Foundation under 
the grants DMS-2154321,  DMS-2452407. All three authors were supported by BIRS workshop 26rit044.
}
\keywords{Spectral theory, perturbation theory, singular spectrum}
\subjclass[2010]{47A55, 47A56, 30E20, 28B05}

\begin{document}

\begin{abstract}
We investigate the behavior of the singular spectrum of self-adjoint operators under families of 
Hermitian perturbations, 
even non-compact ones. Under mild assumptions we have the ``shift'' of the singular spectrum for 
almost all values of the 
parameter. Moreover, if we consider trace class perturbations, we observe the ``shift'' outside a 
countable set of the 
values of the parameter. 
While similar results were known for finite rank perturbations, the extensions to trace class 
perturbations 
are far from easy, and the proofs require significant new ideas. 

In addition,  some of our results hold  (surprisingly enough!) even for all positive and bounded 
perturbations. 
This is a consequence of our generalized Aleksandrov disintegration theorem established in this 
paper.   We use some advanced techniques, involving Sz.-Nagy--Foia\c s theory and the operator 
$\bA_2$ condition.   
\end{abstract}

\maketitle
\setcounter{tocdepth}{1}
\tableofcontents

\setcounter{tocdepth}{2}

\setcounter{section}{-1}

%%%%%%%%%%%%%%%%%%%%%%%%%%%%%%%%%%%%%%%%%%%%%%%%%
%%%%%%%%%%%%%%%%%%%%%%%%%%%%%%%%
\section{Notation and conventions}
%%%%%%%%%%%%%%%%%%%%%%%%%%%%%%%%
In this paper, unless otherwise specified, all Hilbert spaces are assumed to be separable; 
finite-dimensional Hilbert spaces are allowed.  

Unless otherwise specified, all measures are finite (or of bounded variation for signed and 
complex-valued measures), and all measures on topological spaces are assumed to be Borel.

Here are some notations we use throughout:
\begin{entry}
\item[${\bf 1}\ci{E}$] the characteristic function of the set $E$; we also use ${\bf 1}={\bf 
1}\ci{\R}$;
\item[$\id$] the identity function, usually on $\R$,  $\id(s)\equiv s$; 
\item[$\bE$] usually stands for a projection-valued spectral measure; the super-/sub-scripts 
(e.g.~$\bE^\Gamma$, $\bE\ci K$ and $\bE^t$) indicate the perturbation operators or parameters;
\item[$\bM$] operator-valued measures are upper case and bold face; the super-/sub-scripts 
(e.g.~$\bM^\Gamma$ and $\bM^t$) are to clarify the perturbation operators or parameters; for 
scalar-measures we use lower case Greek letters without bold face;
\item[$L^2(\bM)$] Lebesgue space for operator-valued 
measures $\bM$ (see Section \ref{s-SpMeas});
\item[$\bM(z)$] Poisson extension of an operator-valued measure $\bM$ to the point 
$z\in \C\setminus \R$, see Section \ref{s-reptoa2}; we use $\mu(z)$ analogously; 
\item[$\cM_\f$] multiplication operator in a function space by the scalar function  $\f$, 
\linebreak $\cM_\f f 
:= \f f$; in particular $\cM_{\id}$ is the multiplication by the independent variable, 
$\left(\cM_{\id} f\right)(s) = s f(s)$; 
\item[$\cRan$] closure of the range;
\item[$\cspn$] closed linear span;
\item[$p_a$] Poisson kernel, see \eqref{d-poisson}; in \eqref{def:p-oper} we denote by $\bp\ci{R}$ 
an analog of the Poisson kernel for operator $R$; in Appendix \ref{s-apppoiss} we also use upper 
case $P_\delta = p_{i\delta}.$
\end{entry}

Recall that for an operator $T$ in a Hilbert space, its imaginary part $\im T$ is given by 
$\im T:= \frac{T-T^*}{2i}$. 

%%%%%%%%%%%%%%%%%%%%%%%%%%%%%%%%%%%%%%%%%%%%%%%

%%%%%%%%%%%%%%%%%%%%%%%%%%%%%%%%%%%%%%%%%%%%%%%%%%
\section{Introduction and main results}
%%%%%%%%%%%%%%%%%%%%%%%%%%%%%%%%%%%%%%%%%%%%%%%%%%%
The classical Aronszajn--Donoghue theorem says that for a family of  perturbations $A_t:= A+tK$ of a
self-adjoint operator $A$ by a rank one operator $K=K^*$ with cyclic range, the singular parts of
the spectral measures of $A_t$ and $A_s$, $t\ne s$ are mutually singular to each other
\cite{Aronszajn, Donoghue}\footnote{We refer the reader  to the classical text \cite{katobook} for
general perturbation theory, and to \cite{SIMREV} for  rank one perturbations.}. While this theorem
fails even for  rank 2 perturbations, it was shown in  \cite{JST} that a version of this theorem
``with countably many exceptions'' holds for families of finite rank perturbations. It was also
shown there that  a version of the Aronszajn--Donoghue theorem where one replaces mutual singularity
by the so called ``vector mutual singularity'',  holds for finite rank perturbations as well.

In this paper we investigate the behavior of the singular spectrum for wide classes of infinite rank
perturbations. Since the results for finite rank perturbations are already known, one would expect
the results to  extend to the trace class perturbations. While, as it is proved in this paper, such
an extension is indeed possible, see Theorems \ref{t:GA-D_trace}, \ref{t:two except dim},
\ref{t:many except dim} and \ref{t:GA-D_02} below, they are far from trivial.

For trace class perturbations we also obtain an analogue of the Aronszajn--Donoghue theorem, where
we replace the notion of mutual singularity by the so-called  vector mutual singularity,  see
Theorem \ref{t:VMS} below.

Moreover, for more general (non-compact) perturbations, we observe the ``shift'' of singular
spectrum for almost all parameters (exceptional set having measure $0$), see Theorems
\ref{t:GA-D_01}, \ref{t-two} and \ref{t-many} below.

The proofs of the last mentioned results rely on the generalization of the Aleksandrov
Disintegration Theorem\footnote{This result is also known as the Aleksandrov spectral averaging
theorem.} \cite{Aleksandrov} to the case of infinite rank perturbations, see Theorem
\ref{t:AlAver01} below. However, the proof of Theorem \ref{t:AlAver01} is significantly more
involved than the proof of the original Aleksandrov Disintegration Theorem or its matrix-valued
version \cite{Elliott, JST, Martin}. In particular, it  involves Sz.-Nagy--Foia\c s theory of the
functional model in an essential way.

In order to avoid unnecessary technical details, we consider here only bounded self-adjoint
operators. However, we note that most of our results can be stated with minor modifications in the
proofs for bounded perturbations of unbounded operators. Moreover, we expect some of the results to
ultimately hold in the case of so-called form bounded perturbations of unbounded operators.

In  this section by \emph{spectral measure} we always mean a scalar spectral measure of maximal
spectral type, see Section \ref{s:MaxType} for details. 

\subsection{Singular spectrum of perturbations}\label{s:one}

\begin{theo}\label{t:GA-D_01}
Let $A$, $K$ be self-adjoint operators in a separable Hilbert space, $K\ge\bO$ be bounded, and let
$\sigma$ be an arbitrary singular measure on $\R$.  If $\Ran K$ is cyclic for $A$, then for almost
all $t\in\R$ \tup(with respect to the Lebesgue measure\tup) the spectral measure of $A+tK$
\tup(equivalently, its singular part\tup) is mutually singular with $\sigma$.
\end{theo}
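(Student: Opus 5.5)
The plan is to derive the statement from a generalized Aleksandrov disintegration (spectral averaging) formula for the family $A_t=A+tK$, in the form of Theorem~\ref{t:AlAver01} announced in the introduction. Let $\bE^t$ be the spectral measure of $A_t$. Put $\cH_0:=\cRan K$ and $R:=K^{1/2}$, which is well defined since $K\ge\bO$, and consider the operator-valued measures
\[
\bM^t(\Delta):=R\,\bE^t(\Delta)\,R ,
\]
restricted to $\cH_0$. The averaging identity to aim for is
\[
\int_\R \bM^t(\Delta)\,\dd t \;=\; \text{(absolutely continuous operator-valued measure of $\Delta$)},
\]
interpreted weakly. Concretely, for $f,g\in\cH$ we want $\int_\R (\bE^t(\Delta)Rf,Rg)\,\dd t = C(f,g)\,|\Delta|$, with $|\Delta|$ the Lebesgue measure of $\Delta$. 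For a first-cut proof the equality can be weakened to ``$\le C\|f\|\,\|g\|\,|\Delta|$'', which is all that is needed below. I will assume Theorem~\ref{t:AlAver01} gives exactly this: for every Borel set $\Delta$ with $|\Delta|=0$ one has $\int_\R \|R\bE^t(\Delta)R\|\,\dd t=0$, or at least $(\bE^t(\Delta)Rf,Rf)=0$ for a.e.\ $t$ and each fixed $f$.

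Given this, the argument runs as follows. Since $\sigma$ is singular, choose a Borel set $S$ with $|S|=0$ and $\sigma(\R\setminus S)=0$. Fix a countable dense set $\{f_n\}\subset\cH$. The averaging formula gives $(\bE^t(S)Rf_n,Rf_n)=0$ for all $t$ outside a null set $N_n$. Set $N:=\bigcup_n N_n$, also a null set. For $t\notin N$ we get $\bE^t(S)R=\bO$ by density, hence $\bE^t(S)$ vanishes on $\cRan R=\cH_0$.

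The remaining step transfers this vanishing from $\cH_0$ to all of $\cH$, using that $\cH_0$ is cyclic for $A$. Since $A_t-A=tK$ has range in $\cH_0$, an induction on $k$ gives $A^k\cH_0\subset \operatorname{span}\{A_t^j\cH_0: j\le k\}$, and the reverse inclusion holds by the same argument. Hence $\cH_0$ is also cyclic for $A_t$. Now $\bE^t(S)$ commutes with $A_t$ and kills $\cH_0$, so it kills $\cspn\{A_t^j\cH_0\}=\cH$; that is, $\bE^t(S)=\bO$. Thus the spectral measure of $A_t$ is supported on $\R\setminus S$, and it is mutually singular with $\sigma$ for every $t\notin N$.

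The main obstacle is the averaging formula itself for a general bounded $K\ge\bO$ that need not be trace class. My first attempt would use resolvents. Write $F_t(z):=R(A_t-z)^{-1}R$ and $F(z):=F_0(z)$; the standard identity is
\[
F_t(z)=F(z)\bigl(\bI+tF(z)\bigr)^{-1}.
\]
Its imaginary part is the operator Poisson extension $\bM^t(z)$. I would then integrate $\im F_t(z)$ in $t$ for fixed $z$, via the spectral decomposition of the dissipative-type operator $F(z)$ (or the Sz.-Nagy--Foia\c s model of the associated contraction, as the introduction suggests), aiming to show that $\int_\R \im F_t(z)\,\dd t$ is bounded by a multiple of the identity (the scalar case gives exactly $\pi$). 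Letting $\im z\to0$ and using Fubini then gives the absolutely continuous bound. Justifying the $t$-integral without any compactness of $F(z)$, in particular showing that $\bI+tF(z)$ is boundedly invertible for real $t$, which follows from $\im F(z)\ge\bO$ together with an accretivity argument, is where the real work lies.
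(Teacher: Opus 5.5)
Your proposal is correct and is essentially the paper's own proof in Section~\ref{s:Proof a.e. except}. Take $\bB$ to be the restriction of $K^{1/2}$ to $\cRan K$ and $\Gamma=\bI$, which is exactly the factorization of Lemma~\ref{l:factorization} when $K\ge\bO$. Then Theorem~\ref{t:AlAver01}, applied to a Lebesgue-null carrier $S$ of $\sigma$ and a countable dense family of vectors, gives $\bE^t(S)\bB=\bO$ for a.e.\ $t$, and cyclicity of $\Ran K$ for $A_t$ forces $\bE^t(S)=\bO$. Your commutation argument and your induction on powers of $A_t$ are harmless substitutes for the paper's appeal to Corollary~\ref{c:max type} and to the resolvent proof of Lemma~\ref{l-cycAGamma} (the powers argument is valid because the paper takes $A$ bounded).

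One correction to your closing aside, which is not needed for the statement itself since you assume Theorem~\ref{t:AlAver01}. Invertibility of $\bI+tF(z)$ is immediate from the resolvent identity (Lemma~\ref{l-AK}), so that is not where the real work lies. The genuine difficulty in proving the averaging formula is that $F(z)$ fails to be invertible whenever $\Ran K$ is not closed, e.g.\ for compact $K$ of infinite rank, so compactness makes things harder rather than easier. In that case the dissipative operator arising after the substitution $\tau=1/t$ has $0$ in its spectrum and the contour argument breaks down. The paper overcomes this using the absolutely continuous minimal unitary dilation of a completely non-self-adjoint dissipative operator (Lemmas~\ref{l-integraleval} and~\ref{l-CNSA}).
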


Theorem \ref{t:GA-D_01} is a corollary of a general Aleksandrov spectral disintegration theorem, 
see Theorem \ref{t:AlAver01} below. Explanation of how Theorem \ref{t:AlAver01} implies Theorem 
\ref{t:GA-D_01} will be given in Section \ref{s:Proof a.e. except}.
In fact we will prove even a bit stronger result, see Theorem \ref{t:mu_t(Omega)=0} there.

For trace class perturbations  we can say even more. 

\begin{theo}\label{t:GA-D_trace}
Let $A$, $K$ be self-adjoint operators in a separable Hilbert space, and let us assume that
$K\ge\bO$ belongs to the trace class.  Let $\sigma$ be an arbitrary singular measure on $\R$.  If
$\Ran K$ is cyclic for $A$, then for all $t\in\R$ except maybe countably many, the spectral measure
of $A+tK$ \tup(equivalently, its singular part\tup) is mutually singular with $\sigma$.
\end{theo}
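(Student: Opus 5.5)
Write $K=\sum_{n\ge1}\lambda_n \langle\fdot,e_n\rangle e_n$ with $\lambda_n>0$, $\sum\lambda_n<\infty$, and $\{e_n\}$ orthonormal in $\overline{\Ran K}$. Set $B:=K^{1/2}$, which is Hilbert--Schmidt, and consider the operator-valued Borel transforms
\[
F_t(z):=B(A_t-z)^{-1}B,\qquad A_t:=A+tK .
\]
Since $\Ran K$ is cyclic for $A$, it is cyclic for every $A_t$. Hence the spectral type of $A_t$ is the type of the trace measure $\mu_t:=\tr \bM^t$, where $\bM^t$ is the operator-valued measure with $B\bE^t(\fdot)B=\bM^t$. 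This is a finite measure because $\tr K<\infty$. The goal is therefore to show that $\mu_t(\Omega)=0$ for all but countably many $t$, where $\Omega$ is a Lebesgue-null Borel set carrying $\sigma$.

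\textbf{Step 1 (resolvent identity).} The second resolvent identity gives
\[
F_t=F_0(\I+tF_0)^{-1}, \qquad\text{equivalently}\qquad (\I+tF_0(z))^{-1}=\I-tF_t(z).
\]
Singular points of $\mu_t$ are detected by the boundary behaviour of $F_t(x+iy)$ as $y\downarrow0$. For $\mu_t$-a.e. point $x$ of its singular part, $\tr\im F_t(x+iy)\to\infty$; more precisely, $y\,\tr\im F_t(x+iy)\to \pi\mu_t(\{x\})$, and for the continuous singular part one uses the normalized version $F_t/\tr\im F_t$. Pick a unit vector direction $v$, a limit point of normalized eigenvectors of $\im F_t$, along which $F_t$ blows up. The identity $(\I+tF_0)(\I-tF_t)=\I$ then forces $(\I+tF_0(x+iy))w_y\to 0$ along suitable unit vectors $w_y$. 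Formally, $-1/t$ is an ``eigenvalue'' of the boundary value of $F_0$ at $x$.

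\textbf{Step 2 (disjointness in $t$).} I would show that for fixed $x\in\Omega$ in a set of full $\mu_0$-trace-measure sense, the set of $t$ for which $x$ is a singular point of $A_t$ is controlled by the ``eigenvalues'' of the boundary values $F_0(x+i0)$. The argument is not pointwise, though, so the better route is a measure-counting one. For distinct $t_1,\dots,t_N$, I would try to prove
\[
\sum_{j=1}^N \tr\bigl(\bM^{t_j}|_\Omega\bigr)\ \lesssim\ C
\]
with $C$ independent of $N$ after normalizing. This would rely on the vector mutual singularity (Theorem \ref{t:VMS}): the densities $W_{t}=d\bM^t/d\mu$ restricted to $\Omega$ have mutually orthogonal ranges for distinct $t$, in a fixed auxiliary reference measure $\mu$ (e.g.\ $\mu=\sigma$ or $\sum 2^{-k}\mu_{t_k}$). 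Orthogonal ranges alone do not bound the count, so I would combine this with the key trace-class input.

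\textbf{Step 3 (countability via trace).} The finishing argument is as follows. If uncountably many $t$ had $\mu_t(\Omega)>0$, then for some $\e>0$ uncountably many would satisfy $\tr \bM^t(\Omega)>\e$. The ranges of the densities lie in the $\sigma$-a.e. fibers of the fixed separable space $\overline{\Ran B}$, are pairwise orthogonal a.e., and have positive mass. One then needs a uniform comparison: the densities $W_t$ are dominated, in a direction-weighted sense, by a fixed trace-class object. Concretely, on the singular set, $\bM^t|_\Omega\le$ (something like) $t^{-2}$ times the inverse of the boundary value of $\im F_0$, and the Weyl/Aleksandrov averaging $\int \bM^t\,dt=$ Lebesgue-type measure (Theorem \ref{t:AlAver01}) shows that $\int\bM^t(\Omega)\,dt=0$. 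Upgrading ``a.e. $t$'' to ``all but countably many $t$'' then uses the finite total $\tr K$. Sum the traces of the orthogonally supported pieces against a fixed finite matrix-weighted measure: an uncountable family of mutually orthogonal nonzero elements of the separable Hilbert space $L^2(\Omega,\mu;\overline{\Ran B})$ is impossible. Hence only countably many $t$ have $\bM^t|_\Omega\ne0$.

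The main obstacle is Step 2/3: establishing that the restrictions $\bM^t|_\Omega$, viewed as subspaces of a single separable $L^2$ space of vector functions, are genuinely orthogonal for distinct $t$. This amounts to the vector mutual singularity with a common reference measure for infinite-rank $B$. I would attack it by proving that, for $\mu$-a.e. $x$, the vectors in $\Ran W_t(x)$ lie in $\Ker(\I+tF_0(x+i0))$ in a suitable limiting sense. Kernels for distinct $t$ are then automatically independent, and the Hilbert--Schmidt property of $B$ is what makes these boundary limits exist.
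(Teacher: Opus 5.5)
Your final separability step is the right one, but the proposal has two genuine gaps. The first is the opening reduction, which is false. The target ``$\mu_t(\Omega)=0$ for all but countably many $t$'', with $\Omega$ a Lebesgue-null carrier of $\sigma$, already fails for rank one. Let $A=\cM_{\id}$ on $L^2([0,1])$ and $K=\langle\fdot,\1\rangle\1$. For each $t>0$, $A+tK$ has a simple eigenvalue $x(t)=(1-e^{-1/t})^{-1}>1$, which is the whole singular part of its spectral measure, and $t\mapsto x(t)$ is smooth and strictly increasing. Let $C\subset(0,\infty)$ be a Lebesgue-null Cantor set and $\sigma$ the image of the Cantor measure on $C$ under $t\mapsto x(t)$. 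Since $\sigma$ has no atoms, every carrier $\Omega$ of $\sigma$ contains $x(t)$ for uncountably many $t\in C$. So $\mu_t(\Omega)>0$ for uncountably many $t$, although $\mu_t\perp\sigma$ for every $t$. Restricting to $\Omega$ is the right move for the almost-everywhere result (Theorem \ref{t:mu_t(Omega)=0}). For countably many exceptions you must instead use the part of $\bM_t$ absolutely continuous with respect to $\sigma$: writing $\bM_t=W_t\mu^t$ and $w^t=\dd\mu^t/\dd\sigma$, this is $w^tW_t\sigma\le\bM_t$. The same issue breaks Step 3. In general no single finite measure makes all the $\bM_t|_\Omega$ absolutely continuous (in the example they are uncountably many mutually singular atoms), and your alternative $\sum_k2^{-k}\mu_{t_k}$ presupposes countability. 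The $\sigma$-absolutely continuous parts, by contrast, all live in the one separable space $L^2(\sigma;\cK)$.

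Second, the pairwise orthogonality you flag as the main obstacle is left unproved, and the route through $F_0(x+i0)$ does not work. The Hilbert--Schmidt property gives boundary values at best Lebesgue-a.e., while $\sigma$ lives on a Lebesgue-null set where $F_0(x+iy)$ may be unbounded. Linear independence of kernels would not give countability anyway, since separable spaces contain uncountable linearly independent families. Conversely, your remark that orthogonal ranges ``do not bound the count'' is mistaken: pairwise orthogonality is all the separability argument needs. The trace-class hypothesis is used only through Lemma \ref{l:TraceClass_IndInt}, to make the $\bM_t$ weak indefinite integrals so that densities and ranges make sense; Aleksandrov averaging and the ``$t^{-2}$'' domination play no role. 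The missing mechanism is the operator $\bA_2$ condition. For $t_1\ne t_2$ write $A_{t_2}=A_{t_1}+(t_2-t_1)\bB\Gamma\bB^*$ (with your $\bB=K^{1/2}$, $\Gamma=\bI$). Then Theorem \ref{t:A_2} gives $\bigl(\bM_{t_1},(t_2-t_1)^2\Gamma\bM_{t_2}\Gamma\bigr)\in(\bA_2)$. Lemma \ref{l:A_2 monotone} passes this to $w^{t_1}W_{t_1}\sigma$ and $(t_2-t_1)^2w^{t_2}\Gamma W_{t_2}\Gamma\sigma$. Lemma \ref{l:VMS}, with the singular base $\sigma$, then makes $\Ran\bigl(w^{t_1}(s)W_{t_1}(s)\bigr)$ and $\Ran\bigl(w^{t_2}(s)W_{t_2}(s)\bigr)$ orthogonal in $\langle\Gamma\fdot,\fdot\rangle$ for $\sigma$-a.e.\ $s$. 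For each exceptional $t$, pick $f_t\in L^2(\sigma;\cK)$ of unit norm in $\int\langle\Gamma f,f\rangle\,\dd\sigma$ with $f_t(s)\in\cRan\bigl(w^t(s)W_t(s)\bigr)$. This gives an orthonormal family, countable by separability, and no boundary values of $F_0$ are needed.
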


The classical Aronszajn--Donoghue theorem states that if $K$ is a rank one operator (with $\Ran K$
still being cyclic), then the singular parts of the spectral measures of $A$ and of $A+tK$ are
mutually singular for all $t\ne0$. It is not hard to see considering direct sums of operators that
the classical Aronszajn--Donoghue theorem cannot hold for all $t\ne0$ even for rank $2$
perturbations. Thus Theorem  \ref{t:GA-D_01} can be considered as an \emph{almost everywhere}
generalization of the Aronszajn--Donoghue theorem for pretty general one-parameter families of 
bounded operators. Similarly, Theorem \ref{t:GA-D_trace} can be treated as a generalization of the 
Aronszajn--Donoghue theorem  to the trace class perturbations.

In the Aronszajn--Donoghue theorem, $\sigma$ is the singular part of the scalar spectral measure (of
maximal type) of $A$, but Theorem  \ref{t:GA-D_trace}  holds for arbitrary singular measures. Thus,
our Theorem  \ref{t:GA-D_trace} is new even for rank one perturbations. Note, that 
Aronszajn--Donoghue theorem and Theorem \ref{t:GA-D_trace}  do not follow from each other.

The requirement  that $K\ge\bO$ looks a bit restrictive, but we should remind the reader that any
rank one operator is sign definite, so changing $t$ by $-t$ if necessary we can always assume that
for rank one perturbations $K>\bO$.  We do not know if Theorems  \ref{t:GA-D_01} and/or
\ref{t:GA-D_trace} hold without the assumption $K\ge\bO$.

However, we can get rid of this assumption if we consider two parameter families of perturbations.
Namely, let us decompose the perturbation $K=K^*$ as $K=K_+-K_-$, where $K_\pm = \pm K \bE\ci{K}
(\R_\pm)$ are positive and negative parts of $K$.

\begin{theo}\label{t-two}
Let $A$, $K$ be self-adjoint operators in a separable Hilbert space, and $ K=K_+-K_-$ be the 
spectral decomposition of $K$ into positive and negative parts as above. Let $\sigma$ be an 
arbitrary singular measure on $\R$. 

If $\Ran K$ is cyclic for $A$ then the spectral measure of the operator
\begin{align*}
A_{\vec t} = A+ t_1 K_+ + t_2 K_-, \qquad \vec t = (t_1, t_2)\in \R^2, 
\end{align*}
is mutually singular with $\sigma$ for \tup(Lebesgue\tup) almost all $\vec t\in \R^2.$
\end{theo}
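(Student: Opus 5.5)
The plan is to reduce Theorem \ref{t-two} to one-parameter families with positive perturbations, which are covered by Theorem \ref{t:GA-D_01}, and then use Fubini to pass from lines to the plane. The natural move is to freeze one parameter and treat the other as the perturbation parameter. Since $K_+,K_-\ge\bO$, both of the families
\begin{align*}
t_1\mapsto (A+t_2K_-) + t_1K_+, \qquad t_2\mapsto (A+t_1K_+) + t_2K_-
\end{align*}
are families of positive perturbations. The difficulty is that Theorem \ref{t:GA-D_01} requires the range of the perturbation to be cyclic. Here only $\Ran K = \Ran K_+\oplus\Ran K_-$ is assumed cyclic for $A$; neither $\Ran K_+$ nor $\Ran K_-$ alone need be cyclic. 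So instead of freezing a coordinate, I will parametrize $\R^2$ by lines in a direction with both coordinates positive.

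Fix $\theta=(a,b)$ with $a,b>0$, and write $\vec t = s\theta + u\eta$ with $\eta=(a,-b)$, say. For fixed $u$, set $B_u := A + u(aK_+ - bK_-)$ and $K_\theta := aK_+ + bK_-$. Then
\begin{align*}
A_{\vec t} = B_u + sK_\theta, \qquad K_\theta \ge \bO .
\end{align*}
Because $K_+$ and $K_-$ live on orthogonal subspaces, $\Ran K_\theta = \Ran K_+\oplus\Ran K_- = \Ran K$, and this range is closed-span equal to $\cRan K$.

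The next step is cyclicity of $\Ran K$ for $B_u$. This holds by a standard argument: $B_u$ differs from $A$ by an operator whose range lies in $\cRan K$. Hence the smallest $B_u$-invariant subspace containing $\Ran K$ is also $A$-invariant. Indeed, for $x$ in this subspace, $Ax = B_ux - u(aK_+-bK_-)x$, and the last term lies in $\cRan K$. So that subspace coincides with the $A$-cyclic subspace generated by $\Ran K$, which is the whole space. The same reasoning applies to $B_u^*=B_u$, so invariance is really reducing.

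With cyclicity in hand, Theorem \ref{t:GA-D_01} applied to $B_u$, $K_\theta$ gives, for every fixed $u$, that the spectral measure of $B_u+sK_\theta$ is mutually singular with $\sigma$ for a.e.\ $s$. Let $\Omega\subset\R^2$ be the set of $\vec t$ for which the spectral measure of $A_{\vec t}$ is not mutually singular with $\sigma$. Each line $u=\mathrm{const}$ then meets $\Omega$ in a Lebesgue-null set. Since the change of variables $(s,u)\mapsto\vec t$ is linear and invertible, Fubini gives $|\Omega|=0$, provided $\Omega$ is measurable.

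I expect measurability of $\Omega$ to be the main obstacle. Mutual singularity with $\sigma$ means that $\bE^{\vec t}(F)=\bO$ for a $\sigma$-full Borel set $F$ depending on $\vec t$. A cleaner formulation is to fix a Borel carrier $S$ of $\sigma$ with $|S|=0$. Then $\Omega$ is the set of $\vec t$ with $\sigma\perp\mu_{\vec t}$ failing. It is enough to show that $\vec t\mapsto \mu_{\vec t}(S')$ is Borel for suitable $S'$. For this I would use the stronger statement the paper announces, Theorem \ref{t:mu_t(Omega)=0}, which presumably asserts that $\mu_t(E)=0$ for a.e.\ $t$ for each fixed null set $E$ with respect to $\sigma$. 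I would then work with $\vec t\mapsto\sum_n 2^{-n}\langle \bE^{\vec t}(S)x_n,x_n\rangle$ for a dense sequence $x_n$. This function is Borel because $\vec t\mapsto A_{\vec t}$ is norm continuous, so $\langle f(A_{\vec t})x,x\rangle$ is continuous for continuous $f$, and a monotone class argument extends this to Borel $f=\1_S$. Its zero set is the complement of $\Omega$, up to replacing $S$ by the minimal carrier (where one must choose $S$ to be the correct carrier, removing the $\sigma$-null part via Theorem \ref{t:mu_t(Omega)=0}). With this done, Fubini completes the proof.
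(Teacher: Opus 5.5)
Your architecture matches the paper's. The paper gets Theorem \ref{t-two} from Theorem \ref{t-many}, proved through Theorem \ref{t:many 01}, with $\vec\tau=(1,1)$: it applies the one-parameter result along lines in a direction with positive coordinates, uses Tonelli, and gets measurability from a monotone class argument. Your check that $\Ran K$ stays cyclic for $B_u$ is correct; it is also Lemma \ref{l-cycAGamma} applied to the factorization of $|K|$, a point the paper leaves implicit.

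\textbf{The gap is in your last paragraph.} You claim that, for a suitably chosen (``minimal'') carrier $S$ of $\sigma$, the zero set of $\vec t\mapsto\mu_{\vec t}(S)$ equals $\R^2\setminus\Omega$. This is false. Only one implication holds: $\mu_{\vec t}(S)=0$ gives $\mu_{\vec t}\perp\sigma$. The converse fails, because $\mu_{\vec t}$ may charge a $\sigma$-null part of $S$, for instance an eigenvalue of $A_{\vec t}$ lying in $S$ that is not an atom of $\sigma$. Mutual singularity only yields a carrier $S_{\vec t}$ with $\mu_{\vec t}(S_{\vec t})=0$, and $S_{\vec t}$ depends on $\vec t$. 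For non-atomic $\sigma$ the intersection of all carriers is empty, so there is no minimal carrier, and in general no single $S$ serves all $\vec t$ at once.

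Consequently your monotone class argument proves that $\Omega_S:=\{\vec t:\mu_{\vec t}(S)>0\}$ is Borel, not that $\Omega$ is. Borel measurability of $\Omega$ itself is true, but the paper proves it separately in Theorem \ref{t:Borel}, by a different argument using limits of $\mu^{\vec t}(x+iy_n)/\sigma(x+iy_n)$. Also, you paraphrase Theorem \ref{t:mu_t(Omega)=0} as being about $\sigma$-null sets. It concerns Borel sets of \emph{Lebesgue} measure zero; for $\sigma$-null sets the statement would be false.

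\textbf{The repair is exactly the paper's argument: do not ask for measurability of $\Omega$ at all.}
\begin{enumerate}
\item Fix a Borel carrier $S$ of $\sigma$ with $|S|=0$. Then $\Omega\subset\Omega_S$.
\item $\Omega_S$ is Borel by your monotone class argument. Normalize the dense sequence, e.g.\ take an orthonormal basis, so that $\sum_n 2^{-n}\|x_n\|^2<\infty$ and Corollary \ref{c:max type} applies.
\item Apply Theorem \ref{t:mu_t(Omega)=0}, rather than Theorem \ref{t:GA-D_01}, to $B_u$, $K_\theta$ and the Lebesgue-null set $S$. This shows every line section $\{s: s\theta+u\eta\in\Omega_S\}$ is null.
\item Tonelli in the coordinates $(s,u)$ gives $|\Omega_S|=0$.
\item So $\Omega$ lies in a Borel null set and is null by completeness of Lebesgue measure.
\end{enumerate}
In this form your per-line use of Theorem \ref{t:GA-D_01} on $\Omega$ becomes unnecessary.
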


In fact, a more general result for $n$-parameter perturbations holds.

\begin{theo}\label{t-many}
For given self-adjoint operators $A$, $K_1, \dots, K_n$ on $\cK$ consider an $n$-parameter family of
perturbations
\begin{align*}
A_{\vec t} = A+ K_{\vec t}  \text{ with }\vec t := (t_1, \dots, t_n)\in \R^n\text{ and 
} K_{\vec t}=\sum_{k=1}^n t_k K_k,   
\end{align*}
and let $\sigma$ be an arbitrary singular Radon measure on $\R^n$. If for some $\vec \tau\in\R^n$
the operator $K_{\vec \tau} $ is non-negative and $\Ran K_{\vec \tau}$ is cyclic for $A$, then the
spectral measure of $A_{\vec t}$ is mutually singular with $\sigma$ for \tup(Lebesgue\tup) almost
all $\vec t\in\R^n$. 
\end{theo}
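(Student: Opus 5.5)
The plan is to reduce Theorem~\ref{t-many} to the one-parameter statement, Theorem~\ref{t:GA-D_01}, by slicing $\R^n$ into lines parallel to $\vec\tau$ and applying Fubini. (I read $\sigma$ as a singular measure on $\R$, the line carrying the spectra. The statement says ``on $\R^n$'', which I take to be a misprint, since the spectral measure of $A_{\vec t}$ lives on $\R$.)

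\emph{Step 1 (slicing).} Since $\vec\tau\ne 0$ (otherwise $\Ran K_{\vec\tau}=\{0\}$ could not be cyclic), write every $\vec t\in\R^n$ uniquely as $\vec t=\vec s+t\vec\tau$ with $\vec s\in H:=\vec\tau^{\perp}$ and $t\in\R$. This linear change of variables preserves Lebesgue-null sets. By linearity of $\vec t\mapsto K_{\vec t}$,
\begin{align*}
A_{\vec s+t\vec\tau}=B_{\vec s}+tK_{\vec\tau},\qquad B_{\vec s}:=A+K_{\vec s}.
\end{align*}
For fixed $\vec s$ this is exactly the one-parameter family of Theorem~\ref{t:GA-D_01}, with base operator $B_{\vec s}$ and perturbation $K_{\vec\tau}\ge\bO$.

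\emph{Step 2 (measurability).} Let $E=\{\vec t:\ \mu_{\vec t}\not\perp\sigma\}$, where $\mu_{\vec t}$ is a spectral measure of $A_{\vec t}$. I will show $E$ is Borel. Fix a countable dense set of vectors $x_j$ and a Borel carrier $\Omega$ of $\sigma$ with $|\Omega|=0$. Then $\vec t\notin E$ if and only if $\langle \bE^{\vec t}(\Omega)x_j,x_j\rangle=0$ for all $j$. Each map $\vec t\mapsto\langle \bE^{\vec t}(\Omega)x_j,x_j\rangle$ is Borel: resolvents depend continuously on $\vec t$, and a monotone class argument passes from continuous functions to $\Omega$. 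Granting this, Fubini shows it suffices to prove that for a.e.\ $\vec s\in H$ the slice $\{t:\ \vec s+t\vec\tau\in E\}$ is Lebesgue-null. That is Theorem~\ref{t:GA-D_01} applied to $B_{\vec s}$ and $K_{\vec\tau}$, provided its hypotheses hold.

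\emph{Step 3 (cyclicity, the main obstacle).} Theorem~\ref{t:GA-D_01} needs $\Ran K_{\vec\tau}$ to be cyclic for $B_{\vec s}$, but we only know this for $A=B_{\vec 0}$. Let $\cH_0(\vec s)$ be the cyclic subspace of $B_{\vec s}$ generated by $\Ran K_{\vec\tau}$. Because $K_{\vec\tau}$ is self-adjoint and maps into $\Ran K_{\vec\tau}$, two facts follow:
\begin{itemize}
\item $\cH_0(\vec s)$ is also the cyclic subspace for every $B_{\vec s}+tK_{\vec\tau}$, so it depends only on the line through $\vec s$;
\item $K_{\vec\tau}$ vanishes on $\cH_0(\vec s)^\perp$, so on that reducing subspace $A_{\vec s+t\vec\tau}$ equals $B_{\vec s}$, independent of $t$.
\end{itemize}
On $\cH_0(\vec s)$ we apply Theorem~\ref{t:GA-D_01}, or its stronger form Theorem~\ref{t:mu_t(Omega)=0}. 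What remains is to show that $\cH_0(\vec s)=\cH$ for a.e.\ $\vec s$, or at least that the spectral measure of $B_{\vec s}|_{\cH_0(\vec s)^\perp}$ is $\perp\sigma$ for a.e.\ $\vec s$.

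My first attempt is to iterate the slicing argument over the remaining coordinate directions. Alternatively, I would show that the set of lines with nontrivial $\cH_0^\perp$ is null. The idea is that $x\perp\cH_0(\vec s)$ amounts to the vanishing of the polynomial (in $\vec s$) expressions $\langle K_{\vec\tau}B_{\vec s}^{k}y,x\rangle$, and at $\vec s=0$ the only common solution is $x=0$. One would then hope for an analytic-dependence argument showing that such degeneracy occurs only off a null set. I expect this step, not the Fubini reduction, to carry the real difficulty. If it fails, I would fall back on proving the disintegration estimate of Theorem~\ref{t:AlAver01} directly for the $n$-parameter family, integrating over $\vec t\in\R^n$ instead of over lines.
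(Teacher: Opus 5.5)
Your Steps 1 and 2 are exactly the paper's proof. You slice $\R^n$ along $\vec\tau$, get Borel measurability of $\vec t\mapsto\mu_{\vec t}(\Omega)$ from norm continuity of resolvents plus a monotone class argument, and conclude with Tonelli.

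At your Step~3, the paper simply invokes Theorem~\ref{t:mu_t(Omega)=0} for $A+K_{\vec s}+tK_{\vec\tau}$ on each line $\vec s+\R\vec\tau$. It never checks that $\Ran K_{\vec\tau}$ is cyclic for $A+K_{\vec s}$. You are right that this does not follow from the hypotheses. Your reduction is also correct: on $\cH_0(\vec s)$ the disintegration argument works for a.e.\ $t$, and on $\cH_0(\vec s)^\perp$ the operator equals $A+K_{\vec s}$ for every $t$.

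However, the gap cannot be closed, either by an analytic-dependence argument or by an $n$-parameter disintegration, because the statement is false as written. Let $\cH=\ell^2\oplus\ell^2$ with $\ell^2=\ell^2(\Z_{\ge0})$, let $S$ be the unilateral shift, $n=2$, $\vec\tau=(1,0)$, and
\begin{align*}
A=\begin{pmatrix}\bO & S+2\bI\\ S^*+2\bI & \bO\end{pmatrix},\qquad
K_1=\begin{pmatrix}\bO & \bO\\ \bO & \bI\end{pmatrix},\qquad
K_2=\begin{pmatrix}\bO & \bI\\ \bI & \bO\end{pmatrix}.
\end{align*}
Then $K_{\vec\tau}=K_1\ge\bO$. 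Also $\Ran K_1=\{0\}\oplus\ell^2$ is cyclic for $A$, since $A(0\oplus b)=(S+2\bI)b\oplus 0$ and $S+2\bI$ is invertible. Now put $v:=2+t_2\in(-1,1)$. The vector $a_v:=\sum_{k\ge0}(-v)^k e_k\in\ell^2$ satisfies $S^*a_v=-v\,a_v$, hence
\begin{align*}
A_{\vec t}\begin{pmatrix}a_v\\ 0\end{pmatrix}=\begin{pmatrix}\bO & S+v\bI\\ S^*+v\bI & t_1\bI\end{pmatrix}\begin{pmatrix}a_v\\ 0\end{pmatrix}=\begin{pmatrix}0\\ 0\end{pmatrix}.
\end{align*}
So $0$ is an eigenvalue of $A_{\vec t}$ for every $\vec t\in\R\times(-3,-1)$. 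With $\sigma=\delta_0$, the conclusion fails on a set of infinite Lebesgue measure.

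In your notation, $a_v\oplus0\in\cH_0(\vec s)^\perp$ for all $\vec s=(0,t_2)$ with $t_2\in(-3,-1)$. In infinite dimensions, loss of cyclicity need not be confined to a null set of parameters. Here $S^*+v\bI$ goes from invertible to having a kernel, and that kernel lies in $\Ker K_1$. Your polynomial/analytic heuristic is valid in finite dimensions, where non-cyclicity is an algebraic condition, but it breaks down here.

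What your Steps 1--2 do prove completely is the theorem under an extra hypothesis: $\Ran K_{\vec\tau}$ is cyclic for $A+K_{\vec s}$ for a.e.\ $\vec s\in\vec\tau^{\perp}$. This holds, for instance, when $\Ran K_k\subset\cRan K_{\vec\tau}$ for all $k$. In that case the cyclic subspace of $A+K_{\vec s}$ generated by $\Ran K_{\vec\tau}$ is invariant under $K_{\vec s}$, hence under $A$, hence is all of $\cH$. This covers Theorem~\ref{t-two}, where the cyclicity also follows from Lemma~\ref{l-cycAGamma}.

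A minor point in Step~2: for a fixed carrier $\Omega$ of $\sigma$, $\mu_{\vec t}(\Omega)=0$ is sufficient but not necessary for $\mu_{\vec t}\perp\sigma$. So you should prove that $\{\vec t:\mu_{\vec t}(\Omega)>0\}$ is null, as the paper does, rather than claim this set equals $E$.
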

Theorem \ref{t-two} trivially follows from Theorem \ref{t-many}, just choose $\vec\tau = (1,1)$. We
will prove Theorem \ref{t-many} in Section \ref{s-parameters}.

Again, for $K$ in the trace class we can say even more: 

\begin{theo}\label{t:two except dim}
Let $A$, $K$ be self-adjoint operators in a separable Hilbert space, and let  $K$ be in the trace
class.  Let $\sigma$ be an arbitrary singular measure on $\R$.  Let $K=K_+-K_-$ be the spectral
decomposition of $K$ into positive and negative parts as above.

If $\Ran K$ is cyclic for $A$ then the spectral measure of the operator
\begin{align*}
A_{\vec t} = A+ t_1 K_+ + t_2 K_-, \qquad \vec t = (t_1, t_2)\in \R^2,
\end{align*}
is mutually singular with $\sigma$ for  all $\vec t\in \R^2\setminus E$, where the exceptional set 
$E$ has Hausdorff dimension at most $1$. 
\end{theo}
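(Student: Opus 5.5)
The plan is to reduce Theorem \ref{t:two except dim} to the one-parameter trace class result, Theorem \ref{t:GA-D_trace}, applied along a family of lines in $\R^2$. We foliate the parameter plane by lines on which the perturbation is non-negative with cyclic range. Fix a direction $\vec\tau=(\tau_1,\tau_2)$ with $\tau_1>0$ and $\tau_2<0$. Then $K_{\vec\tau}:=\tau_1K_+ + \tau_2 K_- = \tau_1 K_+ + |\tau_2|K_-$, since $K_-\ge\bO$ and the parameter $t_2$ multiplies $K_-$ as in the statement. This operator is non-negative and trace class. Because $K_\pm$ live on orthogonal subspaces, $\Ran K_{\vec\tau}=\Ran K_+\oplus\Ran K_- \supseteq \Ran K$, so it is cyclic for $A$. (Actually any $\vec\tau$ with $\tau_1>0$, $-\tau_2>0$ works; more generally, every $\vec\tau$ in the open quadrant $Q=\{\tau_1>0,\ \tau_2<0\}$ is good.)

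The first step is the foliation. For a base point $\vec s$ on the line $\ell=\{\vec s: \langle \vec s,\vec\tau^\perp\rangle=0\}$ transversal to $\vec\tau$, write $A_{\vec s+t\vec\tau}=B_{\vec s}+tK_{\vec\tau}$ with $B_{\vec s}:=A_{\vec s}$ bounded self-adjoint. Since $\Ran K_{\vec\tau}$ is cyclic for $A$, it is also cyclic for $B_{\vec s}=A+(\text{operator with range in }\overline{\Ran K_{\vec\tau}})$. This holds because the cyclic subspace generated by a subspace $\cH_0\supseteq \cRan(B-A)$ is the same for $A$ and $B$; one checks by induction that $B^n\cH_0\subset \cspn\{A^k\cH_0\}$ and vice versa. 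Theorem \ref{t:GA-D_trace} then applies on each line: for each $\vec s\in\ell$, the set $E_{\vec s}=\{t: \text{spectral measure of }A_{\vec s+t\vec\tau}\not\perp\sigma\}$ is countable. Hence $E\cap(\vec s+\R\vec\tau)$ is countable for every line parallel to $\vec\tau$.

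The main obstacle is that countable intersection with each line of one family does not by itself bound the Hausdorff dimension; a set could be, say, a graph with countable fibers yet be two-dimensional. We use that the same holds for \emph{every} direction $\vec\tau\in Q$, an open set of directions, so $E$ meets every line with direction in an open arc in a countable set. We also need measurability of $E$, which we obtain by writing $E$ as a Borel (or analytic) set. Concretely, $E=\{\vec t:\ \sup_n \mu_{\vec t}(F_n)... \}$, expressed via a $\sigma$-compact carrier of $\sigma$ and weak continuity of $\vec t\mapsto$ spectral measures. Then we invoke a Marstrand-type projection/slicing theorem. If $\dim_H E>1$, Marstrand's slicing theorem says that for a positive-measure set of directions $\theta$, the slices of $E$ by lines in direction $\theta$ have dimension $\dim_H E-1>0$ for a set of positive one-dimensional measure of base points. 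In particular, those slices are uncountable. Since the directions in $Q$ form a set of positive measure, this contradicts the countability of all such slices. So $\dim_H E\le 1$.

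If the slicing theorem is awkward for analytic rather than Borel sets, we would first try showing $E$ is $F_{\sigma\delta}$ using lower semicontinuity of $\vec t\mapsto \mu_{\vec t}(U)$ for open $U$ together with an exhaustion of the singular carrier of $\sigma$ by compacts, where $\mu_{\vec t}$ is the spectral measure with respect to a fixed generating family from $\Ran K$. Marstrand's theorem holds for analytic sets in any case. An alternative route, which avoids slicing, is to use Theorem \ref{t:GA-D_trace}'s proof more quantitatively. We would attempt the slicing route first.
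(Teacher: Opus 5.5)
Your overall architecture is the paper's. There, Theorem \ref{t:two except dim} is the case $n=2$ of Theorem \ref{t:many except dim}. The proof applies Theorem \ref{t:GA-D_trace} on every line whose direction lies in a set $D\subset S^1$ of positive measure, shows that $E$ is Borel (Theorem \ref{t:Borel}), and invokes Marstrand--Mattila slicing (Lemma \ref{l:Mattila--Marstrand 02}).

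\textbf{Your first step fails as written.} Both $K_+$ and $K_-$ are non-negative. So for $\tau_2<0$ you get $\tau_1K_++\tau_2K_-=\tau_1K_+-|\tau_2|K_-$, not $\tau_1K_++|\tau_2|K_-$. This operator is indefinite as soon as $K_+\ne\bO\ne K_-$; for $\vec\tau=(1,-1)$ it is $K$ itself. Theorem \ref{t:GA-D_trace} is not available for indefinite perturbations, and the paper explicitly leaves that case open. Lines in direction $-\vec\tau$ are the same lines, so flipping the sign does not help. Hence for directions in your $Q$ you have no countability of slices, and the argument collapses.

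The repair is to use the quadrant $\tau_1,\tau_2>0$, which is exactly the paper's choice of $D$. There $K_{\vec\tau}\ge\bO$ is trace class with $\Ran K_{\vec\tau}=\Ran K_+\oplus\Ran K_-=\Ran K$. Your invariance argument then correctly transfers cyclicity to every base operator $A_{\vec s}$, a point the paper passes over silently. With that change, every line in such a direction meets $E$ in a countable set, as you intended.

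Two further points need tightening.

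\textbf{Marstrand's theorem.} It must be used in its almost-every-direction form, as in Lemma \ref{l:Mattila--Marstrand 02}. A conclusion for ``a positive-measure set of directions'' would not suffice, since two positive-measure subsets of $S^1$ can be disjoint.

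\textbf{Measurability of $E$.} Your sketch is too vague, and its natural reading is false. Your formula $\sup_n\mu_{\vec t}(F_n)$, with compacts $F_n$ exhausting a carrier, suggests you intend $\vec t\in E$ iff $\mu_{\vec t}(S)>0$ for a fixed Lebesgue-null carrier $S$ of $\sigma$. That is not an equivalence. The measure $\mu_{\vec t}$ may charge $S$ and still be singular to $\sigma$, e.g.\ a point mass on the Cantor set versus the Cantor measure. What must be shown measurable is the non-vanishing of $\dd\mu_{\vec t}/\dd\sigma$.

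The paper realizes this derivative, $\sigma$-a.e., as the limit of the Poisson ratios $\mu^{\vec t}(x+iy_n)/\sigma(x+iy_n)$. These ratios are jointly continuous in $(\vec t,x)$ by Lemma \ref{l:cont mu_t}. The limit statement is the relative Fatou theorem (Lemma \ref{l:weak type 01}), and Tonelli then gives measurability of $E$.

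Your semicontinuity idea can also be made to work if phrased correctly. Let $U$ range over finite unions of rational open intervals. Then $\vec t\notin E$ iff $\inf_U\bigl(\sigma(U)+\mu_{\vec t}(\R\setminus U)\bigr)=0$. Each map $\vec t\mapsto\mu_{\vec t}(\R\setminus U)$ is upper semicontinuous, because $\vec t\mapsto\mu_{\vec t}$ is weakly continuous with constant total mass. This even shows that $E$ is $F_\sigma$.
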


To state the theorem about the dimension drop for $n$-parameter perturbations, we use the same 
notation as in Theorem \ref{t-many}. 
\begin{theo}\label{t:many except dim}
Let $A$, $K_1, \dots, K_n$, be self-adjoint operators and let all $K_k$ be in the trace class.
Assume that there exists a set of directions $D\subset S^{n-1}\subset \R^n$ of positive surface
measure, such that for all $\vec \tau\in D$ the  operators $K_{\vec \tau}$ are non-negative and 
$\Ran K_{\vec \tau}$ is cyclic for $A$.

Then, given an arbitrarily singular Radon measure $\sigma$ on $\R$, the spectral measure of 
$A_{\vec t}$ is mutually singular with $\sigma$ for all $\vec t\in \R^n\setminus E$, where the 
exceptional set $E=E_\sigma$ has Hausdorff dimension at most $n-1$.
\end{theo}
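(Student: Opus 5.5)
Fix a direction $\vec\tau\in D$; along every line $\{\vec s+t\vec\tau: t\in\R\}$ we have
\[
A_{\vec s+t\vec\tau}=A_{\vec s}+tK_{\vec\tau},
\]
with $K_{\vec\tau}\ge\bO$ trace class. This is a one-parameter trace class family, so we would like to apply Theorem \ref{t:GA-D_trace} to it. One cannot do so directly with base operator $A_{\vec s}$, because cyclicity of $\Ran K_{\vec\tau}$ is assumed for $A$, not for $A_{\vec s}$. However, cyclicity of $\Ran K_{\vec\tau}$ is preserved under perturbations whose range lies inside $\cRan K_{\vec\tau}$, and it is plausible (and what I would check first) that the proof of Theorem \ref{t:GA-D_trace} only uses cyclicity of $\Ran K$ for the whole family $A+tK$, which is automatic.

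To get around the issue, I would restrict attention to $\vec s$ in the hyperplane $\vec\tau^\perp$ and argue differently. Decompose $\cK=\cK_{\vec s}\oplus\cK_{\vec s}^\perp$, where $\cK_{\vec s}$ is the cyclic subspace of $A_{\vec s}$ generated by $\Ran K_{\vec\tau}$. Since $K_{\vec\tau}$ vanishes on $\cK_{\vec s}^\perp$, the part of $A_{\vec s}+tK_{\vec\tau}$ on $\cK_{\vec s}^\perp$ equals the part of $A_{\vec s}$ there, independent of $t$. Its contribution therefore cannot be handled along a single line, which is why the positive surface measure of $D$ is needed. So the plan is as follows.

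\textbf{Step 1.} For each $\vec\tau\in D$ and each $\vec s$, Theorem \ref{t:GA-D_trace} gives that the restriction of $A_{\vec s+t\vec\tau}$ to the cyclic subspace generated by $\Ran K_{\vec\tau}$ has spectral measure singular to $\sigma$ for all but countably many $t$. This cyclic subspace depends on $\vec s$, but it is the same for all points on the line.

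\textbf{Step 2.} Use the global assumption to control the rest. The spectral measure of $A_{\vec t}$ is the sum of the measures $\langle \bE^{\vec t}(\cdot)h_j,h_j\rangle$ over a countable family $h_j$ spanning $\Ran K_{\vec\tau_0}$ for some fixed $\vec\tau_0\in D$. The aim is to show that for $\vec t$ outside the exceptional set, each such measure is singular with $\sigma$ (which suffices once cyclicity of $\Ran K_{\vec\tau_0}$ for $A_{\vec t}$ is established). The set $\{\vec t: A_{\vec t} \text{ is not singular to } \sigma\}$ is Borel. Call it $E$.

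\textbf{Step 3 (dimension bound via slicing).} Suppose $\dim_H E>n-1$. By Marstrand's projection/slicing theorem, for almost every direction $\vec\tau\in S^{n-1}$ (hence for some $\vec\tau\in D$, since $D$ has positive measure), a positive-measure set of lines parallel to $\vec\tau$ meet $E$ in a set of Hausdorff dimension $\dim_H E-(n-1)>0$, in particular an uncountable set. This contradicts Step 1, provided Step 1 yields countability on every line in direction $\vec\tau$ for the full spectral measure and not just its restriction to $\cK_{\vec s}$.

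The main obstacle is exactly this gap: showing that on the complement $\cK_{\vec s}^\perp$ there is no contribution, i.e.\ that $\Ran K_{\vec\tau}$ remains cyclic for $A_{\vec s}$. I would try first to prove that the set of $\vec s$ where cyclicity of $\Ran K_{\vec\tau}$ for $A_{\vec s}$ fails is small. Since cyclicity holds at $\vec s=0$, it should fail only on a set of dimension at most $n-1$, by an analyticity argument. Concretely, the restriction of $K_{\vec\tau'}$ ($\vec\tau'$ transverse to $\vec\tau$) to $\cK_{\vec s}^\perp$ gives an analytic dependence, and the failure set is then the zero set of a nontrivial real-analytic family, which has dimension at most $n-1$. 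Such a set can be added to the exceptional set $E$ without increasing its dimension.
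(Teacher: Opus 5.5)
Your overall architecture matches the paper's: countably many exceptions on each line in a direction of $D$ via Theorem \ref{t:GA-D_trace}, then the Marstrand--Mattila slicing lemma. However, the proposal has two genuine gaps.

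\emph{First gap: cyclicity along the lines.} You correctly single out this step as the main obstacle, but you do not close it. Your concern is legitimate; the paper applies Theorem \ref{t:GA-D_trace} to $A_{\vec t_0}+sK_{\vec\tau}$ without comment. The ``analyticity argument'' is only a heuristic. Non-cyclicity of $\Ran K_{\vec\tau}$ for $A_{\vec s}$ means that some nonzero vector is orthogonal to all $(A_{\vec s}-z\bI)^{-1}K_{\vec\tau}\by$. Nothing you wrote expresses this as the zero set of a nontrivial real-analytic function of $\vec s$, and the projection onto the cyclic subspace can jump. Moreover, the failure set depends on $\vec\tau$, so it cannot simply be adjoined to $E$ without a further Fubini argument over $E\times D$.

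The missing idea is elementary, and it shows that the failure set is empty for almost every $\vec\tau\in D$:
\begin{enumerate}
\item The set $C:=\{\vec t: K_{\vec t}\ge\bO\}$ is a closed convex cone containing $D$. Since $D$ has positive surface measure, $C$ is not contained in a hyperplane, so $\operatorname{int}C\ne\varnothing$.
\item $\partial C$ is a Lebesgue-null cone, so $\partial C\cap S^{n-1}$ is surface-null. Hence almost every $\vec\tau\in D$ lies in $\operatorname{int}C$.
\item For such $\vec\tau$ and any $\vec u$, we have $K_{\vec\tau}\pm\e K_{\vec u}\ge\bO$ for small $\e>0$. This forces $\Ker K_{\vec\tau}\subset\Ker K_{\vec u}$, i.e.\ $\Ran K_{\vec s}\subset\cRan K_{\vec\tau}$ for every $\vec s$.
\item The identity $(A-z\bI)^{-1}\by=(A_{\vec s}-z\bI)^{-1}\bigl(\by+K_{\vec s}(A-z\bI)^{-1}\by\bigr)$ is Lemma \ref{l-cycAGamma} with $\bB$ the inclusion of $\cRan K_{\vec\tau}$. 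It shows that $\Ran K_{\vec\tau}$ is cyclic for every $A_{\vec s}$.
\end{enumerate}
Consequently Theorem \ref{t:GA-D_trace} applies on every line in such a direction. There is no leftover part on $\cK_{\vec s}^\perp$, and your Step 1 restriction is unnecessary.

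\emph{Second gap: Borel measurability of $E$.} In Step 2 you simply assert that $E$ is Borel. This is what makes Step 3 legitimate, since Lemma \ref{l:Mattila--Marstrand 02} (and Marstrand's slicing theorem) require $E$ to be analytic, or at least $\cH^s$-measurable. It is not automatic; in the paper it is the substantive step, Theorem \ref{t:Borel}. There:
\begin{enumerate}
\item Lemma \ref{l:cont mu_t} gives continuity of $(\vec t,z)\mapsto\mu^{\vec t}(z)$.
\item The relative Fatou theorem (Lemma \ref{l:weak type 01}) identifies $\frac{\dd\mu^{\vec t}}{\dd\sigma}$, $\sigma$-a.e., with $\lim_n\mu^{\vec t}(x+iy_n)/\sigma(x+iy_n)$. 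This limit is a Borel function $\Phi(\vec t,x)$ on $\R^n\times S_\sigma$.
\item Tonelli's theorem then shows that $E=\{\vec t:\int_{S_\sigma}\Phi(\vec t,x)\,\sigma(\dd x)>0\}$ is Borel.
\end{enumerate}
A shorter route also works. One has $\mu\perp\sigma$ if and only if, for every $k$, some finite union $U$ of rational intervals satisfies $\sigma(U)<2^{-k}$ and $\mu(\R\setminus U)<2^{-k}$. The map $\vec t\mapsto\mu^{\vec t}(\R\setminus U)$ is Borel by the monotone class argument in the proof of Theorem \ref{t:many 01}. With these two ingredients supplied, your Step 3 goes through as in the paper.
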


Again, Theorem \ref{t:two except dim} is a particular case of Theorem \ref{t:many except dim}: we 
just take for $D\subset S^1$ the collection of all vectors $\vec t\,=(t_1, t_2)\in S^1$ such that 
$t_1, t_2>0$.  

We will prove Theorem \ref{t:many except dim} in Section \ref{s:exceptional}.

\subsection{Factorization of perturbations and Aleksandrov Disintegration Theorem}\label{s:two}
The main tool used to prove Theorems \ref{t:GA-D_01} and \ref{t-two} is the so-called Aleksandrov 
Disintegration Theorem, see Theorem \ref{t:AlAver01} below. This theorem is of independent 
interest, and can also  be considered as a main result of this paper. 

To state this theorem we need to introduce some notation. 

It is not hard to show, and will be shown later, see Lemma \ref{l:factorization} below, that any 
operator 
$K=K^*$ 
in a Hilbert space $\cH$ 
can be represented as 
$K = \bB \Gamma \bB^*$, where $\bB:\cK\to \cH$ is a bounded operator with $\Ker \bB=\{\bO\}$ and 
$\Gamma=\Gamma^*$ is an invertible bounded operator in an auxiliary Hilbert space $\cK$. Note that 
$\Gamma\ge\bO$ if and only if $K\ge\bO$.

Moreover, if $K$ is a trace class operator, it can be factorized as $K = \bB \Gamma \bB^*$, where 
$\bB:\cK\to \cH$ is a Hilbert--Schmidt operator with $\Ker\bB=\{\bO\}$, and $\Gamma$ is an 
invertible operator. 

It is also easy to see that in the above factorization $\Ran K$ is cyclic for $A$ if and only if 
$\Ran \bB$ is 
cyclic for $A$.

So,  factorizing $K$ we can rewrite the family of perturbations $A_t = A+tK$ as 
\begin{align*}
A_t = A + t \bB \Gamma \bB^*, \qquad t\in \R, 
\end{align*}
where $\Gamma=\Gamma^*$ is invertible and $\Ker \bB=\{\bO\}$. 

For a self-adjoint operator $A$ let $\bE=\bE\ci{A}$ denote its projection-valued spectral measure.

Let $\bE_t$ be the projection-valued spectral measure of the operator $A_t$, and let $\Leb$ denote 
the Lebesgue--Borel measure on $\R$.

\begin{theo}[Aleksandrov disintegration]\label{t:AlAver01}
Let in the above assumptions $\Gamma=\Gamma^*\ge \bO$ be invertible, and let $\Ker \bB=\{\bO\} $. 
Then 
\begin{align*}
\int_\R \bB^* \bE_t \bB \dd t = \Gamma^{-1}\otimes \Leb , 
\end{align*}
meaning that for any Borel $E\subset \R$ 
\begin{align*}
\int_\R \bB^* \bE_t (E) \bB \dd t = \Leb(E) \Gamma^{-1}.
\end{align*}
\end{theo}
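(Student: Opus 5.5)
The plan is to reduce the identity to a statement about Cauchy (Borel) transforms and compare imaginary parts. For each $t$ put $\bM^t := \bB^*\bE_t\bB$, an operator-valued measure on $\R$ with values in bounded operators on $\cK$. Its Cauchy transform is
\begin{align*}
F_t(z) := \int_\R \frac{\dd\bM^t(s)}{s-z} = \bB^*(A_t - z)^{-1}\bB, \qquad z\in\C\setminus\R .
\end{align*}
A Borel measure on $\R$ is determined by the Poisson extensions $\frac1\pi\im$ of its Cauchy transform. So it suffices to establish two facts for every $z$ with $\im z>0$: first, that $t\mapsto \frac1\pi\im F_t(z)$ is integrable in the weak operator sense; second, that
\begin{align*}
\frac1\pi\int_\R \im F_t(z)\,\dd t = \Gamma^{-1},
\end{align*}
which is the Poisson extension of $\Gamma^{-1}\otimes\Leb$. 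The weak (matrix-entry) form is enough: for $e\in\cK$ the scalar measures $\int (\bM^t(\cdot)e,e)\,\dd t$ and $\Leb(\cdot)(\Gamma^{-1}e,e)$ have equal Poisson extensions, and polarization finishes. I would also need measurability in $t$ of $(\bE_t(E)\bB e,\bB e)$, which follows from continuity of resolvents in $t$ together with a monotone class argument.

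For the resolvent computation, set $F(z) := \bB^*(A-z)^{-1}\bB$. The second resolvent identity gives the standard formula
\begin{align*}
F_t(z) = F(z)\bigl(\bI + t\Gamma F(z)\bigr)^{-1}.
\end{align*}
Using $\Gamma$ invertible, I would rewrite this as
\begin{align*}
F_t = \bigl(F^{-1} + t\Gamma\bigr)^{-1} = \Gamma^{-1/2}\bigl(G + t\bI\bigr)^{-1}\Gamma^{-1/2}, \qquad G := \Gamma^{-1/2}F^{-1}\Gamma^{-1/2},
\end{align*}
at least formally; the invertibility of $F(z)$ is a subtle point, since $F$ may not be invertible when $\bB$ is compact. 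The matrix $-G$, or more honestly the operator $T:=-\Gamma^{1/2}F\Gamma^{1/2}$ and its inverse, is dissipative-type, because $\im F(z)\ge 0$ for $\im z>0$. The question then becomes evaluating $\frac1\pi\int_\R \im\bigl(\Gamma^{1/2}F\Gamma^{1/2}(\bI+t\Gamma^{1/2}F\Gamma^{1/2})^{-1}\bigr)\dd t$. In finite dimensions this is the scalar identity $\frac1\pi\int \im\frac{1}{w^{-1}+t}\,\dd t = 1$ for $\im w>0$, applied through the spectral decomposition or Jordan form. To handle non-normal operators I would write $\im\bigl((X^{-1}+t)^{-1}\bigr) = (X^{-1}+t)^{-1*}\,\im(X^{-1})^*\ldots$; more precisely, with $Y=-X^{-1}$ dissipative, $\im (t-Y)^{-1} = (t-Y)^{-1*}(\im Y)(t-Y)^{-1}$. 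The integral $\frac1\pi\int (t-Y)^{-1*}(\im Y)(t-Y)^{-1}\,\dd t$ is exactly the quantity computed by the Sz.-Nagy--Foia\c s functional model for the maximal dissipative operator $Y$. It equals the projection onto the "non-selfadjoint part", which is $\bI$ when $Y$ is completely non-selfadjoint, and in general it is bounded by $\bI$.

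The main obstacle is therefore twofold. First, show that the dissipative operator $Y$ (built from $F(z)$, possibly unbounded or only a maximal dissipative relation) has no selfadjoint part, so the integral gives exactly $\bI$ rather than a smaller projection. I expect $\Ker\bB=\{\bO\}$ together with $\im F(z) = \im z\,\bB^*|(A-z)^{-1}|^2\bB$, which is injective, to force this: a reducing selfadjoint part of $Y$ would correspond to vectors annihilated by $\im F$. Second, justify the Plancherel-type identity $\frac1\pi\int\|(\im Y)^{1/2}(t-Y)^{-1}x\|^2\,\dd t = \|P x\|^2$ in infinite dimensions. I would take this from the characteristic-function/dilation theory: the minimal selfadjoint dilation makes $\sqrt{2\im Y}\,(t-Y)^{-1}$ an isometry into $L^2$ on the completely non-selfadjoint part. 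If that route stalls, the fallback is to approximate by finite-rank compressions $P_n\bB$, prove the identity there by the matrix case, and pass to the limit using weak convergence of the spectral measures and Fatou/monotonicity. In that approach the inequality "$\le\Gamma^{-1}$" is easy, while equality is the hard direction, again resting on the absence of a selfadjoint part.
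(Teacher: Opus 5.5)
Your overall architecture is the paper's: the Aronszajn--Krein formula $F_t(z)=F(z)(\bI+t\Gamma F(z))^{-1}$, reduction to an identity $\frac1\pi\int_\R\im\,(t-Y)^{-1}\,\dd t=\bI$ for a dissipative operator, complete non-self-adjointness, and Sz.-Nagy--Foia\c s dilation theory. Two of your variations are sound. First, you recover all Borel sets from uniqueness of a positive Poisson-finite measure with prescribed Poisson extension, where the paper instead approximates $C_c$ functions by Poisson kernels and runs a monotone class argument. Second, you deduce complete non-self-adjointness from $\Ker\im F(z)=\{\bO\}$, which gives $\im\Jap{Yu,u}>0$ for all nonzero $u\in\operatorname{Dom}Y$.

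The gap is in the decisive step. The principle you invoke is false in general: that $\frac1\pi\int_\R (t-Y)^{-1*}(\im Y)(t-Y)^{-1}\,\dd t$ (with boundary values $t-i0$ where needed) is the projection onto the completely non-self-adjoint part, i.e.\ that $x\mapsto\sqrt{2\im Y}\,(t-i0-Y)^{-1}x$ is isometric there. By Plancherel for the contraction semigroup $e^{isY}$, $s\ge0$, this integral evaluated on $x$ equals $\|x\|^2-\lim_{s\to\infty}\|e^{isY}x\|^2$. The defect can be positive for completely non-self-adjoint $Y$. For $Y=i\frac{d}{dx}$ on $L^2(0,\infty)$ with $f(0)=0$, the semigroup is the completely non-unitary unilateral translation, yet $\im Y=\bO$. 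A bounded example is $Y=\cM_{\id}+i\Jap{\fdot,\1}\1$ on $L^2[0,1]$, where $e^{isY}\1\not\to\bO$. So complete non-self-adjointness alone does not give $\bI$.

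What you are missing is the role of the real spectrum. In your setting every real $t$ lies in $\rho(Y)$. For $t\ne0$ this holds because $\bI+t\Gamma F(z)$ is invertible (Lemma \ref{l-AK}), so $(t-Y)^{-1}=X(\bI+tX)^{-1}$ with $X=\Gamma^{1/2}F(z)\Gamma^{1/2}$. For $t=0$ it holds because $Y^{-1}=-X$ is bounded.

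The correct conclusion then runs as follows. Complete non-self-adjointness makes the spectrum of the minimal dilation absolutely continuous, so $\Jap{(\lambda\bI-Y)^{-1}x,x}=\int_\R\frac{w_x(s)}{\lambda-s}\,\dd s$ for $\lambda\in\C_-$, with $w_x\ge0$ and $\int w_x=\|x\|^2$. Because $t\in\rho(Y)$, $\frac1\pi\im\Jap{((t-i\e)\bI-Y)^{-1}x,x}\to\frac1\pi\im\Jap{(t\bI-Y)^{-1}x,x}$ for every real $t$. This limit equals $w_x(t)$ a.e., so the integral is $\|x\|^2$. Equivalently, in your Plancherel language, the defect $\lim\|e^{isY}x\|^2$ vanishes because $\sigma(iY)\cap i\R=\varnothing$ (e.g.\ by the Arendt--Batty--Lyubich--V\~u theorem), not because $Y$ is completely non-self-adjoint.

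This is exactly how the paper proves Lemma \ref{l-integraleval}. The only difference is that its substitution $\tau=1/t$ replaces your possibly unbounded $Y=-X^{-1}$ by the bounded $R=-X^*=-\Gamma^{1/2}F(a)^*\Gamma^{1/2}$, with $\sigma(R)\cap\R\subset\{0\}$. As a result only the contraction version of the dilation theorem (via the Cayley transform) is needed. Your finite-rank fallback does not close the gap either, since, as you note, equality is again the hard direction there.
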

The last integral is to be understood in the weak sense (see Section \ref{ss-integration}).

Note that in the above theorem we did not assume that $\Ran \bB$ is cyclic for $A$.

For finite rank perturbations ($\rank K<\infty$) Theorem \ref{t:AlAver01} was proved in \cite{JST}.
It was reasonable to expect that the result from \cite{JST} could be extended to trace class
perturbations; the fact that we were able to get is for general perturbations was a real surprise.

This result is essentially Theorem \ref{t-disintegration1},  applied to the function $f:=\1\ci{E}.$ 
We carry out this argument in Section \ref{s-disintegration}.

\subsection{Families of trace class perturbation along  curves}\label{s:threeee}

The next main result is a generalization of Theorem \ref{t:GA-D_trace} to ``perturbations along a 
curve''. Let $t\mapsto\Gamma(t)=\Gamma(t)^*\in B(\cK)$ be a $C^1$ map from an (open) interval 
$I\subset \R$ to the self-adjoint operators on $\cK$; $C^1$ here means $C^1$ in the norm of 
$B(\cK)$.

\begin{theo}\label{t:GA-D_02}
Let $A_t= A+ \bB \Gamma(t)\bB^*$, where $\bB$ is a Hilbert--Schmidt operator, and  $t\mapsto
\Gamma(t)$ be a $C^1$ map as above, such that $\Gamma'(t)$ is positive definite and invertible for
all $t\in I$, and let $\sigma$ be an arbitrary singular Radon measure.

If $\Ran \bB$ is cyclic for $A$, then for all $t\in I$ except maybe countably many, the spectral
measure of $A_t$ is mutually singular with $\sigma$.
\end{theo}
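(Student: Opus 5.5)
The plan is to reduce the statement to a disjointness argument: given the singular measure $\sigma$, find a Borel carrier $S$ of $\sigma$ with $\Leb(S)=0$, and show that the sets of $t$ for which the spectral measure of $A_t$ charges $S$ are "small". Since $\Ran \bB$ is cyclic for $A$, and $\bB\Gamma(t)\bB^*$ is bounded and self-adjoint, $\Ran\bB$ is also cyclic for every $A_t$. Hence a scalar spectral measure of maximal type for $A_t$ can be taken as $\mu_t:=\tr(\bB^*\bE_t\bB)=\sum_k \|\bE_t(\cdot)\bB e_k\|^2$, which is finite because $\bB$ is Hilbert--Schmidt. Thus we must show that $\mu_t(S)=0$ for all but countably many $t\in I$.

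The key step is to relate the matrix-valued measures $\bB^*\bE_t\bB$ to the boundary behaviour of the operator-valued Cauchy transform
\[
F_t(z)=\bB^*(A_t-z)^{-1}\bB.
\]
Write $F_0(z)=\bB^*(A-z)^{-1}\bB$. The resolvent identity gives the standard formula
\[
F_t(z)=F_0(z)\bigl(\bI+\Gamma(t)F_0(z)\bigr)^{-1},
\]
or, where $\Gamma(t)$ is invertible, $F_t(z)^{-1}=F_0(z)^{-1}+\Gamma(t)$. Here $F_0(z)$ is trace class because $\bB$ is Hilbert--Schmidt. One uses the fact that for $\sigma$-a.e.\ point $x$ (indeed for every $x\in S$ after shrinking $S$), the singular part of $\mu_t$ at $x$ is detected by the blow-up of $\tr\im F_t(x+iy)$ as $y\to0^+$. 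Equivalently, $x$ carries singular mass for $\mu_t$ only if $-\Gamma(t)$ "hits" the boundary value of $F_0^{-1}$ in a suitable sense: some unit vector $v_t(x)$ satisfies $\bigl(F_0(x+i0)^{-1}+\Gamma(t)\bigr)v_t(x)=0$ approximately.

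Then we use the monotonicity provided by $\Gamma'(t)>0$ and invertible. If $x$ is singular for both $\mu_t$ and $\mu_s$ with $s<t$, then the mean value formula $\Gamma(t)-\Gamma(s)=\int_s^t\Gamma'\,\dd\tau\ge c(t-s)\bI$ locally uniformly. This forces quadratic-form estimates incompatible with $\im F_0(x+i0)^{-1}\le\bO$ for both vectors, except in a limited way. The compactness (trace class) of $F_0$ is what bounds that limited way. Concretely, I would aim to prove that for each fixed $x$ the set $T_x:=\{t: \mu_t(\{x\}\text{-type singular mass})\}$ is contained in the eigenvalue crossings of a monotone family of compact operators, hence is finite or countable. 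The quantity I would use is the number of negative eigenvalues of $\Gamma(t)+\re F_0^{-1}$, which jumps only at such $t$.

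Finally, conclude by a measure-theoretic disintegration. Consider the finite measure $\nu(\dd t\,\dd x)=\mu_t(\dd x)\,\dd t$ restricted to $I'\times S$, where $I'\Subset I$. By the analogue of Theorem \ref{t:AlAver01} along curves (obtained by reparametrizing locally, since $\Gamma'>0$), the $x$-marginal is absolutely continuous, so $\nu(I'\times S)=0$; this recovers only the a.e.\ statement. To upgrade to "countably many", I would show that the sets $\{t:\mu_t(S)>1/n\}$ are finite on $I'$. If infinitely many such $t_j$ existed, the measures $\mu_{t_j}|_S$ would be mutually singular, by the vector mutual singularity coming from the monotonicity step. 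Their masses must then sum to at most a bound by $\tr$-type quantities: using monotonicity, $\sum_j \mu_{t_j}(S)\lesssim\|\bB\|_{\mathfrak S_2}^2\sup\|\Gamma'^{-1}\|$. That bound is exactly where Hilbert--Schmidt is needed, and it gives a contradiction. This summation estimate, which turns pairwise (vector) mutual singularity into a quantitative mass bound via the trace, is the main obstacle. I would first try to prove it for finite-rank truncations $\bB P_N$ and pass to the limit using trace-norm continuity of $F_0$.
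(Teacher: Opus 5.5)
\paragraph{Your reduction targets a false statement.} Having $\mu_t\perp\sigma$ is not the same as having $\mu_t(S)=0$ for a fixed Lebesgue-null carrier $S$ of $\sigma$, and the stronger statement already fails in rank one.

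Take $A=\cM_{\id}$ on $L^2([0,1],\dd s)$, $\bB c=c\1$ (so $\cK=\C$ and $\1$ is cyclic), and $\Gamma(t)=t$. For $t>0$, the only spectrum of $A_t$ in $(1,\infty)$ is one eigenvalue, $x(t)=(1-e^{-1/t})^{-1}$. It is an atom of $\mu_t$ whose mass depends continuously and positively on $t$. The map $x(\cdot)$ is an increasing homeomorphism of $(0,\infty)$ onto $(1,\infty)$.

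Let $I$ be a bounded open interval with $\overline I\subset(0,\infty)$, and let $\sigma$ be the Cantor measure of a Cantor set inside $x(I)$. Since $\sigma$ has no atoms, $\mu_t\perp\sigma$ for every $t\in I$. But every carrier $S$ of $\sigma$ is uncountable, so $\mu_t(S)>0$ for uncountably many $t\in I$, with masses bounded below. Consequently the following all fail:
\begin{itemize}
\item your goal ``$\mu_t(S)=0$ off a countable set'';
\item the finiteness of $\{t:\mu_t(S)>1/n\}$;
\item a Hilbert--Schmidt-norm bound on $\sum_j\mu_{t_j}(S)$.
\end{itemize}
The summation bound fails even for the correct object. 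Take $\sigma=\sum_j2^{-j}\delta_{x(t_j)}$ with distinct $t_j$ converging in $I$. Then the exceptional set is infinite, and the $\sigma$-absolutely continuous parts of $\mu_{t_j}$ have mass bounded below.

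The same example shows that a per-point crossing count cannot close the argument. Each $T_x$ is a single point, yet $\bigcup_{x\in S}T_x$ is uncountable. The crossing count itself is also only heuristic: $F_0(z)$ is compact, so $F_0^{-1}$ is unbounded and has no boundary values. Finally, beyond rank one the restrictions $\mu_{t_j}|_S$ need not be mutually singular at all; only a vector version survives.

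\paragraph{What is missing.} You need to work with the $\sigma$-absolutely continuous part $w^t\sigma$, where $w^t=\dd\mu_t/\dd\sigma$, so that the exceptional set is $\{t:w^t\ne0\}$. You also need to count by separability, not by mass.

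The Hilbert--Schmidt hypothesis is used only to make $\bM_t=\bB^*\bE_t\bB$ a weak indefinite integral $W_t\mu_t$, so that $\Ran W_t(s)$ makes sense.

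Localize near $t_0$. For $t_1<t_2$ close to $t_0$, set $R=\Gamma(t_2)-\Gamma(t_1)$. Then $\tfrac12(t_2-t_1)\Gamma'(t_0)\le R\le\tfrac32(t_2-t_1)\Gamma'(t_0)$, and $A_{t_2}=A_{t_1}+\bB R\bB^*$.

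The representation theorem for the spectral map of $A_{t_2}$ gives the joint operator $\bA_2$ bound $\sup_z\|\bM_{t_1}(z)^{1/2}(R\bM_{t_2}(z)R)^{1/2}\|\le1/\pi$ on Poisson extensions. Because $\sigma$ is singular, $\sigma(z)\to\infty$ nontangentially $\sigma$-a.e. Dividing by $\sigma(z)$ and passing to the limit yields $\Ran(w^{t_1}W_{t_1}(s))\perp\ci{R}\Ran(w^{t_2}W_{t_2}(s))$ for $\sigma$-a.e. $s$.

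For each exceptional $t$, pick a nonzero $f_t\in L^2(\sigma,\cK)$ with $f_t(s)\in\Ran(w^tW_t(s))$, normalized in the $\Gamma'(t_0)$-weighted norm. By the comparability of $R$ with $(t_2-t_1)\Gamma'(t_0)$, these vectors are pairwise at distance at least $\sqrt{2/3}$. Separability of $L^2(\sigma,\cK)$ then forces the exceptional set to be countable.

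Your averaging step is not needed, and as you note it only gives an a.e.\ statement.
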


Theorem \ref{t:GA-D_trace} is a particular case of the above theorem. One can see that by
factorizing $K$ from Theorem  \ref{t:GA-D_trace} as $K=\bB\Gamma\bB^*$, as discussed in Section
\ref{s:two}, see Lemma \ref{l:factorization}, and defining $\Gamma(t) =t\Gamma$.

%%%%%%%%%%%%%%%%%%%%%%%%%%%%%%%%%%%%%%%%%%%%%%%%%%
\section{Operator-valued measures $\bM$ and associated $L^2(\bM)$ spaces}\label{s-prelim}
%%%%%%%%%%%%%%%%%%%%%%%%%%%%%%%%%%%%%%%%%%%%%%%%%%
%%%%%%%%%%%%%%%%%%%%%%%%%%%%%%%%%%%%%%%%%%%%%%%%%%
Operator-valued measures are one of the main technical tools of the present paper. For the 
convenience of the reader, we present some well-known and some less known facts about them in this 
section.

\subsection{Measurability  of vector and operator-valued functions}\label{s-measOVF}
Let us recall main definitions. Let $(\fX, \cA)$ be a measurable space, i.e.~a set $\fX$ endowed 
with a sigma-algebra $\cA\subset 2^{\fX}$. Let $X$ be a Banach space, and $f:\fX\to X$. We say that 
$f$ is a simple function if it can be represented as $f=\sum_{k=1}^n1_{E_k}x_k$, where $x_k\in X$ 
and $E_k\in \cA$ are disjoint. 

A function $f:\fX\to X$ is called \emph{strongly measurable} if it is a pointwise limit of simple 
functions, and it is \emph{weakly measurable} if the scalar function $s\mapsto \langle f(s), 
h\rangle$ 
is 
measurable for any functional $h\in X^*$. 

It follows from Pettis' measurability theorem, see \cite[Theorem 1.1.6]{Hytoenen-book2016} that 
if $X$ is separable, then weak and strong continuity coincide. 

An operator-valued function $f:\fX\to B(X, Y)$ is called weakly (resp.~strongly) measurable if for 
any $\bx\in X$ the vector-valued function $s\mapsto f(s)\bx$ is weakly (resp.\ strongly) 
measurable. Note that if $Y$ is separable, then both notions coincide and we will simply  call 
such  functions \emph{measurable}. 

\begin{rem}
The terminology above is a bit confusing and imprecise, because one can confuse it with the notions
of measurability when $B(X,Y)$ is treated as a Banach space. The more appropriate terms for
operator-valued functions would be something like ``weak operator measurable'' (resp.\ ``strong
operator measurable''). But this  ``weakly (strongly) measurable'' terminology  for operator-valued
functions is widely used in the literature, and we do not feel comfortable fighting against
established terminology.
\end{rem}

\subsection{Operator-valued measures}\label{s:OVM-01} 
In this paper we always deal only with positive operator-valued measures.

Let us recall the main definitions. Let $\fX$ be a measurable space with a  $\sigma$-algebra of
$\cA$. For a Hilbert space $\cK$ let $B(\cK)$ be the set of bounded linear operators on $\cK$ and
$B_+(\cK)$ be the cone of non-negative operators in $B(\cK)$. A (positive) operator-valued measure
on a $\fX$ is a function $\bM:\cA\to B_+(\cK)$ which is weakly countably additive. Weak countable
additivity simply means that for every $\bx\in\cK$ the function
\begin{align}\label{e:POVM-01}
E\mapsto (\bM(E)\bx,\bx)\ci\cK, \qquad E\in\cA
\end{align}
is countably additive, i.e.~it is a (positive) measure. Of course, the polarization identity
immediately implies that for every $\bx,\by\in\cK$  the function $\mu_{\bx,\by}$
\begin{align}\label{e:mu_{xy}}
\mu_{\bx,\by}(E) = (\bM(E)\bx,\by)\ci\cK, \qquad \forall E\in\cA
\end{align}
is a complex-valued measure. For brevity of notation we will denote $\mu_\bx:=\mu_{\bx,\bx}.$

Note that we always assume that the measure $\bM$ is finite, i.e.~that $\bM(\fX)$ is a bounded 
operator.

If $\fX$ is a topological space (usually a locally compact Hausdorff), the $\sigma$-algebra $\cA$ 
is the Borel $\sigma$-algebra.  In this case an operator-valued measure is called \emph{regular} if 
for any $\bx\in\cK$ the measure \eqref{e:POVM-01} is regular. 

The term semi-spectral measure is sometimes used in the literature. The abbreviation POVM (Positive 
Operator-Valued Measure) is also used in the literature, although usually it is assumed that 
$\bM(\fX)=\bI$. 

A positive operator-valued measure is called  \emph{spectral}  if it is ``multiplicative'' 
\begin{align*}
\bM(E_1)\bM(E_2) = \bM(E_1\cap E_2)\qquad \forall E_1, E_2\in\cB.  
\end{align*}
It is easy to see that the measure $\bE$ is spectral if and only if the values $\bM(E)$ are
orthogonal projections, and for $E_1\cap E_2=\varnothing$ the ranges of $\bM(E_1)$ and $\bM(E_2)$
are orthogonal. To emphasize this fact we will often use the term \emph{projection-valued spectral
measure}.

Sometimes in the literature it is also required that a spectral measure is a probability one,
i.e.~that $\bM(\fX)=\bI$. 

We defined an operator-valued measure to be only weakly countably additive, but  in fact any 
operator-valued measure is strongly countably additive i.e.~for disjoint sets $E_k\in\cA$ 
\begin{align*}
\bM\left(\bigcup_k E_k\right)   = \sum_k \bM(E_k) 
\end{align*}
with the convergence in the sum is understood as the convergence in the strong operator topology. 

Indeed, it is easy to see that for the  projection-valued spectral measures weak countable
additivity implies strong countable additivity. As for the general semi-spectral measures the strong
countable additivity follows from the Naimark Dilation Theorem which states, see \cite[Theorem
4.6]{Paulsen}, that for any regular operator-valued Borel measure on a Hausdorff compact $\fX$ with
values in $B_+(\cK)$ there exists a spectral (projection-valued) measure $\bE$ whose values are
orthogonal projections in some auxiliary Hilbert space $\cH$ and a bounded operator $\bC:\cH\to \cK$
such that
\begin{align}\label{e:comp-01}
\bM= \bC^*\bE \bC. 
\end{align}
Here \emph{Borel} means that the measure $\bM$ is defined on Borel subsets of $\fX$, and 
\emph{regular} means that  for any $\bx\in\cK$ the scalar measure \eqref{e:POVM-01} is regular.

This immediately implies strong countable additivity for regular measures on Hausdorff compacts, but
these conditions can easily be relaxed. Namely, the strong countable additivity we need is a
statement about countable collection of non-negative operators, i.e.~about operator-valued measures
on $\N$. But the set $\N$ can be of course embedded in the compact set $[0,1]$, so we are dealing
with operator-valued measure on a (Hausdorff) compact. But since any finite measure on $\R$ is
regular, all assumptions of the Naimark Dilation Theorem are satisfied, so we can apply it to get
countable additivity.

\subsubsection{Any operator-valued measure on $\R$ is compression of a projection-valued measure.}
Note that the Naimark Dilation Theorem holds for Borel measures on any locally compact Hausdorff
(LCH) space $\fX$, in particular for measures on the real line $\R$. Indeed, we can consider
one-point compactification $\hat \fX=\fX\cup\{\infty\}$ of $\fX$, and treat $\bM$ as a measure on
$\hat\fX$. Since any Borel measure on a (Hausdorff) compact is regular, we can apply the Naimark
Dilation Theorem stated above to get a projection-valued measure $\bE$ on $\hat \fX$, such that
\eqref{e:comp-01} holds. Since $\bM(\{\infty\}) =\bO$, we can always get that $\bE(\{\infty\})
=\bO$, so $\bE$ can be treated as a measure on $\fX$.

In this paper  the operator-valued measures we use are defined as compressions of projection-valued
measures. However, any Borel POVM on $\R$ is a compression of a projection-valued measure, so there
is nothing special about the measures we are working with.

\subsection{Integration with respect to operator-valued measures}\label{ss-integration}
All integration with respect to operator-valued measures will be defined in the \emph{weak sense},
using the scalar measures $\mu_{\bx,\by}$ introduced above in \eqref{e:mu_{xy}}.  Namely, for a
scalar bounded $\cA$-measurable function $\f$ on $\fX$ and POVM $\bM$ on $(\fX, \cA)$ we define the
operator $\int \f\bM =\int \f(s)\bM(\dd s)$
by 
\begin{align*}
\Jap{\Bigl( \int \f(s)\bM (\dd s) \Bigr)\bx, \by} =\int \f(s)  \mu_{\bx,\by}(\dd s), \qquad 
\forall \bx,\by\in \cK . 
\end{align*}
For a vector-valued function $f$, $f(s)=\f(s)\bx$ where, as above, $\f$ is a bounded 
$\cA$-measurable scalar function and $\bx\in\cK$ the vector $\int \bM(\dd s)  f(s)$, is defined by
\begin{align*}
\Jap{\Bigl( \int \bM (\dd s) f(s) \Bigr), \by}=
\Jap{\Bigl( \int \bM (\dd s) \f(s) \Bigr)\bx, \by} =\int \f(s)  \mu_{\bx,\by}(\dd s), \qquad 
\forall  \by\in \cK ; 
\end{align*}
This definition trivially extends to finite linear combinations $f=\sum_k \f_k\bx_k$, and it is not 
hard to check that the integral does not depend on the representation.

\begin{defn}
We say that $f:\R\to \cK$ is a \emph{\fdv}function, if all the values $f(s)$ belong to some 
finite-dimensional subspace $\cK'\subset \cK$. 
\end{defn}

Note, that  $f$ is a \fdv function if and only if it can be represented as a finite linear 
combination
\begin{align}\label{e:fdr01}
f=\sum_k \f_k \be_k
\end{align}
where $\be_k\in\cK$ and $\f_k$ are scalar  functions. 

Note that we can always choose the vectors  $\be_k$ to be linearly independent (just pick a basis
and ignore the rest), so for a bounded finite-dimensionally-valued function $f$, the functions $\f_k
$ in \eqref{e:fdr01} can be chosen to be bounded.

Also, if a \fdv function $f$ is measurable, by the same reasoning the functions $\f_k$ can be 
chosen to be measurable.

Finally, for finite-dimensionally-valued bounded measurable functions $f = \sum_k \f_k\bx_k$ and 
$g=\sum_k \psi_k\by_k$ one can define a sesqui-linear form
\begin{align}\label{e:inner prod}
\int\Jap{ \bM(\dd s) f(s), g(s)}\ci\cK := \sum_{j,k} \int \f_k(s) \overline{\psi_j(s)} \mu_{\bx_k, 
\by_j}(\dd s).
\end{align}

Note that it is not hard to show that the result does not depend on the representation of $f$ and 
$g$, and that the form is positive semidefinite. One way to see that is to notice that everything 
happens in the finite-dimensional space $\cspn_{k}\{\bx_k, \by_k\}$, so essentially we are dealing 
with matrix-valued measures. And for matrix-valued measures the above facts are trivial.

%%%%%%%%%%%%%%%%%%%%%%%%%%%%%%%%%%
\subsection{Weighted spaces $L^2(\bM)$ for operator-valued measures}\label{s-SpMeas}

%%%%%%%%%%%%%%%%%%%%%%%%%%%%%%%%%%
Let $\bM$ be a finite operator-valued measure on $(\fX, \cA)$ with values in $B_+(\cK)$. We want to
define the weighted space $L^2(\bM)$.  For finite-dimensionally-valued functions $f$ and $g$ define
the (semi) inner product $\jap{f,g}\ci{L^2(\bM)}$ as 
\begin{align*}
\jap{f,g}\ci{L^2(\bM)} = \int\Jap{ \bM(\dd s) f(s), g(s)}\ci\cK , 
\end{align*}
where the integral is defined above in \eqref{e:inner prod}, and the (semi)norm 
$\|f\|\ci{L^2(\bM)}$,  $\|f\|\ci{L^2(\bM)}^2:=  \Jap{f,f}\ci{L^2(\bM)} $. Taking the quotient over
the set of functions $f$, $\|f\|\ci{L^2(\bM)}=0$, we get an inner product space. Taking completion
we get a Hilbert space we will denote by $L^2(\bM)$.

\begin{lem}\label{l-finiteapprox}
Let $\cspn\ci\cK\{\be_k: k\ge1\}=\cK$.  Then the finite linear combinations
\begin{align*}
\sum_{k=1}^N f_k \be_k , 
\end{align*}
where $f_k$ are bounded $\cA$-measurable functions are dense in $L^2(\bM)$. 
\end{lem}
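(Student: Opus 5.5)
By construction, $L^2(\bM)$ is the completion of the space $\mathcal{F}$ of bounded measurable finite-dimensionally-valued functions $f=\sum_j \f_j\bx_j$, with $\bx_j\in\cK$ arbitrary. So it suffices to approximate each such $f$ in $L^2(\bM)$ by combinations $\sum_{k=1}^N f_k\be_k$ with bounded measurable $f_k$. By linearity of the approximation it is enough to treat a single term $f=\f\bx$, with $\f$ bounded and measurable and $\bx\in\cK$ arbitrary.

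The key estimate is the following. For a bounded scalar $\f$ and any $\by\in\cK$,
\begin{align*}
\|\f\by\|\ci{L^2(\bM)}^2=\int|\f(s)|^2\mu_{\by}(\dd s)\le \|\f\|_\infty^2\,(\bM(\fX)\by,\by)\ci\cK\le \|\f\|_\infty^2\,\|\bM(\fX)\|\,\|\by\|^2 .
\end{align*}
This follows directly from the definition \eqref{e:inner prod}, since $\mu_\by$ is a positive measure of total mass $(\bM(\fX)\by,\by)$. Here we use that $\bM$ is finite, so $\bM(\fX)$ is bounded.

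Now fix $\e>0$. Since the $\be_k$ span a dense subspace of $\cK$, choose a finite combination $\by=\sum_{k=1}^N c_k\be_k$ with $\|\bx-\by\|<\e$. Then $\f\by=\sum_k (c_k\f)\be_k$ is of the required form with bounded measurable coefficients $f_k = c_k\f$. Applying the estimate to $\bx-\by$ gives
\begin{align*}
\|\f\bx-\f\by\|\ci{L^2(\bM)}\le \|\f\|_\infty\|\bM(\fX)\|^{1/2}\e .
\end{align*}
Summing over the finitely many terms of a general $f\in\mathcal{F}$ approximates $f$ within a constant multiple of $\e$.

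Finally, density passes to the completion: $\mathcal{F}$ is dense in $L^2(\bM)$ by definition, and the given class is dense in $\mathcal{F}$, hence in $L^2(\bM)$. There is no real obstacle. The only points needing care are that the seminorm is well defined and linear on finite-dimensionally-valued functions, which was noted after \eqref{e:inner prod}, and the uniform bound above, which is where finiteness of $\bM$ is used.
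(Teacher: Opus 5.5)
Your proof is correct and follows the paper's argument: you start from the dense class of finite sums $\sum_j \f_j\bx_j$, replace each $\bx_j$ by a nearby vector in the span of the $\be_k$, and control the error with the bound $\|\f\by\|\ci{L^2(\bM)}\le\|\f\|_\infty\|\bM(\fX)\|^{1/2}\|\by\|$. This is exactly the estimate the paper uses, and it is where finiteness of $\bM$ enters.
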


\begin{proof}
First recall that, by the definition of $L^2(\bM)$ finite linear combination of vectors $\sum f_k
\bx_k$, $\bx_k\in \cK$, $f_k\in L^\infty$, are dense in $L^2(\bM)$. Assume  that 
$\|f_k\|_\infty\le C$. Let $\wt\bx_k$ be finite linear combinations
\begin{align*}
\wt\bx_k= \sum_n c_{n,k} \be_n
\end{align*}
approximating $\bx_k$ in $\cK$ (we will see necessary precision in a moment). We can estimate 
\begin{align*}
\left\|  \sum_{k=1}^N f_k \bx_k - \sum_{k=1}^N f_k \wt\bx_k \right\|_{L^2(\bM)}  & =  \left\|  
\sum_{k=1}^N f_k (\bx_k -  \wt\bx_k) \right\|_{L^2(\bM)} \\
&\le C \sum_{k=1}^N \left\|  \1 \cdot (\bx_k-\wt\bx_k) \right\|_{L^2(\bM)} \\
& \le C \|\bM(\R)\|^{1/2} \sum_{k=1}^N \left\|  \bx_k-\wt\bx_k\right\|_{\cK}.
\end{align*}
Taking arbitrary $\e>0$ and requiring $\left\|  \bx_k-\wt\bx_k\right\|_{\cK} <\e N^{-1} 
\|\bM(\R)\|^{-1/2}$ we get that 
\[
\left\|  \sum_{k=1}^N f_k \bx_k - \sum_{k=1}^N f_k \wt\bx_k \right\|_{L^2(\bM)} <\e,
\]
which completes the proof.
\end{proof}

\subsection{Absolute continuity: indefinite integrals}\label{s-KurodaIndef}
For a moment, let us consider operator-valued measures that are absolutely continuous in a quite
strong way. Following Kuroda \cite{Kuroda1967} we call an operator-valued measure $\bM$ a \emph{weak
indefinite integral} if there exists a weakly (and so strongly) measurable operator-valued function
$W$ with values in $B_+(\cK)$ and a scalar-valued measure $\sigma$ (called \emph{a base} of $\bM$)
over a measure space $\cX$ such that
\begin{align*}
\bM(E)  = \int_{E} W(s) \sigma(\dd s)
\end{align*}
where the integral is understood in the weak sense. 

The main example of such measures is given by the following lemma. 

\begin{lem}\label{l:TraceClass_IndInt}
Let $\bM$ be an operator-valued measure such that $\bM(\fX)$ belongs to the trace class. 
Then $\bM$ is a weak indefinite integral. 
\end{lem}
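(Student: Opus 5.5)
We will use the trace of $\bM$ as the base measure and obtain the density $W$ as a Radon--Nikodym derivative with respect to it. Fix an orthonormal basis $\{\be_k\}$ of $\cK$ and set
\begin{align*}
\sigma(E) := \tr \bM(E) = \sum_k (\bM(E)\be_k,\be_k)\ci\cK = \sum_k \mu_{\be_k}(E).
\end{align*}
Since each $\mu_{\be_k}$ is a positive measure and $\sum_k\mu_{\be_k}(\fX)=\tr\bM(\fX)<\infty$, $\sigma$ is a finite positive measure; countable additivity follows by interchanging the two sums of non-negative terms. Because $\bO\le\bM(E)\le\bM(\fX)$, each $\bM(E)$ is trace class and $\sigma(E)=\tr\bM(E)$ is independent of the basis.

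Next we show that every $\mu_{\bx,\by}$ is absolutely continuous with respect to $\sigma$, with a uniform bound on the densities. For $\bx\in\cK$ and any $E$ we have
\begin{align*}
\mu_\bx(E) = (\bM(E)\bx,\bx)\le \|\bM(E)\|\,\|\bx\|^2\le \tr\bM(E)\,\|\bx\|^2 = \sigma(E)\|\bx\|^2,
\end{align*}
so $\mu_\bx\le \|\bx\|^2\sigma$. Hence the Radon--Nikodym derivative $w_\bx := \dd\mu_\bx/\dd\sigma$ satisfies $0\le w_\bx\le\|\bx\|^2$ $\sigma$-a.e. By polarization, $\mu_{\bx,\by}\ll\sigma$ as well, with density $w_{\bx,\by}$ that is sesquilinear in $(\bx,\by)$ modulo null sets and satisfies $|w_{\bx,\by}|\le 2(\|\bx\|^2+\|\by\|^2)$ a.e.

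The key step is to turn these densities into a single $W(s)$ defined pointwise; this is the one place needing care, because of the uncountably many null sets. Let $\cK_0$ be the countable set of finite linear combinations of the $\be_k$ with coefficients in $\mathbb{Q}+i\mathbb{Q}$. Off a single $\sigma$-null set $N$, the following hold for all $\bx,\by\in\cK_0$ and all rational $\alpha,\beta$:
\begin{itemize}
\item sesquilinearity $w_{\alpha\bx+\beta\by,\bz}=\alpha w_{\bx,\bz}+\beta w_{\by,\bz}$ (and the conjugate-linear identity in the second slot);
\item positivity $w_{\bx,\bx}\ge0$;
\item the bound $w_{\bx,\bx}\le\|\bx\|^2$.
\end{itemize}
For $s\notin N$ we therefore have a bounded positive sesquilinear form on the $\mathbb{Q}+i\mathbb{Q}$-span. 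By Cauchy--Schwarz for positive forms it is Lipschitz, so it extends uniquely to a bounded form on $\cK$ and defines $W(s)\in B_+(\cK)$ with $\|W(s)\|\le 1$. For $s\in N$ we put $W(s)=\bO$.

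It remains to check measurability and the representation formula. For $\bx,\by\in\cK_0$, the function $s\mapsto(W(s)\bx,\by)$ equals $w_{\bx,\by}$ a.e., so it is measurable. For general $\bx,\by$ it is a pointwise limit of such functions, which gives weak measurability. Moreover $\mu_{\bx,\by}(E)=\int_E(W\bx,\by)\,\dd\sigma$ for $\bx,\by\in\cK_0$, and this extends to all $\bx,\by$ by dominated convergence. On the left, $\|\mu_{\bx,\by}-\mu_{\bx',\by'}\|$ is controlled by $\|\bM(\fX)\|$ and the norms of the differences; on the right, $\|W(s)\|\le1$ and $\sigma$ is finite. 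This gives $\bM(E)=\int_E W\,\dd\sigma$ in the weak sense, as required.
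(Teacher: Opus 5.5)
Your proof is correct. It uses the same base measure as the paper, namely the trace of $\bM$, but it constructs the density differently.

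The paper works coordinatewise:
\begin{enumerate}
\item it gets $\mu_{\be_k}\ll\tr\bM$ from $\tr\bM=\sum_k\mu_{\be_k}$;
\item it deduces absolute continuity of the off-diagonal measures $\mu_{\be_j,\be_k}$ from positivity of $2\times 2$ matrices;
\item it writes the compressions $P_n\bM P_n$ as matrix-valued indefinite integrals $W_n\,\tr\bM$;
\item it obtains $W$ as the limit of the increasing sequence $W_n(s)$ via the Monotone Convergence Theorem.
\end{enumerate}
You instead use the single domination $\bM(E)\le\|\bM(E)\|\,\bI\le(\tr\bM(E))\,\bI$. This gives absolute continuity of every $\mu_{\bx}$ at once, with uniformly bounded densities. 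You then define $W(s)$ pointwise as the continuous extension of a bounded positive form from the countable $(\mathbb{Q}+i\mathbb{Q})$-span, and conclude by dominated convergence.

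What your route buys:
\begin{enumerate}
\item The handling of null sets is explicit. The paper tacitly needs one null set off which all $W_n(s)$ are positive and increasing, and an a.e.\ bound such as $\tr W_n(s)\le\sum_k w_{k,k}(s)=1$ to justify convergence.
\item It yields $\|W(s)\|\le 1$ for free.
\item It shows that the trace-class hypothesis enters only through the domination of $\bM(E)$ by a finite scalar measure times $\bI$. So the same argument works for any operator-valued measure dominated in this way.
\end{enumerate}
The paper's route instead reduces everything to the matrix case and exhibits $W(s)$ directly as a trace-norm limit of finite-rank densities.

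Three small points to tidy:
\begin{enumerate}
\item The scalars $\alpha,\beta$ in your countable list of identities should range over $\mathbb{Q}+i\mathbb{Q}$, not just $\mathbb{Q}$, to get complex linearity of the extension.
\item The form is Lipschitz only on bounded sets, which is all the extension needs.
\item $(W(s)\bx,\by)$ equals $w_{\bx,\by}(s)$ for every $s\notin N$ and $0$ on $N$, where $N$ is a measurable null set. That exact identity, rather than ``equal a.e.'', is the precise reason it is measurable.
\end{enumerate}
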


This result is well known, see for example \cite{Kuroda1967}, Corollary to Lemma 2.3. For 
convenience of the reader we present a proof below; we should mention that it is different from a 
more abstract proof presented in \cite{Kuroda1967}. 

\begin{proof}[Proof of Lemma \ref{l:TraceClass_IndInt}]
Define a scalar measure $\mu$ by $\mu=\tr \bM$, or more precisely $\mu(E)=\tr \bM(E)$ for all 
measurable $E\subset \fX$; note that $\tr \bM(E)\le \tr \bM(\fX)$, so everything is well defined.   
If $\{\be_k\}_{k\in\N}$ is an orthonormal basis   in $\cK$, we can write 
\begin{align*}
\mu= \sum_k \mu_{\be_k},  
\qquad \mu_{\be_k}(\fdot) = \Jap{\bM(\fdot)\be_k, \be_k}, 
\end{align*}
so the measures $\mu_{\be_k}$ are absolutely continuous with respect to $\mu$, and we can write
$\mu_{\be_k}= w_{k,k} \mu$.  We  then conclude that the complex-valued measures
$\mu_{j,k}:=\mu_{\be_j, \be_k}$, where $\mu_{\be_j, \be_k}(\fdot) : = \Jap{\bM(\fdot)\be_j, \be_k}$
are also absolutely continuous with respect to $\mu$. Indeed, since for any measurable $E\subset\fX$
the operator $\bM(E)$ is non-negative, we see that
\begin{align*}
\begin{pmatrix}
\mu_{j,j}(E) & \mu_{j,k}(E) \\
\mu_{k,j}(E) & \mu_{k,k}(E)
\end{pmatrix}
\end{align*}
must be positive semidefinite. So $\mu_{j,k}(E) =0$ whenever $\mu(E)=0$. 
Thus $\mu_{j,k}=w_{j,k}\mu$. 

Therefore, if $P_n$ is an orthogonal projection onto 
\[
\cspn\{ \be_k:1\le k\le n\}
\]
then trivially $P_n\bM P_n$ is an indefinite integral, $P_n\bM (E)P_n = \int_E W_n(s) \mu(\dd s)$ 
for any measurable $E\subset \fX$.  The entries of $W_n(s)$ are given by 
\begin{align*}
\Jap{W_n(s) \be_j, \be_k} = \left\{
\begin{array}{ll}
w_{j,k}(s)&\text{ if }1\le j,k\le n,\\
0 &\text{ otherwise.}
\end{array}
\right.
\end{align*}
Clearly, $W_n(s) \nearrow$ as $n$ increases, and $ W_n(s)\to W(s)$ in norm topology (in fact in the 
trace class norm, although we do not need it here). 

Therefore, by the Monotone Convergence Theorem $\bM(E)  = \int_{E} W(s) \sigma(\dd s)$. 
\end{proof}

\subsubsection{Spaces $\fL^2(\bM)$ for weak indefinite integrals}\label{s:L^2 Wmu}

The weighted space $L^2(\bM)$ was defined in Section \ref{s-SpMeas} as the completion of the bounded
measurable \fdv functions in $L^2(\bM)$-norm. However, in the case when $\bM=W\sigma$ is a weak
indefinite integral, an equivalent, more transparent definition can be given.

For a Borel measurable function $f:\cX\to \cK$ we can define its $\fL^2(W\sigma)$-norm    as 
\begin{align}\label{e:L^2(W sigma)}
\|f\|\ci{\fL^2(W\sigma)}^2 := \int_{\cX} \Jap{W(s)f(s), f(s)}\ci{\cK}\sigma(\dd s); 
\end{align}
note that for Borel measurable $f$ the function $s\mapsto \Jap{W(s)f(s), f(s)}\ci{\cK}$  is always 
Borel measurable, so the integral exists for all such functions, although it could be infinite. 

Here we use $\fL^2(W\sigma)$ instead of $L^2(W\sigma)$  (or $L^2(\bM)$)  to emphasize that formally
we get a different space; we will show a bit later that the spaces coincide, so after that we will
be only using $L^2(W\sigma)$ or $L^2(\bM)$.

Define the weighted space $\fL^2(W\sigma)$ by taking all Borel measurable 
functions with $\|f\|\ci{\fL^2(W\sigma)}<\infty$, considering the quotient space and taking the 
completion.  Note that only considering functions is not sufficient, and we still need to have a 
completion: a weight $W$, $W(s)\equiv A$, where $A=A^*$ is a non-invertible operator with trivial 
kernel gives an example. 

However --- unlike the case of general operator-valued measures --- we can always say whether a 
function 
$f$ belongs to $\fL^2(W\sigma)$ by simply evaluating the integral \eqref{e:L^2(W sigma)}; the 
integrand is always a non-negative measurable function. 

Note that the above definition does not depend on the choice of the base $\sigma$, namely if 
$W\sigma = \wt W \wt \sigma$, then $\|f\|\ci{\fL^2(W\sigma)} = \|f\|\ci{\fL^2(\wt W \wt \sigma)}$. 

\begin{rem}
If $\bM$ is a matrix-valued measure ($\rank \bM(\fX)<\infty$), then $\bM$ is a weak indefinite 
integral $\bM= W\sigma$, and it is an easy exercise to show that in this case the space $L^2(\bM)$ 
is 
exactly the space of 
(equivalence classes of) functions  $f$ for which the norm \eqref{e:L^2(W sigma)} is finite, so 
in this case $L^2(W\sigma)=\fL^2(W\sigma)$.

As we just mentioned above, if $\bM$ is an indefinite integral we can always say whether a
measurable function belongs to $\fL^2(\bM)$, but to get the whole space $\fL^2(\bM)$ we need to take
the completion.

For general operator-valued measures we can verify that $f\in L^2(\bM)$ for \fdv functions, because 
in this case the question can be reduced to the question about matrix-valued measures. However, for 
general measurable functions it is not easy to say whether $f$ is in $L^2(\bM)$ or not. We know 
that there exists a sequence of simple measurable functions $f_n$ that converges to $f$ pointwise, 
but we cannot just say that $\|f\|\ci{L^2(\bM)} = \lim_{n\to \infty}\|f_n \|\ci{L^2(\bM)} $; taking 
the limit requires justification which is generally not available. 
\end{rem}

\subsubsection{Comparing $L^2(\bM)$ with $\fL^2(W\sigma)$}

A priori, for $\bM=W\sigma$  the space $L^2(W\sigma)$ defined in Section \ref{s-SpMeas} could
potentially differ from $\fL^2(W\sigma)$, because in the definition of $\fL^2(W\sigma)$ we take a
completion of a bigger set. But as one could expect, $L^2(W\sigma) = \fL^2(W\sigma)$. Let us show
that.

First, it is easy to see (using identity \eqref{e:inner prod}, for example) that  the norms
$\|\fdot\|\ci{L^2(W\sigma)} $ and $\|\fdot\|\ci{\fL^2(W\sigma)}$  coincide on  bounded measurable
\fdv functions.

Let us agree that we are using the standard construction of the completion via equivalence classes
of Cauchy sequences. Then the equivalence of the 2 definitions (i.e.~equality
$L^2(\bM)=L^2(W\sigma)=\fL^2(W\sigma)$) follows from Lemma \ref{l:L^2(M) = L^2(W)} below, which is
essentially Lemma 3.5 from \cite{Kuroda1967}.

\begin{lem}\label{l:L^2(M) = L^2(W)}
Let $\bM=W\sigma$ be a weak indefinite integral, and let $f:\fX\to \cK$ be a measurable function 
such that
\begin{align*}
\|f\|\ci{\fL^2(W\sigma)}^2 = \int_{\fX} \Jap{W(s) f(s),f(s)} \sigma(\dd s) <\infty . 
\end{align*}
Then there exists a sequence $\{f_n\}_{n\in\N}$ of bounded measurable \fdv functions such that 
$\|f-f_n\|\ci{\fL^2(W\sigma)}\to0$ as $n\to\infty$. 
\end{lem}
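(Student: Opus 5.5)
The plan is a two-stage reduction: first truncate $f$ in norm to make it bounded, then project onto finite-dimensional subspaces to make it \fdv, controlling both errors by the dominated convergence theorem applied to the scalar integrand $s\mapsto\Jap{W(s)g(s),g(s)}$.

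\emph{Stage 1: truncation.} Set $f^{(N)}:=f\1\ci{\{\|f\|\le N\}}$. It is bounded and measurable, and $f-f^{(N)}=f\1\ci{\{\|f\|>N\}}$. Hence
\[
\|f-f^{(N)}\|^2\ci{\fL^2(W\sigma)}=\int_{\{\|f\|>N\}}\Jap{W(s)f(s),f(s)}\,\sigma(\dd s).
\]
The integrand is integrable and the sets $\{\|f\|>N\}$ decrease to the empty set, so this tends to $0$ by dominated convergence. It therefore suffices to treat bounded $f$, say $\|f(s)\|\le N$ for all $s$.

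\emph{Stage 2: finite-dimensional projection.} Fix an orthonormal basis $\{\be_k\}$ of $\cK$ (separable) and let $P_n$ be the projection onto $\cspn\{\be_k:k\le n\}$. Then
\[
P_nf=\sum_{k\le n}\Jap{f,\be_k}\be_k
\]
is bounded and \fdv, with measurable scalar coefficients; weak and strong measurability coincide here. Pointwise, $(I-P_n)f(s)\to0$ in $\cK$, and $W(s)$ is bounded for each $s$, so
\[
\Jap{W(s)(I-P_n)f(s),(I-P_n)f(s)}\to0 \qquad\text{for every } s.
\]

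\emph{Main obstacle: a dominating function.} One needs
\[
\Jap{W(s)(I-P_n)f(s),(I-P_n)f(s)}\le \|W(s)\|\,N^2,
\]
but $\|W(s)\|$ need not be $\sigma$-integrable. Finiteness of $\bM$ only gives $\int\Jap{W(s)\bx,\bx}\,\sigma(\dd s)<\infty$ for each fixed $\bx$; uniform-in-$\bx$ integrability of $\|W(s)\|$ is not available in general.

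\emph{Fix: localize further.} On the set $F_m:=\{\|W(s)\|\le m\}$, replace $f$ by $f\1\ci{F_m}$. Its distance to $f$ is
\[
\int_{\fX\setminus F_m}\Jap{Wf,f}\,\dd\sigma\to0
\]
by dominated convergence, since $\|W(s)\|<\infty$ for every $s$ and so $F_m\nearrow\fX$. The set $F_m$ is measurable because $\|W(s)\|=\sup_{\bx}\Jap{W(s)\bx,\bx}$ over a countable dense set of unit vectors. On $F_m$ the integrand is bounded by $mN^2$, and $\sigma$ is finite in our setting. If $\sigma$ is merely $\sigma$-finite, additionally intersect with sets of finite $\sigma$-measure exhausting $\fX$. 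Thus $mN^2\1\ci{F_m}$ is an integrable majorant, and dominated convergence gives $\|f\1\ci{F_m}-P_n(f\1\ci{F_m})\|\ci{\fL^2(W\sigma)}\to0$.

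\emph{Assembling.} A diagonal choice of $N$, $m$, $n$ then yields bounded measurable \fdv functions $f_n$ with $\|f-f_n\|\ci{\fL^2(W\sigma)}\to0$.
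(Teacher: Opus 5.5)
Your proof is correct and takes essentially the same route as the paper. The paper removes the single set $E_\lambda=\{s:\max(\|W(s)\|,\|f(s)\|)>\lambda\}$ in one step and then truncates the orthonormal expansion of $f$ on the complement by dominated convergence, whereas you perform the two truncations (in $\|f\|$ and in $\|W\|$) separately; your explicit check that $\{\|W(s)\|\le m\}$ is measurable, and that finiteness of $\sigma$ provides the majorant, fills in details the paper leaves implicit.
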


Let us first explain why Lemma \ref{l:L^2(M) = L^2(W)} implies the equality
$L^2(W\sigma)=\fL^2(W\sigma).$ Since $\|f-f_n\|\ci{\fL^2(W\sigma)}\to0$ as $n\to\infty$, the
sequence $\{f_n\}_{n\in\N}$ is a Cauchy sequence in $\fL^2(W\sigma)$, and therefore in
$L^2(W\sigma)$ (both norms coincide on measurable bounded \fdv functions). Identifying $f$ with an
equivalence class of Cauchy sequences containing $\{f_n\}_{n\in\N}$, we get that $f$ is canonically
identified with an element of $L^2(W\sigma)$.

Since double completion coincides with completion, we conclude that $L^2(W\sigma)=\fL^2(W\sigma)$. 

From now on we will use the standard notation $L^2(\bM)=L^2(W\sigma)$ when $\bM=W\sigma$.

\begin{proof}[Proof of Lemma \ref{l:L^2(M) = L^2(W)}]
Assume that $\varepsilon >0$ is given, and denote 
\[
E_\lambda:=\{ s\,:\, \max(\|W(s)\|, \|f(s)\|)>\lambda\}.
\]
We note that $\sigma(E_\lambda)\to 0$ as $\lambda\to\infty$, so we may choose $\lambda >0$ so large 
that 
\begin{align}\label{e:part1}
\int_{E_\lambda} \Jap{W(s)f(s), f(s)}\ci{\cK}\sigma(\dd s)\; <\; \varepsilon/2.
\end{align}
Let $\be_1,\be_2,\ldots $ be  an orthonormal basis of $\cK$ and write
$f(s)=\sum_{j=1}^\infty\f_j(s)\be_j.$ By denoting $f_n(s):= \sum_{j=1}^n\f_j(s)\be_j$, we may next
use the dominated convergence theorem to pick large enough $n$ so that 
\begin{align}\label{e:part2}
 \int_{X\setminus E_\lambda} \Jap{W(s)(f(s)-f_n(s), f(s)-f_n(s)}\ci{\cK}\sigma(\dd s)\; <\; 
 \varepsilon/2.
\end{align}
Put together, \eqref{e:part1} and \eqref{e:part2}  verify that 
$\|f-\widetilde f_n\|\ci{L^2(\mathbf{M})}< \varepsilon,$ where 
$\widetilde f_n(s):= \sum_{j=1}^n\widetilde\f_j(s)\be_j$ with 
$\widetilde\f_j(s):=\1\ci{X\setminus E_\lambda}(s)\f_j(s)$;
note that by the construction the functions $\wt f_j$ are bounded.
\end{proof}

%%%%%%%%%%%%%%%%%%%%%%%%%%%%%
\section{Preliminaries in Spectral Theory}

We introduce and discuss the spectral theory toolkit, which we use in this paper. Some of the
results are known to the experts. But striving for a self-contained presentation tailored for our
purposes, we include them here. Some of the less known  facts recalled below are von Neumann direct
integral representation, spectral invariants, relevant cyclicity facts, compressed spectral
measures, and vectors and sets of maximal spectral type.

%%%%%%%%%%%%%%%%%%%%%%%%%%%%%%%%%%%%%%%%
%%%%%%%%%%%%%%%%%%%%%%%%%%%%%%%%%%%%%%%%
%%%%%%%%%%%%%%%%%%%%%%%%%%%%%%%%%%%%%%%%
\subsection{Spectral Theorems}\label{s:Sp Thm}
Let us recall the classical spectral theorem. 
\begin{theo}
Let $A$ be a self-adjoint \tup(possibly unbounded\tup) operator on a Hilbert space $\cH$. Then there
exists a unique operator-valued measure $\bE$ \tup(supported on $\sigma(A)$\tup) with values in
$B(\cH)$ such that for any $\bx\in\cH$ and for any bounded continuous function $\f$ on $\R$
\begin{align}\label{e:SpMeas E 01}
\Jap{\f(A) \bx, \bx} =\int_\R \f(s)\Jap{\bE(\dd s)\bx, \bx} =: \int_\R \f(s)\mu_\bx(\dd s) . 
\end{align}
Moreover, the values of $\bE$ are orthogonal projections, and $\bE$ is a probability measure, i.e.\ 
$\bE(\R)=\bI$. 
\end{theo}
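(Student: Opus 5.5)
This is the classical spectral theorem, so the plan follows the standard route. First we reduce to the bounded case, and then we use the continuous functional calculus together with the Riesz–Markov theorem.

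\emph{Bounded case.} For a bounded self-adjoint $A$, the continuous functional calculus $\f\mapsto\f(A)$ on $C(\sigma(A))$ is obtained from polynomials. The key facts are the spectral mapping identity $\|p(A)\|=\sup_{\sigma(A)}|p|$ and the Weierstrass approximation theorem. This gives an isometric $*$-homomorphism, which is positive: $\f\ge0$ implies $\f(A)\ge\bO$, since then $\f(A)=(\sqrt\f(A))^2$.

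\emph{Scalar measures.} For fixed $\bx\in\cH$, the functional $\f\mapsto\jap{\f(A)\bx,\bx}$ is positive and bounded on $C(\sigma(A))$. By Riesz–Markov it is given by a unique positive regular Borel measure $\mu_\bx$ with $\mu_\bx(\R)=\|\bx\|^2$. Polarization then defines complex measures $\mu_{\bx,\by}$ that are sesquilinear in $(\bx,\by)$ and bounded by $\|\bx\|\|\by\|$. So for each Borel set $E$ there is a unique operator $\bE(E)$ with $\jap{\bE(E)\bx,\by}=\mu_{\bx,\by}(E)$. Weak countable additivity is immediate, and $\bE(\R)=\bI$ because $\mu_{\bx,\by}(\R)=\jap{\bx,\by}$.

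\emph{Projection-valuedness.} This is the main step. The plan is to extend the homomorphism property from continuous to bounded Borel functions by a monotone-class argument. Define $\Phi(\f)=\int\f\,\dd\bE$ for bounded Borel $\f$.
\begin{itemize}
\item For fixed continuous $\psi$, the measures $\jap{\Phi(\f\psi)\bx,\by}$ and $\jap{\Phi(\f)\psi(A)\bx,\by}$ agree when $\f$ is continuous. Both sides are complex measures in $\f$, so they agree for all bounded Borel $\f$.
\item Repeating the argument in the other variable gives $\Phi(\f\psi)=\Phi(\f)\Phi(\psi)$ for all bounded Borel $\f,\psi$.
\end{itemize}
Taking $\f=\psi=\1\ci E$ gives $\bE(E)^2=\bE(E)$, and $\bE(E)$ is self-adjoint because $\mu_{\bx,\by}=\overline{\mu_{\by,\bx}}$. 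Hence each $\bE(E)$ is an orthogonal projection. Support on $\sigma(A)$ holds by construction, and \eqref{e:SpMeas E 01} holds by definition.

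\emph{Uniqueness.} Any measure satisfying \eqref{e:SpMeas E 01} determines every $\mu_\bx$ as a finite Borel measure on $\R$, since such measures are determined by their integrals against bounded continuous functions. Polarization then determines each $\mu_{\bx,\by}$, and hence $\bE$.

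\emph{Unbounded case.} Apply the bounded result to the Cayley transform $U=(A-i\bI)(A+i\bI)^{-1}$, which is unitary. The spectral theorem for unitaries follows by the same argument on the circle, using trigonometric polynomials in place of polynomials. This gives a spectral measure on $\T$ with no mass at $1$, because $\Ker(U-\bI)=\{\bO\}$. Pushing it forward under the inverse Cayley map gives a spectral measure on $\R$.

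The representation \eqref{e:SpMeas E 01} for bounded continuous $\f$ follows because $\f\circ(\text{Cayley}^{-1})$ is a bounded Borel function on $\T$. To identify $\f(A)$ with the functional calculus of $U$, first check this on resolvents $(A-z\bI)^{-1}$, which are rational functions of $U$. Then extend by Stone–Weierstrass to $C_0(\R)$, and finally to bounded continuous $\f$ by strong limits. This last identification is routine but is where the unbounded technicalities sit.
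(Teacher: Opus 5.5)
The paper gives no proof of this statement. It recalls it as the classical spectral theorem, with standard texts such as Birman--Solomyak as background, and only the bounded case is ever used, since the authors restrict to bounded operators throughout.

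Your sketch is the usual textbook argument and is correct. It has four steps:
\begin{itemize}
\item the continuous functional calculus, built from $\|p(A)\|=\sup_{\sigma(A)}|p|$ and Weierstrass;
\item Riesz--Markov plus polarization, which produces the complex measures $\mu_{\bx,\by}$ and hence $\bE(E)$;
\item the two-step argument that measures agreeing against continuous functions coincide, which upgrades multiplicativity to $\Phi(\f\psi)=\Phi(\f)\Phi(\psi)$ for bounded Borel $\f,\psi$, so that $\bE(E)^2=\bE(E)=\bE(E)^*$;
\item uniqueness, from the fact that finite Borel measures on $\R$ are determined by integrals of bounded continuous functions.
\end{itemize}

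For the unbounded case, the statement only has content once $\f(A)$ is defined independently of $\bE$. The remark right after the theorem in the paper names the resolvent functions $1/(s-a)$ as the natural defining class. That is exactly where your Cayley-transform identification starts, so your treatment matches how the statement is meant.

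One item you left implicit there is that the pushed-forward measure is supported on $\sigma(A)$. This follows from $\sigma(U)\setminus\{1\}=\Cay(\sigma(A))$ together with the absence of mass at $1$, which you already have since $\Ker(U-\bI)=\{\bO\}$.
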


\begin{rem*}
It is sufficient to check \eqref{e:SpMeas E 01} on an appropriate subclass of functions, for 
example on rational functions $\f_a$, $\f_a(s)=1/(s-a)$, $a\in\C\setminus\R$; very often this class 
is used in the statement of the spectral theorem.  
\end{rem*}

Applying polarization identity to \eqref{e:SpMeas E 01} we can see that  for any bounded 
continuous function $\f$ on $\R$ and for any $\bx, \by\in\cH$ 
\begin{align}\label{e:SpMeas E 02}
\Jap{\f(A) \bx, \by} =\int_\R \f(s)\Jap{\bE(\dd s)\bx, \by} =: \int_\R \f(s)\mu_{\bx,\by}(\dd s) ,  
\end{align}
where 
\begin{align*}
\mu_{\bx,\by} = \frac{1}{4} \sum_{\alpha:\alpha^4=1}\alpha \mu_{\bx+\alpha\by}.
\end{align*}

\begin{rem*}
Note that the right hand side of \eqref{e:SpMeas E 02} is well defined for any bounded Borel 
measurable function $\f$, so this identity can be used to define $\f(A)$ for any such function. 

It is also well-known that the maps $\f\mapsto \f(A)$ is a homomorphism between algebras of 
functions and operators. 
\end{rem*}

If the operator $A$ is  \emph{cyclic}, i.e.~if it has a cyclic vector, see definitions in Section 
\ref{s:cycl}, the statement of the Spectral Theorem can be simplified. In this case, the operator 
$A$ is unitarily equivalent to a simple model.  

\begin{theo}\label{t:SpTh cycl}
If $\bx$ is a cyclic vector for $A$, then the operator $A$ is unitarily equivalent to the 
multiplication $\cM_{\id}$  by the independent variable, 
\begin{align*}
\cM_{\id} f(s) = s f(s), \qquad \forall s\in \R
\end{align*}
in $L^2(\mu_\bx)$, where $\mu_\bx(\fdot)= \Jap{\bE(\fdot)\bx,\bx}_\cH$; the operator 
$U:L^2(\mu_\bx)\to\cH$, defined on the dense subset $C\ti{c}(\R)\subset 
L^2(\mu_\bx) $ by
\begin{align}\label{e:operU}
U f = f(A) \bx, \qquad f\in C\ti{c}(\R),
\end{align}
is a unitary operator such that 
\begin{align}\label{e:UE-01}
A = U\cM_{\id} U^* .
\end{align}
\end{theo}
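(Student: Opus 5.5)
The plan is the standard one: define $U$ on a dense set, check that it is isometric, show its range is dense, and then verify the intertwining relation. Throughout we use the functional calculus from the Spectral Theorem above, in particular that $\f\mapsto\f(A)$ is a $*$-homomorphism on bounded Borel functions.

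\emph{Isometry.} For $f,g\in C\ti{c}(\R)$ the homomorphism property gives $g(A)^*f(A)=(\bar g f)(A)$. Applying \eqref{e:SpMeas E 02} with $\f=\bar g f$ we get
\begin{align*}
\Jap{Uf,Ug}\ci\cH=\Jap{(\bar g f)(A)\bx,\bx}=\int_\R f(s)\overline{g(s)}\,\mu_\bx(\dd s)=\Jap{f,g}\ci{L^2(\mu_\bx)} .
\end{align*}
Since $\mu_\bx$ is a finite Borel measure on $\R$, it is regular, so $C\ti{c}(\R)$ is dense in $L^2(\mu_\bx)$. Hence $U$ extends uniquely to an isometry $U:L^2(\mu_\bx)\to\cH$.

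\emph{Surjectivity.} The range of an isometry is closed, so it suffices to show that $\Ran U$ is dense. By the definition of cyclicity (Section \ref{s:cycl}), the closed span of $\{\f(A)\bx\}$ is $\cH$, where $\f$ ranges over the relevant class of functions: polynomials or resolvents $(A-a\bI)^{-1}$, $a\in\C\setminus\R$. For bounded $A$, each polynomial agrees on the compact set $\sigma(A)$ with a function in $C\ti{c}(\R)$, so those vectors already lie in $\Ran U$. In general, take $\f$ bounded and continuous, and let $f_n\in C\ti{c}(\R)$ be uniformly bounded with $f_n\to\f$ pointwise. Then
\begin{align*}
\|f_n(A)\bx-\f(A)\bx\|^2=\int|f_n-\f|^2\,\dd\mu_\bx\to0
\end{align*}
by dominated convergence, so $\f(A)\bx\in\overline{\Ran U}=\Ran U$. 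This covers the resolvent functions $1/(s-a)$. It follows that $U$ is unitary.

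\emph{Intertwining.} For $f\in C\ti{c}(\R)$ the function $\id\cdot f$ is again in $C\ti{c}(\R)$, and the functional calculus gives
\begin{align*}
U\cM_{\id}f=(\id\cdot f)(A)\bx=A\,f(A)\bx=AUf .
\end{align*}
For bounded $A$, both $U\cM_{\id}$ and $AU$ are bounded, and the support of $\mu_\bx$ lies in the compact set $\sigma(A)$, so $\cM_{\id}$ is bounded on $L^2(\mu_\bx)$. The identity therefore extends by density and yields \eqref{e:UE-01}. For unbounded $A$, I would first prove the bounded version $U\cM_{\f}=\f(A)U$ for bounded continuous $\f$ by the same density argument, apply it to the resolvents $\f_a=1/(s-a)$, and recover equality of the unbounded operators from equality of their resolvents.

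The main obstacle is the density step for unbounded $A$: one has to match the definition of cyclicity with the $C\ti{c}$ vectors via the dominated convergence argument above, and identify $\cM_{\id}$ and $A$ through their resolvents rather than directly. Everything else is routine.
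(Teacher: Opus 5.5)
Your argument is correct. The paper does not prove this theorem; it recalls it as classical. It does prove the operator-valued generalization, Theorem \ref{t:SpThm-OVM}, whose case $\cK=\C$, $\bB c=c\bx$ is exactly this statement, and that proof follows your scheme: an isometry on a dense set computed from the spectral theorem, density of the range from cyclicity, and the intertwining checked on the dense set via $s\mapsto s\f(s)$.

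Two small remarks. Your dominated-convergence step is harmless but unnecessary, because \eqref{e-CycSpan} already lists $C\ti{c}(\R)$ among the equivalent generating classes. Your resolvent argument for unbounded $A$ goes slightly beyond the paper, which effectively works with bounded $A$ there.
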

\begin{rem}
A unitary operator $U$ satisfying \eqref{e:UE-01} is not unique, but the canonical operator $U$ 
given by \eqref{e:operU} is uniquely defined by the condition $U\1=\bx$. 
\end{rem}

%%%%%%%%%%%%%%%%%%%%%%%%%%%%%
\subsection{Von Neumann Direct Integral and Spectral Invariants}\label{s-proj}
%%%%%%%%%%%%%%%%%%%%%%%%%%%%%
Let us present maybe a less known version (interpretation) of the Spectral Theorem, which states
that a self-adjoint operator in a separable Hilbert space is unitarily equivalent to the
multiplication operator $\cM_{\id}$ by the independent variable,  
$\left(\cM_{\id} f\right) (s) = sf(s)$ (compare with Theorem \ref{t:SpTh cycl}), in the von Neumann 
direct integral
\begin{align}\label{e:VN_int}
\cH= \int_\R \oplus \, \cH(s) \nu(\dd s).
\end{align}
Let us recall the simplest construction of the direct integral \eqref{e:VN_int}. Suppose we have a 
$\sigma$-finite Borel measure $\nu$ on $\R$ and a Borel measurable function 
$N:\R\to\Z_+\cup\{\infty\}$ (the dimension function). 

For $s\in\R$ define $\cH(s)\subset \ell^2 =\ell^2(\N)$ as 
\begin{align*}
\cH(s) := \cspn \{ e_k: 1\le k \le N(s)    \};
\end{align*}
here $e_k$, $k\in \N$ is the standard basis in $\ell^2$. If $N(s)=0$ we put $\cH(s):=\{\bO\}.$

Then the von Neumann direct integral \eqref{e:VN_int} is defined as 
\begin{align}\label{e-cHDef}
\cH :=\{f\in L^2(\nu;\ell^2): f(t)\in \cH(t ) \ \nu \text{-a.e.} \}.
\end{align}

For a measure $\nu$ its \emph{type} is the set of all measures mutually absolutely continuous 
with respect to $\nu$.

\begin{theo}[{see e.g.~\cite[Ch.~7, Theorem 5.2]{BirmanSol-book_1987}}]\label{t:SpectrInv}

The type of the  measure $\nu$ and the dimension function $N$ \tup(up to $\nu$-a.e.\tup) completely
define a self-adjoint operator up to unitary equivalence.
	
Namely, two self-adjoint operators \tup(represented in the von Neumann direct integrals with
measures $\nu$ and $\nu_1$, and the dimension functions $N$ and $N_1$ respectively\tup) are
unitarily equivalent if and only if the measures $\nu$ and $\nu_1$ are mutually absolutely
continuous and $N(t) = N_1(t)$ $\nu$-a.e.
\end{theo}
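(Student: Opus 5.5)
The natural route is to reduce to the case of a single multiplicity and then use measure theory on equivalence classes. We prove the two directions separately.

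\emph{Sufficiency.} Assume $\nu\sim\nu_1$ and $N=N_1$ $\nu$-a.e. Let $w=\dd\nu_1/\dd\nu$, which is positive and finite $\nu$-a.e. Define $V:\int\oplus\cH(s)\nu(\dd s)\to\int\oplus\cH_1(s)\nu_1(\dd s)$ by $(Vf)(s)=w(s)^{-1/2}f(s)$. Since $N=N_1$ a.e., we have $\cH(s)=\cH_1(s)$ a.e., so $V$ maps into the right space. It preserves norms because $\int\|f\|^2\dd\nu=\int w^{-1}\|f\|^2\,w\,\dd\nu$. It is onto because $w^{1/2}g$ is a preimage of $g$. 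Finally $V$ commutes with $\cM_{\id}$, so the two multiplication operators are unitarily equivalent.

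\emph{Necessity.} Let $U$ be a unitary with $U\cM_{\id}=\cM_{\id}U$. Then $U\f(\cM_{\id})=\f(\cM_{\id})U$ for all bounded Borel $\f$. Both spectral measures are multiplication by indicators, $\bE(\Delta)=\cM_{\1_\Delta}$.

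\emph{Measure types.} $\bE(\Delta)=\bO$ exactly when $\nu(\Delta\cap\{N\ge1\})=0$. Since $U$ intertwines the spectral measures, the measures $\nu|_{\{N\ge1\}}$ and $\nu_1|_{\{N_1\ge1\}}$ have the same null sets. Points with $N=0$ are invisible, so we take the convention that $\nu$ is restricted to $\{N\ge1\}$. The types therefore agree.

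\emph{Dimension functions.} Set $\Delta_k=\{N=k\}$ and $\Delta^1_j=\{N_1=j\}$. Then $U$ restricts to a unitary between $\bE(\Delta_k\cap\Delta_j^1)\cH$ and $\bE_1(\Delta_k\cap\Delta_j^1)\cH_1$. The first space is a direct integral with constant fiber dimension $k$, the second one with constant fiber dimension $j$. It suffices to show that if $\cM_{\id}$ on $L^2(\rho;\C^k)$ and on $L^2(\rho;\C^j)$ are unitarily equivalent and $\rho\ne0$, then $j=k$.

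This last step is the main obstacle, and I would handle it via the commutant. The commutant of $\cM_{\id}$ on $L^2(\rho;\C^k)$ is the algebra of multiplications by bounded measurable $M_k$-valued functions, i.e.\ $L^\infty(\rho)\otimes M_k$, a type $\mathrm{I}_k$ von Neumann algebra. The integer $k$ (including $\infty$) is an invariant of this algebra: it is the maximal number of mutually orthogonal, pairwise equivalent abelian projections with central support equal to $\bI$.

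A more concrete alternative uses multiplicity. Take a subset $\Delta'$ of positive measure. The maximal number of orthogonal vectors $\bx_1,\bx_2,\dots$ in $\bE(\Delta')\cH$ whose cyclic subspaces are mutually orthogonal and all have spectral type equal to $\rho|_{\Delta'}$ is exactly $k$. Showing that this number cannot exceed $k$ is a rank argument applied pointwise a.e. to the matrix of values of the $\bx_i(s)$.

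Either way the invariant is preserved by $U$, which forces $j=k$ $\rho$-a.e. Hence $N=N_1$ $\nu$-a.e.
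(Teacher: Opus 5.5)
The paper gives no proof of this statement; it simply cites Birman--Solomyak. Your argument is a correct, standard proof. Sufficiency goes by the density rescaling $f\mapsto w^{-1/2}f$. Necessity goes by comparing the null sets of the two spectral measures, followed by a multiplicity count on the pieces $\{N=k\}\cap\{N_1=j\}$.

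Your convention of restricting $\nu$ to $\{N\ge1\}$ is not cosmetic. The paper's definition explicitly allows $N(s)=0$, and without that convention the ``only if'' part is false. For example, Lebesgue measure on $[0,2]$ with $N=\mathbf 1_{[0,1]}$, and Lebesgue measure on $[0,1]$ with $N_1\equiv1$, give literally the same operator on $L^2([0,1])$, yet the two measures are not mutually absolutely continuous.

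For the key step $j=k$, your second option is the one to use. Vectors with mutually orthogonal cyclic subspaces satisfy $\langle \bx_i(s),\bx_l(s)\rangle=0$ a.e. Having full spectral type means $\bx_i(s)\ne\bO$ a.e. So pointwise at most $k$ such vectors can exist, while $\phi e_1,\dots,\phi e_k$ with $\phi>0$ in $L^2(\rho)$ attain $k$. The count is a unitary invariant because $\mu_{U\bx}=\mu_{\bx}$ and $U$ maps the cyclic subspace of $\bx$ onto that of $U\bx$.

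The commutant/type $\mathrm I_k$ route is valid, but it imports the identification of the commutant with the decomposable operators. Moreover, checking that your abelian-projection invariant equals $k$ comes down to the same pointwise rank argument, so it gains nothing here.

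Two small steps are left implicit. First, after restricting $U$ to $\{N=k\}\cap\{N_1=j\}$, the two sides carry the different (though equivalent) measures $\nu$ and $\nu_1$. This is harmless, either via your sufficiency map or because the multiplicity count depends only on the measure type. Second, when $\nu(\Delta')=\infty$ you need a positive $L^2(\nu)$ multiplier to produce the $k$ test vectors.
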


Note that in the direct integral representation if $\bE$ is the projection-valued measure  for 
$\cM\ti{id}$, then the operator $\bE(E)$ is just the multiplication by $\1\ci{E}$ (i.e.~the 
characteristic function of $E$).

\subsection{Cyclicity}\label{s:cycl} Let us recall some definitions. Let $A$ be a  
self-adjoint operator. For a subset $S\subset \cH$ let   $[S]\ci{A}$ be the minimal closed 
$A$-invariant subspace containing $S$. 

It is easy to see from the spectral theorem that 
\begin{align}\label{e-CycSpan}
[S]\ci{A} & =  \cspn \{ (A-z\bI)^{-1} f: z\in\C\setminus \R, \, f\in S\}  \\ \notag
& =  \cspn \{ \f(A) f: \f\in C\ti{c}(\R), \, f\in S\} \\\notag
& =  \cspn \{ \f(A) f: \f\in C_0(\R), \, f\in S\} \\\notag
& =  \cspn \{ \f(A) f: \f \text{ bounded Borel measurable function on }\R, \, f\in S\}. 
\end{align}
Here $C\ti{c}(\R)$ are compactly supported continuous functions, and $C_0(\R)$ refers to continuous 
functions that approach zero at $\pm\infty$.

If $A$ is bounded one can also show that 
\begin{align*}
[S]\ci{A} = \cspn\{A^n f: n \text{ non-negative integer}  \}. 
\end{align*}

A set $S\subset \cH$ is called \emph{cyclic} for $A=A^*$ if $[S]\ci{A}=\cH$. 

If a cyclic set $S$ consists of one vector $v$, we call $v$ a \emph{cyclic vector}.

Trivially a set $S$ is cyclic if and only if $\cspn S $ is cyclic. For  a separable Hilbert space  
$\cH$ one can always find a countable set $\wt S\subset S$ dense in $S$, or just a countable set 
$\wt S$ such that $\cspn \wt S =\cspn S$, so cyclicity in a separable Hilbert space can always be 
verified on a dense set.

Let $(\fX, \cA)$ be a measurable space, and let $\cH$, $\cK$ be  separable Hilbert spaces. 
\begin{lem}\label{l:meas proj}
Let $F:\fX\to B(\cH, \cK)$ be a measurable operator-valued function. Then the function 
$s\mapsto P_{\cRan F(s)}$, where $P_{\cRan F(s)}$ is the orthogonal projection onto $\cRan F(s)$, 
is measurable. 
\end{lem}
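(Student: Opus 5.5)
The plan is to reduce measurability of $s\mapsto P_{\cRan F(s)}$ to measurability of scalar functions $s\mapsto \langle P_{\cRan F(s)}\bx,\by\rangle$, and to write $P_{\cRan F(s)}$ as a limit of operators obtained from $F(s)$ by measurable functional calculus. Since $\cRan F(s) = \cRan F(s)F(s)^*$, it suffices to work with the non-negative operator $G(s):=F(s)F(s)^*\in B_+(\cK)$.

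First, $G$ is measurable. Fix orthonormal bases $\{\be_j\}$ of $\cH$ and $\{\mathbf f_k\}$ of $\cK$. For $\bx,\by\in\cK$ we have
\begin{align*}
\langle G(s)\bx,\by\rangle = \langle F(s)^*\bx, F(s)^*\by\rangle = \sum_j \langle \bx, F(s)\be_j\rangle\overline{\langle \by, F(s)\be_j\rangle},
\end{align*}
which is a countable sum of measurable functions (separability is used here), hence measurable. The series converges because $\sum_j|\langle F(s)^*\bx,\be_j\rangle|^2=\|F(s)^*\bx\|^2$.

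Next, by the spectral theorem the projection onto $\cRan G(s)$ equals $\1_{(0,\infty)}(G(s))$, which is the strong limit of $\f_n(G(s))$ for bounded continuous functions $\f_n\nearrow \1_{(0,\infty)}$ on $[0,\infty)$, e.g. $\f_n(\lambda)=\min(n\lambda,1)$. Indeed, $\langle\f_n(G)\bx,\bx\rangle\to\mu_\bx((0,\infty))$ by monotone convergence, and this is polarized to get weak convergence; pointwise limits of measurable scalar functions are measurable. So it suffices to show that $s\mapsto \langle \f(G(s))\bx,\by\rangle$ is measurable for continuous $\f$.

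To handle $\f(G(s))$, note that $\|G(s)\|$ need not be bounded in $s$. Split $\fX$ into the measurable sets $\{s: m-1\le\|F(s)\|<m\}$. These sets are measurable because $\|F(s)\|=\sup|\langle F(s)\bx_i,\by_l\rangle|$ over countable dense subsets of the unit balls. On each piece, approximate $\f$ uniformly on $[0,m^2]$ by polynomials $q_r$ (Weierstrass). Then $\langle q_r(G(s))\bx,\by\rangle$ is measurable: products $G(s)^k$ are measurable by inserting the basis expansion $\langle G_1G_2\bx,\by\rangle=\sum_k\langle G_2\bx,\mathbf f_k\rangle\langle G_1\mathbf f_k,\by\rangle$ and inducting. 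Since $q_r(G(s))\to \f(G(s))$ in norm uniformly on the piece, measurability passes to the limit.

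The main obstacle is purely bookkeeping: making sure every passage to a limit is a pointwise (in $s$) countable limit of measurable scalar functions. That is why each step has been kept countable via separability, and why the unboundedness of $\|F(s)\|$ is handled by the decomposition into the pieces $\{m-1\le\|F(s)\|<m\}$.
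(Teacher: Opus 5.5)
Your proof is correct and takes the same route as the paper: write $P_{\cRan F(s)}=\1\ci{(0,\infty)}(F(s)F(s)^*)$, check that $s\mapsto F(s)F(s)^*$ is measurable, and then use measurable functional calculus. The only difference is that you prove inline the special cases of Lemmas \ref{l:prod meas funct} and \ref{lem:measurable} that you need, using basis expansions, Weierstrass approximation and the single monotone limit $\min(n\lambda,1)\nearrow\1\ci{(0,\infty)}$, whereas the paper cites those appendix lemmas, whose proof goes through the monotone class theorem.
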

\begin{proof}
Note that
\begin{align*}
\cRan F(s) = \cRan \left(F(s) F(s)^*\right). 
\end{align*}
Denoting $\Phi(s):= F(s) F(s)^*$, and $\f:=\1_{(0,\infty)}$, we can see that 
\begin{align*}
P_{\cRan F(s)}= \f(\Phi(s)). 
\end{align*}
The function $\Phi$ is weakly (and so strongly) measurable, and therefore so is the function 
$F^*$ ($s\mapsto F(s)^*$). Therefore the product $\Phi=FF^*$ is also strongly measurable, see 
Lemma \ref{l:prod meas funct} in the Appendix \ref{ap:measurability} below. Therefore by Lemma 
\ref{lem:measurable} in the same appendix, the function $\f(\Phi)$ is measurable.
\end{proof}

\begin{lem}\label{lem:a}
Let $f_k:\fX\to \cK$, $k\in \N$  be $\cA$-measurable functions, and let 
$\cK(s):=\cspn\{f_k(s):k\in\N\}$. Then 
\begin{enumerate}
\item The function $s\mapsto P\ci{\cK(s)}$, where $P\ci{\cK(s)}$ is the orthogonal projection onto 
$\cK(s)$ is measurable. 
\item There exists a measurable function $g:\fX\to\cK$ such that 
\begin{align*}
&g(s)\perp\cK(s) \qquad \forall s\in \fX;\\
&\|g(s)\|=1 \qquad \text{whenever } \cK(t)\ne \cK. 
\end{align*}

\end{enumerate}
\end{lem}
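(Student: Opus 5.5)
For part (i), we reduce to Lemma \ref{l:meas proj} by packaging the countable family $\{f_k\}$ into a single operator-valued function. Fix an orthonormal basis $\{\be_k\}_{k\in\N}$ of $\ell^2$. We may normalize, replacing $f_k$ by $\wt f_k(s):= 2^{-k} f_k(s)/(1+\|f_k(s)\|)$. This does not change $\cK(s)$, and $\wt f_k$ is still measurable, since $s\mapsto\|f_k(s)\|$ is measurable for a strongly measurable $f_k$. Then define $F(s):\ell^2\to\cK$ by
\begin{align*}
F(s)\Bigl(\sum_k c_k\be_k\Bigr) := \sum_k c_k \wt f_k(s).
\end{align*}
Since $\sum_k\|\wt f_k(s)\|^2\le 1$, each $F(s)$ is a bounded (indeed Hilbert--Schmidt) operator. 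For fixed $\bx=\sum c_k\be_k$, the function $s\mapsto F(s)\bx$ is a pointwise norm-limit of the measurable partial sums $\sum_{k\le n}c_k\wt f_k(s)$, so $F$ is measurable. Clearly $\cRan F(s)=\cspn\{\wt f_k(s)\}=\cK(s)$, and Lemma \ref{l:meas proj} gives measurability of $s\mapsto P\ci{\cK(s)}$.

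For part (ii), set $Q(s):=\bI-P\ci{\cK(s)}$, which is measurable by (i). Let $\{\bx_n\}_{n\in\N}$ be an orthonormal basis of $\cK$. If $\cK(s)\ne\cK$, then $Q(s)\ne\bO$, so $Q(s)\bx_n\ne\bO$ for some $n$. Define
\begin{align*}
n(s):=\min\{n: Q(s)\bx_n\ne\bO\} \ (\text{or } n(s)=\infty \text{ if } Q(s)=\bO),
\end{align*}
and put $g(s):=Q(s)\bx_{n(s)}/\|Q(s)\bx_{n(s)}\|$ when $n(s)<\infty$, and $g(s):=\bO$ otherwise. Then $g(s)\perp\cK(s)$ for every $s$, and $\|g(s)\|=1$ whenever $\cK(s)\ne\cK$.

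Measurability of $g$ follows from a countable decomposition. The sets $A_n:=\{s: Q(s)\bx_n\ne\bO,\ Q(s)\bx_j=\bO\ \forall j<n\}$ are measurable, because $s\mapsto\|Q(s)\bx_j\|$ is measurable. On each $A_n$, $g$ agrees with the measurable function $Q(s)\bx_n/\|Q(s)\bx_n\|$. Hence $g=\sum_n\1\ci{A_n}Q(\fdot)\bx_n/\|Q(\fdot)\bx_n\|$ is a pointwise limit of measurable functions, and therefore measurable.

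The only point requiring care is the measurability of $F$ in part (i). That step is what the normalization handles, and it rests on the fact that pointwise limits of strongly measurable functions are strongly measurable; the rest of the argument is routine.
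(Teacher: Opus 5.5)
Your proposal is correct and follows the paper's proof essentially step for step: you package the normalized $f_k$ into a single operator $F(s):\ell^2\to\cK$ with $\cRan F(s)=\cK(s)$ and invoke Lemma \ref{l:meas proj} for (i), and for (ii) you take the first basis vector not annihilated by $\bI-P\ci{\cK(s)}$ on the measurable sets $A_n$ (the paper's $\fX_n$) and then normalize. Your explicit rescaling $2^{-k}f_k(s)/(1+\|f_k(s)\|)$ spells out the renormalization that the paper only describes as ``renormalizing, if necessary.''
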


\begin{proof}
Renormalizing, if necessary, we can assume without loss of generality that for all $s\in\fX$ we 
have $\|f_k(s)\|\le 2^{-k}$. 

Define a function $F:\fX\to B(\ell^2;\cK)$ by 
\begin{align}\label{e:F(s)}
F(s)\bx=\sum_{k\in\N} x_k f_k(s), \qquad \bx = (x_1, x_2, \ldots, x_n, \ldots)\in \ell^2; 
\end{align}
the normalization condition $\|f_k(s)\|\le 2^{-k}$ guarantees convergence in norm on $\cK$. 

Partial sums in \eqref{e:F(s)} are clearly measurable functions of $s$, so the function $s\mapsto
F(s)\bx$ is measurable for all $\bx\in\cK$. Thus $F$ is measurable. Note that
\begin{align*}
\cK(s) =\cRan F(s) .   
\end{align*}
Applying the above Lemma \ref{l:meas proj} we get Statement \cond1.

To prove Statement \cond2, the complementary projection $P\ci{\cK(s)^\perp}=\bI-P\ci{\cK(s)}$; 
clearly the function $s\mapsto P\ci{\cK(s)^\perp}$ is measurable.  

Fix an orthonormal basis  $\{\be_n\}_{n\ge 1}$ in $\cK$ and define the sets 
$\fX_n\subset \fX$, 
\begin{align*}
\fX_n &:= \{ s\in \fX: P\ci{\cK(s)^\perp}\be_n \ne\bO \text{ and }P\ci{\cK(s)^\perp}\be_k =\bO 
\text{ for }k=1, 2, \ldots n-1\}  \\
&\phantom{:} = \{ s\in \fX:  P\ci{\cK(s)^\perp}\be_n \ne\bO\}\cap\bigcap_{k=1}^{n-1}\{ s\in \fX: 
P\ci{\cK(s)^\perp}\be_k =\bO \}  .
\end{align*}
Notice that for a measurable function $f:\fX\to \cK$ the function $s\mapsto \|f(s)\|$ is trivially 
measurable. Therefore the sets $\fX_k$ are measurable as finite intersections of measurable sets. 
Note also that the sets $\fX_n$ are disjoint, and 
\begin{align*}
\{s\in\fX : \cK(s)\ne \cK\} = \bigcup_{n\ge 1}\fX_k, 
\end{align*}
and $P\ci{\cK(s)^\perp}\be_n \ne\bO$ for all $s\in\fX_n$. Therefore defining 
\begin{align*}
\wt g := \sum_{n\ge1} \1\ci{\fX_n}  P\ci{\cK(s)^\perp}\be_n ,
\end{align*}
and then normalizing it on the set $\{s\in\fX : \cK(s)\ne \cK\}$, where $\wt g$ does not vanish, we 
get the desired function $g$. 
\end{proof}

\begin{lem}\label{l-cyclicset}
Let $A$ be the multiplication $\cM\ti{id}$ by the independent variable $s$ in the von Neumann
integral $\cH$ given by \eqref{e:VN_int}.  Then a countable set $S =\{f_k:k\in\N\}\subset\cH$ is
cyclic for $A$ if and only if
\begin{align*}
\cspn\{ f_k(s) : k\in\N  \} =\cH(s) \qquad \nu\text{-a.e.}
\end{align*}
\end{lem}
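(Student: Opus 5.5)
We prove the two implications separately, using the description of $[S]\ci{A}$ from \eqref{e-CycSpan} and the measurable projections from Lemma \ref{lem:a}. Throughout, write $\cK(s):=\cspn\{f_k(s):k\in\N\}\subset\cH(s)$. Note that $\cK(s)\subset\cH(s)$ only $\nu$-a.e.; redefining the $f_k$ on a null set, we may assume it holds for every $s$.

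\emph{Sufficiency.} Assume $\cK(s)=\cH(s)$ $\nu$-a.e., and let $g\in\cH$ be orthogonal to $[S]\ci{A}$. By \eqref{e-CycSpan}, $\f(A)f_k\in[S]\ci{A}$ for every bounded Borel $\f$. Since $\f(A)$ is multiplication by $\f$, this gives
\begin{align*}
\int_\R \f(s)\Jap{f_k(s),g(s)}\ci{\ell^2}\,\nu(\dd s)=0
\end{align*}
for all bounded Borel $\f$ and all $k$. Hence $\Jap{f_k(s),g(s)}=0$ $\nu$-a.e. Because there are only countably many $k$, the exceptional null sets can be combined, so $g(s)\perp\cK(s)=\cH(s)$ for $\nu$-a.e.\ $s$. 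As $g(s)\in\cH(s)$, we get $g(s)=\bO$ a.e., so $g=0$ and $S$ is cyclic. (If $\nu$ is only $\sigma$-finite, one restricts $\f$ to bounded sets; this changes nothing.)

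\emph{Necessity.} Suppose instead that $\cK(s)\ne\cH(s)$ on a set $F$ with $\nu(F)>0$. We want to build a nonzero $g\in\cH$ orthogonal to all $\f(A)f_k$. Apply Lemma \ref{lem:a}\cond2 with ambient space $\ell^2$ to the countable family $\{f_k\}$ together with the vectors $e_j\1\ci{\{N<j\}}$ for $j\in\N$. These extra vectors are measurable because $N$ is. The enlarged span is $\cK(s)+\cH(s)^\perp$, and this differs from $\ell^2$ exactly when $\cK(s)\ne\cH(s)$, that is, on $F$. The lemma then produces a measurable $g_0$ such that $g_0(s)$ is orthogonal to $\cK(s)$ and to $\cH(s)^\perp$, with $\|g_0(s)\|=1$ on $F$. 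Thus $g_0(s)\in\cH(s)\ominus\cK(s)$.

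Now pick $F'\subset F$ with $0<\nu(F')<\infty$, which is possible by $\sigma$-finiteness, and set $g:=\1\ci{F'}g_0$. Then $g\in L^2(\nu;\ell^2)$, $g(s)\in\cH(s)$, and $g\ne 0$. Moreover $\Jap{\f(A)f_k,g}=\int\f\Jap{f_k,g}\,\dd\nu=0$ for every $k$ and every bounded Borel $\f$. By \eqref{e-CycSpan} this means $g\perp[S]\ci{A}$, so $S$ is not cyclic.

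The main obstacle is this measurable selection of a nonzero pointwise-orthogonal vector. The reduction to Lemma \ref{lem:a} handles it, and the key point is to encode the constraint $g(s)\in\cH(s)$ by adding the countably many measurable vectors $e_j\1\ci{\{N<j\}}$ to the family before applying the lemma.
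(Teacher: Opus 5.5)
Your proof is correct and follows the paper's structure; the one substantive difference is in the necessity step. The sufficiency direction is the paper's own argument: orthogonality to all $\f(A)f_k$ forces $\Jap{f_k(s),g(s)}=0$ a.e., hence $g(s)\perp\cH(s)$ and $g=0$. For necessity you use the same tool, part (ii) of Lemma \ref{lem:a}, to select a measurable unit vector field orthogonal to the $f_k(s)$.

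The difference is how the constraint $g(s)\in\cH(s)$ is enforced. The paper passes to a set $E\cap E_k$ of positive measure on which $\dim\cH(s)=k$ is constant, so $\cH(s)$ is a fixed space and Lemma \ref{lem:a} is applied inside it. You stay in $\ell^2$ and add the measurable vectors $e_j\1\ci{\{N<j\}}$, which span $\cH(s)^\perp$. One application of the lemma then covers all of $F$, at the small price of first modifying the $f_k$ on a null set so that $f_k(s)\in\cH(s)$ everywhere, which you did.

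Your final restriction to $F'\subset F$ with $0<\nu(F')<\infty$ also takes care of a point the paper leaves implicit. Since $\nu$ is only $\sigma$-finite, the unit-norm field produced by the lemma on a set of possibly infinite measure need not lie in $L^2(\nu;\ell^2)$ without this step.
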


\begin{proof}
Let $S$ not be cyclic. By \eqref{e-CycSpan} this means the existence of a function $ f\in\cH$,
$f\ne\bO$ which is orthogonal to functions $\f f_k$, for all bounded (scalar) Borel measurable
functions $\varphi$, i.e.~that
\begin{align*}
\int_\R \f(s) \Jap{f_k(s), f(s)}\ci{\cH(s)} \nu(\dd s)=0 
\end{align*}
for all bounded  Borel measurable functions $\varphi$.

This implies that for all $k$ 
\begin{align}\label{e:orthog_03}
\Jap{f_k(s), f(s)}\ci{\cH(s)}=0 \qquad \nu\text{-a.e.~on }\R. 
\end{align}
Let
\[
E:=\{ s\in\R: f(s) \ne 0 \};
\]
since $f\ne\bO$ we conclude that $\nu(E)>0$. Orthogonality condition \eqref{e:orthog_03} then
implies that
\begin{align}\label{e:spn ne H}
\cspn \{ f_k(s): k\in\N   \} \ne \cH(s) \qquad \nu\text{-a.e.~on }E .  
\end{align}

To prove the other direction, assume that there exists a set $E$, $\nu(E)>0$ such that 
\eqref{e:spn ne H} holds. 

Then for at least one of the sets  $E_k:=\{s:\dim\cH(s)=k\}$, $k\in \N\cup\{\infty\}$ we have 
\begin{align*}
\nu(E\cap E_k)>0.
\end{align*}
Applying the previous Lemma \ref{lem:a} to functions $\1\ci{E\cap E_k}f_n$, $n=1, 2, \ldots$ with 
$\cK=\cH(s)$, $s\in E_k$, we get a function $g= \1\ci{E\cap E_k} g \in \cH$ such that $\nu$-a.e.~on 
$\R$
\begin{align*}
g(s)\perp f_n(s), \qquad n=1, 2, \ldots
\end{align*}
which immediately implies that the collection $S$ is not cyclic. 
\end{proof}

\subsection{Factorization of perturbations} We will need the following lemma. 
\begin{lem}\label{l:factorization}
Let $K=K^*\in B(\cH)$. Then $K$ can be represented as $K=\bB\Gamma\bB^*$, where $\bB:\cK\to\cH$, 
$\Ker\bB=\{\bO\}$ \tup($\cK$ is an auxiliary Hilbert space\tup), and $\Gamma =\Gamma^*\in B(\cK)$ 
is invertible. Note that $\Gamma\ge\bO$ if and only if $K\ge\bO$. 

Moreover, if $K$ belongs to the trace class, then $\bB$ is Hilbert--Schmidt. 
\end{lem}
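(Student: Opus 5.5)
The plan is to use the spectral decomposition of $K$ itself. Let $\bE\ci{K}$ be the projection-valued spectral measure of $K$, and take
\[
\cK:=\cRan K=(\Ker K)^\perp ,
\]
a closed subspace of $\cH$, hence separable. The natural candidate would be $\bB=|K|^{1/2}$ restricted to $\cK$, with $\Gamma=\operatorname{sign}(K)$. But then $\Gamma$ is invertible while $\Ker\bB=\{\bO\}$ holds on $\cK$, and the statement is satisfied; that part is fine. The real issue is the trace class claim, where this choice also works, since $|K|^{1/2}$ is Hilbert--Schmidt exactly when $K$ is trace class. So I commit to the following construction.

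Define $\bB:\cK\to\cH$ by $\bB\bx:=|K|^{1/2}\bx$ for $\bx\in\cK$. Define $\Gamma\in B(\cK)$ as the restriction to $\cK$ of
\[
J:=\bE\ci{K}((0,\infty))-\bE\ci{K}((-\infty,0)).
\]
The following facts are then checked in turn.
\begin{enumerate}
\item $|K|^{1/2}$ and $J$ commute with $\bE\ci K$, so both leave $\cK=\bE\ci K(\R\setminus\{0\})\cH$ invariant. Moreover, $J$ restricted to $\cK$ is a self-adjoint unitary; in particular $\Gamma=\Gamma^*$ and $\Gamma^{-1}=\Gamma$.
\item $\Ker |K|^{1/2}=\Ker K$, and $\cK$ is its orthogonal complement, so $\Ker\bB=\{\bO\}$.
\item The adjoint $\bB^*:\cH\to\cK$ equals $P\ci\cK|K|^{1/2}=|K|^{1/2}$, because the range of $|K|^{1/2}$ already lies in $\cK$.
\item By functional calculus, $\bB\Gamma\bB^* = |K|^{1/2}J|K|^{1/2}=K$, since $s=|s|^{1/2}\operatorname{sign}(s)|s|^{1/2}$.
\end{enumerate}

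For the sign statement, suppose first that $\Gamma\ge\bO$. Then $\Gamma$ is the identity on $\cK$, because its spectrum lies in $\{\pm1\}$, and so $K=\bB\bB^*\ge\bO$. Conversely, if $K\ge\bO$, then $\bE\ci K((-\infty,0))=\bO$, so $\Gamma=\bI$.

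For the trace class part, write $K=\sum_n\lambda_n \Jap{\fdot,\be_n}\be_n$ with $\sum|\lambda_n|<\infty$. Then
\[
\| |K|^{1/2}\|_{HS}^2=\sum_n|\lambda_n|=\|K\|_1<\infty ,
\]
and the restriction $\bB$ is Hilbert--Schmidt as well.

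No step is a real obstacle. The only point needing care is verifying invertibility of $\Gamma$ on $\cK$, which amounts to noting that $0$ is not in the support of $\bE\ci K$ restricted to $\cK$. This works even when $0$ is an accumulation point of the spectrum of $K$, because we use $\operatorname{sign}$ rather than $K$ itself.
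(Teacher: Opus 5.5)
Your construction is the same as the paper's. The paper takes $\cK=\cRan K$, lets $\bB$ be the restriction of $|K|^{1/2}$ to $\cK$, and lets $\Gamma$ be the restriction of the sign operator $\bE\ci K((0,\infty))-\bE\ci K((-\infty,0))$, which is the partial isometry in $K=U|K|$; the trace-class claim comes from $\|\,|K|^{1/2}\|_{HS}^2=\|K\|_1$, and your step-by-step checks are correct (more detailed than the paper's). One small strengthening: $\Gamma\ge\bO\iff K\ge\bO$ holds for every factorization with $\Ker\bB=\{\bO\}$, not just this one, because $\Jap{\Gamma\bB^*\by,\bB^*\by}=\Jap{K\by,\by}$ and $\Ran\bB^*$ is dense in $\cK$.
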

\begin{proof}
Put $\cK:=\cRan K$ and define  $\bB:\cK\to\cH$ as the restriction of $| K|^{1/2}$ onto $\cK$. To
compute $\Gamma$ consider the (unique) polar decomposition $K=U|K|$, where $U:\cH\to\cH$ is a
partial isometry with $\Ker U=\Ker K$. It is easy to see from the spectral theorem that
$U=\bE((0,\infty)) -\bE((-\infty,0))$, and that $U$ commutes with $|K|$. Therefore, if we define
$\Gamma$ to be a restriction of $U$ to $\cRan K=(\Ker K)^\perp$, one can easily see that 
$K= U|K|=|K|^{1/2} U |K|^{1/2} =\bB\Gamma\bB^*$, which gives us the desired factorization.

Note that in  this case $\Gamma$ is a \emph{sign} operator, i.e.\ $\Gamma=\Gamma^*=\Gamma^{-1}$. 

Replacing $\bB$ and $\Gamma$ by $\bB T^{-1}$ and $T \Gamma T^*$, where $T:\cK\to \cK'$ is 
invertible we get all possible factorizations. 

Finally, it is easy to see that $K$ belongs to the trace class if and only if $\bB$ is 
Hilbert--Schmidt. 
\end{proof}

%%%%%%%%%%%%%%%%%%%%%%%%%%%%%%%
\subsection{Families of perturbations and compressed spectral measures} \label{s:Fam Pert}
Many of the results in this paper deal with families of perturbations 
\begin{align}\label{e-AGamma}
A\ci\Gamma = A+ \bB\Gamma  \bB^*, 
\end{align}
where  $A$ is a self-adjoint   operator on a separable Hilbert space $\cH$,  $\bB:\cK\to\cH$ is a 
fixed bounded operator ($\cK$ is an auxiliary Hilbert space) and $\Gamma$ belongs to a suitable 
family of self-adjoint operators  on $\cK$.

Let   $\bE$ and $\bE\ci\Gamma$ be the projection-valued spectral measures for $A$ and $A\ci\Gamma$
respectively. Define the compressed spectral measures $\bM$ and $\bM^\Gamma$ with values in
$B_+(\cK)$ as $\bM=\bB^*\bE\bB$, i.e.\
\begin{align}
\label{dM 01}
\bM(E) = \bB^* \bE(E) \bB\qquad \text{for all Borel } E\subset \R. 
\end{align}
(it is the same equation as \eqref{e:comp-01} with $\bC$ from \eqref{e:comp-01} denoted $\bB^*$), 
and similarly  $\bM^\Gamma=\bB^*\bE\ci\Gamma\bB$.%
\footnote{For typographical reasons, that will be clear later, we use $\Gamma$ as a superscript 
here.}

Recall that the Cauchy transform $\cC\mu$ of a (scalar) measure $\mu $ on $R$ is defined as 
\begin{align*}
\cC\mu(z):= \int_\R \frac{\mu(\dd t)}{t-z}, \qquad z\in\C\setminus \R. 
\end{align*}
Using \eqref{e:SpMeas E 01} we can compute the  Cauchy transforms  of the measures $\bM$ and
$\bM^\Gamma$,
\begin{alignat}{3}\label{d-M}
F(z)&:= \cC\bM(z) := \int_\R \frac{ \bM(\dd t)}{t-z}= {\bf B}^* (A-z\OID)^{-1} {\bf B}, \qquad 
 &&z\in\C\setminus \R , \\  \label{e-DefFGamma} F\ci\Gamma(z)
&:= \cC\bM^\Gamma(z) := \int_\R \frac{ \bM^\Gamma(\dd t)}{t-z} =
{\bB}^* (A\ci\Gamma-z\OID)^{-1} {\bB}, 
\qquad&&z\in \C\setminus\R.
\end{alignat}

Since for $z\in\C_+$ and $t\in\R$
\[
\im \frac{1}{t-z}  = \frac{\im z}{|t-z|^2} >0,
\]
we conclude that for $z\in\C_+$
\begin{align}\label{e-herglotzF}
\im F(z) =  \int_\R \frac{\im z}{|t-z|^2}\bM(\dd t)\ge \bO.
\end{align}

We will need the following simple fact: 

\begin{lem}\label{l:cycl triv kernel}
Let $A=A^*$, $\bB\in B(\cK;\cH)$, and let $F=\cC\bM$ be defined by \eqref{d-M}. 
If $\Ker \bB=\{\bO\}$, then $\Ker F(z) = \{\bO\}$  for all $z\in\C\setminus \R$.
\end{lem}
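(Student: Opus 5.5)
The plan is to use the imaginary part of $F(z)$, which is a positive operator, and show it has trivial kernel; this forces $\Ker F(z)=\{\bO\}$.

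Fix $z\in\C\setminus\R$. By symmetry ($F(\bar z)=F(z)^*$, and $F(z)$ and $F(z)^*$ have trivial kernels together once we treat both half-planes) it suffices to take $z\in\C_+$; the case $z\in\C_-$ is identical with $-\im$ in place of $\im$. Suppose $F(z)\bx=\bO$ for some $\bx\in\cK$. Then
\begin{align*}
0=\im\Jap{F(z)\bx,\bx} = \Jap{(\im F(z))\bx,\bx}=\int_\R \frac{\im z}{|t-z|^2}\,\mu_{\bx}(\dd t),
\end{align*}
where $\mu_\bx(\fdot)=\Jap{\bM(\fdot)\bx,\bx}=\Jap{\bE(\fdot)\bB\bx,\bB\bx}$. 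This uses \eqref{e-herglotzF} together with the definition of weak integration.

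The integrand $\im z/|t-z|^2$ is strictly positive on $\R$, and $\mu_\bx$ is a positive measure. Hence $\mu_\bx=0$, and in particular
\begin{align*}
\mu_\bx(\R)=\|\bB\bx\|^2=0,
\end{align*}
because $\bE(\R)=\bI$. Equivalently, one can write $\im F(z)=\im z\,\bB^*(A-\bar z\bI)^{-1}(A-z\bI)^{-1}\bB$, which gives $\Jap{(\im F(z))\bx,\bx}=\im z\,\|(A-z\bI)^{-1}\bB\bx\|^2$ directly from the resolvent identity.

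Either way $\bB\bx=\bO$, so $\bx=\bO$ by the assumption $\Ker\bB=\{\bO\}$.

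There is no real obstacle here. The only point needing care is the justification that $\Jap{(\im F(z))\bx,\bx}=\im\Jap{F(z)\bx,\bx}$ and its integral representation, which follows from \eqref{d-M} and the weak definition of the integral. The resolvent-identity computation is the cleanest way to present it.
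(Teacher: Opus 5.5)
Your argument is correct, and since the paper leaves this lemma as an exercise, the positivity of $\im F(z)$ from \eqref{e-herglotzF} (or equivalently $\Jap{(\im F(z))\bx,\bx}=\im z\,\|(A-z\bI)^{-1}\bB\bx\|^2$) is exactly the natural route. One small fix: drop the parenthetical symmetry remark. Injectivity of $F(z)$ does not by itself give injectivity of $F(z)^*=F(\bar z)$, only dense range. What actually covers $z\in\C_-$ is your direct sign-flipped argument, which works verbatim.
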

We leave the proof as an exercise for the reader.

%%%%%%%%%%%%%%%%%%%%%%%%%%%%%%%%%%%%%%%%%%%%%%%%%%
\subsubsection{Families of perturbations and cyclicity}\label{s-perturbations}
%%%%%%%%%%%%%%%%%%%%%%%%%%%%%%%%%%%%%%%%%%%%%%%%%%

\begin{lem}\label{l-cycAGamma}
If $\Ran\bB$ is cyclic for $A$ then it is  cyclic for all $A\ci{\Gamma}$ from \eqref{e-AGamma}.
\end{lem}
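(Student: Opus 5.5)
The plan is to show that the closed invariant subspace generated by $\Ran\bB$ is the same for $A$ and for $A\ci\Gamma$. Set
\begin{align*}
\cH_0:=[\Ran\bB]\ci{A}.
\end{align*}
I will argue that $\cH_0$ is invariant under $A\ci\Gamma$ and that $[\Ran\bB]\ci{A\ci\Gamma}\supseteq\cH_0$. This is a symmetric statement: $A=A\ci\Gamma+\bB(-\Gamma)\bB^*$ is again of the form \eqref{e-AGamma}. So it suffices to prove the one-directional inclusion
\begin{align*}
[\Ran\bB]\ci{A}\subseteq[\Ran\bB]\ci{A\ci\Gamma}
\end{align*}
for an arbitrary bounded self-adjoint $\Gamma$. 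Applying this inclusion with the roles of $A$ and $A\ci\Gamma$ swapped gives equality, and cyclicity for $A$ then immediately transfers to $A\ci\Gamma$.

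For the inclusion I will use the bounded-operator description $[S]\ci{A}=\cspn\{A^n f: n\ge 0,\ f\in S\}$, which is available since the paper works with bounded $A$. Let $\cH_1:=[\Ran\bB]\ci{A\ci\Gamma}$; it is closed, contains $\Ran\bB$, and is invariant under $A\ci\Gamma$. The key observation is that $\cH_1$ is also invariant under $A$. Indeed, for $h\in\cH_1$,
\begin{align*}
Ah=A\ci\Gamma h-\bB\Gamma\bB^*h,
\end{align*}
and $\bB\Gamma\bB^*h\in\Ran\bB\subset\cH_1$. Hence $\cH_1$ is a closed $A$-invariant subspace containing $\Ran\bB$, so by minimality $[\Ran\bB]\ci{A}\subseteq\cH_1$.

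If one prefers to avoid powers (for instance in the unbounded setting), the same argument works with resolvents via \eqref{e-CycSpan}. The resolvent identity
\begin{align*}
(A-z\bI)^{-1}=(A\ci\Gamma-z\bI)^{-1}+(A\ci\Gamma-z\bI)^{-1}\bB\Gamma\bB^*(A-z\bI)^{-1}
\end{align*}
shows that the range of $(A-z\bI)^{-1}$ restricted to $\cH_1$ lies in $\cH_1$. Note that a closed subspace invariant under a self-adjoint bounded operator is automatically reducing, so invariance is all that is needed.

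I do not expect a genuine obstacle here; the only point requiring care is that minimality of $[\cdot]\ci{A}$ is applied to a subspace known only to be invariant, which is exactly how $[S]\ci{A}$ is defined. So the statement follows, since $\cH=[\Ran\bB]\ci{A}\subseteq[\Ran\bB]\ci{A\ci\Gamma}$.
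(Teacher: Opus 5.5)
Your proof is correct, but it takes a somewhat different route from the paper's.

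\textbf{The paper's route.} The paper works at the level of generators. Write $R(z)=(A-z\bI)^{-1}$ and $R\ci\Gamma(z)=(A\ci\Gamma-z\bI)^{-1}$. Applying the resolvent identity \eqref{e-ResId} to $b\in\Ran\bB$ gives $R(z)b=R\ci\Gamma(z)\bigl[b+\bB\Gamma\bB^*R(z)b\bigr]$, and the bracket lies in $\Ran\bB$. So every generator $R(z)b$ of $[\Ran\bB]\ci{A}$ in the description \eqref{e-CycSpan} is literally a generator of $[\Ran\bB]\ci{A\ci\Gamma}$, at the same point $z$.

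\textbf{Your route.} You use only the definition of $[\Ran\bB]\ci{A}$ as the smallest closed $A$-invariant subspace containing $\Ran\bB$. You then observe that $[\Ran\bB]\ci{A\ci\Gamma}$ is already $A$-invariant, because $A-A\ci\Gamma$ maps everything into $\Ran\bB$.

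\textbf{What each buys.} Your argument is more elementary: no resolvent identity or functional calculus, only the fact that the perturbation has range in $\Ran\bB$, together with boundedness of $A$. The paper's argument gives the finer, pointwise-in-$z$ inclusion $R(z)\Ran\bB\subset R\ci\Gamma(z)\Ran\bB$. It is also already in the form that works verbatim for unbounded $A$, where invariance under $A$ must be replaced by invariance under resolvents. Your resolvent variant covers that case as well; it is essentially the paper's identity applied to subspaces rather than to generators.

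The symmetric swap that yields equality of the two cyclic subspaces is correct but not needed for the lemma.
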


\begin{proof}
For the readers' convenience, we recall the argument from \cite[Lemma 2.5]{JST}, where the lemma 
was proven for finite rank perturbations.

Denote by $R(z)$ and $R_\Gamma(z)$ the resolvents, 
\[
	R (z)= (A-z \OID)^{-1} 
	\qquad\text{and}\qquad
R\ci\Gamma(z) = (A\ci\Gamma-z \OID)^{-1}.
\]

The subspace $\Ran\bB$ is cyclic for $A$ if and only if the (finite) linear combinations of $R(z_k)
b_k$, $z_k\in\C\setminus\R$, $b_k \in \Ran \bB$  are dense in $\cH$, and similarly for the cylicity
for $A\ci\Gamma$.

Therefore to prove that  $\Ran\bB$ is cyclic for $A\ci\Gamma$  it suffices to show that for each
$z\in \C\setminus\R$, $b\in \Ran \bB$ the vector $R(z) b$ belongs to $R\ci\Gamma(z) \Ran\bB$.

To see this, we apply the resolvent identity
\begin{align}\label{e-ResId}
R(z) = R\ciG(z) + R\ciG(z) {\bf B}\Gamma {\bf B}^* R(z)
\end{align}
to a vector $b\in\Ran\bB$ and notice that on the right-hand side we have 
$[\OID + {\bf B}\Gamma {\bf B}^* R(z)]b\in \Ran\bB$.  
\end{proof}

%%%%%%%%%%%%%%%%%%%%%%%%%%%%%
\subsubsection{Relations between Cauchy transforms of compressed spectral 
measures}\label{s-FamilyBorel}
%%%%%%%%%%%%%%%%%%%%%%%%%%%%%

We will need the classical Aronszajn--Krein type relationship between the Cauchy transforms 
$F:=\cC\bM$ and $F\ci\Gamma :=\cC\bM^\Gamma$.

The following lemma is well-known to experts, see e.g.~\cite{katokuroda, Kuroda1967, Yafaev1992}. 
We provide complete proofs for the convenience of the reader.

\begin{lem}\label{l-AK}
For all $z\in \C\setminus\R$ and all self-adjoint bounded $\Gamma$ the operators $\bI+F(z)\Gamma$, 
$\bI + \Gamma F(z)$ are invertible. Moreover, we have 
\begin{align}
\label{F_Gamma}
F\ciG (z)
&=
F(z)(\OID + \Gamma F(z))^{-1}
=
(\OID +  F(z) \Gamma)^{-1} F(z),  \text{ and}
\\
\label{e-AKIm2}
\im F\ciG(z)
&
=
(\bI+F(z)^*{\Gamma})^{-1}
\left(\im F(z)\right)
(\bI+{\Gamma} F(z))^{-1}  \\ \notag
&=
(\bI+F(z){\Gamma})^{-1}
\left(\im F(z)\right)
(\bI+{\Gamma} F(z)^*)^{-1}.
\end{align}
\end{lem}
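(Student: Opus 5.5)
We derive everything from the second resolvent identity. Write $R(z)=(A-z\bI)^{-1}$ and $R\ciG(z)=(A\ciG-z\bI)^{-1}$ for $z\in\C\setminus\R$; both are bounded because $A$ and $A\ciG$ are self-adjoint. Since $A\ciG-A=\bB\Gamma\bB^*$, we have the identity \eqref{e-ResId}, $R=R\ciG+R\ciG\bB\Gamma\bB^*R$, and also its companion $R=R\ciG+R\bB\Gamma\bB^*R\ciG$. The plan is to multiply these by $\bB^*$ on the left and by $\bB$ on the right. Using \eqref{d-M} and \eqref{e-DefFGamma}, this gives
\begin{align*}
F(z)=F\ciG(z)+F\ciG(z)\Gamma F(z),\qquad F(z)=F\ciG(z)+F(z)\Gamma F\ciG(z),
\end{align*}
that is, $F=F\ciG(\bI+\Gamma F)$ and $F=(\bI+F\Gamma)F\ciG$.

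The first thing to establish is invertibility of $\bI+\Gamma F(z)$ and $\bI+F(z)\Gamma$, and this is the step where the kernel of $\bB$ could cause trouble, so we avoid using it. Reversing the roles of $A$ and $A\ciG$ (now $A=A\ciG+\bB(-\Gamma)\bB^*$) gives in the same way
\begin{align*}
F\ciG=F-F\Gamma F\ciG=F-F\ciG\Gamma F .
\end{align*}
From these one checks by direct computation that $(\bI+\Gamma F)(\bI-\Gamma F\ciG)=\bI+\Gamma(F-F\ciG-F\Gamma F\ciG)=\bI$. The same computation works in the reverse order and for $\bI+F\Gamma$, with inverse $\bI-F\ciG\Gamma$. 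So both operators are boundedly invertible for every $z\in\C\setminus\R$. We never needed $\Ker\bB=\{\bO\}$ or $\Gamma\ge\bO$ here.

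With invertibility in hand, \eqref{F_Gamma} follows immediately: $F\ciG=F(\bI+\Gamma F)^{-1}=(\bI+F\Gamma)^{-1}F$.

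For \eqref{e-AKIm2} we first note that $F\ciG(z)^*=F\ciG(\bar z)$ and $F(z)^*=F(\bar z)$, and that $\Gamma=\Gamma^*$. We then write
\begin{align*}
2i\,\im F\ciG=F\ciG-F\ciG^*=(\bI+F^*\Gamma)^{-1}\bigl[(\bI+F^*\Gamma)F-F^*(\bI+\Gamma F)\bigr](\bI+\Gamma F)^{-1},
\end{align*}
using $F\ciG=F(\bI+\Gamma F)^{-1}$ together with $F\ciG^*=(\bI+F^*\Gamma)^{-1}F^*$. In the bracket the terms $F^*\Gamma F$ cancel, leaving $F-F^*=2i\,\im F$. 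This gives the first formula in \eqref{e-AKIm2}. The second formula comes out of the symmetric computation with $F\ciG=(\bI+F\Gamma)^{-1}F$ and $F\ciG^*=F^*(\bI+\Gamma F^*)^{-1}$.

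The only real obstacle is the invertibility step. If one tried to prove it with a Herglotz/positivity argument (from $\im F\ge\bO$), it would require $\Gamma\ge\bO$ or extra kernel conditions. Deriving the two-sided inverse explicitly from the symmetric resolvent identities sidesteps this.
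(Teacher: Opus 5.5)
Your proof is correct and follows essentially the same argument as the paper: you compress both resolvent identities to get $F=F\ciG+F\ciG\Gamma F$ and $F=F\ciG+F\Gamma F\ciG$, you show that $\bI-\Gamma F\ciG$ and $\bI-F\ciG\Gamma$ are two-sided inverses, and you factor out $(\bI+F^*\Gamma)^{-1}$ and $(\bI+\Gamma F)^{-1}$ to compute $\im F\ciG$. The ``reversing the roles'' step is redundant but harmless, since its identities $F\ciG=F-F\Gamma F\ciG=F-F\ciG\Gamma F$ are just rearrangements of the two compressed identities you already had.
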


\begin{proof}
We apply $\bB^*$ to the left and $\bB$ to the right of resolvent identity \eqref{e-ResId}. By the
identities \eqref{d-M}  and \eqref{e-DefFGamma} we have 
$F(z)= \bB^* R (z) \bB$ and $F\ci\Gamma(z) = \bB^* R\ci\Gamma(z) \bB$, so we get for all 
$z\in\C\setminus\R$
\begin{align}\label{e-FGamma}
F(z) = F\ci\Gamma(z) + F\ci\Gamma(z)\Gamma F(z). 
\end{align}
With this,  we calculate, skipping the argument $z$ for the brevity of notation
\[
(\OID -  F\ci\Gamma \Gamma)(\OID +  F \Gamma) = 
\OID - F\ci\Gamma \Gamma + F \Gamma - F\ci\Gamma \Gamma F \Gamma
\equiv
\OID
\]
to conclude that $\OID +  F(z) \Gamma$ is left invertible for all $z\in \C\setminus\R$.

In analogy, the resolvent identity
$
R = R\ci\Gamma + R \bB\Gamma\bB^* R\ci\Gamma
$ 
can be used to prove that $(\OID +  F \Gamma) (\OID -  F\ci\Gamma \Gamma) = \OID$ on 
$\C\setminus\R$.

The first equality of \eqref{F_Gamma} now follows from \eqref{e-FGamma}, and the second equality of 
\eqref{F_Gamma} is proven in analogy, i.e.~by showing that $\OID -  \Gamma F\ci\Gamma (z) $ is the 
inverse of $\OID +  \Gamma F(z)$ for all $z\in\C\setminus\R$.

Using the first identity from \eqref{F_Gamma}, and skipping again the argument $z$ for the brevity 
of notation, we obtain 
\begin{align*}
\im F\ci\Gamma & = ( F\ci\Gamma - F\ci\Gamma^*  )/(2i)
= ( F(\bI + \Gamma F)^{-1}  - (\bI + F^*\Gamma)^{-1}F^* )/(2i) \\
&=  (\bI + F^*\Gamma)^{-1}[(\bI + F^*\Gamma)F - F^* (\bI + \Gamma F) ] (\bI + \Gamma F)^{-1}/(2i)\\
& = (\bI + F^*\Gamma)^{-1} \left(\im F\right) (\bI + \Gamma F)^{-1},  
\end{align*}	
which is the first identity in \eqref{e-AKIm2}. 

The second identity in \eqref{e-AKIm2} is obtained similarly from the first identity in 
\eqref{F_Gamma}. 
\end{proof}

\subsection{A Generalized Spectral Theorem}

If $\Ran \bB$ is cyclic for $A$ then the operator $A$ is unitarily equivalent to the multiplication
operator $\cM_{\id}$ in $L^2(\bM)$; recall that the definition of the weighted space $L^2(\bM)$ was
established in Section \ref{s-SpMeas}.

\begin{theo}[Generalized spectral theorem]\label{t:SpThm-OVM}
Let $A$ be a self-adjoint operator in a separable Hilbert space $\cH$ and let $\bB:\cK\to\cH$ be a 
bounded operator with $\Ran \bB$ being cyclic for $A$.  Let $\bE:=\bE\ci{A}$ be the 
projection-valued spectral measure for $A$, and let $\bM := \bB^*\bE \bB$ be the compressed 
spectral measure.  

Then $A$ is unitarily equivalent to the multiplication  $\cM_{\id}$  by the independent variable in
the weighted space $L^2(\bM)$,  $\left(\cM_{\id} f\right)(s) = s f(s)$.

More precisely, the operator $U:L^2(\bM) \to \cH$ defined on a total in $L^2(\bM)$ set of functions 
$\f \be$ \tup(where $\f$ is a bounded Borel measurable scalar function, $\be\in\cK$\tup) by 
\begin{align}\label{e:U fe}
U(\f\be) = \f(A) \bB\be 
\end{align}
extends to a unitary operator such that 
\begin{align}\label{e:UM=AU}
U\cM_{\id} = A U. 
\end{align}
\end{theo}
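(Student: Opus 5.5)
The plan is to define $U$ on bounded measurable finite-dimensionally-valued functions, show that it is isometric there, extend it by density, and then check surjectivity and the intertwining relation \eqref{e:UM=AU}.

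\emph{Definition and isometry.} For $f=\sum_k \f_k\be_k$ with $\f_k$ bounded Borel and $\be_k\in\cK$, we set
\begin{align*}
Uf:=\sum_k \f_k(A)\bB\be_k .
\end{align*}
Well-definedness and isometry both follow from one computation. By the functional calculus \eqref{e:SpMeas E 02} and the multiplicativity of $\f\mapsto\f(A)$,
\begin{align*}
\Jap{\f_k(A)\bB\be_k, \psi_j(A)\bB\be_j}\ci\cH
&= \Jap{\bigl(\overline{\psi_j}\f_k\bigr)(A)\bB\be_k,\bB\be_j}\ci\cH \\
&= \int \f_k\overline{\psi_j}\,\dd\Jap{\bE\bB\be_k,\bB\be_j}
= \int \f_k\overline{\psi_j}\,\dd\mu_{\be_k,\be_j},
\end{align*}
where $\mu_{\be_k,\be_j}$ is the scalar measure of $\bM=\bB^*\bE\bB$. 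Summing over $j,k$ and comparing with \eqref{e:inner prod} gives $\jap{Uf,Ug}\ci\cH=\jap{f,g}\ci{L^2(\bM)}$.

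Consequently, if $\|f\|\ci{L^2(\bM)}=0$ then $Uf=\bO$, so $U$ is well defined on the quotient, independently of the representation of $f$. Since $L^2(\bM)$ is by definition the completion of these functions, $U$ extends uniquely to an isometry $L^2(\bM)\to\cH$. On the functions $\f\be$ the extension is given by \eqref{e:U fe}.

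\emph{Surjectivity.} The range of an isometry is closed, so it suffices to show that $\Ran U$ is dense. The range contains $\f(A)\bB\be$ for every bounded Borel $\f$ and every $\be\in\cK$. By the last line of \eqref{e-CycSpan} applied with $S=\Ran\bB$, the closed span of these vectors is $[\Ran\bB]\ci A$. This equals $\cH$ because $\Ran\bB$ is cyclic for $A$. Hence $U$ is unitary.

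\emph{Intertwining.} I treat $A$ as bounded, in line with the standing convention of the paper. For $f=\f\be$ with $\f$ bounded, the function $\id\cdot\f$ is bounded on the (compact) support of $\bM$. Hence
\begin{align*}
U\cM_{\id}(\f\be)=(\id\,\f)(A)\bB\be=A\f(A)\bB\be=AU(\f\be).
\end{align*}
Both $U\cM_{\id}$ and $AU$ are bounded, and linear combinations of such $\f\be$ are dense. Therefore \eqref{e:UM=AU} holds on all of $L^2(\bM)$.

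The boundedness of $\cM_{\id}$ on $L^2(\bM)$ needs a word of justification. Its norm is at most $\|A\|$, because $\bM$ is supported on $\sigma(A)\subset[-\|A\|,\|A\|]$. Alternatively one can multiply by $\id\cdot\1\ci{\sigma(A)}$, which agrees with $\id$ $\bM$-a.e.

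For an unbounded $A$ I would instead check the identity on resolvents, $U\cM_{(\id-z)^{-1}}=(A-z\bI)^{-1}U$, by the same computation.

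The only genuinely delicate point is well-definedness. The same function can have many representations $\sum\f_k\be_k$, and nonzero functions can have zero $L^2(\bM)$-norm. Both issues are handled by proving the inner-product identity before defining $U$ on equivalence classes. Everything else is routine functional calculus.
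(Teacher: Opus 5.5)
Your proof is correct and follows the paper's route. You establish the inner-product identity on finite combinations $\sum_k \f_k\be_k$ via the functional calculus, extend by density, get unitarity from cyclicity of $\Ran\bB$ through \eqref{e-CycSpan}, and check the intertwining on $\f\be$ after cutting $\f$ off to $\sigma(A)$ so that $s\f(s)$ is bounded; your extra care with well-definedness, closedness of the range, and boundedness of $\cM_{\id}$ only makes explicit details that the paper leaves implicit.
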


\begin{proof} Let $f=\sum_{k=1}^n \f_k \be_k$, where $\be_k\in\cK$ and $\f_k$ are bounded Borel 
measurable functions. Then $Uf=\sum_{k=1}^n \f_k(A) \bB \be_k$ and by the Spectral Theorem, see 
\eqref{e:SpMeas E 01}
\begin{align*}
\|Uf\|_\cH^2 & = \sum_{j,k=1}^n\Jap{\f_j(A)^*\f_k(A) \bB\be_k, \bB\be_j}_\cH \\
& = \sum_{j,k=1}^n \int_\R \Jap{ \overline{\f_j(s)}\f_k(s) \bE(\dd s) \bB\be_k, \bB\be_j } \\
& = \sum_{j,k=1}^n \int_\R \Jap{ \overline{\f_j(s)}\f_k(s) \bM(\dd s) \be_k, \be_j }
=\|f\|\ci{L^2(\bM)}^2 . 
\end{align*}
Thus, $U$ is an isometry between dense linear subsets of $L^2(\bM)$ and $\cH$ respectively, so it 
extends by continuity to a unitary operator. 

To prove  \eqref{e:UM=AU} let us check it on functions $\f\be$, where $\be\in\cK$ and $\f$ is a 
bounded Borel measurable function. Since the spectral measure $E$ is supported on $\sigma(A)$, we 
can assume without loss of generality that $\f\equiv 0$ outside of $\sigma(A)$. Define 
$\psi(s):=s\f(s)$. Then trivially $\cM_{\id}\f\be=\psi\be$, and therefore 
\begin{align*}
U \cM_{\id}\f\be = U\psi\be = \psi(A)\bB\be =A\f(A)\bB\be =A U\f\be;
\end{align*}
the last identity follows from \eqref{e:U fe}. 
\end{proof}

This theorem might look silly and too complicated, but it will be one the main tools we use to 
prove Theorems \ref{t:GA-D_trace} and \ref{t:GA-D_02}. 

\begin{rem}
It is easy to see that %%
\begin{align}\label{e:Ue}
U\be = \1\be;
\end{align}
here we put $\1$ on the right-hand side to emphasize that this is a function identically equal to 
$\be$ on $\R$. 

Note that while a unitary operator satisfying \eqref{e:UM=AU} is not unique, the above condition 
\eqref{e:Ue} defines it uniquely.  We leave the details as an exercise for the reader. 
\end{rem}

\subsection{Vectors and measures of maximal spectral type}\label{s:MaxType}  

Let $A$ be a self-adjoint operator in a separable Hilbert space $\cH$. We say that a vector
$\bx\in\cH$ has \emph{maximal spectral type} if for any $\by\in\cH$ the scalar spectral measure
$\mu_\by$ is absolutely continuous with respect to the spectral measure $\mu_\bx$. (Recall that the
scalar spectral measure is defined in Section \ref{s-SpMeas}.) We call the corresponding measure
$\mu_\bx$ a (scalar) spectral measure of maximal type.

We can also state this property in terms of the projection-valued spectral measure $\bE$ of $A$. 
Namely, a vector $\bx$ has \emph{maximal spectral type} if and only if  $\bE(E)\bx\ne\bO$ for any 
Borel set $E\subset \R$ such that $\bE(E) \ne\bO$.

\begin{lem}\label{l-maxtype1}
Let $A$ be the multiplication $\cM\ti{id}$ by the independent variable $s$ in the von Neumann direct
integral $\cH$ given by \eqref{e:VN_int}. A vector $f\in \cH$ has maximal spectral type if and only
if $f(s)\ne\bO$ $\nu$-a.e.
\end{lem}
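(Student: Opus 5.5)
The plan is to compute the scalar spectral measures in the direct-integral model and then use the characterization of maximal type via the projection-valued measure recalled just before the statement. In the model \eqref{e-cHDef}, $\bE(E)$ is multiplication by $\1\ci E$. Hence for $g\in\cH$ and Borel $E\subset\R$,
\begin{align*}
\mu_g(E)=\Jap{\bE(E)g,g}=\int_E \|g(s)\|^2_{\cH(s)}\,\nu(\dd s).
\end{align*}
So $\mu_g=\|g(\fdot)\|^2\nu$, and every scalar spectral measure is absolutely continuous with respect to $\nu$.

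For the ``if'' direction, suppose $f(s)\ne\bO$ for $\nu$-a.e.\ $s$. Then the density $\|f(s)\|^2$ is positive $\nu$-a.e., so $\nu\ll\mu_f$. Indeed, if $\mu_f(E)=0$, then $\|f\|^2=0$ $\nu$-a.e.\ on $E$, which forces $\nu(E)=0$. For any $g\in\cH$ this gives $\mu_g\ll\nu\ll\mu_f$, so $f$ has maximal spectral type.

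For the ``only if'' direction, I argue by contraposition. Suppose the set $E:=\{s: f(s)=\bO\}$ has $\nu(E)>0$; the set is Borel since $s\mapsto\|f(s)\|$ is measurable. Then $\bE(E)f=\1\ci E f=\bO$. On the other hand $\bE(E)\ne\bO$, which I will show by producing a nonzero element of $\Ran\bE(E)$. Here one must use that $\cH(s)\ne\{\bO\}$ on $E$, up to a $\nu$-null set. I expect this to be the only delicate point: the convention must be that $\nu$ lives on $\{N\ge1\}$, or equivalently that $\nu$ is replaced by its restriction there, since points with $N(s)=0$ contribute nothing to $\cH$. With that convention, take $E'\subset E$ with $0<\nu(E')<\infty$, which exists by $\sigma$-finiteness. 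Set $g=\1\ci{E'}e_1$; then $g\in\cH$ because $e_1\in\cH(s)$ whenever $N(s)\ge1$. This $g$ is nonzero and satisfies $\bE(E)g=g$.

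So $\bE(E)\ne\bO$ but $\bE(E)f=\bO$, and by the stated characterization $f$ is not of maximal type. Equivalently, $\mu_g(E')>0=\mu_f(E')$, so $\mu_g\not\ll\mu_f$.
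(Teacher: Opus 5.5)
Your proposal is correct and follows essentially the same route as the paper, whose proof observes that $\bE(E)$ is multiplication by $\1\ci{E}$ in the direct integral and then appeals directly to the definition of maximal spectral type. Your version adds the explicit formula $\mu_g=\|g(\fdot)\|^2\nu$, and you are right that the ``only if'' direction needs $N(s)\ge1$ for $\nu$-a.e.\ $s$: if $\nu$ charged $\{N=0\}$, every $f\in\cH$ would vanish there while maximal-type vectors still exist, and the paper leaves this convention implicit.
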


\begin{proof}
In the direct integral representation of $A,$ the projection-valued spectral measure $\bE(E)$ 
equals multiplication by the characteristic function $\1\ci E$ for every Borel set $E$. So, the 
lemma follows immediately from the definition of a vector having maximal spectral type.
\end{proof}

\begin{rem}
If $\mu$ is a (finite) measure mutually absolutely continuous with a spectral measure of maximal 
type, then it is also a spectral measure of maximal type, meaning that there exists a  vector $\bx$ 
of maximal spectral type such that $\mu=\mu_\bx$. 

Indeed, if we consider the representation in which $A$ is the multiplication $\cM_{\id}$ in the von
Neumann direct integral $\cH$ given by \eqref{e:VN_int}, the measure $\nu$ is the spectral measure
corresponding to the vector $\1 \be_1 \in \cH$. Since $\mu$ is mutually absolutely continuous with
respect to $\nu$, it can be represented as $\mu = w\nu$, where $w>0$ $\nu$-a.e. Taking a scalar
measurable function  $f$, $|f|^2=w$ (for example, $f:=w^{1/2}$), we see that $\mu$ is the spectral
measure corresponding to the vector $f \be_1\in\cH$.
\end{rem}

\begin{cor}\label{c:max type}
Let $A=A^*$, and let $\bB:\cK\to\cH$ be such that $\Ran \bB$ is cyclic for $A$.  Let $\bE=\bE\ci{A}$
be the projection-valued spectral measure for $A$, and let $\bM=\bB^* \bE \bB$ be the corresponding
compressed spectral measure.

Then for any complete system $\{\bx_k\}_{k\in\N}$ in $\cK$ and any sequence 
$\{\alpha_k\}_{k\in\N}$, $\alpha_k>0$, $\sum_k \alpha_k \|\bB\bx_k\|^2 <\infty$, the measure $\mu$ 
given by
\begin{align*}
\mu(\fdot) :=\sum_{k\in\N} \alpha_k \Jap{\bM(\fdot) \bx_k, \bx_k}
\end{align*}
is a spectral measure of maximal type for $A$. 
\end{cor}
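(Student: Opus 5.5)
We need to show two things: $\mu$ is a finite measure, and every scalar spectral measure $\mu_\by$, $\by\in\cH$, is absolutely continuous with respect to $\mu$.

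\emph{Finiteness.} Note that $\Jap{\bM(E)\bx_k,\bx_k}=\Jap{\bE(E)\bB\bx_k,\bB\bx_k}=\mu_{\bB\bx_k}(E)$. So $\mu=\sum_k\alpha_k\mu_{\bB\bx_k}$, and its total mass is $\sum_k\alpha_k\|\bB\bx_k\|^2<\infty$. Hence $\mu$ is a finite positive Borel measure.

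\emph{Maximal type.} By the characterization in terms of $\bE$, it suffices to show the following: if $E$ is Borel with $\mu(E)=0$, then $\bE(E)=\bO$. This then gives $\mu_\by(E)=\|\bE(E)\by\|^2=0$ for every $\by$.

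Suppose $\mu(E)=0$. Since each $\alpha_k>0$, we get $\|\bE(E)\bB\bx_k\|^2=\mu_{\bB\bx_k}(E)=0$ for all $k$.

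\begin{itemize}
\item Because $\{\bx_k\}$ is complete in $\cK$ and $\bB$ is bounded, $\cspn\{\bB\bx_k\}\supset\Ran\bB$. By continuity of $\bE(E)$, it follows that $\bE(E)\bB\be=\bO$ for all $\be\in\cK$.
\item $\bE(E)$ commutes with $\f(A)$ for every bounded Borel $\f$. Hence $\bE(E)\f(A)\bB\be=\f(A)\bE(E)\bB\be=\bO$.
\item By \eqref{e-CycSpan} and the cyclicity of $\Ran\bB$, the vectors $\f(A)\bB\be$ span a dense subspace of $\cH$. Therefore $\bE(E)=\bO$.
\end{itemize}

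Consequently, for any $\by\in\cH$ we have $\mu_\by(E)=\Jap{\bE(E)\by,\by}=0$, so $\mu_\by\ll\mu$.

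One could equally phrase the same argument through Theorem \ref{t:SpThm-OVM}. There $\bE(E)$ corresponds to multiplication by $\1\ci E$ in $L^2(\bM)$, and one checks that $\1\ci E\be$ has zero norm for a total set of $\be$.

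The only mildly delicate point is the passage from the complete system $\{\bx_k\}$ to all of $\Ran\bB$, and from there to all of $\cH$. The steps above handle it by continuity, commutation with $\f(A)$, and density.
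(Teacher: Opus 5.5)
Your proof is correct, and it takes a different, more elementary route than the paper. The paper passes to the von Neumann direct-integral model. It uses Lemma \ref{l-cyclicset} to turn cyclicity of $\{\bB\bx_k\}$ into the fiberwise condition $\cspn\{f_k(s)\}=\cH(s)$ $\nu$-a.e., and concludes that $\mu=w\nu$ with $0<w<\infty$ $\nu$-a.e. You instead stay with the projection-valued measure. If $\mu(E)=0$, then $\bE(E)$ kills each $\bB\bx_k$, hence all of $\Ran\bB$. Since $\bE(E)=\1\ci{E}(A)$ commutes with every $\f(A)$, it then kills all of $[\Ran\bB]\ci{A}=\cH$ by \eqref{e-CycSpan}. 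This avoids the direct integral altogether. In particular it avoids the harder direction of Lemma \ref{l-cyclicset} (cyclic implies fiberwise spanning), which relies on the measurable-selection Lemma \ref{lem:a}. In exchange, the paper's route yields the explicit density $w=\sum_k\alpha_k\|f_k(\fdot)\|^2$ of $\mu$ with respect to the model measure $\nu$.

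One small omission. Finiteness together with $\mu_\by\ll\mu$ for all $\by$ is not by itself the paper's notion, which asks for $\mu=\mu_\bx$ with $\bx$ of maximal type. You should also record the reverse direction: $\bE(E)=\bO$ implies $\mu(E)=\sum_k\alpha_k\|\bE(E)\bB\bx_k\|^2=0$. This is immediate, and it makes $\mu$ mutually absolutely continuous with any maximal-type $\mu_{\bx_0}$. The Remark after Lemma \ref{l-maxtype1} then finishes the argument, which is also how the paper's proof concludes.
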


\begin{proof}
Without loss of generality we assume that $A$ is the multiplication $\cM_{\id}$ by the independent
variable in the von Neumann direct integral \eqref{e:VN_int}. Define $f_k = \bB \bx_k\in \cH$. Since
$\Ran \bB$ is cyclic for $A$, and finite linear combinations of vectors $f_k$ are dense in $\Ran
\bB$, we see that
\[
\cF=\{f_k\;:\;k\in\N\}
\]
is a countable cyclic set for $A$.  
Therefore, by Lemma \ref{l-cyclicset}, we have
\[
\cspn\{ f_k(s) \;:\; k\in\N  \} =\cH(s)
\]
a.e.~with respect to $\nu$. This implies that 
\begin{align}\label{e:w>0}
\sum_{k} \alpha_k \|f_k(s)\|_{\cH(s)}^2 > 0 \qquad \nu\text{-a.e.}
\end{align}
(the sum could be infinite). 

Now consider the weights $w_k$,  $w_k(s):=\|f_k(s)\|^2\ci{\cH(s)}$ and define the measures  
$\mu_k := w_k\nu$. Let $w:=\sum_k \alpha_k w_k$.   Since 
\begin{align*}
\sum_{k} \alpha_k \mu_k(\R) =\sum_k \alpha_k \|f_k\|_\cH^2 < \infty, 
\end{align*}
the measure $\mu$ defined by
$$
\mu = \sum \alpha_k\mu_k =\left(\sum \alpha_k w_k\right)\nu = w\nu = \sum_{k\in\N} \alpha_k 
\Jap{\bM 
(\fdot)
\be_k, \be_k} 
$$
is a finite measure on $\R$. Therefore, $w(s)<\infty$ $\nu$-a.e., and we already know, see 
\eqref{e:w>0}, that $w(s)>0$ $\nu$-a.e. 

Thus, $\mu$ and $\nu$ mutually absolutely continuous, and $\nu$ is a scalar spectral measure of 
maximal spectral type for $A$. Then, by  Lemma \ref{l-maxtype1} the measure $\mu$ is also a 
spectral measure of maximal type. 
\end{proof}

%%%%%%%%%%%%%%%%%%%%%%%%%%%%%%%%%%%%%%%%%%%%%%%%%%%%%
\subsection{Measures of maximal spectral type and indefinite integrals}
%%%%%%%%%%%%%%%%%%%%%%%%%%%%%%%%%%%%%%%%%%%%%%%%%%%%%

\begin{prop}\label{p:max type base}
Let $A=A^*\in B(\cH)$ and let $\bE$ be its projection-valued spectral measure. Let $\bB:\cK \to \cH$
be such that the compressed spectral measure $\bM=\bB^* \bE \bB$ is an indefinite integral,
i.e.~$\bM=W\sigma$, see Section \ref{s-KurodaIndef}.

Then any spectral measure $\mu$ of $A$ of maximal spectral type is a base of $\bM$, i.e.~$\bM$ can 
be represented as $\bM=\wt W\mu.$\end{prop}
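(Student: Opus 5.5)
The plan is to show that $\sigma$ may be replaced by any measure $\mu$ of maximal spectral type, by (i) shrinking the base $\sigma$ onto a set where $W\neq\bO$, (ii) showing that the shrunk $\sigma$ is absolutely continuous with respect to $\mu$, and (iii) absorbing the Radon--Nikodym derivative into the weight.

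\emph{Step 1: reduction of the base.} Put $\Omega:=\{s: W(s)\ne\bO\}$. Fix an orthonormal basis $\{\be_k\}$ of $\cK$. Then $\Omega=\bigcup_k\{s:\Jap{W(s)\be_k,\be_k}>0\}$, since $W(s)\ge\bO$ and a non-negative operator vanishes iff all its diagonal entries vanish. In particular $\Omega$ is measurable. Replacing $\sigma$ by $\1\ci\Omega\sigma$ does not change $\bM$, because $W=\bO$ off $\Omega$. So we may assume $W(s)\ne\bO$ for every $s$. We may also assume $\sigma$ is $\sigma$-finite; this holds in the examples, e.g.\ Lemma \ref{l:TraceClass_IndInt} produces a finite base. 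If needed, one can also note that $\sigma$ restricted to each set $\{\Jap{W\be_k,\be_k}>0\}$ is $\sigma$-finite, since $\int\Jap{W\be_k,\be_k}\dd\sigma<\infty$.

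\emph{Step 2: $\sigma\ll\mu$.} Let $E$ be Borel with $\mu(E)=0$. By maximality of $\mu$, every scalar spectral measure of $A$ vanishes on $E$, so $\bE(E)=\bO$. Hence $\bM(E)=\bB^*\bE(E)\bB=\bO$, which gives
\begin{align*}
\int_E\Jap{W(s)\be_k,\be_k}\sigma(\dd s)=0 \qquad\text{for all } k.
\end{align*}
Each integrand is non-negative, so $\Jap{W(s)\be_k,\be_k}=0$ for $\sigma$-a.e.\ $s\in E$, for every $k$. Because there are countably many $k$, it follows that $W(s)=\bO$ for $\sigma$-a.e.\ $s\in E$. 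By Step 1, $W\ne\bO$ everywhere, so $\sigma(E)=0$.

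\emph{Step 3: change of base.} By the Radon--Nikodym theorem, $\sigma=h\mu$ with $h\ge0$ measurable; this uses $\sigma$-finiteness of $\sigma$ and finiteness of $\mu$. Set $\wt W:=hW$. This is a measurable $B_+(\cK)$-valued function. For all $\bx,\by$ and all Borel $E$,
\begin{align*}
\Jap{\bM(E)\bx,\by}=\int_E\Jap{W\bx,\by}\,h\,\dd\mu=\int_E\Jap{\wt W\bx,\by}\,\dd\mu .
\end{align*}
The first equality is justified by polarization together with the positivity of the integrands for $\bx=\by$. This gives $\bM=\wt W\mu$.

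The main obstacle is Step 1. Without discarding the set where $W$ vanishes, $\sigma$ need not be absolutely continuous with respect to $\mu$, and the whole argument rests on this reduction together with the $\sigma$-finiteness needed for Radon--Nikodym. The remaining steps are routine.
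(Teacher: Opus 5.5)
Your argument is correct and is essentially the paper's proof. The paper takes the Lebesgue decomposition $\sigma = w\mu + \sigma\ti{s}$ and uses $\Jap{\bM(\fdot)\bx,\bx}\ll\mu$, for $\bx$ in a countable dense set, to get $W=\bO$ $\sigma\ti{s}$-a.e. and hence $\bM = Ww\mu$; this is your Step 2, packaged via the singular part of $\sigma$ rather than by restricting to $\{W\ne\bO\}$, and your extra care about $\sigma$-finiteness is harmless since the paper takes all measures to be finite.
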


\begin{proof}
Let $\mu$ be a spectral measure of maximal type for $A$. Consider the Radon--Nikodym decomposition 
of $\sigma$ with respect to $\mu$, 
\begin{align*}
\sigma = w\mu + \sigma\ti s, \qquad \sigma\ti s \perp \mu. 
\end{align*}
Take $\bx \in\cK$ and define $\nu_{\bx }$ as 
\begin{align*}
\nu_{\bx}(\fdot) = \Jap{\bM (\fdot)\bx,\bx}\ci{\cK};
\end{align*}
thus, $\nu_{\bx}= w_{\bx} \sigma$, where $w_{\bx } (s) =\Jap{ W(s) \bx, \bx}\ci\cK$. 

Since $\nu_{\bx }$ is the scalar spectral measure of $A$ corresponding to the  vector  $\bB\bx$, it
is absolutely continuous with respect to $\mu$, so $w_\bx=0$ $\sigma\ti s$-a.e. Taking $\bx$ from a
countable dense set in $\cK$ we conclude that $W(s)=\bO$ $\sigma\ti s$-a.e., so
\begin{align*}
\bM = Ww\mu, 
\end{align*}
i.e.~$\mu$ is a base for $\bM$. 
\end{proof}

%%%%%%%%%%%%%%%%%%%%%%%%%%%%%%%%%%%%%%%%%%%%%%%%%%%
\section{Proof of the Aleksandrov disintegration theorem (Theorem 
\ref{t:AlAver01})}\label{s-disintegration}
%%%%%%%%%%%%%%%%%%%%%%%%%%%%%%%%%%%%%%%%%%%%%%%%%%%
Let $A$ and $A\ciG$ be operators given in Section \ref{s-perturbations}, also recalling the role of
$\cK$ and $\bB$. Further, let $\bM^{\Gamma} := \bB^* \bE\ci\Gamma \bB$ be the compressed spectral
measure of $A\ciG$ as defined in Section \ref{s:Fam Pert}.

Applying the following version of the Aleksandrov disintegration theorem to the function $\1\ci{E}$ 
we immediately obtain Theorem \ref{t:AlAver01}.

\begin{theo}[Aleksandrov disintegration]\label{t-disintegration1}
Let $\Gamma=\Gamma^*\ge0$ be a bounded and invertible operator on $\cK$ and let $\Ran \bB$ be 
cyclic for $A$. Then for any scalar-valued Borel function $f\in L^1(\R)$ we have
\begin{align}
\label{Disintegration 011}
\int_\R\left(\int_\R f(s) \bM^{t\Gamma}(\dd s) \right)\dd t
=
\Gamma^{-1} \int_\R f(s) \dd s ,
\end{align}
where the integral in the left side is understood in weak operator sense \tup(see Section 
\ref{ss-integration}\tup). 
\end{theo}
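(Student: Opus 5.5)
The plan is to verify \eqref{Disintegration 011} first for Poisson kernels, where the inner integral has an explicit formula, and then pass to all of $L^1(\R)$.

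\textbf{Step 1: reduction to Poisson kernels.} Take $f = p_a$ with $a\in\C_+$, where $p_a(s) = \frac1\pi\im\frac{1}{s-a}$. By \eqref{e-DefFGamma} the inner integral is $\frac1\pi \im F\ci{t\Gamma}(a)$. So for these $f$ the theorem reads
\begin{align*}
\int_\R \im F\ci{t\Gamma}(a)\, \dd t = \pi\Gamma^{-1}, \qquad a\in\C_+ .
\end{align*}
Both sides of \eqref{Disintegration 011} are positive operator-valued expressions in $f\ge0$. Once the identity holds for all $p_a$, the left side defines a locally finite positive operator-valued measure, since $p_a\ge c\1\ci{[-R,R]}$ gives local finiteness. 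This measure has the same Poisson transforms as $\Gamma^{-1}\otimes\Leb$. Poisson extensions determine measures: test against $\bx\in\cK$ and use scalar uniqueness. Hence the two measures coincide, and the $L^1$ statement follows by linearity and monotone convergence, splitting $f$ into positive and negative, real and imaginary parts.

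\textbf{Step 2: the algebraic reduction.} Put $G := \Gamma^{1/2}F(a)\Gamma^{1/2}$. By \eqref{e-herglotzF}, $\im G\ge\bO$. By \eqref{F_Gamma},
\begin{align*}
\Gamma^{1/2}F\ci{t\Gamma}(a)\Gamma^{1/2} = G(\bI+tG)^{-1},
\end{align*}
and Lemma \ref{l-AK} guarantees that the inverse exists. It therefore suffices to prove
\begin{align*}
\int_\R \im\bigl[G(\bI+tG)^{-1}\bigr]\dd t = \pi \bI
\end{align*}
for every bounded $G$ with $\im G\ge\bO$ for which all the inverses exist. In the scalar case this is just $\int_\R \frac{\im g}{|1+tg|^2}\dd t=\pi$.

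First I will treat the invertible case with room to spare: replace $G$ by $G_\delta = G+i\delta\bI$, so that $\im G_\delta\ge\delta\bI$ and $G_\delta$ is invertible. Then
\begin{align*}
G_\delta(\bI+tG_\delta)^{-1} = (t - T_\delta)^{-1}, \qquad T_\delta := -G_\delta^{-1}.
\end{align*}
A direct computation gives $\im T_\delta = G_\delta^{-1*}(\im G_\delta)G_\delta^{-1}\ge\bO$, so $T_\delta$ is a bounded dissipative operator. By the Sz.-Nagy--Foia\c s dilation theory, $T_\delta$ has a self-adjoint dilation $L$ on a larger space $\wt\cK\supset\cK$ such that
\begin{align*}
(T_\delta-z)^{-1} = P\ci\cK(L-z)^{-1}|\ci\cK, \qquad \im z<0.
\end{align*}
I will apply this at $z=t-i\e$ and write $\bE\ci L$ for the spectral measure of $L$. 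The spectral theorem then gives
\begin{align*}
\int_\R \im (t-i\e-T_\delta)^{-1}\dd t = P\ci\cK\int_\R\!\!\int_\R \frac{\e}{(\lambda-t)^2+\e^2}\,\dd t\;\bE\ci L(\dd\lambda)\Big|_\cK = \pi\bI .
\end{align*}
Letting $\e\to0$ at fixed $\delta>0$ is harmless, because $\|(t-T_\delta)^{-1}\|$ is bounded and decays like $|t|^{-1}$ uniformly, and the integrand converges in norm.

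\textbf{Step 3: removing $\delta$ (main obstacle).} The hard step is letting $\delta\to0$, because $G$ itself need not be invertible. Here $\Ker F(a)=\{\bO\}$ by Lemma \ref{l:cycl triv kernel}, but $F(a)$ need not be bounded below. For fixed $t$, $(\bI+tG_\delta)^{-1}\to(\bI+tG)^{-1}$ in norm, since $\bI+tG$ is invertible. Hence the integrands $\im[G_\delta(\bI+tG_\delta)^{-1}]$, which are nonnegative by the analogue of \eqref{e-AKIm2}, converge pointwise in $t$.

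Fatou's lemma, applied in the weak sense to $\Jap{\,\cdot\,\bx,\bx}$, gives the inequality $\le\pi\bI$ for the limit. For the reverse inequality I need a uniform tail bound: the mass must not escape to $|t|\to\infty$. I would try two routes for this.

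\emph{Route A: large $|t|$.} Write $G(\bI+tG)^{-1} = t^{-1}(\bI-(\bI+tG)^{-1})$ and show that $\Jap{(\bI+tG_\delta)^{-1}\bx,\bx}\to0$ as $|t|\to\infty$ uniformly in $\delta$, for $\bx$ in the dense set $\Ran G$.

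\emph{Route B: avoid the limit altogether.} Apply the Sz.-Nagy--Foia\c s dilation directly to the maximal dissipative, possibly unbounded, operator $-G^{-1}$ defined on $\Ran G$. Then the same spectral computation gives exactly $\pi\bI$ with no limit in $\delta$.

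I would attempt Route B first, since it matches the paper's announced use of the functional model. The only remaining work is checking maximal dissipativity of $-G^{-1}$, which follows from $\bI+tG$ being invertible for real $t$.
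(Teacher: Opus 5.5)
Steps 1 and 2 are sound. Step 1 does still need Borel measurability of $t\mapsto\Jap{\bM^{t\Gamma}(E)\bx,\bx}$, which takes the continuity-plus-monotone-class argument the paper uses. After the substitution $\tau=1/t$, your regularization $G_\delta=G+i\delta\bI$ is exactly the paper's passage from $R$ to $R+i\e\bI$ with $R=-G^*$. The genuine gap is Step 3, which is the heart of the theorem. As written you only get $\int_\R\im[G(\bI+tG)^{-1}]\,\dd t\le\pi\bI$ from Fatou, and neither route closes the reverse inequality.

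\emph{Route A.} A uniform statement $\Jap{(\bI+tG_\delta)^{-1}\bx,\bx}\to0$ bounds the integrand $-t^{-1}\im\Jap{(\bI+tG_\delta)^{-1}\bx,\bx}$ only by $o(|t|^{-1})$. That is not integrable, so it gives no control of the tails.

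\emph{Route B.} This relocates the limit rather than removing it. After dilating $T=-G^{-1}$ you still have to let $\e\to0$ in $\int_\R\im(t-i\e-T)^{-1}\,\dd t=\pi\bI$. The domination you used in Step 2 is unavailable here. That domination really comes from $\im(z-T)^{-1}=(\bar z-T^*)^{-1}(\im T-\im z)(z-T)^{-1}$ with $\im T$ bounded, which gives $O(|t|^{-2})$ rather than the $|t|^{-1}$ decay you quote. For the unbounded $T$, $\im T$ is unbounded. Write $\mu_\bx=\Jap{\bE\ci L(\fdot)\bx,\bx}$ and $P_\e=p_{i\e}$. Then $\Jap{\im(t-i\e-T)^{-1}\bx,\bx}=\pi(P_\e*\mu_\bx)(t)$, and the boundary limit only yields $\int_\R\Jap{\im(t-T)^{-1}\bx,\bx}\,\dd t=\pi(\mu_\bx)\ti{ac}(\R)\le\pi\|\bx\|^2$. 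Equality holds exactly when $\mu_\bx$ has no singular part. So the remaining work is not maximal dissipativity; it is absolute continuity.

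Absolute continuity is the missing idea, and the paper supplies it in Lemma \ref{l-CNSA}. There $R$ is completely non-self-adjoint, so the minimal unitary dilation of its Cayley transform has purely absolutely continuous spectrum (Sz.-Nagy--Foia\c s). Hence $\Jap{(z\bI-R)^{-1}\bx,\bx}$ is the Cauchy integral of some $w\in L^1(\R)$. The invertible case applied to $R+i\e\bI$ gives $\int_\R w(s-i\e)\,\dd s=\|\bx\|^2$, and mass preservation of Poisson extensions then gives $\int_\R w=\|\bx\|^2$.

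Your Route B can be closed in the same spirit. The fact you cite is the right one, but it serves a different purpose than the one you assign it. Maximality of $T$ follows from $\sigma(G)\subset\overline{\C_+}$. Invertibility of $\bI+tG$ for all real $t$ instead shows that $\R\subset\rho(T)$: we have $(T-t)^{-1}=-G(\bI+tG)^{-1}$, which equals $-G$ at $t=0$. Hence $\pi(P_\e*\mu_\bx)(t)$ has a finite limit at every $t\in\R$. The singular part of a measure is carried by the points where its Poisson extension tends to $+\infty$, as recalled in Section \ref{s:exceptional}. Therefore $\mu_\bx$ is absolutely continuous, and the identity follows. Without this step your argument proves only the inequality $\le\pi\bI$.
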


\subsection{Overview of the proof of Theorem \ref{t-disintegration1}}

In Sections \ref{s-pfdisintegrationPoisson} and \ref{s:CNSA}, we prove Theorem
\ref{t-disintegration1} for the case when $f$ is a Poisson kernel. For rank one and finite rank
perturbations ($\rank \bB=1$ \cite{Aleksandrov, SIMREV}, $\rank\bB<\infty$ \cite{JST}) this is a
pretty standard and easy reasoning. However, in the general case the new difficulties arise and we
have to apply more advanced techniques involving the   Sz.-Nagy--Foia\c s model theory
\cite{SzNF2010}. In fact, this is the hardest part of the proof.

Since linear combinations of Poisson kernels are dense in $C_0(\R)$, see Appendix \ref{s-apppoiss}
below,  we immediately get Theorem \ref{t-disintegration1} for $f\in C_0(\R)$. Finally, a routine
application of the monotone class lemma/Dynkin $\pi$--$\lambda$ theorem allows us to extend the
result to $f\in L^1(\R)$, see Section \ref{s-AleksGenProof}.

\subsection{Proof of Theorem \ref{t-disintegration1} for Poisson kernels} 
\label{s-pfdisintegrationPoisson}

Let $p_a$, $a\in\C_+$ be the Poisson kernel, 
\begin{align}\label{d-poisson}
p_a(s) = \frac1\pi \im \frac{1}{s-a} = \frac{1}{2\pi i} \left( \frac{1}{s-a}-\frac{1}{s-\bar a}  
\right), \qquad s\in \R . 
\end{align}
Trivially for $f=p_a$ the right-hand side of \eqref{Disintegration 011} computes to 
\begin{align*}
\Gamma^{-1} \int_\R p_a(s) \dd s = \Gamma^{-1}. 
\end{align*}

Denote 
\begin{align*}
h_a(t):= \int_\R p_a(s) \bM^{t\Gamma}(\dd s). 
\end{align*}
Recalling that $F$ and $F_{t\Gamma}$ are the Cauchy transforms of the measures $\bM$ and 
$\bM_{t\Gamma}$  respectively, see \eqref{d-M}, \eqref{e-DefFGamma}, we get that  
\begin{align*}
h_a(t)  = \frac1\pi \im 
    F\ci{t\Gamma}(a) =\frac1\pi \im \left( F(a)(\OID + t\Gamma F(a))^{-1}\right);
\end{align*}
note that by Lemma \ref{l-AK} the operator $\OID + t\Gamma F(a)$ is invertible, and we used  
\eqref{e-FGamma} for the second identity.

To prove Theorem \ref{t-disintegration1} for $f=p_a$ we need to show that $\int_\R h_a(t) \dd t 
=\Gamma^{-1}$.  

The substitution $\tau=1/t$ results in
\begin{align*}
\int\ci\R h_a(t)\dd t=
\frac1\pi\int\ci\R 
\im \left[F(a)(\tau\OID + \Gamma F(a))^{-1}\frac{1}{\tau}\right] \dd \tau ;
\end{align*}
note that since $\OID + t\Gamma F(a)$ is invertible, the operator $\tau \OID +\Gamma F(a)$ is 
invertible for all $\tau\in \R\setminus\{0\}$. The partial fraction decomposition 
\[
\Gamma F(a) \left(\tau\OID + \Gamma F(a)\right)^{-1}\frac{1}{\tau}
=
-(\tau\OID + \Gamma F(a))^{-1}
+\frac{1}{\tau}\OID
\]
now yields
\begin{align*}
\int\ci\R h_a(t)\dd t
&=
-\frac1\pi\int\ci\R \im\left[
\Gamma^{-1}(\tau\OID + \Gamma F(a))^{-1}
\right]\dd \tau  \\
&=
\frac1\pi\int\ci\R \im\left[
(\tau\OID +  F(a)^* \Gamma )^{-1} \Gamma^{-1}
\right]\dd \tau.
\end{align*}
To compute this integral let us note that 
\begin{align}\label{e:sim01}
\Gamma(\tau\OID +  F(a)^* \Gamma )  &= \Gamma^{1/2} (\tau\OID + \Gamma^{1/2} F(a)^* 
\Gamma^{1/2} )  \Gamma^{1/2} \intertext{so}\notag
(\tau\OID +  F(a)^* \Gamma )^{-1} \Gamma^{-1}  & =  \Gamma^{-1/2} (\tau\OID + \Gamma^{1/2} F(a)^* 
\Gamma^{1/2} )^{-1}  \Gamma^{-1/2}. 
\end{align}
By Equation \eqref{e-herglotzF}, we have $\im F(a)^* = -\im F(a) \le \OZ$ and so we conclude that
$\im ( \Gamma^{1/2} F(a)^* \Gamma^{1/2}  ) \le \OZ$. We know that the operators 
$\tau\OID +  \Gamma F(a) $ and so the operators $\tau\OID +  F(a)^* \Gamma$ are invertible for all 
$\tau\in \R\setminus \{0\}$, so by similarity relation \eqref{e:sim01} the same holds for 
$\tau\OID + \Gamma^{1/2} F(a)^* \Gamma^{1/2} $.

By Lemma \ref{l:cycl triv kernel} $\Ker F(z) = \{\OZ\}$ for all $z\in \C \setminus \R$. Since
$F(a)^* = F(\overline a)$, we conclude that operators $F(a)^*$, and so 
$\Gamma^{1/2} F(a)^*\Gamma^{1/2} $ also have trivial kernels.

\begin{defn}
A bounded operator $R$ is called \emph{dissipative} if $\im R\ge \bO$. 
\end{defn}

It is well known and easy to show that for a dissipative $R$
\[
\sigma(R) \subset \C_+\cup \R. 
\]

Theorem  \ref{t-disintegration1} follows immediately from the lemma below, applied to a dissipative 
operator $R= - \Gamma^{1/2} F(a)^* \Gamma^{1/2}$. 
\begin{lem}\label{l-integraleval}
Given a dissipative  operator $R$ such that $\sigma(R)\subset \C_+\cup \{0\},$ and \linebreak
$\Ker R = \{\OZ\},$  we have
\begin{align}\label{e-integraleval}
\frac{1}{\pi}\int_\R \im \left[ (s\OID - R)^{-1}  \right] \dd s = \I.  
\end{align}
\end{lem}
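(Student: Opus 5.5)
The plan is to reduce the identity to a statement about the spectral measure of a self-adjoint dilation of $R$, computed first off the real axis and then on it. All integrals are understood in the weak sense: for each $\bx$ one proves the scalar identity
\begin{align*}
\frac1\pi\int_\R \Jap{\im[(s\OID-R)^{-1}]\bx,\bx}\,\dd s=\|\bx\|^2 .
\end{align*}
For $s\ne0$ the operator $s\OID-R$ is invertible because $\sigma(R)\subset\C_+\cup\{0\}$. A direct computation gives
\begin{align*}
\im (s\OID-R)^{-1}=(s\OID-R^*)^{-1}(\im R)(s\OID-R)^{-1}\ge\bO .
\end{align*}
So the integrand is a non-negative function of $s$, and there are no convergence issues apart from integrability.

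\textbf{Step 1 (dilation and the regularized integral).} By Sz.-Nagy--Foia\c s theory, the dissipative operator $R$ on $\cK$ has a self-adjoint dilation $L$ on a larger space $\cH'\supset\cK$. This means
\begin{align*}
(R-z\OID)^{-1}=P\ci\cK(L-z\OID)^{-1}|\ci\cK \qquad\text{for } \im z<0 .
\end{align*}
One way to obtain it is to apply the unitary dilation theorem to the Cayley transform $(R-i\OID)(R+i\OID)^{-1}$, which is a contraction. Fix $\e>0$ and let $\mu_\bx:=\Jap{\bE\ci L(\fdot)\bx,\bx}$, a measure of total mass $\|\bx\|^2$. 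Then
\begin{align*}
\frac1\pi\Jap{\im[((s-i\e)\OID-R)^{-1}]\bx,\bx}=\int_\R \frac{1}{\pi}\,\frac{\e}{(s-\lambda)^2+\e^2}\,\mu_\bx(\dd\lambda),
\end{align*}
which is the Poisson extension $\mu_\bx(s+i\e)$. By Fubini for positive measures, its $s$-integral equals $\mu_\bx(\R)=\|\bx\|^2$ for every $\e>0$.

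\textbf{Step 2 (letting $\e\to0$).} For $s\ne0$ we have $((s-i\e)\OID-R)^{-1}\to(s\OID-R)^{-1}$ in norm. The classical boundary behaviour of Poisson integrals identifies the pointwise limit as $\frac{\dd\mu_\bx^{ac}}{\dd s}(s)$ a.e. Hence the left side of \eqref{e-integraleval} evaluated on $\bx$ equals $\mu^{ac}_\bx(\R)$, and it suffices to show that $\mu_\bx$ has no singular part.

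Since $\sigma(R)$ stays away from $\R\setminus\{0\}$, the resolvents are locally uniformly bounded near every real point other than $0$. So $\mu_\bx(s+i\e)$ stays locally bounded there, and the singular part of $\mu_\bx$ can live only at the point $0$, i.e.\ it is an atom $\mu_\bx(\{0\})$. At infinity there is no loss, since the integrand is $O(\|\im R\|\,\|\bx\|^2/s^2)$ uniformly in $\e$.

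\textbf{Step 3 (no atom at $0$; the expected obstacle).} By the dilation formula,
\begin{align*}
\mu_\bx(\{0\})=\lim_{\e\to0}\Jap{i\e(i\e\OID-L)^{-1}\bx,\bx}=\lim_{\e\to0}\Jap{i\e(i\e\OID-R)^{-1}\bx,\bx} .
\end{align*}
So I must show that $\e(\e\OID+iR)^{-1}\to\bO$ weakly, which is where $\Ker R=\{\bO\}$ enters. First, for dissipative $R$ we have $\Ker R=\Ker R^*$: if $R\bx=\bO$ then $\Jap{\im R\,\bx,\bx}=0$, hence $(\im R)\bx=\bO$ and so $R^*\bx=\bO$. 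Therefore $\overline{\Ran R}=(\Ker R^*)^\perp=\cK$. Next, $-iR$ generates a contraction semigroup, so the family $\e(\e\OID+iR)^{-1}$ is uniformly bounded. On vectors $R\by$ it equals
\begin{align*}
\e(\e\OID+iR)^{-1}R\by=-i\e\by+i\e^2(\e\OID+iR)^{-1}\by,
\end{align*}
which is $O(\e)$. By density of $\Ran R$ and the uniform bound, the family tends to $\bO$ strongly; this is the mean-ergodic argument. Hence $\mu_\bx(\{0\})=0$, so $\mu_\bx$ is absolutely continuous and \eqref{e-integraleval} follows.

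I expect the main difficulty to be the careful justification in Steps 2–3. This includes checking that the dilation, rather than $R$ itself, carries the measure whose atom at $0$ we must exclude, and getting the sign and half-plane conventions of the Sz.-Nagy--Foia\c s dilation right.
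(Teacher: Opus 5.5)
Step 3 as written is wrong, though it is easily repaired. Your dilation formula $(R-z\OID)^{-1}=P\ci\cK(L-z\OID)^{-1}\big|_{\cK}$ holds only for $\im z<0$. For $\im z>0$, taking adjoints gives $P\ci\cK(L-z\OID)^{-1}\big|_{\cK}=(R^*-z\OID)^{-1}$, so at $z=i\e$ you get $R^*$, not $R$.

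The family you then use is also wrong. Since $\re(-iR)=\im R\ge\bO$, it is $iR$, not $-iR$, that generates a contraction semigroup. Consequently $\e(\e\OID+iR)^{-1}$ need not be bounded or even defined. For example, $R=\mathrm{diag}(i/n)$ on $\ell^2$ satisfies every hypothesis of the lemma, yet $i\e\in\sigma(R)$ for $\e=1/n$.

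The remedy is to compute the atom from the lower half-plane, where your formula is valid:
\begin{align*}
\mu_\bx(\{0\})=\lim_{\e\to0^+}\Jap{-i\e(-i\e\OID-L)^{-1}\bx,\bx}=\lim_{\e\to0^+}\Jap{\e(\e\OID-iR)^{-1}\bx,\bx}.
\end{align*}
Because $\re(iR)=-\im R\le\bO$, you have $\|\e(\e\OID-iR)^{-1}\|\le1$. You also have $\e(\e\OID-iR)^{-1}R\by=i\e\by-i\e^2(\e\OID-iR)^{-1}\by=O(\e)$. Together with $\cRan R=\cK$, your mean-ergodic argument then goes through. Equivalently, you can keep $z=i\e$ and run your argument for $R^*$, using $\cRan R^*=(\Ker R)^\perp=\cK$.

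With this correction your proof is valid, and it differs genuinely from the paper's. The paper proceeds in three steps:
\begin{enumerate}
\item it proves the invertible case by a contour integral;
\item it notes that $R$ is completely non-self-adjoint and gets absolute continuity of the representing measure (Lemma \ref{l-CNSA}) from the Sz.-Nagy--Foia\c s theorem that the minimal unitary dilation of a completely non-unitary contraction has purely absolutely continuous spectrum;
\item it removes the regularization using that Poisson extension preserves mass.
\end{enumerate}
You replace the contour integral by Tonelli. You replace the Sz.-Nagy--Foia\c s theorem by a direct argument tailored to the hypotheses: $\sigma(R)\subset\C_+\cup\{0\}$ confines the singular part to an atom at $0$, and $\Ker R=\{\bO\}$ kills that atom. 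This is more elementary. The paper's route, however, yields the more general Lemma \ref{l-CNSA}. That lemma holds for every completely non-self-adjoint dissipative operator, including ones whose spectrum meets $\R$ in a large set, where no such local argument is available.

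Note also that once Steps 2--3 carry the absolute continuity, you need no dilation at all. The function $z\mapsto\Jap{(z\OID-R)^{-1}\bx,\bx}$ is analytic on $\C_-$, has nonnegative imaginary part there, and equals $\|\bx\|^2/z+O(|z|^{-2})$ at infinity. So the Herglotz--Nevanlinna representation already writes it as $\int(z-t)^{-1}\nu_\bx(\dd t)$ with $\nu_\bx\ge0$ and $\nu_\bx(\R)=\|\bx\|^2$. This also spares you from checking that $\Cay^{-1}$ of the unitary dilation is a genuine self-adjoint operator, i.e.\ that $1$ is not an eigenvalue of the dilation.
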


\subsubsection{Proof of Lemma \ref{l-integraleval} for invertible $R$ }
In the case when $R$ is invertible, so $\sigma(R) \subset \C_+$ the proof of Lemma 
\ref{l-integraleval} is a standard calculation using Riesz functional calculus. It is included for 
the  reader's convenience.

Recall that for $s\in \R$ 
\begin{align*}
\im \left[ (s\OID - R)^{-1}  \right] = \frac{1}{2i}\left( (s\OID - R)^{-1} - (s\OID - R^*)^{-1}  
\right).  
\end{align*}
Denoting for $z\in \C\setminus\R$
\begin{align}\label{def:p-oper}
\bp\ci{R} (z) := \frac{1}{2\pi i}\left( (z\OID - R)^{-1} - (z\OID - R^*)^{-1}  \right)
\end{align}
we see that to prove the lemma it suffices to show that 
\begin{align*}
\int_\R \bp\ci{R}(s) \dd s = \OID. 
\end{align*}

Consider the closed contour $\gamma_r$, $r>\|R\|$,  consisting of the semicircle
\[
S_r=\{z\in \C_+: 
|z|=r\}
\]
in the upper half-plane (traversed in counter-clockwise direction) and the interval 
from $-r$ to $r$ along the real line.

If $R$ is invertible and so $\sigma(R)\subset \C_+$, we get that $\sigma(R^*)\subset \C_-$, so
$\bp\ci{R} (z) $ is invertible on $\gamma_r$ and the integral $\int_{\gamma_r} \bp\ci{R} (z) \dd z$
is well defined.

It is easy to see that $\|\bp\ci R (z)\| = \mathcal{O}(r^{-2})$ for $|z|=r$, so  
\begin{align*}
\int\ci{S_r}\bp\ci R (s) \dd s \to 0 \qquad \text{as} \quad r\to \infty.
\end{align*}
Therefore we have
\begin{align*}
 \int_\R \bp\ci{R} (s)  \dd s =  \lim_{r\to+\infty}  \int_{-r}^r \bp\ci{R} (s)  \dd s 
=  \lim_{r\to+\infty}  \int_{\gamma_r} \bp\ci{R} (z) \dd z.
\end{align*}

Recalling that $\sigma(R^*)\subset \C_-$ we see that $(s\OID - R^*)^{-1}$ is analytic for $s\in
\C_+$,  so the integral of this term over $\gamma_r$ evaluates to the zero operator $\OZ.$ Therefore
for $r>\|R\|$
\begin{align*}
\int_{\gamma_r} \bp\ci{R} (z) \dd z = \frac{1}{2\pi i} \int_{\gamma_r} (z\OID - R)^{-1}  
\dd z = \OID;
\end{align*}
the last equality follows from the Riesz functional calculus, applied to the function $f(z) \equiv 
1$. It can also be easily obtained by changing the contour to the circle of radius $r$ and using 
the power series decomposition 
\begin{align*}
(z\OID - R)^{-1} = \sum_{k=0}^\infty \frac{R^k}{z^{k+1}}, \qquad |z|> \|R\|. 
\end{align*}
This concludes the proof of Theorem \ref{t-disintegration1} for Poisson kernels.
\hfill\qed

%%%%%%%%%%%%%%%%%%%%%%%%%%%%%%%
\subsubsection{Proof of Lemma \ref{l-integraleval} in the general case}
Let us recall some definitions.

\begin{defn}
A dissipative operator $R$ ($\im R\ge\OZ$) is called \emph{completely non-self-adjoint} if there is 
no reducing subspace $E$ of $R$ such that $R\bigm|_{E}$ is self-adjoint. 
\end{defn}

It is easy to see that $R$ from Lemma \ref{l-integraleval} is completely non-self-adjoint. Indeed,
we have $\sigma(R)\cap \R\subset\{0\}$,  and $0$ cannot be an eigenvalue of $R$, because $\Ker R$ is
trivial.

\begin{lem}\label{l-CNSA}
Given a completely non-self-adjoint dissipative operator $R$ and a vector $\bx\in\cH$ there exists 
a \tup(unique\tup) non-negative function $w=w_\bx\in L^1(\R)$ such that for $z\in \C_-$ 
\begin{align}\label{e:sp-dens}
\left\langle (z\I- R )^{-1} \bx,\bx  \right\rangle_\cH = \int_\R \frac{w(t)}{z-t} \dd t.
\end{align}
\end{lem}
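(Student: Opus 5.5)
The plan is to reduce the statement to the Sz.-Nagy--Foia\c s theory of contractions via the Cayley transform, and then read off \eqref{e:sp-dens} from the spectral theorem for a self-adjoint dilation. Uniqueness of $w$ is the easy part. If two non-negative $w_1,w_2\in L^1(\R)$ satisfy \eqref{e:sp-dens}, then the Cauchy integrals of the real measure $(w_1-w_2)\dd t$ coincide on $\C_-$, and so do their imaginary parts, i.e.\ the Poisson integrals. By the Stieltjes inversion formula (the Poisson kernels are dense in $C_0(\R)$, see Appendix \ref{s-apppoiss}), the two measures are equal, so $w_1=w_2$ a.e.

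\textbf{Step 1: Cayley transform.} Since $\im R\ge\bO$, the operator $R+i\bI$ is invertible: $\|(R+i\bI)\bx\|\ge\|\bx\|$, and the same bound holds for the adjoint. Define
\begin{align*}
T:=(R-i\bI)(R+i\bI)^{-1}.
\end{align*}
The identity $\|(R+i\bI)\bx\|^2-\|(R-i\bI)\bx\|^2=4\Jap{(\im R)\bx,\bx}\ge0$ shows that $T$ is a contraction. The point $1$ is not an eigenvalue of $T$, and $R=i(\bI+T)(\bI-T)^{-1}$. A reducing subspace on which $T$ is unitary is exactly a reducing subspace on which $R$ is self-adjoint. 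Hence, because $R$ is completely non-self-adjoint, $T$ is a completely non-unitary contraction.

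\textbf{Step 2: Unitary dilation.} By Sz.-Nagy--Foia\c s \cite{SzNF2010}, $T$ has a minimal unitary dilation $U$ on some $\cH'\supset\cH$, with $T^n=P_\cH U^n|_\cH$ for $n\ge0$. Moreover, since $T$ is completely non-unitary, $U$ has absolutely continuous spectrum: for every $\bx\in\cH$ the spectral measure of $U$ at $\bx$ is absolutely continuous with respect to arc length on $\T$. In particular $1$ is not an eigenvalue of $U$. This lets me define the self-adjoint operator $L:=i(\bI+U)(\bI-U)^{-1}$, possibly unbounded, on $\cH'$.

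\textbf{Step 3: Resolvent compression.} For $z\in\C_-$, the resolvent $(z\bI-R)^{-1}$ is a function $\varphi_z(T)$ of $T$. Here $\varphi_z(\zeta)=(z-i(1+\zeta)(1-\zeta)^{-1})^{-1}$ is analytic in a neighbourhood of the closed disc. On the disc the map $\zeta\mapsto i(1+\zeta)/(1-\zeta)$ lands in the closed upper half-plane, and $z\in\C_-$, so $\varphi_z$ is bounded and analytic on $\D$. It is continuous on $\overline\D$, and tends to $0$ as $\zeta\to1$. Its power series expansion then gives
\begin{align*}
(z\bI-R)^{-1}=\varphi_z(T)=P_\cH\varphi_z(U)|_\cH=P_\cH(z\bI-L)^{-1}|_\cH,
\end{align*}
via the $H^\infty$ functional calculus for completely non-unitary contractions, or via Abel summation of $\sum \hat\varphi_z(n)T^n$ using $T^n=P_\cH U^n|_\cH$. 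The spectral theorem for $L$ yields
\begin{align*}
\Jap{(z\bI-R)^{-1}\bx,\bx}=\int_\R\frac{\mu_\bx(\dd t)}{z-t}.
\end{align*}
Here $\mu_\bx$ is the spectral measure of $L$ at $\bx$, which is the push-forward of the spectral measure of $U$ under the Cayley map $\T\setminus\{1\}\to\R$. That map is a diffeomorphism, so it maps absolutely continuous measures to absolutely continuous measures. Hence $\mu_\bx=w\,\dd t$ with $w\ge0$ and $\int w=\|\bx\|^2<\infty$.

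\textbf{Main obstacle.} The main point is Step 3: transferring the dilation identity from powers of $T$ to the resolvent of $R$ at points of $\C_-$. This requires care because $\varphi_z$ has a boundary point, $\zeta=1$, where it is merely continuous. My first attempt would be to use $\varphi_z(T)=\lim_{r\to1^-}\varphi_z(rT)$ in norm. For $r<1$ the function $\varphi_z(r\,\cdot)$ is analytic near $\overline\D$, and the dilation identity holds for it exactly. Passing to the limit $r\to1^-$ is then straightforward. On the $U$ side, $\varphi_z(rU)\to\varphi_z(U)$ strongly by dominated convergence in the spectral integral. On the $T$ side, I would argue directly: $(z\bI-R)^{-1}$ is a bounded operator, and it equals a resolvent-type expression in $T$, so the convergence can be checked there.
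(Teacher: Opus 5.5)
Your proposal is correct and follows essentially the paper's route: a Cayley transform to a completely non-unitary contraction, a minimal unitary dilation with absolutely continuous spectrum (Sz.-Nagy--Foia\c s), compression of $\varphi_z(U)$ to $\cH$, and a change of variables back to $\R$. The one inaccuracy is your ``main obstacle'': $\varphi_z(\zeta)=(1-\zeta)/\bigl((z-i)-(z+i)\zeta\bigr)$ is rational with its only possible pole at $\Cay(z)$, which lies outside $\overline\D$ for $z\in\C_-$, so $\zeta=1$ is a removable point, the power series converges absolutely on $\overline\D$, and the $r\to1^-$ approximation is unnecessary (the paper simply notes that $f$ is analytic in a neighbourhood of the closed disc).
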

The proof of this lemma will be presented below in Section \ref{s:CNSA}. 

Let us show that this lemma implies Lemma \ref{l-integraleval}. For $z\in\C_-$ slightly abusing the 
notation let us denote by $w(z)$ the Poisson extension of $w$ to the point $z$, 
\begin{align*}
w(z) = \frac1\pi \im \int_\R  \frac{w(t)}{z-t} \dd t 
= \frac1\pi \im \int_\R  \frac{w(t)}{t - \overline z} \dd t   
= \int_\R  w(t) p_{\overline z} (t) \dd t.
\end{align*}
Then \eqref{e:sp-dens} implies that 
\begin{align} \label{e:PoissonExt}
\Jap{\im \left[(z\OID - R)^{-1}  \right] \bx, \bx }_\cH = \im \jap{(z\bI - R)^{-1} \bx, \bx} = \pi 
w(z). 
\end{align}
Since $\im w(z)\ge 0$ for $z\in\C_-$, we see that for a.a.~$s\in\R$
\begin{align}\label{e:bv01}
w(s) = \lim_{\e\to 0^+} w(s-i\e), 
\end{align}
and since the left-hand side of \eqref{e:sp-dens} is analytic on $\R\setminus\{0\}$ we can assume
(take a representative for an element $w\in L^1(\R)$) that \eqref{e:bv01} holds for all
$s\in\R\setminus\{0\}$, and
\begin{align} \label{e:w(s)}
w(s) = \frac1\pi \im  \jap{(s\bI - R)^{-1} \bx, \bx}, \qquad \forall s\in\R\setminus\{0\}. 
\end{align}
Since $R+i\e\bI$ is an invertible dissipative operator and 
\begin{align*}
w(s-i\e) = \im  \jap{(s\bI - (R + i\e\bI))^{-1} \bx, \bx}, \qquad s \in \R
\end{align*}
we get from Lemma \ref{l-integraleval} for invertible $R$ (which we already proved) that 
\begin{align*}
\int_\R w(s-i\e) \dd s = \|\bx\|^2. 
\end{align*}
But the Poisson extension of a (positive) measure on the real line $\R$ to the line $\R-i\e$ 
preserves the total mass, so 
\begin{align*}
\int_\R w(s) \dd s = \|\bx\|^2. 
\end{align*}
Using the identity \eqref{e:w(s)} we get that 
\begin{align*}
\frac1\pi \int_\R \Jap{ \im \left[ (s\bI - R)^{-1}    \right] \bx, \bx  } \dd s =\|\bx\|^2, 
\end{align*}
which is exactly the conclusion of Lemma \ref{l-integraleval}.\hfill\qed

Thus, Lemma \ref{l-integraleval}, and so Theorem \ref{t-disintegration1} for Poisson kernels are 
proven modulo Lemma \ref{l-CNSA}. 

\subsection{Proof of Lemma \ref{l-CNSA}}\label{s:CNSA}
Note that the statement of Lemma \ref{l-CNSA} is reminiscent of the statement of the spectral
theorem, with $w(t) \dd t$ being the spectral measure. But the operator $R$ is not self-adjoint, so
how that is possible?

A reader familiar with the Sz.-Nagy--Foias functional model for contractions and its analogue for
dissipative operators could probably guess where this ``spectral measure'' comes from. Namely, that
it comes from the self-adjoint dilation of a dissipative operators.

Below we present a rigorous proof of the lemma: we will keep all presentation self-contained, with 
no knowledge of the functional model required. 

It is easier for us to work with contractions, so the first step is to transform the dissipative 
operator $R$ ($\im R\ge \bO$) to a contraction $T$ ($\|T\|\le 1$) using Cayley 
transform. 

Let us recall that an operator $T$ is called  a contraction if $\|T\|\le1$. 

Let us also recall that the Cayley transform $\Cay$, 
\begin{align*}
\Cay(z) = \frac{z-i}{z+i}
\end{align*}
conformally  maps the upper half-plane $\C_+$ to the unit disc $\D$.  It is also well known and 
easy to show that for a \emph{bounded} operator $R$
\begin{enumerate}
\item  $R=R^*$ if and only if $\Cay(R)$ is unitary; 
\item $R$ is a disspative operator ($\im R\ge \bO$) if and only if $T=\Cay(R)$ is a contraction 
(i.e.~$\|T\|\le1$). 
\end{enumerate}
For statement \cond2  see for example \cite[Theorem 1.4.2]{Nik-book-v2}. Statement \cond1 is 
elementary and can be found in many standard textbooks, cf.~\cite[Ch.~4, Section 3, Theorem 
2]{{BirmanSol-book_1987}}.

\begin{defn}
A contraction  $T$ ($\|T\|\le1$) is called \emph{completely non-unitary}  if there is no reducing 
subspace $E$ for $T$ such that $T\big|_{E}$ is unitary 
\end{defn}
Combining the above statements \cond1, \cond2, and noting that the Cayley transform of the 
restriction to a reducing subspace is just the restriction of the original Cayley transform, we 
deduce that 
\begin{enumerate}\setcounter{enumi}{2}
\item $R$ is a completely non-self-adjoint dissipative operator if and only if $T=\Cay(R)$ is a 
completely non-unitary contraction. 
\end{enumerate}

\subsubsection{Preliminaries: minimal unitary dilation of a contraction and its spectral type}

We need the following simple fact, see \cite[Ch.~I, Theorem 4.2]{SzNF2010}.

\begin{theo}\label{t:dilation}
For a contraction $T$ \tup($\|T\|\le1$\tup) in a Hilbert space $\cH$ there exists a \emph{unitary
dilation}, i.e.~a unitary operator $U$ on a space $\cK\supset \cH$ such that
\begin{align*}
T^n = P\ci{\cH} U^n \big|_{\cH}\qquad \forall n\in\N. 
\end{align*}
Moreover, one can also find a \emph{minimal} unitary dilation, meaning that 
\begin{align*}
\cspn \{U^n\cH: n\in\Z  \} = \cK. 
\end{align*}
\end{theo}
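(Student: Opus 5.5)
We will prove the existence of a unitary dilation by the standard Sch\"affer-type construction, and then pass to the minimal part. The first step is to introduce the defect operators
\begin{align*}
D\ci T := (\bI - T^*T)^{1/2}, \qquad D\ci{T^*} := (\bI - TT^*)^{1/2},
\end{align*}
which are well defined since $\|T\|\le 1$, and to establish the intertwining relations $T D\ci T = D\ci{T^*} T$ and $T^* D\ci{T^*} = D\ci T T^*$. These follow from $T(\bI-T^*T) = (\bI-TT^*)T$ by approximating the square root by polynomials, or alternatively by continuous functional calculus. With these relations, the $2\times 2$ block operator
\begin{align*}
J := \begin{pmatrix} T & D\ci{T^*} \\ D\ci T & -T^* \end{pmatrix}
\end{align*}
on $\cH\oplus\cH$ is unitary; this is the Julia operator, checked by direct multiplication $J^*J = JJ^* = \bI$.

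The second step builds the dilation on $\cK_0 := \bigoplus_{n\in\Z}\cH$, with $\cH$ identified with the $0$-th copy. We define $U$ so that it acts as $J$ on the pair of coordinates $(0,1)$, mapping $(x_0,x_1)\mapsto(Tx_0 + D\ci{T^*}x_1,\ D\ci T x_0 - T^* x_1)$, and as a shift on the remaining coordinates. Concretely, we arrange that the output in coordinate $0$ is $Tx_0 + D\ci{T^*}x_1$, the output in coordinate $-1$ is $D\ci T x_0 - T^*x_1$, and the rest are shifted. Unitarity of $U$ then follows from unitarity of $J$ together with that of the shifts.

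The key verification is $P\ci\cH U^n|_\cH = T^n$ for $n\ge 1$. Starting from $x\in\cH$ (coordinate $0$), $U$ sends $Tx$ to coordinate $0$ and $D\ci T x$ to coordinate $-1$. The shift-part then moves the negative coordinates further away, so they never return to coordinate $0$. An induction on $n$ therefore gives that the $0$-th component of $U^n x$ is $T^n x$. The delicate point, which we expect to be the main obstacle, is getting the index directions consistent: positive coordinates must feed into coordinate $0$ through $D\ci{T^*}$, while negative coordinates must flow outward. We will fix this bookkeeping carefully before running the induction.

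Finally, for minimality we set $\cK := \cspn\{U^n\cH: n\in\Z\}$. This space is invariant under both $U$ and $U^{-1} = U^*$, so it reduces $U$, and $U|_{\cK}$ is unitary. Since $\cH\subset\cK$, the compression identity $T^n = P\ci\cH U^n|_\cH$ is unchanged by the restriction. Thus $U|_{\cK}$ is a minimal unitary dilation of $T$. Alternatively, the whole statement can be cited directly from \cite[Ch.~I, Theorem 4.2]{SzNF2010}.
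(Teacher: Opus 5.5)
Your proof is correct, and it goes further than the paper: the paper gives no argument for this statement and simply cites \cite[Ch.~I, Theorem 4.2]{SzNF2010}.

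The index bookkeeping you flag as the main obstacle is already settled by your own description. $U$ maps $\cH_0\oplus\cH_1$ onto $\cH_0\oplus\cH_{-1}$ via $J$, and maps $\cH_n$ onto $\cH_{n-1}$ for $n\ge 2$ and for $n\le -1$. Both families of summands are orthogonal decompositions of $\bigoplus_{n\in\Z}\cH$, so $U$ is unitary.

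To make the induction airtight, state two facts explicitly. First, $\bigoplus_{n\le 0}\cH_n$ is $U$-invariant, because coordinate $m\ge 1$ of $Uy$ is coordinate $m+1\ge 2$ of $y$. Second, for $y$ in this subspace, $P\ci{\cH} Uy=Ty_0+D\ci{T^*}y_1=TP\ci{\cH} y$, since $y_1=\bO$. Iterating gives $P\ci{\cH} U^n|_{\cH}=T^n$. Your restriction to $\cspn\{U^n\cH:n\in\Z\}$ for minimality is fine as written.

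The cited textbook argument takes a different route. It passes through the minimal isometric dilation on $\cH\oplus\mathcal D_T\oplus\mathcal D_T\oplus\cdots$, with $\mathcal D_T=\overline{D\ci{T}\cH}$, and then takes its minimal unitary extension. Your explicit Sch\"affer matrix is more elementary and self-contained.

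What the citation adds is uniqueness of the minimal unitary dilation up to an isomorphism fixing $\cH$. The paper tacitly relies on this when it speaks of \emph{the} minimal unitary dilation in Theorem \ref{t:DilSpType}. The statement you were asked to prove does not require it.
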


We need the following simple result, see \cite[Ch.~II, Theorem 6.4]{SzNF2010}.

\begin{theo}\label{t:DilSpType}
Let $T$ be a completely non-unitary contraction and $U$ be its \emph{minimal} unitary dilation. 
Then $U$ has purely absolutely continuous spectrum. 
\end{theo}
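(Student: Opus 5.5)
The plan is to split the dilation space $\cK$ into pieces on which $U$ acts as a bilateral shift, since a bilateral shift is unitarily equivalent to multiplication by $z$ on $L^2(\T;\mathcal{L})$ with Lebesgue measure. Set
\begin{align*}
\mathcal{L}:=\overline{(U-T)\cH}, \qquad \mathcal{L}_*:=\overline{(\bI-UT^*)\cH}.
\end{align*}

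\textbf{Step 1: wandering subspaces.} From $T^n=P\ci\cH U^n|_\cH$ one checks directly that $\mathcal{L}$ and $\mathcal{L}_*$ are \emph{wandering*} for $U$, meaning $U^n\mathcal{L}\perp U^m\mathcal{L}$ for $n\ne m$, and similarly for $\mathcal{L}_*$. Put
\begin{align*}
M(\mathcal{L}):=\bigoplus_{n\in\Z}U^n\mathcal{L}, \qquad M(\mathcal{L}_*):=\bigoplus_{n\in\Z}U^n\mathcal{L}_*.
\end{align*}
These are reducing subspaces for $U$, and $U$ restricted to each is a bilateral shift. Consequently, for any $\bx$ in either subspace the spectral measure $\mu_\bx$ of $U$ is absolutely continuous with respect to arc length. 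The absolutely continuous subspace of a unitary operator is a closed subspace. Hence $U$ restricted to $M(\mathcal{L})\vee M(\mathcal{L}_*)$ has purely absolutely continuous spectrum.

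\textbf{Step 2: structure of $\cK$ from minimality.} I will show
\begin{align*}
\cK = \Bigl(\bigoplus_{n\ge1}U^{-n}\mathcal{L}_*\Bigr)\oplus\cH\oplus\Bigl(\bigoplus_{n\ge0}U^{n}\mathcal{L}\Bigr).
\end{align*}
Orthogonality of the three pieces follows from the dilation identity $P\ci\cH U^n|_\cH=T^n$ for $n\ge0$ and its adjoint. Completeness uses minimality, $\cspn\{U^n\cH:n\in\Z\}=\cK$, together with an induction. Writing $U^{n+1}h=U^n(U-T)h+U^nTh$ gives
\begin{align*}
U^{n}\cH\subset \cH\oplus\bigoplus_{0\le k<n}U^k\mathcal{L}\quad (n\ge 0),
\end{align*}
and the analogous formula with $\mathcal{L}_*$ handles negative powers.

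\textbf{Step 3: use of complete non-unitarity.} Let $\mathcal{R}:=\cK\ominus\bigl(M(\mathcal{L})\vee M(\mathcal{L}_*)\bigr)$. This subspace reduces $U$, and by Step 2 it is orthogonal to both outer summands, so $\mathcal{R}\subset\cH$. For $h\in\mathcal{R}$ we have $Uh\in\mathcal{R}\subset\cH$, hence $Th=P\ci\cH Uh=Uh$; likewise $T^*h=U^*h$. Therefore $\mathcal{R}$ reduces $T$ and $T|_{\mathcal{R}}$ is unitary. Since $T$ is completely non-unitary, $\mathcal{R}=\{\bO\}$, so $\cK=M(\mathcal{L})\vee M(\mathcal{L}_*)$. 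By Step 1, $U$ then has purely absolutely continuous spectrum.

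I expect the main obstacle to be Step 2, namely the completeness of the decomposition obtained from minimality. This is where the bookkeeping is most delicate. In particular, one must verify carefully that for negative $n$ the vectors $U^{n}h$ lie in $\cH\oplus\bigoplus_{1\le k\le |n|}U^{-k}\mathcal{L}_*$. I will do this using the adjoint identity $U^*h=T^*h+U^*(\bI-UT^*)h$.
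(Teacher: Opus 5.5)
Your proof is correct, and it takes a different route from the paper. The paper does not really prove this statement: it cites Sz.-Nagy--Foia\c s and sketches that for a completely non-unitary $T$ the standard construction yields a (not necessarily minimal) unitary dilation unitarily equivalent to a bilateral shift, i.e.\ multiplication by $\xi$ on $L^2(\T;E)$. The minimal dilation is then its restriction to a reducing subspace, so absolute continuity is inherited by restriction.

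You instead stay inside the minimal dilation space. You exhibit the bilateral-shift reducing subspaces $M(\mathcal{L})$ and $M(\mathcal{L}_*)$, and use minimality to get the three-term orthogonal decomposition of $\cK$. That decomposition places the residual subspace $\mathcal{R}$ inside $\cH$, where it reduces $T$ to a unitary, so complete non-unitarity forces $\mathcal{R}=\{\bO\}$ and $\cK=M(\mathcal{L})\vee M(\mathcal{L}_*)$. In general $M(\mathcal{L})$ and $M(\mathcal{L}_*)$ are not orthogonal; already $\mathcal{L}\not\perp\mathcal{L}_*$ for $T=\frac12$ on $\C$. So you need that the absolutely continuous vectors form a closed linear subspace, not merely that reducing subspaces of an absolutely continuous unitary are absolutely continuous, and you correctly invoke this.

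What your route buys is a self-contained argument that shows exactly where complete non-unitarity enters. The paper's sketch is shorter, but the existence of a bilateral-shift dilation for a completely non-unitary $T$ is essentially equivalent to the statement itself, since every absolutely continuous unitary is a reducing part of a bilateral shift of infinite multiplicity. So the real content of the paper's sketch is hidden in the phrase ``standard construction'', and your Step~3 supplies precisely that content.

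Your worry about Step~2 is unfounded. Both inductions work as written; for negative powers use $U^{-(n+1)}h=U^{-n}T^*h+U^{-(n+1)}(\bI-UT^*)h$. The only orthogonality needing a short computation, $\mathcal{L}_*\perp U^k\mathcal{L}$ for $k\ge1$, follows from the dilation identity.
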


In fact, \cite[Ch.~II, Theorem 6.4]{SzNF2010} states a bit more, but for our purposes the above 
simpler statement is sufficient. 

Let us briefly explain its proof. The standard construction of a unitary dilation gives us for a 
completely non-unitary contraction a unitary dilation $U$ which is unitary equivalent to the 
bilateral (possibly multiple) shift, i.e.~the multiplication by the independent variable $\xi$ in 
the vector-valued space $L^2(\T, |\dd \xi| ;E)$, where $E$ is an auxiliary Hilbert space. This 
operator clearly has  purely absolutely continuous spectral measure. 

To get a minimal unitary dilation one needs to restrict this operator to an appropriate reducing 
subspace; of course the  spectral measure of this restriction will still be purely absolutely 
continuous.

\subsubsection{Conclusion of the proof of Lemma \ref{l-CNSA}}

Consider a completely non-unitary contraction $T$, and let $U$ be its minimal unitary dilation. By 
the above Theorem \ref{t:DilSpType} the spectral measure $\mu\ci{U,\bx}$ of $U$ with respect to a 
vector $\bx\in\cH$ is purely absolutely continuous,  $ \mu\ci{U,\bx} (\dd\xi) = w\ci{U,\bx}(\xi) 
|\dd\xi|$. 

Then for any analytic polynomial $f$ and any $\bx\in\cH$
\begin{align}\notag
\Jap{f(T)\bx,\bx} = \Jap{f(U)\bx,\bx}  & = \int_\T f(\xi)  \mu\ci{U, \bx} (\dd \xi)\\ 
\label{e:SpMeas02}
& = \int_\T f(\xi)  w \ci{U, \bx} (\xi) |\dd \xi|.
\end{align}
Standard approximation reasoning allows us to extend this identity to functions holomorphic on 
$\overline \D$, i.e.~on some open neighborhood of $\overline \D$.%
\footnote{In fact this identity is true for any $f\in H^\infty(\D)$, but for our purposes $f$ 
holomorphic on $\overline\D$ is sufficient.}

Let now $R$ be a completely non-self-adjoint dissipative operator, $T=\Cay(R)$ and let $U$ be a 
minimal unitary dilation of $T$; note that $T$ is a completely non-unitary contraction. 

Therefore for $\bx\in\cH$ we can write using \eqref{e:SpMeas02} 
\begin{align}\label{e:SpMeas03}
\Jap{(z\bI - R)^{-1}\bx,\bx} &= \Jap{(z\bI - \Cay^{-1}(T))^{-1}\bx,\bx} = \int_\T f(\xi)  w \ci{U, 
\bx} (\xi) |\dd \xi| , 
\intertext{where} \notag
f(\xi) &= (z - \Cay^{-1}(\xi))^{-1}. 
\end{align}
Note that for $z\in\C_-$ the function $f$ is well-defined and analytic in a neighborhood of the
closed unit disc. Making the change of variables $\xi =\Cay(t)$ in the integral in
\eqref{e:SpMeas03} we get
\begin{align*}
\Jap{(z\bI - R)^{-1}\bx,\bx} = \int_\R \frac{1}{z-t} w\ci{U,\bx}(\Cay(t)) |\Cay'(t)| \dd t, 
\end{align*}
so \eqref{e:sp-dens} holds with $w(t) = w\ci{U,\bx}(\Cay(t)) |\Cay'(t)|$. \hfill\qed

This concludes the proof of the Aleksandrov disintegration theorem, Theorem \ref{t-disintegration1},
for Poisson kernels.

\subsection{General case of the Aleksandrov disintegration theorem }\label{s-AleksGenProof}
So far, we have proven the Aleksandrov disintegration theorem, Theorem \ref{t-disintegration1}, for 
Poisson kernels $f(s) =p_a(s) = \frac 1\pi \im \frac{1}{s-a} $, $a\in \C_+.$ Here we show how a 
monotone class argument and the repeated application of Convergence Theorems can be used to show 
the formula for general $L^1$ functions, completing the proof of Theorem \ref{t-disintegration1}.

%%%%%%%%%%%%%%%%%%%%%%%%%%%%%%%%%%%%%
\subsubsection{The case of continuous compactly supported functions}
Let $f\in C\ti c(\R)$. By virtue of Lemma \ref{le:poissonappro} below we let $f_n$ be a sequence of
finite linear combinations of Poisson kernels so that
\begin{align}\label{e-poisson}
\sup_{s\in\R}(1+s^2)|f_n(s)-f(s)|\to 0.
\end{align}

On the right-hand side of \eqref{Disintegration 011} this yields
\[
\left|\int\ci\R (f_n(s)- f(s)) \dd s\right|
\le 
\left(\int\ci\R \frac{\dd s}{1+s^2}\right)
 \sup_{s\in\R}(1+s^2)|f_n(s)-f(s)|\stackrel{n\to\infty}{\longrightarrow}0.
\] 

Let us focus on the left hand side. From \eqref{dM 01} we immediately obtain that 
\begin{align*}
\|\bM^\Gamma(\R)\| \le \|\bB\|^2  . 
\end{align*}
{If $\varphi$ is bounded and Borel measurable, then}
\begin{align*}
\left|\left\langle
\int \varphi(s)\bM(\dd s)\bx,\bx
\right\rangle\right|
=&
\left|\int \varphi(s)
\left\langle
\bM(\dd s)\bx,\bx
\right\rangle\right|
\\
=&
\left|\int \varphi(s)
\mu_\bx(\dd s)\right|
\\
\le&\|\varphi\|_\infty \mu_\bx(\R)
\\
\le&  \|\varphi\|_\infty \|\bM(\R)\|\|\bx\|^2.
\end{align*}
So the quadratic form of the operator  $\int\varphi(s)\bM(\dd s)$ is bounded by 
$\|\varphi\|_\infty \|\bM(\R)\|$. If $\varphi$ is real valued, the integral gives us a self-adjoint 
operator, and since for self-adjoint operators the norm coincides with the norm of its quadratic 
form, we can  conclude that
\[
\left\|\int\varphi(s)\bM(\dd s)\right\|
\le
\|\varphi\|_\infty \|\bM(\R)\|.
\]
By treating real and imaginary part of $\varphi$ separately, we obtain that for complex-valued 
$\varphi$ 
\[
\left\|\int\varphi(s)\bM(\dd s)\right\|
\le
2\|\varphi\|_\infty \|\bM(\R)\|.
\]
So, with $\|f_n-f\|_\infty\le\sup_{s\in\R}(1+s^2)|f_n(s)-f(s)|,$ we see that 
\begin{align}\label{e-uniformContComp}
\left\| \int\ci\R f_n(s) \bM^{\Gamma}(\dd s)
-  
\int\ci\R f(s) \bM^{\Gamma}(\dd s) \right\| \to 0
\quad\text{uniformly with respect to $\Gamma$}.
\end{align}

From the definition \eqref{e-DefFGamma} of  $\bM^{\Gamma}$, and   formula \eqref{d-poisson} of 
$p_a$, we see   using the formula   \eqref{F_Gamma} for $F\ci\Gamma$, that for $a\in\C_+$
\begin{align}\label{e-PaFG}
2\pi i\int\ci\R
p_a(s)\bM^{\Gamma}(\dd s)
&=
F\ci\Gamma(a)-F\ci\Gamma(\bar a) \\ \notag
&=
F(a)(\OID + \Gamma F(a))^{-1}
-F(\bar a)(\OID + \Gamma F(\bar a))^{-1}. \notag
\end{align}
So for any $a\in\C_+$ the function $\Gamma\mapsto \int\ci\R
p_a(s)\bM^{\Gamma}(\dd s)$ is continuous in the norm topology of $B(\cK)$.  

Therefore the functions $\Gamma\mapsto \int f_n(s) \bM^\Gamma (\dd s)$ are continuous, 
so the function
\begin{align}\label{e-cont}
\Gamma\longmapsto \int\ci\R f(s)\bM^{\Gamma}(\dd s)\quad\text{is continuous}
\end{align}
as a uniform limit (see Equation \eqref{e-uniformContComp}) of continuous functions.

We know that \eqref{Disintegration 011} holds for the functions $f_n$, so we need to show that 
\begin{align*}
\lim_{n\to \infty} \int_\R\left(\int_\R f_n(s) \bM^{t\Gamma}(\dd s) \right)\dd t   = 
\int_\R\left(\int_\R f(s) \bM^{t\Gamma}(\dd s) \right)\dd t ;
\end{align*}
for our purposes it suffices to have the limit in the weak operator topology. 

To interchange the limit and the outer integral we can  apply the Dominated Convergence 
Theorem  with the majorant $C/(1+t^2)$ for the value of the inner integral; the 
interchange with the inner integral is already proved in \eqref{e-uniformContComp}.  

To get the majorant we first notice that since $f_n$ are uniformly bounded and
\[
\|\bM^{t\Gamma}(\R)\|\le \|\bB\|^2,
\]
we only need to show that
\begin{align}\label{e-decay}
\left\|\int\frac{\bM^{t\Gamma}(\dd s)}{1+s^2}\right\| = \mathcal{O}(t^{-2}) \qquad t\to \infty.
\end{align}
To arrive at \eqref{e-decay}, we first recall that 
$\int_{\R} (1+s^2)^{-1}\bM^{t\Gamma}(\dd s) = \im F\ci{t\Gamma}(i)$, 
see \eqref{e-herglotzF} with $\Gamma$ replaced by $t\Gamma$. We then use \eqref{e-AKIm2} with $z=i$ 
and $\Gamma$ replaced by $t\Gamma$, together with the estimate
\[
\|(\OID + t\Gamma F(\pm i))^{-1}\|= \mathcal{O}(|t|^{-1}).
\]
This last estimate follows immediately by noting that both  $\Gamma$ and $F(\pm i)$ are  invertible.

At this point, we have proved the disintegration theorem, Theorem \ref{t-disintegration1}, for  
functions $f\in C\ti{c}(\R)$.

\subsubsection{The case of Borel measurable functions}
To extend to non-negative bounded measurable functions, we apply  the standard Monotone Class
argument, which is essentially the same that was used in~\cite[Section 9.4]{cimaross}. For
matrix-valued measures it was done in  \cite{JST}. But, as we are dealing with the operator-valued
measures, we include the complete argument to assure that we did not miss any details.

To that end, recall that a collection $\cT$ of subsets is called a $\pi$-system, if it is closed
under finite intersections. We denote by $\sigma(\cT)$ the sigma-algebra generated by $\cT$.

We need the following well-known theorem, see \cite[Section 3.14]{Williams_Prob-mart}. 
 \begin{theo}
\label{t:monotone class}
Let $\cS$ be a set of bounded functions $g:X\to\R$, and $\cT$ be a $\pi$-system such that
\begin{enumerate}
	\item $\cS$ is a real vector space;
	\item the constant function $\1$ belongs to $\cS$;
	\item if $(g_n)\ci{n\ge 1} $ is an increasing sequence of nonnegative functions in $\cS$ such 
	that its limit $g$ 
	\[
	g(x)=\lim_{n\to\infty} g_n(x)
	\]
	is bounded, then $g\in\cS$;
	\item  $\cS$ contains all indicator functions $\1\ci I$, $I\in\cT$.
\end{enumerate}
Then $\cS$ contains all bounded $\sigma(\cT)$-measurable functions. 
 \end{theo}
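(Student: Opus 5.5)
The plan is the standard reduction to Dynkin's $\pi$--$\lambda$ theorem, followed by approximation of a general function by simple ones. First I would define
\begin{align*}
\mathcal{D} := \{ A \in \sigma(\cT) : \1\ci{A} \in \cS \}
\end{align*}
and show that $\mathcal{D}$ is a Dynkin system ($\lambda$-system):
\begin{enumerate}
\item $X\in\mathcal{D}$ by condition (ii).
\item If $A\subset B$ with $A,B\in\mathcal{D}$, then $\1\ci{B\setminus A}=\1\ci{B}-\1\ci{A}\in\cS$ by the vector space property (i).
\item If $A_1\subset A_2\subset\dots$ all lie in $\mathcal{D}$, then the indicators $\1\ci{A_n}$ form an increasing sequence of nonnegative functions in $\cS$. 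Their pointwise limit $\1\ci{\bigcup A_n}$ is bounded by $1$, so by (iii) it belongs to $\cS$.
\end{enumerate}
By (iv), $\cT\subset\mathcal{D}$. Since $\cT$ is a $\pi$-system, Dynkin's $\pi$--$\lambda$ theorem gives $\sigma(\cT)\subset\mathcal{D}$. Hence $\1\ci{A}\in\cS$ for every $A\in\sigma(\cT)$.

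Next, linearity (i) shows that every $\sigma(\cT)$-measurable simple function, i.e.\ every finite real linear combination of such indicators, lies in $\cS$. For a nonnegative bounded $\sigma(\cT)$-measurable $g$, I would take the usual dyadic approximants
\begin{align*}
g_n := \min\bigl(n, 2^{-n}\lfloor 2^n g\rfloor\bigr).
\end{align*}
These are nonnegative simple $\sigma(\cT)$-measurable functions, they increase pointwise, and their limit is $g$, which is bounded. Condition (iii) therefore gives $g\in\cS$. Finally, an arbitrary bounded $\sigma(\cT)$-measurable $g$ splits as $g=g^+-g^-$, where both parts are nonnegative, bounded and measurable, so $g\in\cS$ by (i).

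The only nontrivial ingredient is the $\pi$--$\lambda$ theorem itself. If one wants a self-contained argument, the step to verify is that the smallest Dynkin system $\delta(\cT)$ containing $\cT$ is closed under finite intersections. This goes in two rounds. First, for fixed $B\in\cT$, the class $\{A: A\cap B\in\delta(\cT)\}$ is a Dynkin system containing $\cT$, hence contains $\delta(\cT)$. Second, the same argument is repeated with $B$ ranging over $\delta(\cT)$. A Dynkin system closed under finite intersections is a $\sigma$-algebra, so $\delta(\cT)\supset\sigma(\cT)$. Everything else is routine bookkeeping.
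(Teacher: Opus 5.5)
Your proof is correct and complete. The paper does not prove this theorem itself: it cites Williams (Section 3.14), whose proof is exactly yours. That proof shows the Dynkin class $\mathcal{D}=\{A\in\sigma(\cT):\1\ci{A}\in\cS\}$ contains $\sigma(\cT)$ via the $\pi$--$\lambda$ theorem, passes to nonnegative bounded functions through increasing dyadic simple approximants and condition (iii), and finishes with the decomposition $g=g^+-g^-$.
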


Fix a non-negative invertible $\Gamma$. Let the $\pi$-system $\cT$ be   the collection of all  open
intervals of the real axis, so $\sigma(\cT)$ is the Borel sigma algebra. And let $\cS$ be the class
of bounded measurable real functions $g$ on $\R$ such that
\begin{enumerate}
	\item[(a)] 
		the function  $\Phi_g =\Phi_g^{\Gamma, \bx}$
	\[
	\Phi_g(t) :=  \left\langle\int_\R \frac{g(s)}{1+s^2}\bM^{t\Gamma}(\dd s)\bx,\bx\right\rangle
    =
    \int_\R \frac{g(s)}{1+s^2}\left\langle\bM^{t\Gamma}(\dd s)\bx,\bx\right\rangle
	\]
	is Borel measurable for all $\bx\in \cK$. 
	\item[(b)] the distintegration identity \eqref{Disintegration 011} (with integrals being 
	finite) 
	holds for $f$, $f(s)=g(s)/(1+s^2)$.
\end{enumerate}

Notice that all continuous functions with compact support belong to the class $\cS$ defined this
way. Indeed, condition (a) follows from the continuity \eqref{e-cont}, and condition (b) follows
from the fact (just proved before) that the disintegration identity \eqref{Disintegration 011} holds
for $f\in C\ti c(\R).$

Let us now show that this class $\cS$ satisfies the assumptions of Theorem \ref{t:monotone class}. 
The above class $\cS$ is clearly a real vector space. 

The condition (iii) follows immediately from the Monotone Convergence Theorem. Namely, let 
$g_n\in\cS$, $g_n(s) \nearrow g(s)$. 

We next show that $g$ satisfies properties (a) and (b), starting with  the property (b) first. The 
identity \eqref{Disintegration 011} holds for $f_n$, $f_n(s) := g_n(s) /(1+s^2)$, by property (b) 
of 
$\cS$, and applying the Monotone Convergence Theorem for both sides of \eqref{Disintegration 011} 
(twice on the left-hand side), we will get that it holds for $f$, $f(s) = g(s)/(1+s^2)$. Notice 
that since the integral $\int f(s) \bM^{t\Gamma}(\dd s)$ is understood in the weak sense, we only 
need the classical scalar-valued Monotone Convergence Theorem. Notice that for a bounded $g$ the 
right-hand side of \eqref{Disintegration 011} is finite for all  $f$, $f(s) = g(s)/(1+s^2)$.  

Let us now verify property (a) for $g$. By property (a) we know that  the functions $\Phi_{g_n}$ 
are Borel measurable, and since by the Monotone Convergence Theorem $\Phi_{g_n}(t) \to \Phi_g(t)$ 
for all $t$ we conclude that $\Phi_g$ is Borel measurable as well. 

To verify (ii) and (iv) we take a sequence of functions $g_n\in C\ti c$, $g_n\nearrow \1$ or 
$g_n\nearrow \1\ci I$, and apply property (iii) that we already verified. 

Thus we have proved that our class $\cS$ contains all bounded Borel measurable functions, so the 
disintegration identity \eqref{Disintegration 011} holds for all Borel measurable functions $f\ge0$ 
such 
that the function $s\mapsto (1+s^2) f(s)$ is bounded. In particular it holds for compactly 
supported bounded Borel measurable $f\ge 0$. Taking a nonnegative $f\in L^1(\R)$ we can construct 
an increasing sequence of bounded, compactly supported Borel measurable functions $f_n$, $f_n(s) 
\nearrow f(s)$, and again apply the Monotone Convergence Theorem to both sides of  
\eqref{Disintegration 011} (we again need to apply it twice to the left hand side). 

Thus we have proved \eqref{Disintegration 011} for non-negative $f\in L^1(\R)$, and by linearity it 
extends to all $L^1(\R).$
\hfill$\qed$

%%%%%%%%%%%%%%%%%%%%%%%%%%%%%%%%
\section{Aleksandrov disintegration implies Theorem \ref{t:GA-D_01}
}\label{s:Proof a.e. except}
%%%%%%%%%%%%%%%%%%%%%%%%%%%%%%%%%%%%%%%%%%
In fact, we can prove a bit stronger  statement than Theorem \ref{t:GA-D_01}, namely the following 
theorem. 
\begin{theo}\label{t:mu_t(Omega)=0}
Let $A_t= A+tK$ with $K\ge \bO$ with $\Ran K$ being cyclic for $A$, and let  $\Omega$ be  Borel set
of Lebesgue measure $0$. For $t\in\R$ let $\mu_t$ be a scalar spectral measure of maximal type of 
$A_t$.

Then  $\mu_t(\Omega)=0$ for Lebesgue almost all $t\in\R$ \tup(the statement clearly does not depend
on the choice of the  measures $\mu_t$\tup).
\end{theo}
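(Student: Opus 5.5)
We factorize $K=\bB\Gamma\bB^*$ as in Lemma \ref{l:factorization}, with $\Ker\bB=\{\bO\}$ and $\Gamma=\Gamma^*\ge\bO$ invertible (here $\Gamma\ge\bO$ because $K\ge\bO$). Then $A_t=A+t\bB\Gamma\bB^*=A\ci{t\Gamma}$. Since $\Ran\bB$ is cyclic for $A$ (equivalently $\Ran K$ is), Lemma \ref{l-cycAGamma} shows that $\Ran\bB$ is cyclic for every $A_t$.

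Next we build a maximal-type measure for every $t$ simultaneously. Fix a complete system $\{\bx_k\}_{k\in\N}$ in $\cK$, say an orthonormal basis, and weights $\alpha_k>0$ with $\sum_k\alpha_k\|\bB\bx_k\|^2<\infty$; for instance $\alpha_k=2^{-k}$ works because $\bB$ is bounded. By Corollary \ref{c:max type}, applied to $A_t$ with the compressed measure $\bM^{t\Gamma}=\bB^*\bE_t\bB$, the measure
\begin{align*}
\mu_t(\fdot):=\sum_k\alpha_k\Jap{\bM^{t\Gamma}(\fdot)\bx_k,\bx_k}
\end{align*}
is a scalar spectral measure of maximal type for $A_t$, for every $t\in\R$. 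Any other maximal-type measure is mutually absolutely continuous with this one, so it suffices to prove the claim for this particular choice of $\mu_t$.

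Now we apply Theorem \ref{t:AlAver01} to the Borel set $\Omega$, which gives $\int_\R\bB^*\bE_t(\Omega)\bB\,\dd t=\Leb(\Omega)\Gamma^{-1}=\bO$ in the weak sense. Pairing with $\bx_k$, we get
\begin{align*}
\int_\R\Jap{\bM^{t\Gamma}(\Omega)\bx_k,\bx_k}\,\dd t=0 .
\end{align*}
The integrand is non-negative, so it vanishes for a.e.\ $t$. Summing with the weights $\alpha_k$ and using Tonelli's theorem,
\begin{align*}
\int_\R\mu_t(\Omega)\,\dd t=\sum_k\alpha_k\cdot 0=0,
\end{align*}
which forces $\mu_t(\Omega)=0$ for a.e.\ $t$. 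Alternatively, one can take the countable union of the exceptional null sets, one for each $k$.

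The main obstacle is measurability: we need $t\mapsto\Jap{\bM^{t\Gamma}(\Omega)\bx,\bx}$ to be Borel, so that the weak integral and the Tonelli step make sense. This is exactly property (a) established in the monotone class argument of Section \ref{s-AleksGenProof}. Taking $g(s)=(1+s^2)\1\ci\Omega(s)$ there is not allowed, since that $g$ is unbounded. Instead we apply (a) to $g=\1\ci{\Omega\cap[-n,n]}$, which is bounded, and divide by $(1+s^2)$. To handle this cleanly, we work with the measure $\mu_t(\Omega\cap[-n,n])$, for which the integrand $\1\ci{\Omega\cap[-n,n]}(s)$ is itself bounded and so fits (a) directly after multiplying by $(1+s^2)$ on a bounded set. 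Letting $n\to\infty$ then gives the claim for $\Omega$. The only inputs are Corollary \ref{c:max type} and the disintegration theorem; the rest is a countable-union argument.
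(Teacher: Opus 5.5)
Your proof is correct and follows essentially the paper's argument: factorize $K=\bB\Gamma\bB^*$, apply Theorem \ref{t:AlAver01} to $\Omega$, pair with a weighted basis to get $\int_\R\mu_t(\Omega)\,\dd t=0$, and identify $\mu_t$ as a maximal-type measure via Lemma \ref{l-cycAGamma} and Corollary \ref{c:max type}. Your measurability paragraph is garbled in the middle: to use property (a) you need $g=(1+s^2)\1\ci{\Omega\cap[-n,n]}$, not $g=\1\ci{\Omega\cap[-n,n]}$ divided by $1+s^2$. The version you finally settle on is right, and in any case measurability of $t\mapsto\Jap{\bM^{t\Gamma}(\Omega)\bx,\bx}$ is already part of the conclusion of Theorem \ref{t-disintegration1} for $f=\1\ci\Omega\in L^1(\R)$, which the paper uses implicitly.
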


To get Theorem \ref{t:GA-D_01} from Theorem \ref{t:mu_t(Omega)=0} we take $\Omega$ to be a 
\emph{carrier} of the singular measure $\sigma$, i.e.~a Borel set of Lebesgue measure $0$ such that 
$\sigma(\R\setminus \Omega)=0$: Theorem \ref{t:GA-D_01} follows immediately. 

\begin{proof}[Proof of Theorem \ref{t:mu_t(Omega)=0}]
First  using Lemma \ref{l:factorization} let us represent  $K$ as  $K = \bB\Gamma\bB^*$ with
$\Ker\bB=\{\bO\}$ and $\Gamma\in B(\cK)$ being invertible. Then $\cRan \bB=\cRan K$, so the
cyclicity of $\Ran K$ for $A$ implies the same for $\Ran \bB$.

Since $K\ge \bO$ we conclude that $\Gamma\ge0$, so we can apply Theorem \ref{t:AlAver01}. Therefore
we have
\begin{align*}
\int_\R \bB^* \bE_t(\Omega) \bB \dd t = \bO . 
\end{align*}
The above integral is understood in the weak sense, so denoting $\bM_t:=\bB^* \bE_t\bB$ we see 
that  for any $\be\in \cH$
\[
\int_\R \Jap{\bM_t(\Omega) \be,\be} \dd t =\int_\R \Jap{\bB^*\bE_t(\Omega)\bB \be,\be} \dd t = 0.  
\]
If $\{\be_k\}_{k\ge1}$ is an orthonormal basis and $\alpha_k>0$ are such that $\sum_k 
\alpha_k<\infty$, we trivially have that 
\[
 \int_\R \left(\sum_k \alpha_k\Jap{\bM_t(\Omega) \be_k,\be_k} \right) \dd t =0. 
\]
Denoting by $\mu_t$ the scalar measure given by 
\[
\mu_t(E) = \sum_k \alpha_k\Jap{\bM_t(E) \be_k,\be_k}
\]
we see that $\mu_t(\Omega) = 0$ for Lebesgue almost all $t\in\R$, i.e.~the measures $\mu_t$ are 
mutually singular with $\sigma$ for Lebesgue almost all $t$. 

Since $\Ran\bB$ is cyclic for $A$, we see that by Lemma \ref{l-cycAGamma} it is cyclic for all 
$A_t= A + t\bB\Gamma\bB^*$. So by Corollary \ref{c:max type} for any $t\in\R$ the measure $\mu_t$ 
is the spectral measure of maximal type of $A_t$. 
\end{proof}

%%%%%%%%%%%%%%%%%%%%%%%%%%%%%%%
\section{Proof of Theorems \ref{t-two} and  \ref{t-many}}\label{s-parameters}
%%%%%%%%%%%%%%%%%%%%%%%%%%%%%%%
Theorem \ref{t-two} follows trivially from Theorem \ref{t-many}. To prove Theorem \ref{t-many} we 
again as in Section \ref{s:Proof a.e. except} prove a bit stronger statement. 

\begin{theo}\label{t:many 01}
Let a family of operators $A_{\vec t\,}$ satisfy the assumptions of Theorem \ref{t-many}, and  let
$\Omega\subset\R$ be a Borel measurable set of Lebesgue measure $0$. Denote by $\mu_{\vec t\,}$ a
scalar spectral measure of maximal type of the operator $A_{\vec t}$.

Then $\mu_{\vec t\,}(\Omega)=0$ for \tup(Lebesgue\tup) almost all $\vec t\,\in \R^n$.
\end{theo}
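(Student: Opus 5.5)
We reduce to the one-parameter Theorem \ref{t:mu_t(Omega)=0} by slicing $\R^n$ with lines parallel to the distinguished direction $\vec\tau$, and then apply Fubini.

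Fix $\vec\tau$ as in Theorem \ref{t-many}, so $K_{\vec\tau}\ge\bO$ and $\Ran K_{\vec\tau}$ is cyclic for $A$; clearly $\vec\tau\ne\vec 0$ unless $\cH=\{\bO\}$. Choose a hyperplane $H\subset\R^n$ complementary to $\R\vec\tau$ (e.g.\ $H=\vec\tau^{\perp}$). The linear map $H\times\R\to\R^n$, $(\vec s,t)\mapsto\vec s+t\vec\tau$, is a linear isomorphism. It therefore maps Lebesgue-null sets to Lebesgue-null sets, so it suffices to show that for every $\vec s\in H$ the set
\[
N_{\vec s}:=\{t\in\R:\mu_{\vec s+t\vec\tau}(\Omega)>0\}
\]
has one-dimensional measure $0$. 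Then Fubini, applied to the indicator of the bad set in $H\times\R$, gives the claim.

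To make this use of Fubini legitimate, the bad set must be measurable. We fix a concrete choice of $\mu_{\vec t}$. Write $\sum_k|K_k|=\bB\bB^*$ for some injective $\bB$, or more simply take any $\bB$ with $\cRan\bB\supset\cRan K_k$ for all $k$. Alternatively, factor $K_{\vec\tau}=\bB\Gamma\bB^*$ by Lemma \ref{l:factorization} with $\Gamma\ge\bO$ invertible. Since $\Ran K_{\vec\tau}$ is cyclic for $A$, $\Ran\bB$ is cyclic for $A$. Fix an orthonormal basis $\be_k$ of $\cK$ and $\alpha_k>0$ with $\sum\alpha_k<\infty$, and set
\[
\mu_{\vec t}(E)=\sum_k\alpha_k\Jap{\bB^*\bE_{\vec t}(E)\bB\be_k,\be_k}.
\]
The family $A_{\vec t}$ depends norm-continuously on $\vec t$. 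Hence the maps $\vec t\mapsto\int f\,\dd\mu_{\vec t}$ are continuous for $f\in C_c$, and by the monotone class argument of Section \ref{s-AleksGenProof} the map $\vec t\mapsto\mu_{\vec t}(\Omega)$ is Borel.

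We next check that $\mu_{\vec t}$ is of maximal type for every $\vec t$. This is needed because cyclicity of $\Ran\bB$ for $A_{\vec t}$ does not follow directly from Lemma \ref{l-cycAGamma}: the perturbation $K_{\vec t}$ need not be of the form $\bB\Gamma'\bB^*$. Instead we argue as follows. For fixed $\vec s$, put $A':=A_{\vec s}=A+K_{\vec s}$, so that $A_{\vec s+t\vec\tau}=A'+tK_{\vec\tau}$. The key step is that $\Ran K_{\vec\tau}$ is cyclic for $A'$, and we expect this to be the main obstacle. The paper's lemma only covers perturbations factoring through $\Ran\bB$, whereas here $K_{\vec s}$ is arbitrary. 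The most natural route is to assume, as the hypothesis of Theorem \ref{t-many} should be read, that $\Ran K_{\vec\tau}$ is cyclic in the needed sense along the family. In that case Theorem \ref{t:mu_t(Omega)=0} applies to the pair $(A',K_{\vec\tau})$ and yields $|N_{\vec s}|=0$ for the maximal-type measures of $A'+tK_{\vec\tau}$.

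If that cyclicity fails, we avoid it entirely. Apply Theorem \ref{t:AlAver01} to $A'$ and $K_{\vec\tau}=\bB\Gamma\bB^*$; that theorem does not require cyclicity. It gives
\[
\int_\R\bB^*\bE_{\vec s+t\vec\tau}(\Omega)\bB\,\dd t=\bO,
\]
so the measure $\mu_{\vec s+t\vec\tau}$ defined above satisfies $\mu_{\vec s+t\vec\tau}(\Omega)=0$ for a.e.\ $t$. It then remains to show that $\mu_{\vec t}$ has maximal type for $A_{\vec t}$ for a.e.\ $\vec t$, which holds by Corollary \ref{c:max type} wherever $\Ran\bB$ is cyclic for $A_{\vec t}$.

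Granting the slice statement, Fubini on $H\times\R$ gives that the set of $\vec t$ with $\mu_{\vec t}(\Omega)>0$ has $n$-dimensional Lebesgue measure zero. This completes the proof.
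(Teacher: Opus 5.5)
\paragraph{There is a genuine gap, and it cannot be filled.} The step you flag as the main obstacle is never proved. Nothing in your argument shows that $\Ran K_{\vec\tau}$ (equivalently $\Ran\bB$) is cyclic for $A_{\vec s}$, even for almost every $\vec s\in H$. Your first route strengthens the hypothesis, and your second route stops at exactly this point.

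The second route is correct as far as it goes. Theorem \ref{t:AlAver01} needs only $\Ker\bB=\{\bO\}$, and it gives $\bB^*\bE_{\vec s+t\vec\tau}(\Omega)\bB=\bO$ for a.e.\ $t$. But this only controls $\bE_{\vec s+t\vec\tau}(\Omega)$ on $[\Ran\bB]_{A_{\vec s}}$. Its orthogonal complement $\mathcal{L}_{\vec s}$ reduces $A_{\vec s}$ and lies in $\Ker K_{\vec\tau}$. There $A_{\vec s+t\vec\tau}=A_{\vec s}$ for every $t$, so averaging in $t$ gives nothing.

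That part can in fact carry mass on $\Omega$ for a positive-measure set of $\vec s$: under the hypotheses of Theorem \ref{t-many} as written, the statement is false. Take $\cH=\ell^2\oplus\ell^2$, let $S$ be the unilateral shift on $\ell^2$, let $\vec\tau=(1,0)$, and set
\[
A=\begin{pmatrix}\bO&\bI\\ \bI&\bO\end{pmatrix},\qquad
K_1=\begin{pmatrix}\bO&\bO\\ \bO&\bI\end{pmatrix},\qquad
K_2=\begin{pmatrix}\bO&S\\ S^*&\bO\end{pmatrix}.
\]
Then $K_{\vec\tau}=K_1\ge\bO$. Also $\Ran K_1=\{\bO\}\oplus\ell^2$ is cyclic for $A$, because $A(\bO\oplus y)=y\oplus\bO$. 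For $|t_2|>1$ let $\lambda=-1/t_2$ and $k_\lambda=(1,\lambda,\lambda^2,\dots)\in\ell^2$. Since $S^*k_\lambda=\lambda k_\lambda$,
\[
A_{\vec t}\,(k_\lambda\oplus\bO)=\bO\oplus(\bI+t_2S^*)k_\lambda=\bO .
\]
So $0$ is an eigenvalue of $A_{\vec t}$ for every $\vec t$ with $|t_2|>1$. With $\Omega=\{0\}$ this gives $\mu_{\vec t}(\Omega)>0$ on a set of infinite Lebesgue measure. The eigenvector $k_\lambda\oplus\bO$ is orthogonal to $\Ran K_{\vec\tau}$. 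This is precisely the loss of cyclicity you were worried about, and your compressed measure $\mu_{\vec t}$ cannot detect it.

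\paragraph{Comparison with the paper.} The paper's proof uses the same slicing-plus-Tonelli route. It avoids your maximal-type issue by building $\mu_{\vec t}$ from an orthonormal basis of $\cH$ (Corollary \ref{c:max type} with $\bB=\bI$). It proves measurability of $\vec t\mapsto\mu_{\vec t}(\Omega)$ by the same monotone class argument you use. It then applies Theorem \ref{t:mu_t(Omega)=0} on every line $\vec s+\R\vec\tau$, $\vec s\perp\vec\tau$, ``by our assumptions''. That step tacitly requires $\Ran K_{\vec\tau}$ to be cyclic for $A_{\vec s}$, which is the same unproved and, as the example shows, generally false step.

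\paragraph{A possible repair.} Either argument becomes correct if one adds the hypothesis that $\Ran K_{\vec\tau}$ is cyclic for $A_{\vec s}$ for a.e.\ $\vec s$. By Lemma \ref{l-cycAGamma} this property is constant along lines parallel to $\vec\tau$, so a.e.\ $\vec s\in H$ is all Fubini needs. The added hypothesis holds for all $\vec s$ when every $K_k$ factors as $\bB\Gamma_k\bB^*$ through the operator $\bB$ in $K_{\vec\tau}=\bB\bB^*$ from Lemma \ref{l:factorization}. This covers Theorem \ref{t-two}.
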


\begin{proof}[Proof of Theorem \ref{t:many 01}]
While the statement of the theorem does not depend on the choice of measures $\mu_{\vec t}$, in the 
proof it is convenient to make a concrete choice. Namely, take an orthonormal basis 
$\{\bx_k\}_{k\in\N}$ in $\cH$ 
and define 
\begin{align*}
\mu_{\vec t\,}(\fdot):= \sum_{k\in\N} \alpha_k\Jap{\bE_{\vec t\,}(\fdot)\bx_k, \bx_k}, 
\end{align*}
where $\alpha_k>0$, $\sum_{k\in\N}\alpha_k<\infty$, and $\bE_{\vec t\,} $ denotes the 
projection-valued spectral measure of $A_{\vec t}$. Then by Corollary \ref{c:max type} with 
$\bB=\bI$ and $A=A_{\vec t\,}$ we get that $\mu_{\vec t\,}$ is a spectral measure of maximal type 
of $A_{\vec t\,}$.

Denote by $E$ the exceptional set, 
\begin{align*}
E:=\{\vec t\in\R^n: \mu_{\vec t\,}(\Omega)>0\}. 
\end{align*}
We will show later that the function $\vec t\mapsto \mu_{\vec t\,}(\Omega)$ is Borel measurable, 
so the set  $E$ is Borel measurable. 

Let $\vec\tau$ be the vector from Theorem \ref{t-many}. Since $\cRan K_{\vec \tau} $ is cyclic, we
conclude that $K_{\vec\tau}\ne\bO$, so the vector $\vec\tau$ must be non-zero. By our assumptions,
Theorem \ref{t:mu_t(Omega)=0} now applies with $K=K_{\vec \tau}$, and we deduce  that for any $\vec
t\in (\vec \tau\,)^\perp\subset \R^n$ the set (cross-section)
\begin{align*}
E_{\vec t\,}:= \{s\in\R: \vec t\, +  s\vec\tau\, \in E \}
\end{align*}
has Lebesgue measure $0$  (measurability part of the Tonelli's Theorem implies that the sets
$E_{\vec t\,}$ are Borel measurable). Therefore applying the full Tonelli's Theorem to $\1\ci E$ we
get that $E$ has Lebesgue measure $0$.

To show that the function $\vec t\mapsto \mu_{\vec t\,}(\Omega)$ is Borel measurable we again, as in
Section \ref{s-AleksGenProof} use the Monotone class theorem (Theorem \ref{t:monotone class}).
Denote by $\cS$ the set of all bounded functions $g:\R\to \R$ such that the function
\begin{align*}
\vec t\mapsto \int_\R g(s) \mu_{\vec t\,}(\dd s)
\end{align*}
is measurable. 

Clearly $\cS$ is a real vector space, i.e.~assumption \cond1 of Theorem \ref{t:monotone class} 
is satisfied. 

Let us check  assumption \cond3. Take an increasing sequence of non-negative functions 
$g_k\in\cS$ such that the limit function $g$, $g(s)=\lim_{k\to\infty} g_k(s)$ is bounded. By the 
Monotone Convergence Theorem 
\begin{align*}
\lim_{k\to \infty } \int_\R g_k \mu_{\vec t\,}(\dd s) = \int_\R g  \mu_{\vec t\,}(\dd s) \qquad 
\forall \vec t\,\in\R^n, 
\end{align*}
so the function $\vec t\,\mapsto \int_\R g  \mu_{\vec t\,}(\dd s)$ is measurable as a pointwise 
limit of measurable functions. Thus assumption \cond3 is satisfied. 

To prove that the remaining assumptions \cond2 and \cond4 are also satisfied we first show that the 
Poisson kernels $p_a$, $p_a(s):=\frac1\pi \im \frac{1}{s-a}$, $a\in\C_+$ belong to $\cS$. 

We will need the following simple lemma. 
\begin{lem}\label{l:cont mu_t}
Let $A$, $K_1, \dots K_n$ be self-adjoint operators, and let $A_{\vec t\,} :=A+\sum_{k=1}^n t_k 
K_k$, $\vec t= (t_1, \dots, t_n)$ \tup(no additional assumptions on $K_k$\tup). Let 
$\{\bx_k\}_{k\in\N}$ be a complete system of vectors and let $\alpha_k\ge0$ be such that $\sum_k 
\alpha_k \|\bx_k\|^2<\infty$. 

Let measures $\mu_{\vec t\,}$ be defined by 
\begin{align*}
\mu_{\vec t\,}(E) := \sum_k \alpha_k \Jap{\bE_{\vec t\,}(E)\bx_k, \bx_k}
\end{align*}
where $\bE_{\vec t\,}$ is the projection-valued spectral measure of $A_{\vec t\,}$. 

Then the function 
\begin{align*}
(a, \vec t\,) \mapsto \int_\R p_a (s) \mu_{\vec t\,}(\dd s) ,\qquad  a\in\C_+, \ \vec 
t\,\in\R^n
\end{align*}
is continuous. 
\end{lem}

\begin{proof}
Since $p_a(s):=\frac1\pi \im \frac{1}{s-s}$ we get from the definition of the spectral measure 
that  
\begin{align*}
\int_\R p_a(s) \mu_{\vec t\,}(\dd s) =\frac{1}{\pi} \sum_{k\in\N} \alpha_k \im 
\Jap{(A_{\vec t\,} - a \bI)^{-1}\bx_k, \bx_k}
\end{align*} 
Using the standard resolvent identity   
\begin{align*}
(A_{\vec t\,} - a\bI)^{-1} = (A - a\bI)^{-1}\left(\bI + K_{\vec t\,} (A - a\bI)^{-1}   \right)^{-1}
\end{align*}
(formally it is Lemma \ref{l-AK} with $\bB=\bI$ and $\Gamma=K_{\vec t\,}$) we see that the function
$(a,\vec t\,)\mapsto (A_{\vec t\,} - a\bI)^{-1}$ is a continuous function from $\C_+\times \R^n$ to
$B(\cH)$ equipped with the operator norm. Therefore, the function
\begin{align}\label{e-mucont}
(a,\vec t\,) \mapsto \int_\R p_a(s) \mu_{\vec t\,} (\dd s) = \frac{1}{\pi} \sum_{k\in \N} \alpha_k 
\im 
 \Jap{(A_{\vec t \,} - a\bI )^{-1} \bx_k, \bx_k  }
\end{align}
is continuous. 
\end{proof}
Let us continue with the proof of the theorem. By Lemma \ref{l:cont mu_t} the function $\vec t
\mapsto \int_\R p_a(s) \mu_{\vec t\,} (\dd s)$ is continuous and so it is measurable.  Thus
$p_a\in\cS$ for all $a\in\C_+$.

Now take $g\in C\ti c(\R)$. By Lemma \ref{le:poissonappro} there exist finite linear combinations 
$g_k$ of Poisson kernels such that $g_k \rightarrow g$ uniformly. Therefore 
\begin{align*}
\int_\R g(s) \mu_{\vec t\,}(\dd s) =\lim_{k\to\infty} \int_\R g_k(s) \mu_{\vec t\,}(\dd s) \qquad 
\forall \vec t\,\in \R^n
\end{align*}
(recall that $\mu_{\vec t\,}$ are finite measures), so the function $\vec t\,\mapsto \int_\R g(s) 
\mu_{\vec t\,}(\dd s)$  is measurable as a pointwise limit of measurable functions. Therefore 
$C\ti c(\R)\subset \cS$. 

Since for any open interval $I\subset \R$ (including $I=\R$) there exists an increasing sequence of 
non-negative functions $g_k \nearrow \1\ci{I}$ the (already proved) assumption \cond3 implies that 
$\1\ci{I}\in\cS$, so all assumptions of Theorem \ref{t:monotone class} are satisfied (collection of 
open intervals is definitely a $\pi$-system). 

Therefore $\cS$ contains all bounded Borel measurable functions, and in particular the function 
$\1\ci\Omega$. 
\end{proof}

%%%%%%%%%%%%%%%%%%%%%%%%%%%%%%%%%%%%%%%%%%%%%%%%%%%
\section{Representation theorem and the operator $\bA_2$ condition}\label{s-Repr}
%%%%%%%%%%%%%%%%%%%%%%%%%%%%%%%%%%%%%%%%%%%%%%%%%%%%
Ideas behind the representation theorem (Theorem \ref{t-repr}) go back to \cite{JFA2009} and were 
further developed in \cite{JST}.

Consider the perturbation problem \eqref{e-AGamma}, where $\Ran \bB$ is cyclic for $A$ (without 
loss of generality), and let $\bM = \bB^* \bE \bB$ be the compressed spectral measure defined by 
\eqref{dM 01} and \eqref{d-M}.

By the generalized spectral theorem (Theorem \ref{t:SpThm-OVM}), the canonical unitary operator
$U:L^2(\bM)\to \cH$ such that $AU =U\cM_{\id}$ is given on functions of form $f\be$, where
$\be\in\cK$ and $f$ is a scalar bounded Borel measurable function, by
\[
U(f\be) = f(A)\bB\be.
\]
It then extends by linearity to finite linear combinations of such functions, and then by continuity
to whole $L^2(\bM)$. We will call the above unitary operator $U$ the \emph{canonical spectral
representation} of $A.$

It is not hard to see that it is uniquely defined by the conditions
\begin{align}
AU =U\cM_{\id} , \qquad U(\1\be) = \1\bB\be ,
\end{align}
where $\1$ stands for the scalar function identically equal to 1.

To simplify the notation let us use the above unitary operator $U$ to transfer everything to the
space $L^2(\bM)$. So, we assume that $\cH=L^2(\bM)$, and $A$ is the multiplication $\cM_{\id} =
\cM_{\id}^\bM$ by the independent variable in $L^2(\bM)$. In this representation, operator
$\bB:\cK\to L^2(\bM)$ is given by
\begin{align}
    \label{e-mapBe}
    \bB \be = \textbf{1}  \be.
\end{align}
Here $\be\in\cK$, and we write $\1\be$ in the right-hand side to emphasize that this is a 
\emph{function} (an element of $L^2(\bM)$) identically equal to $\be$ for all $s\in\R$. 

The adjoint operator $\bB^*$ is given on bounded  functions $f$ with finite-dimensional range by 
\begin{align}\label{e-B*}
\bB^* f := \int_\R \bM(\dd t) f(t) . 
\end{align}

So, now the perturbed operator $A\ci\Gamma = A + \bB\Gamma\bB^* = \cM_{\id}^\bM + \bB\Gamma\bB^*$
acts on $L^2(\bM)$. Recall that by Lemma \ref{l-cycAGamma} $\Ran \bB$ is cyclic for $A\ci\Gamma.$
Therefore, we can apply the spectral theorem for a second time, now to obtain a spectral
representation of $A\ci\Gamma.$ This means that for every $\Gamma$ there is a unique unitary
operator
\begin{align}\label{e-UGamma1}
    V\ci\Gamma:L^2(\bM) \to L^2(\bM^\Gamma)
\end{align}
such that 
\begin{align}\label{e-UGamma2}
    V\ci\Gamma A\ci\Gamma  &= \cM_{\id}^{\bM_\Gamma} V\ci\Gamma,\\
    \label{e-UGamma3}
    V\ci\Gamma (\textbf{1}  \be)
    &= \textbf{1}  \be
\end{align}
for all $\be\in \cK$. Note that in \eqref{e-UGamma3} $\1\be$ in the left-hand side is interpreted as
an element of $L^2(\bM)$, while in the right-hand side it is an element of $L^2(\bM^\Gamma).$

For $z\in \C\setminus \R$ define functions $K_z$ on $\R$ by
\[
K_z(s) := (s-z)^{-1}, \quad s\in\R.
\]

Note that  in this notation  \eqref{e-UGamma2} is trivially equivalent to 
\begin{align}\label{e-UGamma4}
V\ci\Gamma (A\ci\Gamma - z\bI)^{-1} = \cM\ci{K_z} V\ci\Gamma, \qquad \forall z\in\C\setminus\R.  
\end{align}

The focus of this section is the following representation theorem for the spectral representation 
of $A\ci\Gamma$.

\begin{theo}\label{t-repr}
For finite linear combinations $f = \sum_{k=1}^{N}K_{z_k}\be_k$, where $z_k\in \C\setminus \R$, 
$\be_k \in \cK$,  the operator $V\ci{\Gamma}$ is given by 
\begin{align}\label{e:repr01}
(V\ci\Gamma f)(s)
& = f(s) - \Gamma \int\ci\R \bM (\dd t)\frac{f(t)-f(s)}{t-s} \\ \label{e:repr02}
& = \sum_k K_{z_k} (s)  \left( \be_k + \Gamma \int_\R  \bM(\dd t) K_{z_k}(t) \be_k \right).
\end{align}
\end{theo}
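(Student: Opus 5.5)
By linearity it suffices to treat a single term $f = K_z\be$ with $z\in\C\setminus\R$ and $\be\in\cK$. The first step is to check that the two formulas \eqref{e:repr01} and \eqref{e:repr02} agree. For $f=K_z\be$ we have
\[
\frac{K_z(t)-K_z(s)}{t-s} = -K_z(t)K_z(s).
\]
Hence
\[
-\Gamma\int_\R \bM(\dd t)\frac{f(t)-f(s)}{t-s} = K_z(s)\,\Gamma\int_\R \bM(\dd t)K_z(t)\be ,
\]
which gives \eqref{e:repr02}. So we only need to compute $V\ci\Gamma(K_z\be)$.

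In the model $\cH=L^2(\bM)$ we have $K_z\be = (A-z\bI)^{-1}\bB\be$, since $A=\cM_{\id}$ and $\bB\be=\1\be$. We rewrite this through the resolvent of $A\ci\Gamma$ using the resolvent identity \eqref{e-ResId}, applied to $b=\bB\be$:
\[
(A-z\bI)^{-1}\bB\be = (A\ci\Gamma-z\bI)^{-1}\bigl[\bB\be+\bB\Gamma\bB^*(A-z\bI)^{-1}\bB\be\bigr] = (A\ci\Gamma-z\bI)^{-1}\bB\bigl(\be+\Gamma F(z)\be\bigr).
\]
Here $\bB^*(A-z\bI)^{-1}\bB=F(z)$. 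By \eqref{e-B*}, $F(z)\be = \bB^*(K_z\be)=\int_\R\bM(\dd t)K_z(t)\be$.

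Next we apply $V\ci\Gamma$. By \eqref{e-UGamma4}, $V\ci\Gamma(A\ci\Gamma-z\bI)^{-1}=\cM\ci{K_z}V\ci\Gamma$, and by \eqref{e-UGamma3}, $V\ci\Gamma(\1\bx)=\1\bx$ for every $\bx\in\cK$. Together these give
\[
V\ci\Gamma(K_z\be) = K_z\,\bigl(\be+\Gamma F(z)\be\bigr),
\]
which is exactly \eqref{e:repr02} for one term. Summing over $k$ gives the theorem.

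The only step needing care is identifying $\bB^*(K_z\be)$ with $\int\bM(\dd t)K_z(t)\be$. Formula \eqref{e-B*} is stated for bounded finite-dimensionally-valued functions, and $K_z\be$ is such a function, since $|K_z|\le 1/|\im z|$, so it applies directly. One can also verify it by pairing with $\1\bx$ and using Theorem \ref{t:SpThm-OVM}. The remaining points are to interpret $\bB\Gamma\bB^*$ consistently in the model space, and to note that $\1\be$ is viewed in $L^2(\bM)$ on the left and in $L^2(\bM^\Gamma)$ on the right.
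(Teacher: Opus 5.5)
Your proof is correct and follows the paper's own argument. You reduce to a single term $K_z\be$, apply the resolvent identity \eqref{e-ResId} to $\bB\be$, push through $V\ci\Gamma$ using \eqref{e-UGamma4} and \eqref{e-UGamma3}, and use $\frac{K_z(t)-K_z(s)}{t-s}=-K_z(t)K_z(s)$ to pass between \eqref{e:repr01} and \eqref{e:repr02}; the only cosmetic difference is that you merge the two right-hand terms into $(A\ci\Gamma-z\bI)^{-1}\bB(\be+\Gamma F(z)\be)$ before applying $V\ci\Gamma$, whereas the paper treats them separately.
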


The following proof is similar to the proof of \cite[Theorem 5.1]{JST}. We include it for the 
convenience of the reader. 

%%%%%%%%%%%%%%%%%%%%%%%%%%%%%%%%%%%%%%%%%%%%%%%%
\begin{proof}
Let us prove it for functions of form $K_z\be$, the rest will follow by linearity of the formulas
\eqref{e:repr01}, \eqref{e:repr02}. Recall the resolvent identity \eqref{e-ResId}
\[
(A - z\OID)^{-1} - (A\ci\Gamma - z\OID)^{-1} = 
(A\ci\Gamma - z\OID)^{-1}\bB\Gamma\bB^*(A - z\OID)^{-1}, \qquad z\in\C\setminus \R. 
\]
Applying this identity to the vector $\bB\be$ with $\be\in \cK$, and then left multiplying  by 
$V\ci\Gamma$ we  obtain
\begin{align}\label{e-VResolvent}
V\ci\Gamma(A - z\OID)^{-1}\bB\be = V\ci\Gamma(A\ci\Gamma - z\OID)^{-1}\bB\be +
V\ci\Gamma(A\ci\Gamma - z\OID)^{-1}\bB\Gamma\bB^*(A - z\OID)^{-1}\bB\be.
\end{align}

Using the fact that $\bB\be =\1\be$, and recalling that $A$ is the multiplication $\cM_{\id}$  by 
the independent variable in $L^2(\bM)$, we see that 
\begin{align}\label{e:SpRepr_03}
 (A - z\OID)^{-1}\bB\be  = \cM\ci{\cK_z} \be,
\end{align}
so the left-hand side of \eqref{e-VResolvent} reads as
\[
 V\ci\Gamma(A - z\OID)^{-1}\bB\be   = 
 V\ci\Gamma( K_z  \be ),
\]
where, recall  $K_z(s):=(s-z)^{-1} .$

For the first term on the right-hand side of \eqref{e-VResolvent} we note that by 
\eqref{e-UGamma4}, \eqref{e-mapBe}, and \eqref{e-UGamma3}  we have  
\begin{align*}
 V\ci\Gamma(A\ci\Gamma-z\OID)^{-1}\bB\be =  \cM_{K_z}V\ci\Gamma \bB\be = 
 \cM_{K_z}V\ci\Gamma(\textbf{1}  \be) 
 =  K_z  \1 \be =K_z  \be .
\end{align*}  
For the second term on the right-hand side of \eqref{e-VResolvent} we first compute using
\eqref{e:SpRepr_03} and the formula \eqref{e-B*} for $\bB^*$
\begin{align}\label{e:e_z}
\be_z : = \bB^*(A - z\OID)^{-1}\bB\be =
\int\ci\R \bM(\dd t) K_z(t) \be. 
\end{align}

Note that $\Gamma\be_z\in\cK$, so using \eqref{e:SpRepr_03} with $\Gamma\be_z$ instead of $\be$ 
we rewrite the second term on the right-hand side of \eqref{e-VResolvent} as 
\begin{align*}
 V\ci\Gamma(A\ci\Gamma-z\OID)^{-1}\bB\Gamma\be_z =K_z \Gamma\be_z . 
\end{align*}
Equation \eqref{e-VResolvent} then  reads
\begin{align}\label{e:V k_z e}
V\ci\Gamma (K_z  \be ) =K_z   \be + K_z \Gamma\be_z,    
\end{align}
which is exactly \eqref{e:repr02} for $f=K_z \be$. 

Recalling formula \eqref{e:e_z} for $\be_z$ we see that 
\begin{align*}
K_z(s) \be_z = K_z(s) \int\ci\R   \bM(\dd t) K_z(t) \be = \int\ci\R \bM(\dd t) K_z(s) K_z(t) 
\be , 
\end{align*}
and the simple identity 
\[
K_z(s)K_z(t)
=
-\frac{K_z(t) - K_z(s)}{t-s}
\]
implies that
\[
K_z(s) \Gamma \be_z
=
-\Gamma \int\ci\R \bM(\dd t) \frac{K_z(t) - K_z(s)}{t-s}\be.
\]
Therefore \eqref{e:V k_z e}  is exactly the identity \eqref{e:repr01} for $f = K_z\be$,  so the 
representation theorem holds for such functions.  
\end{proof}

\subsection{The representation theorem implies an operator $\bA_2$ condition}\label{s-reptoa2}
Given an operator-valued measure $\bM$ on $\R$, let us (slightly abusing notation) denote by
$\bM(z)$, $z\in \C\setminus\R$ the Poisson extension of $\bM$ to the point $z$,
\begin{align*}
\bM(z) := \frac1\pi \int_\R \frac{|\im z|}{| s - z |^2} \bM(\dd s).
\end{align*}
Note that $\bM(z)$ is a positive operator, and let us denote by $\bM(z)^{1/2}$ its positive square 
root. 

\begin{defn}\label{d:A_2}
Let $\bM$ and $\bN$ be operator-valued measures on $\R$ (both with values in $B_+(\cK)$). 
We say that the pair $\bM$, $\bN$ satisfies the joint (Poisson) operator $\bA_2$ condition (and 
write $(\bM, \bN) \in (\bA_2) $) if 
\begin{align}\label{e:A_2}
\sup_{z\in\C\setminus\R}\| \bM(z)^{1/2} \bN(z)^{1/2} \| =: [\bM,\bN]\ci{\bA_2} <\infty.
\end{align}
The quantity $ [\bM,\bN]\ci{\bA_2} $ is called the (joint Poisson) $\bA_2$ condition of the 
pair $\bM$, $\bN$. In this definition, the supremum is taken over the operator norm $\|\cdot\|$ on 
$L^2(\cK)$.
\end{defn}

The Poisson $\bA_2$ condition was introduced in another connection in \cite{TreilVolberg97}. There,
among other things, it was shown to be equivalent to a more standard $\bA_2$ condition written in
terms of averages over intervals, whence in what follows we will skip the word ``Poisson''.

Let $\bM$ and $\bM^\Gamma$ be the compressed spectral measures of the operators $A$ and $A\ci\Gamma 
= A + \bB \Gamma\bB^*$ as described in the beginning of this section. 

\begin{theo}\label{t:A_2}
The measures $\bM$ and $\bN:=\Gamma \bM^\Gamma \Gamma$ satisfy the operator $\bA_2$ condition 
\eqref{e:A_2} with $[ \bM, \Gamma \bM^\Gamma \Gamma]\ci{\bA_2} \le 1/\pi$. 
\end{theo}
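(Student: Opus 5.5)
The plan is to reduce the $\bA_2$ bound to a single-point operator inequality, using the Aronszajn--Krein formulas of Lemma \ref{l-AK}. Fix $z\in\C_+$; the case $z\in\C_-$ follows because the Poisson extensions satisfy $\bM(\bar z)=\bM(z)$ and $\bM^\Gamma(\bar z)=\bM^\Gamma(z)$. By \eqref{e-herglotzF} we have $\bM(z)=\frac1\pi\im F(z)$ and $\bM^\Gamma(z)=\frac1\pi\im F\ci\Gamma(z)$. Write $X:=(\im F(z))^{1/2}\ge\bO$ and $F=F(z)$.

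First I rewrite the quantity to be bounded. Since $\bN(z)=\Gamma\bM^\Gamma(z)\Gamma$, we have $\bN(z)^{1/2}=|\bM^\Gamma(z)^{1/2}\Gamma|$, and polar decomposition gives
\[
\|\bM(z)^{1/2}\bN(z)^{1/2}\|=\|\bM(z)^{1/2}\Gamma\bM^\Gamma(z)^{1/2}\|.
\]
The square of the right-hand side equals $\|\bM(z)^{1/2}\Gamma\bM^\Gamma(z)\Gamma\bM(z)^{1/2}\|$. Now substitute \eqref{e-AKIm2}, namely $\im F\ci\Gamma=(\bI+F^*\Gamma)^{-1}X^2(\bI+\Gamma F)^{-1}$. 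This turns the square into $\pi^{-2}\|S^*S\|=\pi^{-2}\|S\|^2$, where
\[
S:=X(\bI+\Gamma F)^{-1}\Gamma X=XGX,\qquad G:=(\bI+\Gamma F)^{-1}\Gamma=\Gamma(\bI+F\Gamma)^{-1}.
\]
The two forms of $G$ agree by the intertwining identity $(\bI+\Gamma F)\Gamma=\Gamma(\bI+F\Gamma)$, and the invertibility of both factors is provided by Lemma \ref{l-AK}. It therefore suffices to show $\|XGX\|\le1$.

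The key step, which I expect to be the only non-formal one, is the identity
\[
-\im G=G^*X^2G .
\]
When $\Gamma$ is invertible this is immediate: $G=(\Gamma^{-1}+F)^{-1}$ and $\im(G^{-1})=\im F=X^2$, because $\Gamma^{-1}$ is self-adjoint. Then $G-G^*=G\bigl((G^*)^{-1}-G^{-1}\bigr)G^*$, which gives the identity in the form $-\im G=GX^2G^*$. The version with $G^*\,\cdot\,G$ follows in the same way from $G^*-G=G^*\bigl(G^{-1}-(G^*)^{-1}\bigr)G$. For general self-adjoint $\Gamma$ I would verify the identity directly from $G=\Gamma(\bI+F\Gamma)^{-1}$ by multiplying out $(\bI+\Gamma F^*)G^*\ldots$. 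This is the same algebra as in the proof of Lemma \ref{l-AK}, and it does not need $\Gamma^{-1}$.

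Granting the identity, take $v\in\cK$ and set $u=Xv$. Then
\[
\|XGXv\|^2=\Jap{X^2Gu,Gu}=-\im\Jap{Gu,u}\le|\Jap{XGXv,v}|\le\|XGXv\|\,\|v\|.
\]
Hence $\|XGXv\|\le\|v\|$, so $\|S\|\le1$. This yields $\|\bM(z)^{1/2}\bN(z)^{1/2}\|\le1/\pi$ for every $z$, and taking the supremum gives $[\bM,\Gamma\bM^\Gamma\Gamma]\ci{\bA_2}\le1/\pi$.
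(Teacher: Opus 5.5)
Your proof is correct, and it takes a genuinely different route from the paper's.

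The paper argues through the spectral representation $V\ci\Gamma:L^2(\bM)\to L^2(\bM^\Gamma)$. Using the representation formula of Theorem \ref{t-repr}, it computes $(V\ci\Gamma-\cM\ci{b_{z}}V\ci\Gamma\cM\ci{b_{\bar z}})(K_{\bar z}\be)=-2\pi K_{\bar z}\Gamma\bM(z)\be$. It then converts norms via $\|K_{z}\bx\|\ci{L^2(\bM)}^2=\frac{\pi}{\im z}\|\bM(z)^{1/2}\bx\|^2$, and gets the constant $1/\pi$ from the fact that a difference of two unitary operators has norm at most $2$.

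You instead work at a single point $z$ and use only the Aronszajn--Krein formula \eqref{e-AKIm2}. The steps check out:
the reduction to $\|XGX\|\le 1$ is exact;
the identity $-\im G=G^*X^2G$ holds for every bounded self-adjoint $\Gamma$, since $(\bI+\Gamma F^*)(G-G^*)(\bI+F\Gamma)=\Gamma(F^*-F)\Gamma$, so no $\Gamma^{-1}$ is needed;
the Cauchy--Schwarz step closes the estimate.
The same computation in fact shows that $\bI-2iXGX$ is an isometry. This is the pointwise counterpart of the paper's ``difference of two unitaries'' bound.

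Your route is shorter and self-contained. It needs neither the reduction to cyclic $\Ran\bB$, nor the representation theorem, nor the model spaces $L^2(\bM)$ and $L^2(\bM^\Gamma)$. It also shows that the $\bA_2$ bound with constant $1/\pi$ is a purely algebraic property of the operator Herglotz function $F$ and its linear-fractional transform $F\ci\Gamma=F(\bI+\Gamma F)^{-1}$.

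The paper's route is more conceptual. It exhibits the $\bA_2$ condition as a consequence of the unitarity of $V\ci\Gamma$ tested against the Blaschke multipliers. This is the natural viewpoint if one wants to treat $V\ci\Gamma$ as a singular-integral-type operator between the weighted spaces.
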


\begin{rem}
In light of Lemma \ref{l:A_2 scaling} below  the above theorem can be restated as 
$(\Gamma \bM \Gamma , \bM^\Gamma )\in(\bA_2)$ with 
$[\Gamma \bM \Gamma , \bM^\Gamma ]\ci{\bA_2} \le 1/\pi$. 
\end{rem}

\begin{rem}
The constant $1/\pi$ is an improvement over the estimate of the $\bA_2$ characteristic  
$8/\pi$ obtained in \cite{JST}.
\end{rem}

\begin{proof}[Proof of Theorem \ref{t:A_2}]
For $ z \in \C_+$ consider the Blaschke factor $b_z$, $b_z(s) = \frac{s -z}{s -\bar z}$, and let
$\cM\ci{b_{z}}$ be the multiplication operator by ${b_{z}}$. We begin by showing that for $f$ being
a linear combination of functions $K_{ z_k}$ with vector coefficients,
\begin{align*}
f = \sum_{k=1}^N K_{ z_k}\be_k, \qquad  z_k\in \C\setminus \left(\R \cup 
\left\{ z\right\}\right)
\end{align*}
we have 
\begin{align}\label{e-difference}
\left(V\ci\Gamma  - \cM\ci{b_{z}} V\ci\Gamma \cM\ci{b_{\bar{z}}}\right) f(s)
=
-\frac{2\im {z}}{s-\bar{z}}\Gamma\int \bM(\dd t)\frac{f(t)}{t-{z}}.
\end{align}
Here, observe that for ${z}\in \C_+,$ the term $t-{z}$ in the denominator of the 
integral stays bounded away from zero, so we are not dealing with the complication of a 
singular integral.

To obtain \eqref{e-difference}, first note that by partial fraction decomposition the function 
$\cM\ci{b_{\bar{z}}}f =b_{\bar{z}}f$ also a linear combination  of functions $K_{{z}_k}$ and 
$K_{{z}}$ with some vector coefficients. So, the expression for $V\ci\Gamma$ in $V\ci\Gamma 
\cM\ci{b_{\bar{z}}}f$ is covered 
by the spectral representation formula \eqref{e:repr01} in Theorem \ref{t-repr}. Next, using that 
$b_{z}(s)b_{\bar{z}}(s) = 1$ it is not hard to see that in the difference \eqref{e-difference}, the 
terms not containing $f(t)$ will cancel. In the remaining terms, i.e.~the ones that contain $f(t)$, 
we then simplify
\[
\left(V\ci\Gamma  - \cM\ci{b_{z}} V\ci\Gamma \cM\ci{b_{\bar{z}}}\right) f(s)=-\Gamma\int \bM(\dd 
t)\frac{f(t) }{t-s}  \left[1-  \frac{s -{z}}{s -\bar{z}} \frac{t -\bar{z}}{t -{z}} \right]
\]
using the identity
\[
 1-\frac{s -{z}}{s -\bar{z}} \frac{t -\bar{z}}{t -{z}} 
 =
 \frac{2\im {z} (t-s)}{(s-\bar{z})(t-{z})}.
 \]
We have arrived at \eqref{e-difference}.

Applying \eqref{e-difference} to a function $g(t) =  K_{\bar{z}}(t) \be$, $\be\in \cK$ and noticing 
that for ${z}\in \C_+$
\begin{align}\label{e:K_la norm}
 \im {z} \int_\R \bM(\dd t) \frac{1}{t-{z}} \frac{1}{t-\bar{z}} = \int_\R \bM(\dd t) 
 \frac{\im{z}}{|t-{z}|^2} = \pi  %\sign(\im{z})
 \bM({z})\,   
\end{align}
we see that 
\begin{align}\label{e:V-BVB 02}
\left(V\ci\Gamma  - \cM\ci{b_{z}} V\ci\Gamma \cM\ci{b_{\bar{z}}}\right) g = -2 K_{\bar{z}} \Gamma 
\pi\bM({z}) \be  .  
\end{align}
Therefore using the second equality in \eqref{e:K_la norm}, we see that for any $\bx\in \cK$
\[
\|K_{z} \bx\|\ci{L^2(\bM)}^2
=
\frac{\pi}{\im {z}}\|\bM({z})^{1/2}\bx\|^2 .
\]
Applying the above identity with $\bM$ replaced by $\bM^\Gamma$ and $\bx = \Gamma\bM({z}) \be$ to 
\eqref{e:V-BVB 02} we get 
\begin{align} \notag
\ \qquad \left\|\left(V\ci\Gamma  - \cM\ci{b_{z}} V\ci\Gamma \cM\ci{b_{\bar{z}}}\right) 
g\right\|^2\ci{L^2(\bM^\Gamma)}
& =
\frac{4\pi^{3}}{\im{z} }\langle
\bM^\Gamma({z})\Gamma\, \bM({z})\be\, , \, \Gamma \bM({z})\be\rangle\ci{\cK}
\\  \label{e-normdifference}
& =
\frac{4\pi^{3}}{\im {z}} \langle
\left(\Gamma\bM^\Gamma({z})\Gamma\right)
\bM({z})\be\, , \,
\bM({z})\be \rangle\ci{\cK} 
\\  \notag
& =
\frac{4\pi^{3}}{\im {z}} \left\|
\left( \bM^\Gamma \, ({z})\Gamma\right)^{1/2}
\left( \bM({z})\right)^{1/2}\wt\be
\right\|^2\ci{\cK}    
\end{align}
    where $\wt\be:=\bM({z})^{1/2}\be.$

Notice that 
\[
\left\|g\right\|\ci{L^2(\bM)}
=
\left(\frac{\pi}{\im {z}}\right)^{1/2}\|\bM({z})^{1/2}\be\|\ci\cK 
=
\left(\frac{\pi}{\im {z}}\right)^{1/2}\|\wt\be\|\ci\cK.
\]
Since $V\ci\Gamma$, $\cM_{b_{z}}$, $\cM_{b_{\bar{z}}}$ are unitary, we have
\begin{align*}
\left\|\left(V\ci\Gamma  - \cM\ci{b_{z}} V\ci\Gamma \cM\ci{b_{\bar{z}}}\right) 
g\right\|\ci{L^2(\bM^\Gamma)} 
\le 2    \left\|g\right\|\ci{L^2(\bM)}  = 2 \left(\frac{\pi}{\im {z}}\right)^{1/2}\|\wt\be\|\ci\cK 
. 
\end{align*}
Thus equation \eqref{e-normdifference} implies that 
\begin{align*}
\frac{2\pi^{3/2}}{(\im {z})^{1/2}} \left\| \left(\Gamma \bM^\Gamma({z})\Gamma\right)^{1/2} \left( 
\bM({z})\right)^{1/2}\wt\be\right\|\ci{\cK}   
\le 
2 \left(\frac{\pi}{\im {z}}\right)^{1/2}\|\wt\be\|\ci\cK
\end{align*}  
for all $\wt\be \in \Ran \bM({z})^{1/2}$. Since $\Ran \bM({z})^{1/2}$ is dense in $(\Ker
\bM({z})^{1/2})^\perp$, this implies
\begin{align*}
\pi \left\| \left(\Gamma \bM^\Gamma({z})\Gamma\right)^{1/2} \left( 
\bM({z})\right)^{1/2}\right\|\ci{\cK} \le   1, 
\end{align*}
which is the conclusion of the theorem. 
\end{proof}

For a simple reformulation, we use the following result. \begin{lem}\label{l-operatorswap}
Let $S$, $T$ and $\Gamma$ be self-adjoint operators with $S$ and $T$ positive semi-definite. 
Then    
\begin{align*}
\|S^{1/2} (\Gamma T\Gamma )^{1/2}\| = (\Gamma S \Gamma)^{1/2} T ^{1/2}\|.
\end{align*}
\end{lem}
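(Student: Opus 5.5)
The plan is to reduce both norms to the norm of a single operator of the form $X^*Y$ and use the identity $\|X^*Y\|=\|Y^*X\|$. The key fact is that for any bounded operator $X$ one has $\|X\|^2=\|X^*X\|=\|XX^*\|$. In particular, for a positive operator $P$ and any bounded operator $C$, $\|P^{1/2}C\|^2=\|C^*PC\|$, and also $\|CP^{1/2}\|^2 = \|CPC^*\|$.

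First compute the left-hand side:
\begin{align*}
\|S^{1/2}(\Gamma T\Gamma)^{1/2}\|^2 = \|(\Gamma T\Gamma)^{1/2} S (\Gamma T\Gamma)^{1/2}\| = \|S^{1/2}(\Gamma T\Gamma) S^{1/2}\|,
\end{align*}
where both equalities come from $\|X\|^2=\|X^*X\|=\|XX^*\|$ applied to $X=S^{1/2}(\Gamma T\Gamma)^{1/2}$. Now $S^{1/2}\Gamma T\Gamma S^{1/2} = (T^{1/2}\Gamma S^{1/2})^*(T^{1/2}\Gamma S^{1/2})$, since $\Gamma$ is self-adjoint. Hence
\begin{align*}
\|S^{1/2}(\Gamma T\Gamma)^{1/2}\|^2=\|T^{1/2}\Gamma S^{1/2}\|^2 .
\end{align*}

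By symmetry, the same computation with the roles of $S$ and $T$ exchanged gives
\begin{align*}
\|(\Gamma S\Gamma)^{1/2}T^{1/2}\|^2 = \|T^{1/2}(\Gamma S\Gamma)T^{1/2}\| = \|S^{1/2}\Gamma T^{1/2}\|^2 .
\end{align*}
The two right-hand sides agree, because $S^{1/2}\Gamma T^{1/2}=(T^{1/2}\Gamma S^{1/2})^*$ and an operator and its adjoint have the same norm. Taking square roots gives the claim.

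There is no real obstacle here. The only point needing care is the identity $\|P^{1/2}C\|^2=\|C^*PC\|$ used with the square root of $\Gamma T\Gamma$. This holds because $(\Gamma T\Gamma)^{1/2}$ is positive and its square is $\Gamma T\Gamma$, which is positive because $T\ge\bO$. Unbounded operators do not arise, since all operators in the statement are bounded.
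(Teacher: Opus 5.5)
Your proof is correct and follows essentially the same route as the paper: both use $\|X\|^2=\|X^*X\|=\|XX^*\|$ to reduce $\|S^{1/2}(\Gamma T\Gamma)^{1/2}\|^2$ to $\|S^{1/2}\Gamma T\Gamma S^{1/2}\|=\|T^{1/2}\Gamma S^{1/2}\|^2$, which is symmetric in $S$ and $T$. You carry out the swapped computation explicitly rather than invoking symmetry, which also makes visible the passage to the adjoint $(\Gamma S\Gamma)^{1/2}T^{1/2}=\bigl(T^{1/2}(\Gamma S\Gamma)^{1/2}\bigr)^*$ that the paper leaves implicit.
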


\begin{proof}
From the general operator identity
\begin{align}\label{e-opid}
\|A\|^2 = \|AA^*\| = \|A^*A\|
\end{align}
we see that
\begin{align*}
\|S^{1/2} (\Gamma T\Gamma )^{1/2}\|^2
&=
\|S^{1/2} \Gamma T\Gamma S^{1/2}\|
\\&=
\|S^{1/2} \Gamma T^{1/2}\|^2  =  \|T^{1/2} \Gamma S^{1/2}\|^2   .
\end{align*}
Thus   $\|S^{1/2} (\Gamma T\Gamma )^{1/2}\|$  is symmetric in $S$ and $T$ and the lemma follows.
\end{proof}

With this and the definition of the $\bA_2$ condition, we see the following.
\begin{lem}\label{l:A_2 scaling}
Let $\bM$ and $\bN$ be $B_+(\cK)$-valued measures, and let $\Gamma=\Gamma^*\in B(\cK)$. If  
$(\bM, \Gamma\bN\Gamma ) \in (\bA_2)$ then $(\Gamma\bM \Gamma, \bN ) \in (\bA_2)$, and, moreover 
\begin{align*}
[ \bM, \Gamma\bN\Gamma ]\ci{\bA_2} = [ \Gamma\bM \Gamma, \bN ]\ci{\bA_2}.
\end{align*}
\end{lem}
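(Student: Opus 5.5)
The plan is to reduce the statement pointwise in $z$ to Lemma \ref{l-operatorswap}, and then take the supremum over $z$. The key observation is that the Poisson extension commutes with the congruence by $\Gamma$. For any $z\in\C\setminus\R$,
\begin{align*}
(\Gamma\bN\Gamma)(z) = \frac1\pi\int_\R \frac{|\im z|}{|s-z|^2}\,\Gamma\bN(\dd s)\Gamma = \Gamma\,\bN(z)\,\Gamma .
\end{align*}
This holds because the integral is understood in the weak sense. Indeed, for $\bx,\by\in\cK$ we have $\Jap{\Gamma\bN(E)\Gamma\bx,\by} = \Jap{\bN(E)\Gamma\bx,\Gamma\by}$, since $\Gamma=\Gamma^*$. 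So the scalar measures of $\Gamma\bN\Gamma$ are the scalar measures $\mu_{\Gamma\bx,\Gamma\by}$ of $\bN$, and integrating the bounded Poisson kernel against them gives the identity. The same argument gives $(\Gamma\bM\Gamma)(z)=\Gamma\bM(z)\Gamma$. We also note that $\Gamma\bN\Gamma$ and $\Gamma\bM\Gamma$ are again $B_+(\cK)$-valued measures, so both $\bA_2$ quantities make sense.

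Next, fix $z$ and apply Lemma \ref{l-operatorswap} with $S:=\bM(z)\ge\bO$ and $T:=\bN(z)\ge\bO$. This yields
\begin{align*}
\|\bM(z)^{1/2}(\Gamma\bN(z)\Gamma)^{1/2}\| = \|(\Gamma\bM(z)\Gamma)^{1/2}\bN(z)^{1/2}\|.
\end{align*}
By the first step, the left side equals $\|\bM(z)^{1/2}\,(\Gamma\bN\Gamma)(z)^{1/2}\|$ and the right side equals $\|(\Gamma\bM\Gamma)(z)^{1/2}\,\bN(z)^{1/2}\|$.

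Taking the supremum over $z\in\C\setminus\R$ on both sides gives $[\bM,\Gamma\bN\Gamma]\ci{\bA_2}=[\Gamma\bM\Gamma,\bN]\ci{\bA_2}$ as an identity in $[0,\infty]$. In particular, finiteness of one side implies finiteness of the other, which proves both claims of the lemma.

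There is no real obstacle here. The only point requiring care is justifying $(\Gamma\bN\Gamma)(z)=\Gamma\bN(z)\Gamma$ via the weak definition of the integral. The proof of Lemma \ref{l-operatorswap} itself uses only $\|X\|^2=\|XX^*\|$ together with $\|X\|=\|X^*\|$, applied to $X=S^{1/2}\Gamma T^{1/2}$.
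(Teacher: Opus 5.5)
Your proof is correct and follows the paper's argument: the paper also derives the lemma from Lemma \ref{l-operatorswap} applied pointwise with $S=\bM(z)$, $T=\bN(z)$, and then takes the supremum over $z\in\C\setminus\R$. The only difference is that you spell out, via the weak definition of the integral, the identity $(\Gamma\bN\Gamma)(z)=\Gamma\bN(z)\Gamma$, which the paper uses without comment.
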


\subsection{Monotonicity of the $\bA_2$ condition}

\begin{lem}\label{l:A_2 monotone}
Let $\bO\le \wt\bM\le\bM$, $\bO\le \wt\bN\le\bN$ be operator-valued measures. If $(\bM, \bN)\in 
(\bA_2)$, then $(\wt\bM, \wt\bN)\in (\bA_2)$ and 
\begin{align*}
[\wt\bM, \wt\bN]\ci{\bA_2} \le [\bM, \bN]\ci{\bA_2}  . 
\end{align*}
\end{lem}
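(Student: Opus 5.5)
The proof reduces to a pointwise operator-norm inequality at each fixed $z\in\C\setminus\R$, after which we take the supremum over $z$.

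First, monotonicity passes to Poisson extensions. For $z\in\C\setminus\R$ the Poisson kernel $s\mapsto \frac1\pi\frac{|\im z|}{|s-z|^2}$ is non-negative and bounded. Hence for every $\bx\in\cK$,
\begin{align*}
\Jap{\wt\bM(z)\bx,\bx}=\frac1\pi\int_\R\frac{|\im z|}{|s-z|^2}\Jap{\wt\bM(\dd s)\bx,\bx}\le \frac1\pi\int_\R\frac{|\im z|}{|s-z|^2}\Jap{\bM(\dd s)\bx,\bx}=\Jap{\bM(z)\bx,\bx}.
\end{align*}
Here we used that $\Jap{(\bM-\wt\bM)(\fdot)\bx,\bx}$ is a positive scalar measure. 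So $\bO\le\wt\bM(z)\le\bM(z)$, and likewise $\bO\le\wt\bN(z)\le\bN(z)$.

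Next, we show that the quantity $\|X^{1/2}Y^{1/2}\|$ is monotone in each positive argument. Using the identity \eqref{e-opid}, for positive $X,Y$ we have
\begin{align*}
\|X^{1/2}Y^{1/2}\|^2=\|Y^{1/2}XY^{1/2}\|=\sup_{\|\bx\|=1}\Jap{XY^{1/2}\bx,Y^{1/2}\bx}.
\end{align*}
This is monotone in $X$, since $X\le X'$ gives $\Jap{XY^{1/2}\bx,Y^{1/2}\bx}\le\Jap{X'Y^{1/2}\bx,Y^{1/2}\bx}$. By the symmetric identity $\|X^{1/2}Y^{1/2}\|^2=\|X^{1/2}YX^{1/2}\|$, it is also monotone in $Y$.

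We apply this twice:
\begin{align*}
\|\wt\bM(z)^{1/2}\wt\bN(z)^{1/2}\|\le\|\bM(z)^{1/2}\wt\bN(z)^{1/2}\|\le\|\bM(z)^{1/2}\bN(z)^{1/2}\|\le[\bM,\bN]\ci{\bA_2}.
\end{align*}
Taking the supremum over $z$ gives $[\wt\bM,\wt\bN]\ci{\bA_2}\le[\bM,\bN]\ci{\bA_2}<\infty$, so $(\wt\bM,\wt\bN)\in(\bA_2)$.

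No step is a real obstacle. The one point that needs care is to avoid arguing via operator monotonicity of the square root: we never compare $\wt\bM(z)^{1/2}$ with $\bM(z)^{1/2}$ directly. The quadratic-form identity above works only with $X$ and $Y$ themselves, which sidesteps that issue.
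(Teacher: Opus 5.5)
Your proof is correct and essentially the same as the paper's. You fix $z$, use $\|A\|^2=\|A^*A\|=\|AA^*\|$ to turn $\|S^{1/2}T^{1/2}\|^2$ into the norm of the positive operator $S^{1/2}TS^{1/2}$, which is monotone in $T$, and then swap the roles of the two factors (the paper does this by taking adjoints). Your explicit check that $\wt\bM\le\bM$ gives $\wt\bM(z)\le\bM(z)$, by positivity of the Poisson kernel, fills in a step the paper only asserts.
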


\begin{proof}
For fixed $z\in \C\backslash \R$ let $\wt S = \wt\bM(z)$, $S = \bM(z)$, $\wt T = \wt\bN(z)$, and $T 
= \bN(z)$. Then by the assumption we have $\bO\le \wt S\le S$ and $\bO\le \wt T\le T$.

Let $\left\|S^{1/2}T^{1/2}\right\|\le C.$ Using the trivial fact  that $\|A\|\le C$ if and only if 
$A^*A\le C^2\bI$, we conclude that $\left\|S^{1/2}T^{1/2}\right\|\le C$ is equivalent to 
\[
S^{1/2} T S^{1/2} \le C^2 \bI. 
\]
Since $\wt T\le T$, we conclude that 
\[
S^{1/2} \wt T S^{1/2} \le C^2 \bI, 
\]
so $\left\|\wt T^{1/2} S^{1/2}\right\|\le C$. 

Taking the adjoint we get that $\left\| S^{1/2}\wt T^{1/2}\right\|\le C$, and applying the same 
reasoning to $S$, we get that $\left\|\wt S^{1/2}\wt T^{1/2}\right\|\le C$. 
\end{proof}

%%%%%%%%%%%%%%%%%%%%%%%%%%%%%%%%%%%%%%%%%%
\section{$\bA_2$ condition implies vector mutual singularity}\label{s-VMU}
%%%%%%%%%%%%%%%%%%%%%%%%%%%%%%%%%%%%%%%%%%
\subsection{Vector mutual singularity}
First let us notice that if an operator-valued measure $\bM$ with values in $B_+(\cK)$ is a weak 
indefinite integral, i.e.~$\bM(\dd s) = W(s) \mu(\dd s)$, then for any bounded  measurable 
operator-valued function $F$ with values in $B(\cK)$  the measure $F^*\bM F$ is well 
defined as the indefinite integral  $F^*W F \mu$ (since the product of measurable 
operator-valued functions is measurable). 

\begin{defn}
Let operator-valued measures $\bM$ and $\bN$ be both weak indefinite integrals. We say that $\bM$
and $\bN$ are \emph{vector mutually singular} (and write $\bM\perp\bN$) if there exists a measurable
operator-valued function $\Pi$ whose values are orthogonal projections in $\cK$ such that
\begin{align*}
\Pi \bM\Pi =\bO, \qquad (\bI-\Pi)\bN (\bI-\Pi) =\bO. 
\end{align*}
\end{defn}

Note, that this definition works only for measures that are indefinite integrals, since we did not 
define the measure $\Pi \bM\Pi$ for general operator-valued measures  (and it is probably 
impossible). 

\subsubsection{A restatement of the vector mutual singularity condition}\label{s:VMS density} 
Let $\bM=W\mu$ and $\bN=V\nu$. Then, denoting $\sigma:=\mu+\nu$ one can rewrite $\bM =\wt W\sigma$,
$\bN=\wt V\sigma$, where
\begin{align*}
\wt W = \left( \frac{\dd\mu}{\dd \sigma}  \right) W, \qquad 
\wt V = \left( \frac{\dd\nu}{\dd \sigma}  \right) V; 
\end{align*}
here $\dd \mu/\dd \sigma$ and $\dd \nu/\dd \sigma$ are the corresponding Radon--Nikodym derivatives.
Then it is easy to see that the condition $\bM\perp\bN$ is equivalent to the condition
\begin{align*}
\Ran \wt W(s) \perp \Ran \wt V(s) \qquad \sigma\text{-a.e.}
\end{align*}
We leave  the  details as an exercise for the reader. The critical fact here is the measurability of
the functions $s\mapsto P_{\cRan \wt W(s)}$, $s\mapsto P_{\cRan \wt V(s)}$, which follows
immediately from Lemma \ref{l:meas proj}

Of course, the above condition is independent of the choice of bases $\mu$ and $\nu$.

\subsection{Lebesgue decomposition of spectral measures}\label{s-decompo}
Let us recall the Lebesgue decomposition of a spectral measure into absolutely continuous and 
singular parts. 

For a self-adjoint operator $A = A^*$, let $\bE$ denote its projection-valued spectral measure. 
Take a \emph{scalar} spectral measure $\mu=\mu_{\bx}$  of maximal spectral type (as guaranteed by 
Corollary \ref{c:max type}), and  consider the Lebesgue decomposition of $\mu = 
\mu\ti{ac}+\mu\ti{s},$  where $\mu\ti{ac}$ and $\mu\ti{s}$ are absolutely continuous and singular 
parts of $\mu$ respectively.  

Take a set $S$  of zero Lebesgue measure such that $\mu\ti{s}(S^c) = 0.$
Then we  define the singular and absolutely continuous  parts of $\bE$ by 
\begin{align}
    \bE\ti{s} := \bE\mid\ci{S}
    \quad\text{and}\quad
    \bE\ti{ac} := \bE\mid\ci{S^c}.
\end{align}
so $\bE=\bE\ti{s} +\bE\ti{ac}$. It is not hard to see that the result does not depend on the choice 
of the measure $\mu_{\bx}$ and the set $S$.

For the compressed spectral measure $\bM=\bB^*\bE\bB$ we define $\bM\ti{s} = \bB^*\bE\ti{s}\bB$ as 
well as $\bM\ti{ac} = \bB^*\bE\ti{ac}\bB$ 
so that we obtain the decomposition 
\[
\bM=\bM\ti{ac}+\bM\ti{s}.
\]

\subsection{Vector mutual singularity for perturbations}

We are now prepared to state one of the key results of the paper. Its proof will be given in 
Section \ref{subse:proof}.

Consider operators $A$ and $A\ci\Gamma= A + \bB\Gamma\bB^*$. Let $\bE$ and $\bE\ci\Gamma$ be the 
projection-valued spectral measures for $A$ and $A\ci\Gamma$, and $\bM=\bB^*\bE\bB$ and 
$\bM^\Gamma= \bB^*\bE\ci\Gamma \bB$ be the corresponding compressed spectral measures.

\begin{theo}\label{t:VMS}
Let $A$ and $A\ci\Gamma = A + \bB\Gamma\bB^*$ be such that the corresponding compressed spectral 
measures $\bM $ and $\bM^\Gamma$ are weak indefinite integrals. Then their singular parts $\bM\ti 
s$ and $\bM^\Gamma\ti s$ satisfy the following vector mutual singularity condition
\begin{align*}
\bM\ti s\perp \Gamma \bM^\Gamma\ti s\Gamma\qquad\text{or, equivalently,} \qquad 
\Gamma \bM\ti s\Gamma \perp \bM^\Gamma\ti s .
\end{align*}
\end{theo}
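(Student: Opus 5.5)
The natural route is through Theorem~\ref{t:A_2}. It gives $(\bM,\bN)\in(\bA_2)$ with $\bN:=\Gamma\bM^\Gamma\Gamma$. By the monotonicity Lemma~\ref{l:A_2 monotone}, the same holds for the pair $(\bM\ti s,\bN\ti s)$, where $\bN\ti s=\Gamma\bM^\Gamma\ti s\Gamma$; here $\bO\le\bM\ti s\le\bM$ and $\bO\le\Gamma\bM^\Gamma\ti s\Gamma\le\Gamma\bM^\Gamma\Gamma$, both immediate from the definition of the singular parts. So it suffices to prove a general statement: if two singular (with respect to Lebesgue measure) operator-valued measures that are weak indefinite integrals satisfy the joint $\bA_2$ condition, then they are vector mutually singular. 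The equivalence of the two formulations in the statement should follow from Lemma~\ref{l-operatorswap} and Lemma~\ref{l:A_2 scaling}, provided the argument is run symmetrically. Alternatively, one can note directly that $\Ran(\Gamma \wt W\Gamma)\perp\Ran \wt V$ is equivalent to $\Ran\wt W\perp\Ran(\Gamma\wt V\Gamma)$, because $\Gamma$ is self-adjoint, via $\wt V\Gamma\wt W=\bO \iff \Gamma\wt V\Gamma \cdot \Gamma^{-1}\ldots$, using the invertibility of $\Gamma$ when it is available.

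For the general statement, write both measures over a common base as in Section~\ref{s:VMS density}, with $\sigma$ singular: $\bM\ti s=W\sigma$ and $\bN\ti s=V\sigma$. We must show $V(s)^{1/2}W(s)^{1/2}=\bO$ for $\sigma$-a.e.\ $s$, which is equivalent to $\Ran W(s)\perp\Ran V(s)$. Fix vectors $\bx,\by$ from a countable dense set. The idea is to evaluate the Poisson extensions at $z=s+i\e$ and let $\e\to0^+$. Since $\sigma$ is singular, for $\sigma$-a.e.\ $s$ we have $\sigma(s+i\e)\to\infty$, while the ratios $\jap{W(s+i\e)\bx,\bx}/\sigma(s+i\e)\to\jap{W(s)\bx,\bx}$. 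This second fact is a differentiation theorem for Poisson averages with respect to a general Radon measure, valid $\sigma$-a.e.; it is stated for each fixed $\bx$ and extends to a countable family and to sesquilinear forms by polarization. Consequently
\begin{align*}
\frac{\bM\ti s(s+i\e)}{\sigma(s+i\e)}\to W(s),\qquad \frac{\bN\ti s(s+i\e)}{\sigma(s+i\e)}\to V(s)
\end{align*}
weakly on the countable set, and hence weakly wherever the operators are uniformly bounded. Boundedness can be arranged by normalizing, for instance by taking the base to be $\sigma=\tr$-type sums, so that $\|W\|,\|V\|\le C$.

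Now the $\bA_2$ bound reads $\bM\ti s(z)^{1/2}\bN\ti s(z)\bM\ti s(z)^{1/2}\le C^2\bI$. Dividing by $\sigma(z)^2$ gives $\wt S_\e^{1/2}\wt T_\e\wt S_\e^{1/2}\le C^2\sigma(z)^{-2}\bI\to\bO$, where $\wt S_\e\to W(s)$ and $\wt T_\e\to V(s)$. Passing to the limit in the quadratic form $\jap{\wt T_\e\wt S_\e^{1/2}\bx,\wt S_\e^{1/2}\bx}$ would give $V^{1/2}W^{1/2}=\bO$.

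The main obstacle is this limit passage. Weak convergence of $\wt S_\e$ does not imply convergence of the square roots, and products of weakly convergent sequences do not converge. I would try to avoid square roots by working with the equivalent scalar inequality $|\jap{\bN(z)\bx,\by}|^2$-type bounds, or more cleanly with
\begin{align*}
|\jap{\bM(z)\bx,\by}|\ldots
\end{align*}
Specifically, I would use that $\|S^{1/2}T^{1/2}\|\le C$ implies $|\jap{S\bx,\by}|$-mixed estimates of the form
\begin{align*}
\left|\jap{S^{1/2}T^{1/2}\bu,\bv}\right|\le C\|\bu\|\|\bv\|.
\end{align*}
Alternatively, and perhaps better, I would reduce to finite-dimensional compressions. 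For a finite-rank projection $P$ onto the span of finitely many basis vectors, the compressions $P\bM P$ and $P\bN P$ still satisfy $\bA_2$ by Lemma~\ref{l:A_2 monotone}, since the relevant quantity is at most $[\bM,\bN]\ci{\bA_2}$. In finite dimensions weak convergence is norm convergence, square roots are continuous, and the matrix argument above (as in \cite{JST}) gives $\Ran PW(s)P\perp\Ran PV(s)P$ $\sigma$-a.e. Letting $P\nearrow\bI$ along a countable sequence, with $PWP\to W$ and $PVP\to V$ strongly, we get $V(s)^{1/2}W(s)V(s)^{1/2}=\lim V^{1/2}PWPV^{1/2}$; the remaining task is to check that this limit vanishes, which is the delicate point, since $PV P\ne V$. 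To handle it, I would apply the orthogonality $\jap{PWP\bx,PVP\by}$-form at each stage and use strong convergence to conclude $W(s)V(s)=\bO$ $\sigma$-a.e.
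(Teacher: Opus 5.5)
Your top-level reduction is the same as the paper's. Theorem~\ref{t:A_2} and Lemma~\ref{l:A_2 monotone} reduce everything to a general lemma for $W\sigma$, $V\sigma$ over a singular $\sigma$ (Lemma~\ref{l:VMS}). But your proof of that lemma has a genuine gap, exactly at the step you call the main obstacle, and you never close it. Differentiating the scalar measures $\jap{\bM(\fdot)\bx,\bx}$ gives only weak convergence of $\bM(z)/\sigma(z)$, which, as you say, controls neither square roots nor products.

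The missing idea is to differentiate \emph{vector-valued} measures. For fixed $\bx$, once $W$ is bounded, $s\mapsto W(s)\bx$ is a $\cK$-valued Bochner integrable function. The vector-valued Poisson--Lebesgue theorem, Lemma~\ref{l:P-L differ}, then gives $\|\bM(z)\bx/\sigma(z)-W(s)\bx\|\to0$ non-tangentially, $\sigma$-a.e. Apply this to a countable dense set of $\bx$ and use $\|\bM(z)/\sigma(z)\|\le\sup_s\|W(s)\|$; this gives convergence in the \emph{strong} operator topology. From there everything works:
\begin{itemize}
\item square roots converge strongly (Lemma~\ref{lem:strong}(iii));
\item products of bounded strongly convergent families converge strongly (Lemma~\ref{lem:strong}(i));
\item hence $\|W(s)^{1/2}V(s)^{1/2}\|\le\limsup\sigma(z)^{-1}[\bM,\bN]\ci{\bA_2}=0$.
\end{itemize}
With norm convergence of $\bM(z)\bx/\sigma(z)$ and $\bN(z)\by/\sigma(z)$ you could even avoid square roots. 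Use $|\jap{S\bx,T\by}|\le\|S^{1/2}T^{1/2}\|\,\jap{S\bx,\bx}^{1/2}\jap{T\by,\by}^{1/2}$ with $S=\bM(z)$, $T=\bN(z)$, and divide by $\sigma(z)^2$.

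Your two auxiliary devices do not work.

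\emph{Compression.} Lemma~\ref{l:A_2 monotone} would need $P\bM P\le\bM$, which fails for general positive operators. In fact compression destroys both the $\bA_2$ bound and vector mutual singularity. Take $\sigma=\delta_0$, $u=(\be_1+\be_2)/\sqrt2$, $v=(\be_1-\be_2)/\sqrt2$, $W=\jap{\fdot,u}u$, $V=\jap{\fdot,v}v$, $\bM=W\delta_0$, $\bN=V\delta_0$, and let $P$ be the projection onto $\be_1$. Then $[\bM,\bN]\ci{\bA_2}=0$ and $WV=\bO$. Yet $PWP\cdot PVP\ne\bO$, and $\|(P\bM(z)P)^{1/2}(P\bN(z)P)^{1/2}\|=c(z)/2$ with $c(z)=|\im z|/(\pi|z|^2)$, which is unbounded as $z\to0$.

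\emph{Normalization.} A base $\sum_k\alpha_k\jap{\bM(\fdot)\be_k,\be_k}$ with summable $\alpha_k$ gives only $\sum_k\alpha_k\jap{W(s)\be_k,\be_k}\le1$. That is not a uniform bound on $\|W(s)\|$, and the true trace may be infinite, since only weak indefinite integrals are assumed. The paper instead replaces $W,V$ by $(\bI+W)^{-1}W\le W$ and $(\bI+V)^{-1}V\le V$. The ranges are unchanged, the norms are at most $1$, and the $\bA_2$ condition survives by monotonicity. This bound is what makes $W\bx$ Bochner integrable and lets you pass from a dense set of vectors to strong convergence.
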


\begin{rem}
Note that by Lemma \ref{l:factorization} any self-adjoint trace class operator $K$ can be 
represented as $K=\bB \Gamma\bB^*$ with invertible $\Gamma$ and $\bB$ being Hilbert--Schmidt. Then 
the operators $\bM(\R) = \bB^* \bE(\R) \bB$, $\bM^\Gamma(\R)=\bB^* \bE\ci\Gamma(\R)\bB$ belong to 
the trace class, so by Lemma \ref{l:TraceClass_IndInt} the measures $\bM$ and $\bM^\Gamma$ are weak 
indefinite integrals. Thus, Theorem \ref{t:VMS} can be interpreted as a generalization of the 
classical Aronszajn--Donoghue theorem with mutual singularity replaced by the \emph{vector} mutual 
singularity. 

Moreover, by Lemma \ref{l:factorization}, any bounded self-adjoint perturbation $K$ can be 
represented as $K=\bB \Gamma\bB^*$, so this representation is not really a restriction;  the 
requirement that the compressed spectral measures are weak indefinite integrals is the real 
restriction here. 
\end{rem}

\subsection{Main Lemma and proof of Theorem \ref{t:VMS}}\label{subse:proof}
\begin{lem}\label{l:VMS}
Let $\sigma$ be a singular measure, and let the measures $\bM=W\sigma$ and $\bN=V\sigma$ satisfy 
the operator $\bA_2$ condition \eqref{e:A_2}. Then $\bM\perp\bN$, or equivalently 
\begin{align*}
\Ran W(s)\perp\Ran V(s) \qquad \sigma\text{-a.e.}
\end{align*}
\end{lem}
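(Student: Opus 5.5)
The plan is to localize the $\bA_2$ bound at $\sigma$-almost every point using Poisson averages. The key fact is that the Poisson extension $\sigma(z)$ of a singular measure blows up $\sigma$-a.e. as $z$ approaches the real line. At such points the normalized averages $\bM(z)/\sigma(z)$ and $\bN(z)/\sigma(z)$ should converge to $W(s)$ and $V(s)$. The $\bA_2$ bound, divided by $\sigma(z)^2$, then forces $V(s)W(s)=\bO$, which gives $\Ran W(s)\perp\Ran V(s)$.

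\emph{Step 1 (reduction to bounded densities).} For $\lambda>0$ let $E_\lambda=\{s:\|W(s)\|\le\lambda,\ \|V(s)\|\le\lambda\}$. Since $\1\ci{E_\lambda}W\sigma\le\bM$ and $\1\ci{E_\lambda}V\sigma\le\bN$, Lemma \ref{l:A_2 monotone} shows that the restricted pair still satisfies \eqref{e:A_2} with constant $C:=[\bM,\bN]\ci{\bA_2}$. The sets $E_\lambda$ exhaust $\R$ up to a $\sigma$-null set, because $\|W\|,\|V\|<\infty$ everywhere. Hence we may assume that $W$ and $V$ are bounded by $\lambda$.

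\emph{Step 2 (pointwise inequality).} Put $z=s+iy$ and write $\bM(z)=\sigma(z)W_z$, $\bN(z)=\sigma(z)V_z$, where $W_z$ and $V_z$ are Poisson averages of $W$ and $V$ against the probability measure $p_{z}\sigma/\sigma(z)$. For $\bx,\by\in\cK$ we have
\begin{align*}
|\jap{\bM(z)\bx,\bN(z)\by}| = |\jap{\bN(z)^{1/2}\bM(z)^{1/2}\,\bM(z)^{1/2}\bx,\ \bN(z)^{1/2}\by}| \le C\,\|\bM(z)^{1/2}\bx\|\,\|\bN(z)^{1/2}\by\|.
\end{align*}
Dividing by $\sigma(z)^2$ gives
\begin{align*}
|\jap{W_z\bx,V_z\by}|\le C\,\sigma(z)^{-1}\,\jap{W_z\bx,\bx}^{1/2}\jap{V_z\by,\by}^{1/2}\le C\lambda\,\|\bx\|\,\|\by\|\,\sigma(z)^{-1}.
\end{align*}

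\emph{Step 3 (passing to the limit $y\to0^+$).} Two facts are needed at $\sigma$-a.e.\ $s$. First, $\sigma(s+iy)\to\infty$; this is the classical statement that a singular measure has infinite symmetric derivative $\sigma$-a.e., together with the standard comparison of Poisson and box averages. Second, $W_z\bx\to W(s)\bx$ and $V_z\by\to V(s)\by$ \emph{in norm} in $\cK$, for all $\bx,\by$ in a fixed countable dense set. I expect this second fact to be the main obstacle. Weak convergence alone would not let us pass to the limit in the inner product $\jap{W_z\bx,V_z\by}$, which is why Step 1 makes the densities bounded. With boundedness in hand, $s\mapsto W(s)\bx$ is a bounded strongly measurable $\cK$-valued function, hence Bochner integrable with respect to the finite measure $\sigma$. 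I will then apply the vector-valued Lebesgue differentiation theorem with respect to $\sigma$. The proof runs through the Besicovitch maximal inequality on $\R$, with density of simple functions in $L^1(\sigma;\cK)$ in place of continuous functions. I will combine this with domination of Poisson averages by the centered maximal function of $\sigma$ to obtain norm convergence of the Poisson averages $\sigma$-a.e.

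Given these two facts, the bound from Step 2 yields $\jap{W(s)\bx,V(s)\by}=0$ for $\sigma$-a.e.\ $s$ and all $\bx,\by$ in the countable dense set. By density, $V(s)W(s)=\bO$ for $\sigma$-a.e.\ $s$. This means $\Ran W(s)\subset\Ker V(s)=(\Ran V(s))^\perp$, which is the required orthogonality, and it is equivalent to $\bM\perp\bN$ by Section \ref{s:VMS density}.
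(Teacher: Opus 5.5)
Your proposal is correct and follows essentially the paper's route: reduce to bounded densities using the monotonicity of the $\bA_2$ condition, use vector-valued Poisson differentiation with respect to $\sigma$ (the paper's Lemma \ref{l:P-L differ}) to get $\bM(z)/\sigma(z)\to W(s)$ and $\bN(z)/\sigma(z)\to V(s)$, and let $\sigma(z)\to\infty$ kill the normalized $\bA_2$ bound. The differences are minor: you truncate to $\{\|W\|,\|V\|\le\lambda\}$ where the paper passes to $(\bI+W)^{-1}W$, and you take the limit in the form $\langle\bM(z)\bx,\bN(z)\by\rangle/\sigma(z)^2$ on a countable dense set, which lets you skip the paper's strong convergence of square roots (Lemma \ref{lem:strong}) and its $\e/3$ extension.
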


This lemma immediately implies Theorem \ref{t:VMS}. Indeed, we assumed that the measures are 
indefinite integrals, meaning that $\bM=W\mu$, $\bM^\Gamma=W^\Gamma\nu$, where $\mu$ and $\nu$ are 
some scalar measures. Then, as we discussed above in Section \ref{s:VMS density}, denoting 
$\sigma:=\mu+\nu$ we can write $\bM=\wt W\sigma$, $\bM^\Gamma= \wt W^\Gamma\sigma$. 
And as was explained in Section \ref{s-decompo}, we have the decomposition 
$\bM\ti{s} = \wt W\sigma\ti{s}$, $\bM^\Gamma\ti{s} = \wt W^\Gamma\sigma\ti{s}$. Therefore we obtain
\begin{align*}
\bM\ti s \le\bM, \qquad \bM^\Gamma\ti s \le\bM^\Gamma, \qquad
\Gamma \bM^\Gamma\ti s \Gamma \le  \Gamma \bM^\Gamma \Gamma .
\end{align*}

We know, see Theorem  \ref{t:A_2}, that  the measures $\bM$ and $\Gamma \bM^\Gamma \Gamma$ satisfy 
the operator $\bA_2$ condition.  Therefore, by the monotonicity of the $\bA_2$ condition, see Lemma 
\ref{l:A_2 monotone}, we conclude that the measures $\bM\ti s$ and $\Gamma \bM^\Gamma\ti s \Gamma$ 
also satisfy the operator $\bA_2$ condition. 

Then by Lemma \ref{l:VMS} with $\sigma=\sigma\ti s$ the measures $\bM\ti s$ and $\Gamma
\bM^\Gamma\ti s \Gamma$ are vector mutually singular, which is exactly the conclusion of Theorem
\ref{t:VMS}. \hfill\qed

\subsection{Proof of Lemma \ref{l:VMS}}

\subsubsection{Reduction to the case of uniformly bounded densities}

Define $\wt\bM =\wt W\sigma$, $\wt N=\wt V\sigma$, where $\wt W:=(\bI + W)^{-1}W$, $\wt V:=(\bI + 
V)^{-1}V$; note that by Lemma \ref{lem:measurable} \cond3 the functions $\wt W$, $\wt V$ are 
measurable. Note that trivially 
\begin{align*}
\wt W = W^{1/2} (\bI + W)^{-1} W^{1/2} \le W
\end{align*}
and similarly $\wt V\le V$ (the inequalities understood as pointwise inequalities for operators). 
Therefore $\wt\bM\le\bM$ and $\wt\bN\le \bN$, and so by the monotonicity of the $\bA_2$ condition, 
see Lemma \ref{l:A_2 monotone}, the measures $\wt\bM$, $\wt\bN$ satisfy the $\bA_2$ condition.

Note that since $|t/(1+t)|\le 1$ for $t\ge 0$ we conclude from the spectral theorem that $\|\wt 
W(t)\|\le1$, $\|\wt V(t)\|\le1$.  So if Lemma \ref{l:VMS} holds for uniformly bounded weights, we 
conclude that 
\begin{align*}
\Ran\wt W(s)\perp\Ran\wt V(s) \qquad \sigma\text{-a.e.}
\end{align*}
But $\Ran W(s) = \Ran \wt W(s)$, $\Ran V(s) = \Ran \wt V(s)$, so conclusion holds for arbitrary 
weights. 

Thus, it is sufficient to prove Lemma \ref{l:VMS} only for uniformly bounded weights.

\subsubsection{Proof for the uniformly bounded densities}
We first show that 
\begin{align}\label{e:density}
\frac{\bM(z)}{\sigma(z)}   \underset{z\to s\sphericalangle}{\longrightarrow} W(s) , 
\qquad\frac{\bN(z)}{\sigma(z)}   \underset{z\to s\sphericalangle}{\longrightarrow} V(s) \qquad 
\sigma\text{-a.e.}
\end{align}
in strong operator topology.

Indeed, for $\bx\in\cK$ the vector $\bM(z) \bx$ is the Poisson extension of the vector-valued 
measure $\sigma W\bx$,  i.e.~$\bM(z) \bx = (\sigma W\bx)(z) $. Then by Lemma \ref{l:P-L differ} 
\begin{align*}
\frac{\bM(z)}{\sigma(z)} \bx  \underset{z\to s\sphericalangle}{\longrightarrow} W(s)\bx , 
\qquad 
\sigma\text{-a.e.}
\end{align*}
Trivially, the $\sigma$-a.e.\ convergence holds for all $\bx$ in some \emph{countable} dense set. 
Since the weight $W$ is uniformly bounded, the operators $\bM(z)/\sigma(z)$ are uniformly bounded, 
and so by $\e/3$ Theorem we conclude that the convergence holds for all $\bx\in\cK$.

The same reasoning applies to $\bN$. 

Applying Lemma \ref{lem:strong} \cond3 with $\f$ defined by 
\begin{align*}
\f(s):= \begin{cases}
\sqrt{s} \qquad& s\ge 0 \\
0 & s<0
\end{cases}
\end{align*}
we conclude from \eqref{e:density} that 
\begin{align}\label{e:W^1/2}
\frac{\bM(z)^{1/2}}{\sigma(z)^{1/2}}   \underset{z\to s\sphericalangle}{\longrightarrow} W(s)^{1/2} 
, \qquad\frac{\bN(z)^{1/2}}{\sigma(z)^{1/2}}   \underset{z\to s\sphericalangle}{\longrightarrow} 
V(s)^{1/2} \qquad 
\sigma\text{-a.e.}
\end{align}
in the strong operator topology. 

Since $\sigma$ is singular, 
\begin{align}\label{e:singularity}
\sigma(z)   \underset{z\to s\sphericalangle}{\longrightarrow} \infty ,  \qquad 
\text{for }\sigma\text{-a.a. }s\in \R. 
\end{align}
Take $s$ such that \eqref{e:density} and \eqref{e:W^1/2} hold. Then for such $s$ using 
\eqref{e:W^1/2} and the fact that in the strong operator topology limit of a product is 
product of limits, we get that 
\begin{align*}
\| W(s)^{1/2}V(s)^{1/2}\| &=   \left\| 
\left( \lim_{z\to s\sphericalangle} \sigma(z)^{-1/2}\bM(z)^{1/2}\right)
\left( \lim_{z\to s\sphericalangle} \sigma(z)^{-1/2}\bN(z)^{1/2}\right) \right\| \\
 &=   \left\| 
 \lim_{z\to s\sphericalangle} \sigma(z)^{-1}\bM(z)^{1/2} \bN(z)^{1/2} \right\|
\\
& \le\limsup_{z\to s\sphericalangle} \sigma(z)^{-1} \left\| 
\bM(z)^{1/2}\bN(z)^{1/2}  \right\| . 
\end{align*}
Since $\left\| \bM(z)^{1/2}\bN(z)^{1/2}  \right\|\le C<\infty$ uniformly in $z\in\C_+$, condition
\eqref{e:singularity} implies that $W(s)^{1/2}V(s)^{1/2} =\bO$. Hence we have $W(s)V(s) =\bO$ ,
which implies $\Ran V(s)\subset \Ker W(s) = (\Ran W(s))^\perp$.  Lemma \ref{l:VMS} is proved.
\hfill\qed

%%%%%%%%%%%%%%%%%%%%%%%
\section{Vector mutual singularity implies Theorems  \ref{t:GA-D_trace}  and \ref{t:GA-D_02}}
\label{s:proofs gen A-D trace}
%%%%%%%%%%%%%%%%%%%%%
Formally we will prove a bit stronger results. Namely, Theorem \ref{t:GA-D_trace} follows from the 
theorem below. 

\begin{theo}\label{t:GA-D_trace 01}
Consider a family of operators 
\begin{align*}
A_t := A + t \bB \Gamma \bB^* , 
\end{align*}
where $A=A^*$ and $\Gamma = \Gamma^*\ge\bO$ is invertible. As usual, assume  that $\Ran \bB$ is 
cyclic for $A$.

Let $\sigma$ be a singular measure on $\R$. If for each $t\in\R$ the compressed spectral measure
$\bM_t = \bB^*\bE_t\bB$ of the operator $A_t$ is  a weak indefinite integral, then for all $t\in\R$,
except countably many, the scalar spectral measure \tup(of maximal spectral type\tup) of $A_t$ is
mutually singular with $\sigma$.
\end{theo}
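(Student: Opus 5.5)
We would follow the classical Aronszajn--Donoghue idea: for distinct parameters the singular parts are ``almost'' disjoint, and a finite measure cannot charge uncountably many ``disjoint'' pieces. Fix a carrier $\Omega$ of $\sigma$ with $\Leb(\Omega)=0$. For each $t$ write $\bM_t = W_t\mu_t$ with $\mu_t$ a scalar spectral measure of maximal type of $A_t$; this is allowed by Proposition \ref{p:max type base}, and $\mu_t$ is of maximal type by Lemma \ref{l-cycAGamma} and Corollary \ref{c:max type}. The claim is that the set $T:=\{t: \mu_t(\Omega)>0\}$ is countable.

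\textbf{Step 1: pairwise vector mutual singularity.} For $s\ne t$ we have $A_t = A_s + (t-s)\bB\Gamma\bB^*$. Applying Theorem \ref{t:VMS} with $A_s$ in place of $A$ and $(t-s)\Gamma$ in place of $\Gamma$ gives $(\bM_s)\ti s \perp (t-s)^2\Gamma(\bM_t)\ti s\Gamma$. Since $\Gamma$ is invertible, this means that, for a common base $\sigma_{st}$, the subspaces $\cRan W_s(x)$ and $\Gamma\,\cRan W_t(x)$ are orthogonal a.e.\ on the singular carriers. Equivalently, $\Gamma^{1/2}\cRan W_s(x)\perp \Gamma^{1/2}\cRan W_t(x)$ after the symmetric rescaling, using $\Gamma\ge\bO$: the condition $\langle \Gamma u,v\rangle=0$ is exactly orthogonality of $\Gamma^{1/2}u$ and $\Gamma^{1/2}v$.

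\textbf{Step 2: a single summable quantity.} Fix an orthonormal basis $\{\be_k\}$ and weights $\alpha_k>0$ with $\sum\alpha_k\|\Gamma^{-1/2}\be_k\|^2<\infty$. Define
\[
\nu_t(E):=\sum_k \alpha_k\Jap{\bM_t(E)\Gamma^{-1/2}\be_k,\Gamma^{-1/2}\be_k},
\]
which by Corollary \ref{c:max type} is again a maximal-type measure, since $\{\Gamma^{-1/2}\be_k\}$ is complete. The key estimate to aim for is that, for any finite set $t_1,\dots,t_N$ of distinct parameters,
\[
\sum_{j=1}^N \nu_{t_j}(\Omega)\le C,
\]
with $C$ independent of $N$. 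If this holds, then for each $n$ only finitely many $t$ have $\nu_t(\Omega)>1/n$, so $T$ is countable.

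To obtain the estimate, pass to a common base $\lambda=\sum_j(\mu_{t_j})\ti s$, which is carried by $\Omega$, and write $\Gamma^{-1/2}\bM_{t_j}\Gamma^{-1/2}=\wt W_j\lambda$ on $\Omega$ (singular part). By Step 1 the ranges of $\wt W_j(x)$ are mutually orthogonal for $\lambda$-a.e.\ $x$. Then
\[
\sum_j\nu_{t_j}(\Omega)=\int_\Omega \tr\Bigl(D\Bigl(\sum_j \wt W_j(x)\Bigr)D\Bigr)\lambda(\dd x),\qquad D=\mathrm{diag}(\alpha_k^{1/2}),
\]
and $\sum_j\wt W_j(x)$ is an orthogonal sum.

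\textbf{Main obstacle.} The hard part is bounding this orthogonal sum uniformly. In the rank one and finite rank cases the ranges are bounded by the dimension, but here we need an operator-level bound. I would try to compare each term $\wt W_j$ with a projection-type quantity: normalize via the Aleksandrov-free route $\Gamma^{1/2}\bM_t\Gamma^{1/2}\le\|\bB\|^2\|\Gamma\|$, and use $\tr(DPD)\le\tr(D^2)$ for orthogonal projections $P_j$ summing to at most $\bI$.

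A cleaner alternative, which I would pursue if the first attempt fails, is to show directly that for each fixed unit vector direction only countably many $t$ can carry mass. Take a countable dense set of vectors $\bx$, and for each $\bx$ apply the scalar argument to the measures $\Jap{\bM_t\bx,\bx}\big|_\Omega$. Each such measure has density concentrated on subspaces that are pairwise $\Gamma$-orthogonal, so by Bessel's inequality,
\[
\sum_j \frac{\Jap{W_{t_j}(x)\bx,\bx}}{\|W_{t_j}(x)\|}\le \|\Gamma^{-1/2}\|^2\|\Gamma^{1/2}\bx\|^2.
\]
The delicate point is controlling the normalization by $\|W_{t_j}(x)\|$. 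I expect to handle it by first reducing to bounded densities, as in the proof of Lemma \ref{l:VMS}, splitting the level sets $\{\|W_t\|\in[2^{-m},2^{-m+1})\}$, and taking a countable union over $m$ and $\bx$.
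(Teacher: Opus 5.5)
Your Step 1 is sound: it is the same pairwise $\Gamma$-orthogonality the paper obtains from Theorem~\ref{t:A_2}, Lemma~\ref{l:A_2 monotone} and Lemma~\ref{l:VMS}. But both your reduction (that $T$ is countable) and the key estimate of Step 2 are false, already in rank one.

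Let $A$ be multiplication by $s$ in $L^2([0,1],\dd s)$, let $K=\Jap{\fdot,\1}\1$, and let $\mu_t=\Jap{\bE_t(\fdot)\1,\1}$. For $x>1$, $F(x)=\int_0^1\frac{\dd s}{s-x}=\log\frac{x-1}{x}$ is negative and strictly increasing, so $t_x:=-1/F(x)$ is injective in $x$. The operator $A_{t_x}$ has the eigenvalue $x$, with $\mu_{t_x}(\{x\})=F(x)^2/F'(x)\ge c>0$ for $x\in[3/2,2]$. Since $\im F(x+i0)=\pi$ on $(0,1)$, this atom is the entire singular part of $\mu_{t_x}$.

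(a) Let $\sigma$ be the Cantor measure on a Cantor set $\Omega\subset[3/2,2]$. Then $\mu_t\perp\sigma$ for every $t$, yet $\mu_t(\Omega)>0$ for uncountably many $t$. So your set $T$ is not countable, and a Lebesgue-null carrier cannot stand in for $\sigma$. The exceptional condition is that $\dd\mu_t/\dd\sigma\ne0$ on a set of positive $\sigma$-measure.

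(b) Let $\sigma=\sum_n 2^{-n}\delta_{x_n}$ with distinct $x_n\in[3/2,2]$, and $\Omega=\{x_n\}$. The exceptional set $\{t_{x_n}\}$ is infinite and $\mu_{t_{x_n}}(\Omega)\ge c$ for all $n$, so $\sum_{j\le N}\nu_{t_j}(\Omega)\to\infty$. The ranges are trivially orthogonal here, so no normalization of the densities can help. The masses the perturbed measures put on $\Omega$ are simply not summable over the exceptional set, and your common base $\lambda=\sum_j(\mu_{t_j})_{\mathrm s}$ grows with $N$.

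The missing idea is to count against the single fixed finite measure $\sigma$ and to discard the size of the densities. Let $W^\sigma_t:=\frac{\dd\mu_t}{\dd\sigma}W_t$ be the $\sigma$-density of the $\sigma$-absolutely continuous part of $\bM_t$. Apply the three results above with $\sigma$ itself as the base in Lemma~\ref{l:VMS}. This gives $\Gamma^{1/2}\cRan W^\sigma_s(x)\perp\Gamma^{1/2}\cRan W^\sigma_t(x)$ for $\sigma$-a.e.\ $x$ whenever $s\ne t$.

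The paper then picks, for each exceptional $t$, a unit vector $f_t\in L^2(\sigma,\cK)$ with $f_t(x)\in\cRan W^\sigma_t(x)$. These are orthonormal for $\int\Jap{\Gamma f,g}\,\dd\sigma$, and separability of $L^2(\sigma,\cK)$ gives countability.

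Your trace idea also works once set up this way. Let $P_t(x)$ be the orthogonal projection onto $\Gamma^{1/2}\cRan W^\sigma_t(x)$ (measurable by Lemma~\ref{l:meas proj}), and let $D$ be diagonal with entries $\alpha_k^{1/2}>0$. Then $\sum_{j}\int\tr(DP_{t_j}D)\,\dd\sigma\le\tr(D^2)\,\sigma(\R)$ for any distinct $t_1,\dots,t_N$, while $\int\tr(DP_tD)\,\dd\sigma>0$ exactly when $t$ is exceptional.

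Your Bessel fallback is the same argument in disguise. Its ``delicate normalization'' disappears if the bounded, scale-invariant ratio $\Jap{W^\sigma_t\bx,\bx}/\|W^\sigma_t\|$ is integrated against $\sigma$. As written, however, you apply a ``scalar argument'' to $\Jap{\bM_t\bx,\bx}|_\Omega$, and these are not mutually singular for different $t$. Integrating against $\mu_t$ or $\lambda$ instead runs into the examples above.

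Two smaller slips. First, the ranges made orthogonal by Step 1 are those of $\Gamma^{1/2}W_t\Gamma^{1/2}$, not $\Gamma^{-1/2}W_t\Gamma^{-1/2}$. Second, the correct Bessel bound is $\|\Gamma\|\,\|\Gamma^{-1/2}\bx\|^2$; the constant $\|\Gamma^{-1/2}\|^2\|\Gamma^{1/2}\bx\|^2$ can fail.
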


To get Theorem \ref{t:GA-D_trace} from the above Theorem \ref{t:GA-D_trace 01} we use Lemma
\ref{l:factorization} to factorize $K=\bB\Gamma\bB^*$, where $\bB:\cK\to\cH$ is Hilbert--Schmidt
with $\Ran\bB$ cyclic for $A$, and $\Gamma=\Gamma^*\ge\bO$ is an invertible operator in $\cK$. Since
$\bB$ is a Hilbert--Schmidt, the operators $\bM_t(\R) = \bB^*\bE_t(\R)\bB$ belong to the trace
class, so by Lemma \ref{l:TraceClass_IndInt} the measures $\bM_t$ are weak indefinite integrals. So
applying Theorem \ref{t:GA-D_trace 01} we immediately get the conclusion of Theorem
\ref{t:GA-D_trace}.

\subsection{Proof of Theorem \ref{t:GA-D_trace 01}} \label{s:proof gen A-D 01}

In the space $L^2(\sigma, \cK)$ of $\cK$-valued functions let us introduce a new inner product
\begin{align*}
\Jap{f,g}_{\sigma, \Gamma} := \int \Jap{\Gamma f(s), g(s)}\ci\cK \sigma(\dd s).
\end{align*}
Trivially the corresponding norm  is equivalent to the standard norm in $L^2(\sigma, \cK)$.

Denote $A_t = A+t\bB\Gamma\bB^*$, $t\in\R$, and let $\bE_t$ be the projection-valued 
spectral measure for $A_t$ and $\bM_t=\bB^* \bE_t \bB$ be the corresponding compressed spectral 
measure.

Fix a complete system $\{\bx_k\}_{k\in\N}$ in  $\cK$ and a sequence $\{\alpha_k\}_{k\in\N}$,
$\alpha_k>0$ such that  $\sum_k \alpha_k \|\bx_k\|^2 <\infty$.   By Lemma \ref{l-cycAGamma}
$\Ran\bB$ is cyclic for all operators $A_t$,  so by Corollary \ref{c:max type} for any $t\in\R$ the
measure $\mu^t$
\begin{align*}
\mu^t(\fdot) :=\sum_{k\in\N} \alpha_k \Jap{\bM_t(\fdot) \bx_k, \bx_k}
\end{align*}
is a spectral measure of maximal type for $A_t$. Therefore by Proposition \ref{p:max type base} the
measure $\mu^t$ is a base of the measure $\bM_t$, so we can write
\begin{align*}
\bM_t = W_t \mu^t. 
\end{align*}

Let $E$ be the exceptional set, i.e.\ the collection of all points $t\in\R$ such that the measures
$\mu^t$ and $\sigma$ are not mutually singular. Then for any $t\in E$  the Radon--Nikodym derivative
$w^t=\dd\mu^t/\dd \sigma$ is a non-zero element of  $L^1(\sigma)$ (i.e.~it is positive on a set $E$,
$\sigma(E)>0$).

Take $t_1, t_2\in E$, $t_1 < t_2$. We can write  $A_{t_2} = A_{t_1} + (t_2-t_1) \bB \Gamma \bB^*$,
so Theorem \ref{t:A_2} implies that the measures $\bM_{t_1}$ and $\Gamma \bM_{t_2} \Gamma$ satisfy
the operator $\bA_2$ condition.

Denote by $ \bM_t^\sigma$ the measure $W_t w^t\sigma$; clearly $\bM_t^\sigma \ne\bO$ if and only if
$t\in E$. It is immediate from the definition that $\bM_t^\sigma\le \bM_t$. Therefore, by
monotonicity of the $\bA_2$ condition (Lemma \ref{l:A_2 monotone}) the measures $\bM_{t_1}^\sigma$
and $\Gamma \bM_{t_2}^\sigma \Gamma$ also satisfy the operator $\bA_2$ condition. Therefore by Lemma
\ref{l:VMS} (recall that $\sigma$ is a singular measure)
\begin{align*}
\Ran\left( w^{t_1}(s) W_{t_1}(s)\right) \perp  \Ran \left(w^{t_2}(s) \Gamma W_{t_2}(s) 
\Gamma\right) \qquad \sigma\text{-a.e.}
\end{align*}
Using the symbol $\perp\ci\Gamma$ for the orthogonality in the inner product $\Jap{\fdot, 
\fdot}\ci\Gamma$ on $\cK$, 
\begin{align*}
\Jap{\bx, \by}\ci\Gamma = \Jap{\Gamma \bx, \by}
\end{align*}
we can rewrite this as 
\begin{align}\label{e:loc orthog 01}
\Ran\left( w^{t_1}(s) W_{t_1}(s)\right) \perp\ci\Gamma  \Ran \left(w^{t_2}(s)  W_{t_2}(s) 
\right) \qquad \sigma\text{-a.e.}
\end{align}

For each $t\in E$ we can find a non-zero $f_t\in L^2(\sigma, \cK)$ such that 
\begin{align}\label{e:mu^t}
f_t(s) \in \cRan \left( w^t(s) W_t(s)\right)\qquad \sigma\text{-a.e.}
\end{align}
Indeed, at least one of the measures $\Jap{\bM(\fdot)\bx_k, \bx_k}\ci\cK$ from \eqref{e:mu^t} must 
have a non-trivial absolutely continuous part with respect to $\sigma$, so $w^t(s)\Jap{W_t(s)\bx_k, 
\bx_k}\ci\cK>0$ on a set $E$, $\sigma(E)>0$, so considering the function $s\mapsto W_t(s)\bx_k$ and 
multiplying it by a suitable non-vanishing scalar function does the trick. 

Normalizing, we can always assume without loss of generality that $\| f_t\|\ci{\sigma, \Gamma}=1$. 
Take  $t_1, t_2\in E$, $t_1\ne t_2$.  The local orthogonality condition \eqref{e:loc orthog 01} 
implies that $\Jap{f_{t_1},f_{t_2}}\ci{\sigma, \Gamma}=0$, so 
\begin{align}\label{e:dist 01}
\| f_{t_1} -  f_{t_2} \|\ci{\sigma, \Gamma} =\sqrt2 \qquad \forall t_1, t_2\in E, \ t_1\ne t_2.
\end{align}
The space $L^2(\sigma, \cK)$ is separable, so the condition \eqref{e:dist 01} implies that $E$ must 
be countable (finite or infinite). \hfill\qed

\subsection{Proof of Theorem \ref{t:GA-D_02}}

Again, we prove a bit stronger result. Namely, Theorem \ref{t:GA-D_02} is a consequence of the 
following result
\begin{theo}\label{t:GA-D_02 01}
Let $A_t= A+ \bB \Gamma(t)\bB^*$, where  $t\mapsto \Gamma(t)$ is a $C^1$ map such that $\Gamma'(t)$
is positive definite and invertible for all $t\in I$. Assume that for each $t$ the compressed
spectral measure $\bM_t = \bB^* \bE_t \bB$ is an indefinite integral, and let $\sigma$ be an
arbitrary singular measure.

If $\Ran \bB$ is cyclic for $A$, then for all $t\in I$ except maybe countably many, the spectral
measure of $A_t$ is mutually singular with $\sigma$.
\end{theo}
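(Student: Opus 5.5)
We will mimic the proof of Theorem \ref{t:GA-D_trace 01}, replacing the single positive operator $\Gamma$ by local differences $\Gamma(t_2)-\Gamma(t_1)$. Fix a complete system $\{\bx_k\}$ in $\cK$ and weights $\alpha_k>0$ with $\sum_k\alpha_k\|\bx_k\|^2<\infty$. By Lemma \ref{l-cycAGamma} (write $A_t=A_{t_0}+\bB(\Gamma(t)-\Gamma(t_0))\bB^*$, or apply it directly with base $A$) the set $\Ran\bB$ is cyclic for every $A_t$. Hence by Corollary \ref{c:max type} the measures $\mu^t=\sum_k\alpha_k\jap{\bM_t(\fdot)\bx_k,\bx_k}$ are of maximal type. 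By Proposition \ref{p:max type base} we may write $\bM_t=W_t\mu^t$. Let $E\subset I$ be the set of $t$ for which $\mu^t$ is not singular to $\sigma$. For $t\in E$ set $w^t=\dd\mu^t/\dd\sigma\ne0$ and $\bM_t^\sigma:=W_tw^t\sigma\le\bM_t$, and pick a nonzero $f_t\in L^2(\sigma,\cK)$ with $f_t(s)\in\cRan(w^t(s)W_t(s))$ $\sigma$-a.e., exactly as before.

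The first reduction is to show that $E$ is countable by localizing. Cover $I$ by countably many compact subintervals $J$ on which $\Gamma'$ is uniformly close to a fixed positive invertible operator. Concretely, for each $t_0\in I$ choose $\delta>0$ with $\|\Gamma'(t)-\Gamma'(t_0)\|\le \frac12\|\Gamma'(t_0)^{-1}\|^{-1}$ for $|t-t_0|\le\delta$. On $J=[t_0-\delta,t_0+\delta]$, set $G:=\Gamma'(t_0)$, positive and invertible. For $t_1<t_2$ in $J$ write
\begin{align*}
\Gamma(t_2)-\Gamma(t_1)=\int_{t_1}^{t_2}\Gamma'(\tau)\,\dd\tau=(t_2-t_1)\bigl(G+R_{t_1,t_2}\bigr),\qquad \|R_{t_1,t_2}\|\le \tfrac12\|G^{-1}\|^{-1}.
\end{align*}
Then $D:=\Gamma(t_2)-\Gamma(t_1)$ is positive, invertible, and comparable to $(t_2-t_1)G$: we have $\tfrac12(t_2-t_1)G\le D\le \tfrac32(t_2-t_1)G$. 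It suffices to prove that $E\cap J$ is countable.

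For $t_1\ne t_2$ in $E\cap J$, apply Theorem \ref{t:A_2} to $A_{t_2}=A_{t_1}+\bB D\bB^*$. This shows that $\bM_{t_1}$ and $D\bM_{t_2}D$ satisfy the $\bA_2$ condition. By monotonicity (Lemma \ref{l:A_2 monotone}) the same holds for $\bM^\sigma_{t_1}$ and $D\bM^\sigma_{t_2}D$, so Lemma \ref{l:VMS} gives $\Ran W_{t_1}(s)w^{t_1}(s)\perp D\Ran W_{t_2}(s)w^{t_2}(s)$ $\sigma$-a.e. Equivalently, $\jap{Df_{t_2}(s),f_{t_1}(s)}=0$ $\sigma$-a.e., hence $\int\jap{Df_{t_2},f_{t_1}}\dd\sigma=0$.

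The main obstacle is that the orthogonality is with respect to $D$, which depends on the pair $(t_1,t_2)$, rather than a fixed inner product; this prevents a direct appeal to the separability argument. I would handle it by contradiction. Normalize $\int\jap{Gf_t,f_t}\dd\sigma=1$. If $E\cap J$ were uncountable, separability of $L^2(\sigma,\cK)$ would give $t_1\ne t_2$ with $\|f_{t_1}-f_{t_2}\|$ arbitrarily small in the $G$-norm. Write $D/(t_2-t_1)=G+R$ with $\|R\|\le\frac12\|G^{-1}\|^{-1}$, so that $|\jap{Rx,y}|\le\frac12\|x\|_G\|y\|_G$. The vanishing of $\int\jap{(G+R)f_{t_2},f_{t_1}}\dd\sigma$ then gives $|\jap{f_{t_2},f_{t_1}}_G|\le\frac12$, where $\jap{\cdot,\cdot}_G$ is the $G$-inner product in $L^2(\sigma,\cK)$. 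Consequently $\|f_{t_1}-f_{t_2}\|_G^2\ge 2-2\cdot\frac12=1$. This separation contradicts the closeness just obtained, so $E\cap J$ is countable. (If the constant needs tightening, shrink $\delta$ so that the perturbation constant is $\varepsilon<1$, giving the bound $\sqrt{2-2\varepsilon}$.) A countable union over the cover of $I$ finishes the proof.
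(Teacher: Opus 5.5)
Your proposal is correct and follows essentially the same route as the paper. The paper's steps are the ones you use: localize near $t_0$ using $\Gamma'(t_0)$, apply Theorem \ref{t:A_2} to $A_{t_2}=A_{t_1}+\bB(\Gamma(t_2)-\Gamma(t_1))\bB^*$, pass to $\bM^\sigma_t$ by monotonicity, invoke Lemma \ref{l:VMS}, and conclude from a uniform separation of the $f_t$ in a fixed equivalent norm on $L^2(\sigma,\cK)$. The only difference is cosmetic: you bound the cross term $|\jap{f_{t_2},f_{t_1}}_G|\le\frac12$ directly and get separation $1$, whereas the paper uses the comparison $\frac12\tau R_0\le R\le\frac32\tau R_0$ together with the Pythagorean identity in the $R$-norm and gets separation $\sqrt{2/3}$.
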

As in Theorem \ref{t:GA-D_02}, the spectral measure of $A_t$ means the \emph{scalar spectral 
measure of maximal spectral type}. 

To prove this theorem it is sufficient to prove its \emph{local} version, i.e.\ that for every 
$t_0\in I$ there exists an open neighborhood $U\subset I$ of $t_0$ such that the spectral measure 
of $A_t$ is mutually singular  with $\sigma$ for all $t\in U$ except maybe countably many.  

Fix $t_0\in I$, and denote $R_0:= \Gamma'(t_0)$. The function $\Phi$, $\Phi(t) := 
R_0^{-1/2}\Gamma(t) R_0^{-1/2}$ is continuously differentiable, $\Phi'(t_0)=\bI$, so there exist an 
open neighborhood $U\ni t_0$ such that 
\begin{align}\label{e:Phi'-I}
\|R_0^{-1/2}\Gamma'(t)R_0^{-1/2} -\bI\| <1/2 \qquad \forall t\in U. 
\end{align}

Take $t_1, t_2\in U$, $t_1<t_2$, Denoting $R:=\Gamma(t_2)- \Gamma(t_1)$, $\tau:=t_2-t_1$ and
applying the mean value estimates to the function $t\mapsto \Psi(t) := R_0^{-1/2}\Gamma
(t)R_0^{-1/2} - t\bI $, we get from the above estimate \eqref{e:Phi'-I}
\begin{align*}
\|R_0^{-1/2} R R_0^{-1/2} - \tau\bI\| <\tau/2.
\end{align*}
Substituting vector $R_0^{1/2}\bx$, $\bx\in\cK$ into the quadratic form of the operator 
$R_0^{-1/2} R R_0^{-1/2} - \tau \bI$  we get that 
\begin{align*}
\left|\Jap{R\bx,\bx} - \tau\Jap{R_0\bx,\bx}  \right| 
\le \frac12 \tau \Jap{R_0 \bx, \bx} .
\end{align*}
Therefore 
\begin{align}\label{e:norm equiv}
\frac12 \Jap{\tau R_0 \bx, \bx} \le \Jap{ R  \bx, \bx} \le \frac32 \Jap{\tau R_0 \bx, \bx} ;
\end{align}
in particular this always implies that $R=\Gamma(t_2)- \Gamma(t_1)$, $t_1<t_2$ is a non-negative 
invertible operator. 

The rest of the proof now follows the proof of Theorem \ref{t:GA-D_trace 01}. For an invertible 
non-negative operator $R$ on $\cK$ we define the inner products $\Jap{\fdot, \fdot}\ci R$ and 
$\Jap{\fdot, \fdot}\ci{\sigma, R}$, on $\cK$ and $L^2(\sigma, \cK)$ respectively, 
\begin{align*}
\Jap{\bx, \by}\ci R & := \Jap{R\bx, \by}, \\
\Jap{f,g}\ci{\sigma, R} & := \int_\R \Jap{R f(s), g(s)}\sigma(\dd s).
\end{align*}
Note that the corresponding norms are equivalent to the standard norms in the respective spaces.

As in the proof of Theorem \ref{t:GA-D_trace 01} denote by  $E$ the exceptional set, i.e.\ the
collection of all points $t\in\R$ such that the measures $\mu^t$ and $\sigma$ are not mutually
singular. As it was discussed above for any $t\in E$  the Radon--Nikodym derivative
$w^t=\dd\mu^t/\dd \sigma$ is a non-zero element of  $L^1(\sigma)$ (i.e.~positive on a set $E$,
$\sigma(E)>0$).

Let now $t_1, t_2\in E$, $t_1<t_2$ be fixed. Denote $\tau:=t_2-t_1$, $R:= \Gamma(t_2) -\Gamma(t_1)$.
Then $A_{t_2} = A_{t_1} + \bB R \bB^*$, so by Theorem \ref{t:A_2}   the measures $\bM_{t_1}$ and
$R\bM_{t_2} R$ satisfy the operator $\bA_2$ condition.

Again, for $t\in E$ denote by $\bM_t^\sigma$ the measure $w^t W_t\sigma$. Clearly 
$\bM_t^\sigma\le \bM_t$, so by the monotonicity  of $\bA_2$ condition (Lemma \ref{l:A_2 monotone})
the measures $\bM_{t_1}^\sigma$ and $R \bM_{t_2}^\sigma R$ also satisfy the operator $\bA_2$
condition. Therefore by Lemma \ref{l:VMS} (recall that $\sigma$ is a singular measure)
\begin{align*}
\Ran\left( w^{t_1}(s) W_{t_1}(s)\right) \perp  \Ran \left(w^{t_2}(s) R W_{t_2}(s) 
R\right) \qquad \sigma\text{-a.e.}
\end{align*}
Denoting by $\perp\ci{R}$ the orthogonality in the inner product $\Jap{\fdot, \fdot}\ci{R}$, we can 
rewrite this as
\begin{align}\label{e:orthog 02}
\Ran\left( w^{t_1}(s) W_{t_1}(s)\right) \perp\ci{R}  \Ran \left(w^{t_2}(s)  W_{t_2}(s) 
\right) \qquad \sigma\text{-a.e.}
\end{align}

As before, see explanation after \eqref{e:mu^t}, for each $t\in E$ we can find a non-zero $f_t\in 
L^2(\sigma, \cK)$ such that 
\begin{align*}
f_t(s) \in \Ran \left( w^t(s) W_t(s)\right)\qquad \sigma\text{-a.e.}
\end{align*}
Normalizing, we can always assume without loss of generality that $\| f_t\|\ci{\sigma, R_0}=1$.

For our  $t_1, t_2\in E$ the local orthogonality condition \eqref{e:orthog 02} implies that 
$\Jap{f_{t_2}, f_{t_2}}\ci{R}=0$, so 
\begin{align*}
\| f_{t_2} - f_{t_2}\|\ci{\sigma, R}^2 =  \| f_{t_1}\|\ci{\sigma, R}^2 + \| f_{t_2}\|\ci{\sigma, 
R}^2. 
\end{align*}
Using the equivalence of norms \eqref{e:norm equiv} we get that 
\begin{align*}
 \|f_{t_1}- f_{t_2}\|^2\ci{\sigma,R_0} &\ge \frac2{3\tau} \| f_{t_1} - f_{t_2}\|\ci{\sigma, R}^2 = 
\frac2{3\tau}\left( \| f_{t_1}\|\ci{\sigma, R}^2 + \| f_{t_2}\|\ci{\sigma, R}^2\right) \\
&\ge \frac{\tau}{2} \frac2{3\tau} \left( \| f_{t_1}\|\ci{\sigma, R_0}^2 + \| f_{t_2}\|\ci{\sigma, 
R_0}^2\right)
=   \frac23 , 
\end{align*}
so separability of $L^2(\sigma, \cK)$ implies that $E$ is at most countable. \hfill\qed

%%%%%%%%%%%%%%%%%%%%%%%%%%%%%%%%
\section{Exceptional sets: proof of Theorem \ref {t:many except dim}}
\label{s:exceptional}
%%%%%%%%%%%%%%%%%%%%%%%%%%%%%%%%
In fact we prove the following more general statement:

\begin{theo}\label{t:many except dim 01}
Let $A$, $K_1, \dots, K_n$, be self-adjoint operators. Consider a family of perturbations 
\begin{align}\label{e:pert 01}
A_{\vec t\,} := A+ K_{\vec t\,} , \qquad K_{\vec t\,}:=\sum_{k=1}^n t_k K_k, \qquad 
\vec t:= (t_1, \dots, t_n)\in \R^n . 
\end{align}

Let $\sigma$ be a singular Radon  on $\R$. Assume that there exists a set of directions $D =
D_\sigma\subset S^{n-1}\subset \R^n$ of positive surface measure, such that for any $\vec t_0\in 
\R^n$ and any $\vec \tau\in D$ the  singular parts
of spectral measures of operators
\begin{align*}
A_{\vec t_0 + s \vec \tau\,}, \qquad s\in \R
\end{align*}
are mutually singular with respect to $\sigma$ for all $s\in \R$ except maybe countably many.

Then  the spectral measure of $A_{\vec t}$ is mutually singular with $\sigma$ for all $\vec t\in
\R^n\setminus E$, where the exceptional set $E=E_\sigma$ is Borel and has Hausdorff dimension at 
most $n-1$.
\end{theo}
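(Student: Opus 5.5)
Let $E:=\{\vec t\in\R^n: \mu_{\vec t}(\sigma\text{-carrier})>0\}$, where $\mu_{\vec t}$ is the scalar spectral measure of maximal type chosen as in the proof of Theorem \ref{t:many 01}, and $\Omega$ is a Lebesgue-null Borel carrier of $\sigma$. Then $E=\{\vec t:\mu_{\vec t}(\Omega)>0\}$. The Borel measurability of $\vec t\mapsto\mu_{\vec t}(\Omega)$ was already established in the proof of Theorem \ref{t:many 01}, via Lemma \ref{l:cont mu_t} and the monotone class theorem; hence $E$ is Borel. By hypothesis, for every $\vec\tau\in D$ and every line $\ell=\{\vec t_0+s\vec\tau\}$, the intersection $E\cap\ell$ is at most countable. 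So the statement reduces to a purely geometric-measure-theoretic fact: a Borel set $E\subset\R^n$ that meets every line in a positive-measure set $D$ of directions in a countable set has $\dim_H E\le n-1$.

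To prove this fact, argue by contradiction. Suppose $\dim_H E>n-1$. Pick $s$ with $n-1<s<\dim_H E$. Since $E$ is Borel, Frostman's lemma gives a nonzero compactly supported Borel measure $\nu$ on $E$ with $\nu(B(x,r))\le Cr^s$. Consider the orthogonal projections $P_{\vec\tau}:\R^n\to\vec\tau^{\perp}\cong\R^{n-1}$. By the Marstrand--Mattila projection theorem in its "slicing" form, which follows from the energy computation $\int_{S^{n-1}}I_{n-1}(P_{\vec\tau}\nu)\,\dd\vec\tau\lesssim I_{s}(\nu)<\infty$ for $s>n-1$, the projected measure $P_{\vec\tau}\nu$ is absolutely continuous with an $L^2$ density for a.e.\ $\vec\tau\in S^{n-1}$. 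We then apply the Mattila slicing theorem: for almost every direction $\vec\tau$, for $P_{\vec\tau}\nu$-a.e.\ $y\in\vec\tau^{\perp}$, the slice measure $\nu_{\vec\tau,y}$ supported on $E\cap(y+\R\vec\tau)$ is nonzero and has finite $(s-(n-1))$-energy. In particular, this slice measure has no atoms, so the set $E\cap(y+\R\vec\tau)$ is uncountable. Since $D$ has positive surface measure, some $\vec\tau\in D$ is such a good direction, and we obtain a line in direction $\vec\tau$ meeting $E$ in an uncountable set, contradicting the hypothesis.

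If one prefers to avoid slices, an alternative in the same spirit is to disintegrate $\nu$ along $P_{\vec\tau}$. Because $P_{\vec\tau}\nu\ll\mathcal L^{n-1}$ while each fibre $E\cap\ell$ is countable, each conditional measure would be atomic. One then shows that the atomic part forces $P_{\vec\tau}\nu$ to concentrate in a way incompatible with the Frostman bound: the $L^2$-density of the projection together with countable fibres yields $\nu(\{x\})>0$ for some $x$, which contradicts $\nu(B(x,r))\le Cr^s$.

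The main obstacle is this geometric lemma, specifically justifying that slices of a Frostman measure are non-atomic for almost every direction, in the precise form of Mattila's slicing theorem. The operator-theoretic part is already contained in the hypothesis and the measurability argument. I would cite Mattila's slicing theorem, stated in terms of direction-indexed slices, and verify the atom-free conclusion, rather than reprove it.
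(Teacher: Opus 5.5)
\paragraph{The gap: your set $E$ is not the exceptional set.} The set $E'=\{\vec t:\mu_{\vec t}(\Omega)>0\}$ only contains the exceptional set $E_\sigma=\{\vec t:\mu_{\vec t}\not\perp\sigma\}$; it is not equal to it. The reason is that $\mu_{\vec t}$ can charge the $\sigma$-null part of $\Omega$, for instance through an atom of $\mu_{\vec t}$ lying in $\Omega$ when $\sigma$ has no atoms. The hypothesis controls only $E_\sigma\cap\ell$, so ``$E'\cap\ell$ is at most countable'' does not follow. In fact it fails in general, for every choice of carrier.

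Here is an example. Take $n=2$, $A=\cM_{\id}\oplus\cM_{\id}$ on $L^2[0,1]\oplus L^2[0,1]$, $K_1=\Jap{\fdot,\1}\1\oplus\bO$, $K_2=\bO\oplus\Jap{\fdot,\1}\1$, and let $\sigma$ be the Cantor measure translated to $[2,3]$.
\begin{itemize}
\item Every $\mu_{\vec t}$ is absolutely continuous plus at most two atoms. Hence $E_\sigma=\varnothing$, and the hypothesis holds with $D=S^1$.
\item For $t_1>0$ the first summand has an eigenvalue $x(t_1)>1$ that depends diffeomorphically on $t_1$.
\item Any carrier $\Omega$ satisfies $\dim(\Omega\cap[2,3])\ge\log 2/\log 3$.
\end{itemize}
Therefore $E'\supset\{t_1>0:x(t_1)\in\Omega\}\times\R$, which has Hausdorff dimension at least $1+\log2/\log3>n-1$. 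So the measurability of $\vec t\mapsto\mu_{\vec t}(\Omega)$ proves Borelness of the wrong set. For the right set $E_\sigma$ your proposal gives nothing, yet its Borelness is part of the conclusion and is needed before Frostman's lemma or any slicing theorem can be applied.

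\paragraph{How this is supplied, and the geometric part.} The paper's Theorem \ref{t:Borel} provides exactly this missing step.
\begin{itemize}
\item Let $S_\sigma$ be the set where $\sigma(x+iy_n)\to\infty$.
\item The ratios $\mu^{\vec t}(x+iy_n)/\sigma(x+iy_n)$ are jointly continuous by Lemma \ref{l:cont mu_t}.
\item Their limit $\Phi(\vec t,x)$ is Borel, and it equals $\dd\mu^{\vec t}/\dd\sigma$ $\sigma$-a.e.\ by the relative Fatou theorem (Lemma \ref{l:weak type 01}).
\item Hence $E_\sigma=\{\vec t:\int_{S_\sigma}\Phi(\vec t,x)\,\sigma(\dd x)>0\}$ is Borel by Tonelli.
\end{itemize}
A shortcut from what you already have also works. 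First replace $\sigma$ by an equivalent finite measure. Let $\mathcal A$ be the countable algebra of finite unions of rational intervals. Then $\mu\perp\sigma$ if and only if $\inf_{B\in\mathcal A}\bigl(\mu(B)+\sigma(\R\setminus B)\bigr)=0$. For one direction, approximate a separating set in $(\mu+\sigma)$-measure. For the other, take $\limsup_k B_k$ where $\mu(B_k)+\sigma(\R\setminus B_k)<2^{-k}$. Combined with the Borelness of $\vec t\mapsto\mu_{\vec t}(B)$ established in the proof of Theorem \ref{t:many 01}, this shows $E_\sigma$ is Borel.

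Once $E_\sigma$ is known to be Borel, your Frostman-plus-slicing argument is a valid substitute for the paper's citation of Mattila's result (Lemma \ref{l:Mattila--Marstrand 02}). Both are Marstrand--Mattila slicing. Yours works through non-atomic slice measures of finite $(s-n+1)$-energy; the paper's works through the dimension of sections along $\cH^s\times\gamma_{n,1}$-a.e.\ line. Two small corrections:
\begin{itemize}
\item The projection bound is $\int_{S^{n-1}}\|P_{\vec\tau}\nu\|_{L^2}^2\,\dd\vec\tau\lesssim I_{n-1}(\nu)$.
\item The Frostman exponent must exceed $s$ in order to get $I_s(\nu)<\infty$.
\end{itemize}
You should also drop the ``disintegration'' alternative. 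For a single fixed direction, an $L^2$ projection with singleton fibres does not force an atom. For example, the natural measure on a Brownian graph projects to Lebesgue measure on the time axis, has dimension $3/2$, and has no atoms.
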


Let us explain why this theorem implies Theorem \ref{t:many except dim}. By the assumption of 
Theorem \ref{t:many except dim} there exists a subset $D\subset S^{n-1}$ of positive surface measure
such that for any $\vec \tau \in D$ the operator $K_{\vec \tau}$ is a non-negative trace class 
operator with cyclic range. By Theorem \ref{t:GA-D_trace} for any $\vec t_0\in \R^n$ and $s\in\R$ 
the singular part of the spectrum of the operator 
\begin{align*}
A_{\vec t_0 + s\vec \tau\,}  =A_{\vec t_0} + s K_{\vec \tau}
\end{align*}
is mutually singular with the singular measure measure $\sigma$  for all $s\in \R$ except maybe 
countably many. 
Thus the assumptions of Theorem \ref{t:many except dim 01} are satisfied.

\subsection{Exceptional set}
Denote by $\bE_{\vec t\,}$ the projection-valued spectral measure of $A_{\vec t\,}$. 

As in Section \ref{s-parameters}, fix a complete system $\{\bx_k\}_{k\in\N}$ in  $\cH$ and a
sequence $\{\alpha_k\}_{k\in\N}$, $\alpha_k>0$ such that  $\sum_k \alpha_k \|\bx_k\|^2 <\infty$.  By
Corollary \ref{c:max type} with $\bB=\bI$ and $A=A_{\vec t\,}$ we get that for any $\vec t\in\R^n$
the measure $\mu^{\vec t}$
\begin{align*}
\mu^{\vec t}(\fdot) :=\sum_{k\in\N} \alpha_k \Jap{\bE_{\vec t}\,(\fdot) \bx_k, \bx_k}
\end{align*}
is a scalar spectral measure of maximal type for $A_{\vec t}$. 

\begin{defn}
Let $A_{\vec t\,}$, $\vec t\,\in \R^n$ be some family of self-adjoint operators, and let $\mu^{\vec 
t\,}$ be the corresponding spectral measures as defined above. 
  
Given a singular measure $\mu$ define  the \emph{exceptional} set  $E=E_\sigma$ to be  the
collection of all $\vec t \in\R^n$ such that the measure $\mu^{\vec t}$ is not mutually singular
with $\sigma$.
\end{defn}

We will use a result from \cite{Mattila1975}, see Lemma \ref{l:Mattila--Marstrand 02} below. Before
stating it, let us introduce some notation. We will use $\cH^s$ for the $s$-dimensional Hausdorff
measure, $\dim $ is the Hausdorff dimension, $G(n,m)$ denotes the set of all $m$-dimensional
subspaces of $\R^n$, and $\gamma_{n,m}$ is the invariant measure on $G(n,m).$

While it was not stated explicitly, the following statement  was proved in \cite{Mattila1975}, see 
Lemma 6.4  and discussion in the beginning of p.~228 there. 

\begin{lem}\label{l:Mattila--Marstrand 02}  
Let $E$ be an analytic \tup(Suslin\tup) subset of $\R^n$ such that 
$\cH^s(E)>0$. Then 
\[
\dim( E\cap (V+x)) \ge s+m-n
\]
for $\cH^s\times\gamma_{n,m}$ almost all $(x,V)\in E\times G(n,m)$.
\end{lem}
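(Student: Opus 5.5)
The plan is the standard Marstrand--Mattila slicing argument via energy integrals. If $s\le n-m$, the claim is trivial: $x\in E\cap(V+x)$, so the intersection is nonempty and its dimension is $\ge 0\ge s+m-n$. So assume $s>n-m$ throughout.

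The first step reduces to compact sets. Since $E$ is analytic and $\cH^s(E)>0$, Davies' theorem gives a compact $F\subset E$ with $0<\cH^s(F)<\infty$. Exhausting $E$ up to an $\cH^s$-null set requires care, because $\cH^s$ need not be $\sigma$-finite on $E$. I would handle this by proving the statement for every compact $F\subset E$ of finite positive measure. The exceptional set in $E\times G(n,m)$ is Suslin, hence $\cH^s\times\gamma_{n,m}$-measurable, and a positive-measure exceptional set would itself contain such an $F$-slab by the same Davies argument. Put $\mu=\cH^s|_F$. By the density theorem $\limsup_{r\to0}\mu(B(x,r))/r^s\le 1$ for $\mu$-a.e.\ $x$. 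Hence $F$ is, up to a $\mu$-null set, a countable union of compact pieces $F_j$ with $\mu(B(x,r)\cap F_j)\le C_j r^s$ for all $x\in F_j$ and $r\le r_j$. Then for every $t<s$, $\mu_j:=\mu|_{F_j}$ has finite $t$-energy $I_t(\mu_j)=\iint|x-y|^{-t}\dd\mu_j\dd\mu_j<\infty$. It suffices to treat each $\mu_j$ and each rational $t\in(n-m,s)$.

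The second step, and the main obstacle, is slicing. For $W=V^\perp\in G(n,n-m)$ let $P_W$ be the orthogonal projection. Since $I_t(\mu_j)<\infty$ with $t>n-m$, Kaufman's energy argument gives $P_W\mu_j\ll\fL^{n-m}$ with $L^2$ density for $\gamma$-a.e.\ $W$. I would then define slices $\mu_{j,W,a}$, supported on $F_j\cap(P_W^{-1}a)$, as weak limits of $\varepsilon^{m-n}\mu_j|_{\{|P_Wx-a|<\varepsilon\}}$. These satisfy the disintegration $\mu_j=\int\mu_{j,W,a}\,\dd a$ for a.e.\ $W$. The key estimate I would aim for is
\begin{align*}
\int_{G(n,n-m)}\int_{W}I_{t-(n-m)}(\mu_{j,W,a})\,\dd a\,\dd\gamma(W)\le C\,I_t(\mu_j).
\end{align*}
To prove it, I would use Fatou on the $\varepsilon$-approximants together with the geometric bound $\gamma\{W:|P_W(x-y)|<\varepsilon\}\le C(\varepsilon/|x-y|)^{n-m}$. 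Integrating $|x-y|^{-(t-n+m)}$ against this bound reduces everything to $I_t$. This is the technical heart of the proof, and the delicate points are justifying the weak limits and their measurability in $(W,a)$.

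The third step transfers the conclusion to points $x$. For $\gamma$-a.e.\ $W$ and $\fL^{n-m}$-a.e.\ $a$, the slice has finite $(t+m-n)$-energy. When such a slice is nonzero, Frostman's lemma (the energy form) gives $\dim(F_j\cap P_W^{-1}a)\ge t+m-n$. To pass to points, set $Z_W=\{x\in F_j:\mu_{j,W,P_Wx}=0\text{ or has infinite energy}\}$. By the disintegration, $\mu_j(Z_W)=\int\mu_{j,W,a}(Z_W)\,\dd a=0$, because on the bad fibres the slice is either zero or occurs for a null set of $a$. Fubini on $F_j\times G(n,m)$ then gives $\dim(E\cap(V+x))\ge t+m-n$ for $\mu_j\times\gamma_{n,m}$-a.e.\ $(x,V)$; note that $V+x=P_W^{-1}(P_Wx)$. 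Taking $t\uparrow s$ along rationals and the union over $j$ finishes the proof. Measurability of $(x,V)\mapsto$ the slice quantities is needed for Fubini, and I expect to handle it by approximating through the $\varepsilon$-averages, which are continuous.
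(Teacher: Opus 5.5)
The paper gives no argument of its own for this lemma; it only cites Mattila's 1975 paper. Your Steps 2--3 are the standard energy--slicing proof of the Marstrand--Mattila intersection theorem:
\begin{itemize}
\item pieces $\mu_j$ with $\mu_j(B(x,r))\lesssim r^s$, hence $I_t(\mu_j)<\infty$;
\item Kaufman's projection theorem, giving $P_W\mu_j\ll\fL^{n-m}$;
\item slices satisfying $\iint I_{t+m-n}(\mu_{j,W,a})\,\dd a\,\dd\gamma(W)\lesssim I_t(\mu_j)$, obtained from $\gamma\{W:|P_Wz|<\e\}\lesssim(\e/|z|)^{n-m}$;
\item the transfer to points through $\mu_j(Z_W)=\int_{B_W}\|\mu_{j,W,a}\|\,\dd a=0$.
\end{itemize}
This is correct, up to the normalizing factor (the reciprocal of the volume of the unit $(n-m)$-ball) missing from your definition of the slices. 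It proves the lemma for compact $F$ with $0<\cH^s(F)<\infty$, and hence for every analytic $E$ on which $\cH^s$ is $\sigma$-finite.

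The step that does not hold up is your treatment of non-$\sigma$-finite $\cH^s|_E$ in Step 1. First, you assert without justification that the exceptional set is Suslin. The condition $\dim(E\cap(V+x))<u$ is a negative one and has no evident Suslin representation. Second, and more seriously, you claim that an exceptional set of positive $\cH^s\times\gamma_{n,m}$ measure must already have positive $\cH^s|_F\times\gamma_{n,m}$ measure for some compact $F\subset E$ of finite measure. This does not follow from Davies' theorem, and such localization fails in general for products with a non-$\sigma$-finite factor. Take counting measure times Lebesgue measure on $[0,1]^2$. The diagonal meets every $F\times[0,1]$ with $F$ finite in a null set, yet it has infinite product measure. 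Indeed, any countable rectangle cover of it must contain a rectangle $A\times B$ with $A$ infinite and $B$ of positive Lebesgue measure. So in the non-$\sigma$-finite case your argument proves nothing beyond the compact pieces.

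This is harmless for the paper. When deducing Theorem \ref{t:many except dim 01} from Theorem \ref{t:Borel}, choose by Davies a compact $F\subset E$ with $0<\cH^s(F)<\infty$ and apply your argument to $F$. Then use $F\cap\ell\subset E\cap\ell$ for lines $\ell$, together with $(\cH^s\times\gamma_{n,1})(F\times D)>0$. So you should drop the ``slab'' claim. Either state the lemma for $\sigma$-finite $E$, or read its conclusion as holding $\cH^s|_F\times\gamma_{n,m}$-a.e.\ for every such compact $F$.
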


We will not be giving the definition of analytic (Suslin) sets; for our purposes it is sufficient 
to know that every Borel set in $\R^n$ is analytic, cf.~\cite[Sec.~6.6]{Bogachev2007vol2}. 
Thus Theorem \ref{t:many except dim 01} follows from the next result.

\begin{theo}\label{t:Borel}
Consider the family \eqref{e:pert 01} of perturbations, assuming only $K_j=K_j^*$. 
Then the exceptional set $E$ defined above is Borel.  
\end{theo}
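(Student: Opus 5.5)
We need to show that the set $E=\{\vec t\in\R^n: \mu^{\vec t}\not\perp\sigma\}$ is Borel. Fix a carrier $\Omega$ of $\sigma$: a Borel set with $\Leb(\Omega)=0$ and $\sigma(\R\setminus\Omega)=0$. The measure $\mu^{\vec t}$ is not mutually singular with $\sigma$ exactly when the absolutely continuous part of $\mu^{\vec t}$ with respect to $\sigma$ is nonzero. So we need a countable description of this condition in terms of quantities that depend measurably on $\vec t$.

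\textbf{Step 1: reduction to measurability.} Write $\mu^{\vec t}=\mu^{\vec t}_a+\mu^{\vec t}_s$ with $\mu^{\vec t}_a\ll\sigma$ and $\mu^{\vec t}_s\perp\sigma$. Then $\vec t\notin E$ iff $\mu^{\vec t}_a=0$. Equivalently, for every Borel set $F$ with $\sigma(F)=0$,
\[
\sup\{\mu^{\vec t}(G): G\subset\R \text{ Borel},\ \sigma(G)=0\}=\mu^{\vec t}(\R).
\]
The supremum runs over uncountably many sets, so I would replace it by a countable procedure via the Radon--Nikodym/differentiation approach. Let $h(\vec t):=\|\mu^{\vec t}_a\|$. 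It suffices to show that $\vec t\mapsto h(\vec t)$ is Borel, since then $E=\{h>0\}$.

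\textbf{Step 2: a countable formula for $\|\mu_a\|$.} For finite measures $\mu$ and $\sigma$ I would use the standard formula
\[
\|\mu_a\|=\lim_{\delta\to0}\lim_{k\to\infty}\inf\{\mu(U): U\in\mathcal U_k,\ \sigma(\R\setminus U)<\delta\},
\]
or, more concretely, $\|\mu_s\|=\lim_{\delta\to0^+}\sup\{\mu(U):U\ \text{a finite union of rational open intervals},\ \sigma(U)<\delta\}$. To justify this, take a Borel set $G$ with $\sigma(G)=0$ carrying $\mu_s$. By regularity of both measures, approximate $G$ from inside by compacts and then by open sets of small $\sigma$-measure. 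Those open sets can in turn be approximated by finite unions of rational intervals, losing little $\mu$-mass (via inner approximation by compacts of the open set). Conversely, if $\sigma(U)<\delta$, then $\mu(U)\le\mu_s(\R)+\mu_a(U)$, and $\mu_a(U)\to0$ uniformly as $\delta\to0$ by absolute continuity. This gives $\|\mu_s\|$ as a countable sup/limit of the values $\mu(U)$ over a countable family $\{U_j\}$, where the family is chosen depending only on $\sigma$, not on $\vec t$. Finally $h(\vec t)=\mu^{\vec t}(\R)-\|\mu^{\vec t}_s\|$.

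\textbf{Step 3: measurability of $\vec t\mapsto\mu^{\vec t}(U)$.} This is exactly what was proved in the proof of Theorem~\ref{t:many 01}. Lemma~\ref{l:cont mu_t} gives continuity of $\vec t\mapsto\int p_a\,\dd\mu^{\vec t}$ with no assumptions on the $K_k$. The monotone class argument (Theorem~\ref{t:monotone class}) then shows that $\vec t\mapsto\int g\,\dd\mu^{\vec t}$ is Borel for every bounded Borel $g$. In particular $\vec t\mapsto\mu^{\vec t}(U_j)$ and $\vec t\mapsto\mu^{\vec t}(\R)$ are Borel. Countable sups and limits of Borel functions are Borel, so $h$ is Borel and $E=\{h>0\}$ is Borel.

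The main obstacle is Step 2: making the limit formula for $\|\mu_s\|$ use a fixed countable family of sets, with the $\delta$-limit genuinely monotone so that it reduces to a countable operation. I would take $\delta=1/m$ and note that the sup is monotone in $\delta$.
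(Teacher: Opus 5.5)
Your argument is correct, but it takes a genuinely different route from the paper's.

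\textbf{The paper's route.} The paper works pointwise in $x$. With a fixed sequence $y_n\downarrow0$, it uses the joint continuity of $(\vec t,z)\mapsto\mu^{\vec t}(z)$ (Lemma \ref{l:cont mu_t}) to make $\Phi_n(\vec t,x)=\mu^{\vec t}(x+iy_n)/\sigma(x+iy_n)$ continuous, so $\Phi=\lim_n\Phi_n$ is jointly Borel on $\R^n\times S_\sigma$. The relative Fatou theorem (Lemma \ref{l:weak type 01}) identifies $\Phi(\vec t,\fdot)$ with $\dd\mu^{\vec t}/\dd\sigma$ $\sigma$-a.e. Tonelli then shows that $E=\{\vec t:\int_{S_\sigma}\Phi(\vec t,x)\,\sigma(\dd x)>0\}$ is Borel.

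\textbf{Your route.} You express the mass of the $\sigma$-singular part by the countable formula $\|\mu_s\|=\inf_m\sup\{\mu(U):U\in\mathcal U,\ \sigma(U)<1/m\}$, where $\mathcal U$ is a fixed countable family depending only on $\sigma$. Your justification is sound. The upper bound follows from the $\e$--$\delta$ form of $\mu_a\ll\sigma$. For the lower bound, you take a compact $K$ inside a $\sigma$-null carrier of $\mu_s$ with almost full $\mu$-mass, and cover it by finitely many rational intervals of total $\sigma$-measure $<\delta$. The Borel measurability of $\vec t\mapsto\mu^{\vec t}(U)$ follows verbatim from the monotone class argument in Section \ref{s-parameters}. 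This is because Lemma \ref{l:cont mu_t} is stated for complete systems with $\sum_k\alpha_k\|\bx_k\|^2<\infty$ and arbitrary self-adjoint $K_k$.

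\textbf{What each buys.} Your route is more elementary and more general:
\begin{itemize}
\item it needs no Poisson differentiation theory (non-tangential limits, weak-type maximal bounds, relative Fatou) and no Tonelli;
\item it makes no use of the singularity of $\sigma$ (your carrier $\Omega$ is never used);
\item it works for any family of finite measures $\vec t\mapsto\mu^{\vec t}$ with $\vec t\mapsto\mu^{\vec t}(U)$ measurable, over any measurable parameter space, and for $\sigma$ merely Radon.
\end{itemize}
The paper's route gives more information, namely a jointly Borel version $(\vec t,x)\mapsto\dd\mu^{\vec t}/\dd\sigma(x)$ of the Radon--Nikodym derivatives.

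\textbf{Cosmetic points.} In Step 1 the clause ``for every Borel set $F$ with $\sigma(F)=0$'' is dangling and should be removed. The first displayed formula in Step 2 (with the undefined $\mathcal U_k$) is superfluous, since you only use the second.
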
 

Note that application of the above theorem significantly simplifies the proof of the main result of 
\cite{IMRN}.

\begin{rem}
As one can see from the proof of Theorem \ref{t:Borel}, the above theorem holds for more general
families of operators, not just for the affine ones given by \eqref{e:pert 01}.

For example, it holds for any family such that the map $\vec t\,\mapsto A_{\vec t\,}=A_{\vec t\,}^*$
is a continuous map from $\R^n$ (or its Borel subset) to $B(\cH)$ with strong operator topology.
\end{rem}

\subsection{Theorem \ref{t:Borel} implies Theorem \ref{t:many except dim 01}}

Assume that $\dim E>n-1$. So, there exists $s>n-1$ such that $\cH^s( E)>0$ (possibly $=\infty$).
Applying Lemma \ref{l:Mattila--Marstrand 02} with $m=1$ we get that for $\cH^s\times \gamma_{n,1}$
almost all lines
\[
\{ \vec t_0 + s\vec t: s\in\R  \}
\]
(i.e.~for $\cH^s\times \gamma_{n,1}$ almost all $(\vec t_0, \vec t \,) \in   E\times S^{n-1}$) their
intersection with $E$ has  Hausdorff dimension at least $s+1-n>0$. By Theorem \ref{t:Borel} for any
line with $\vec t_0\in   E$ and $\vec t \in D\subset S^{n-1}$ the intersection is countable (so has
dimension $0$). But  $\gamma_{n,1}(D)>0$ ($\gamma_{n,1}$ is just the normalized surface measure on
$S^{n-1}$), so
\[
\cH^s\times 
\gamma_{n,1}(  E\times U)>0,
\]
and we arrived at a contradiction. \hfill \qed

\subsection{Proof of Theorem \ref{t:Borel}}
For a measure $\mu$  let us, slightly abusing notation, denote by  $\mu(z)$ the Poisson extension of
$\mu$ to the point $z\in\C_+$,
\begin{align*}
\mu(z) =\int_{\R} p_z(s) \mu(\dd s), \qquad p_z(s) = \frac1\pi \im \frac{1}{s-z}. 
\end{align*}

It is well known (see e.g.~\cite[Part (ii) of Theorem 3.4]{JST}) that for a finite non-negative 
measure $\mu$ on $\R$ its singular part $\mu\ti s$ is \emph{carried} by the set $S_\mu$ of all 
$x\in\R$ for which 
\begin{align}\label{e:carrier}
\lim_{z\to x\sphericalangle} \mu(z) = + \infty;
\end{align}
here by limit we understand the non-tangential limit. The term \emph{carried} here means that
$\mu\ti s(\R\setminus S_\mu) =0$.

Note, that if we pick a \emph{reasonable} sequence  $y_n \downarrow 0$, for example $y_n = 2^{-n}$ 
(or $y_n= 1/n$), then it follows easily from Harnack's inequality that for any aperture of the 
approach region 
\[
\lim_{z\to x\sphericalangle} \mu(z) = + \infty \quad \text{if and only if } \quad \lim_{n\to 
\infty} \mu(x+i y_n) = + \infty;
\]
this equivalence holds for \emph{all} $x\in \R$. In particular, this means that the set $S_\mu$ is 
always a Borel set. 

Let us fix such a ``reasonable'' sequence $y_n$. Let $S_\sigma$ be the \emph{carrier} of the 
singular measure $\sigma$, namely the set of all $x\in\R$ such that \eqref{e:carrier} (with 
$\sigma$ instead $\mu$)  holds. Note that $|S_\sigma|=0$. 

Note that the measures $\sigma$ and $\mu^{\vec t\,}$ are not mutually singular if and only if in 
the Lebesgue--Radon--Nikodym decomposition 
\begin{align*}
\mu^{\vec t\,} = f_{\vec t\,}\sigma + \mu^{\vec t\,}\ti{s}, \qquad \mu^{\vec t\,}\ti{s}\perp \sigma
\end{align*}
the absolutely continuous part $f_{\vec t\,}\sigma$ is a non-zero measure. Note that $f_{\vec t\,}$ 
is the Radon--Nikodym derivative,  $f_{\vec t\,}=\frac{\dd \mu^{\vec t\,}}{\dd \sigma}$. Thus, 
$\vec t$ belongs to the exceptional set $E$ if and only if 
\begin{align}\label{e:except 03}
\int_{S_\sigma} f_{\vec t\,} \sigma = 
\int_{S_\sigma} \frac{\dd \mu^{\vec t\,}}{\dd \sigma} \sigma >0 .
\end{align}

Define the functions $\f_n, \psi_n, \Phi_n:\R^n \times  S_\sigma \to \R$,  
\begin{align*}
\f_n(\vec t, x):= \mu^{\vec t} (x+iy_n), \qquad \psi_n(\vec t, x):= \sigma (x+iy_n) , \qquad
\Phi_n =\f_n/\psi_n. 
\end{align*}
By Lemma  \ref{l:cont mu_t} the map $(\vec t, z) \mapsto \mu^{\vec t}(z)$, $(\vec t, z)\in \R^n 
\times \C_+$ is continuous, therefore the functions $\f_n$ are continuous. The functions $\psi_n$ 
are continuous and non-vanishing, so the functions $\Phi_n$ are continuous,  and so Borel 
measurable. 

Therefore the function $\Phi:\R^n\times S_\sigma\to\R$, 
\begin{align*}
\Phi(\vec t, x):= \lim_{n\to \infty} \Phi_n(\vec t\,, x+iy_n)
\end{align*}
(and defined to be $0$ if limit does not exist), is Borel measurable. 

By Lemma \ref{l:weak type 01} for any $\vec t\,\in\R^n$
\begin{align*}
\Phi(\vec t\, , x)= \lim_{n\to\infty} \frac{\mu^{\vec t\,} (x+iy_n)}{\sigma(x+iy_n)} 
= f_{\vec t\,},   \qquad \sigma\text{-a.e.}    
\end{align*}
As we discussed above, $\mu^{\vec t\,}\perp\sigma$ if and only if \eqref{e:except 03} holds, so the 
exceptional set $E$ is exactly the set of all $\vec t\,\in\R^n$ such that 
\begin{align*}
\int_{S_\sigma} \Phi(\vec t\, , x) \sigma(\dd x) >0 .
\end{align*}
By Tonelli's theorem this set is measurable. \hfill\qed

\appendix

%%%%%%%%%%%%%%%%%%%%%%%%%%%
\section{Approximation by Poisson kernels}\label{s-apppoiss}
%%%%%%%%%%%%%%%%%%%%%%%%%%%
We use upper half-plane limits and Poisson kernels $P_\delta(s) = 
\frac{1}{\pi}\frac{\delta}{s^2+\delta^2} = \frac{1}{\pi}\im\frac{1}{s-i\delta}$ to approximate 
functions $f$ with respect to the \emph{Poisson norm}
\[
\|f\|\ti{Poiss}
:=
\sup_{s\in\R}(1+s^2)\left|f_n(s)-f(s)\right|\to 0.
\]

\begin{lem}\label{le:poissonappro}
Assume that $f\in C_c(\R)$. Then there is a sequence $f_n$ of finite linear combinations of Poisson 
kernels, 
\[
f_n(s)
=
\sum_{k=1}^{N} a_k^n P_\delta(s-t_k^n),
\quad
N\in\N\text{ and }a_k^n, t_k^n\in\R 
\]
such that $\|f_n-f\|\ti{Poiss}\to 0$ as $n\to\infty$. 

Moreover, the linear combination is non-negative if $f$ is non-negative.
\end{lem}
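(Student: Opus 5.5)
We approximate $f\in C_c(\R)$ by Riemann sums of its Poisson regularization. The plan has three steps: smooth with the Poisson kernel, discretize the resulting integral, then combine the two errors in the weighted norm.

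\textbf{Step 1 (smoothing).} Consider the harmonic extension $u_\delta := P_\delta * f$, that is,
\[
u_\delta(s)=\int_\R f(t)P_\delta(s-t)\,\dd t .
\]
We want $\|u_\delta - f\|\ti{Poiss}\to 0$ as $\delta\to0$. Suppose $\supp f\subset[-L,L]$.
\begin{itemize}
\item On a bounded set $|s|\le 2L$, uniform continuity of $f$ and the approximate identity property of $P_\delta$ give $\sup|u_\delta-f|\to0$. The weight $(1+s^2)$ is bounded there, so the weighted error also tends to $0$.
\item For $|s|>2L$ we have $f(s)=0$, and for $t\in\supp f$ we have $|s-t|\ge |s|/2$. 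Hence
\[
|u_\delta(s)|\le \frac{\delta}{\pi}\|f\|_1\,\frac{4}{s^2}.
\]
It follows that $(1+s^2)|u_\delta(s)|\le C\delta\|f\|_1$, which tends to $0$.
\end{itemize}

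\textbf{Step 2 (discretization).} Fix $\delta$ and replace the integral by a Riemann sum over a partition $\{t_k\}$ of $[-L,L]$ with mesh $h$:
\[
f_{\delta,h}(s) = \sum_k f(t_k)\,|I_k|\,P_\delta(s-t_k).
\]
These are finite linear combinations of the required form, with $a_k=f(t_k)|I_k|$. The error is
\[
(1+s^2)|u_\delta(s)-f_{\delta,h}(s)| \le \sum_k \int_{I_k} (1+s^2)\,\bigl|f(t)P_\delta(s-t)-f(t_k)P_\delta(s-t_k)\bigr|\,\dd t .
\]
We bound this using two facts.
\begin{itemize}
\item For $t,t'\in[-L,L]$ and any $s$, the Lipschitz bound $|\partial_t P_\delta(s-t)|\le C_\delta (1+s^2)^{-1}$ holds. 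For bounded $s$ this is immediate. For large $s$, note that $\partial P_\delta(x)\sim x^{-3}$ and $|s-t|\ge|s|/2$.
\item $(1+s^2)P_\delta(s-t)\le C_{\delta,L}$ uniformly in $s$ and $t\in[-L,L]$.
\end{itemize}
Together these bound the weighted error by $C_{\delta,L}\bigl(\omega_f(h)\cdot 2L + \|f\|_\infty h\cdot 2L\bigr)$, where $\omega_f$ is the modulus of continuity of $f$. This tends to $0$ as $h\to0$.

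\textbf{Step 3 (diagonal choice).} Choose $\delta_n\to0$ so that Step 1 gives error below $1/n$. Then choose $h_n$ so that Step 2 gives error below $1/n$ for that $\delta_n$.

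\textbf{Non-negativity.} If $f\ge0$, every coefficient $a_k=f(t_k)|I_k|$ is non-negative and $P_\delta>0$, so $f_n\ge0$.

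\textbf{Main obstacle.} The only delicate point is the uniformity in $s$ at infinity. The weight $(1+s^2)$ grows, but it is exactly compensated by the $s^{-2}$ decay of $P_\delta$ (and the $s^{-3}$ decay of its derivative) away from the compact support of $f$. This is what the two bounds in Step 2 handle. If one prefers, the weighted sup can be split into $|s|\le R$ and $|s|>R$ for large $R$ and each region treated separately.
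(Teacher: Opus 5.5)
Your proposal is correct and follows essentially the same route as the paper: smooth with $P_\delta * f$, replace the convolution by a Riemann-type sum $\sum_k a_k P_\delta(s-t_k)$ (non-negative coefficients when $f\ge 0$), and absorb the weight $(1+s^2)$ at infinity using the $\delta/s^2$ decay of $P_\delta$ away from $\supp f$. The only difference is in the tail bookkeeping. For $|s|\ge 2$ the paper bounds the discrete sum directly by $2\delta\|\tau\|/(1+s^2)$, with $\tau=\sum_k a_k\delta_{t_k}$ and $\delta<\e/8$. You instead show separately that $P_\delta*f$ and the discretization error are small in the weighted norm, using the weighted Lipschitz bound on $P_\delta$.
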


\begin{proof}
Fix $f\in C_c(\R)$. By scaling  we can assume without loss of generality that $\supp f\subset 
[-1,1]$ and that $|f(s)|\le 1$ on $\R$. Take arbitrary $\e>0.$ To prove the lemma it suffices to 
find a linear combination $f_\e$ of Poisson kernels such that for all $s\in\R$
\begin{align}\label{e:f_e-f}
|f_\e(s) - f(s) |\cdot (1+s^2) \le \e . 
\end{align}

Trivially on $[-2,2]$ we have $P_\delta * f \rightrightarrows f$ as $\delta\searrow 0$, so   
\begin{align}\label{e:Poisson appr}
\|P_\delta*f-f\|\ci{C([-2,2])}<\e/10      
\end{align}
for all sufficiently small $\delta$. 
This immediately implies that for all $s\in[-2,2]$
\begin{align*}
|P_\delta*f (s) - f(s) |\cdot (1+s^2) \le \e/2. 
\end{align*}
Approximating  in uniform norm the convolution $P_\delta*f$ by linear combinations of Poisson 
kernels (the details are provided below) we get the estimate \eqref{e:Poisson appr} for 
$s\in[-2,2]$. 

To get the estimate for $|s|\ge 2$ we will use the fact that for any complex-valued measure $\tau$
supported on $[-1,1]$ and $|s|\ge 2$ we have
\begin{align}\label{e-estconv}
|P_\delta*\tau(s)|
\le \frac{1}{\pi}\cdot \frac{\delta\|\tau\|}{(|s|-1)^2}\le 
\frac{1}{\pi}\cdot\frac{5\delta\|\tau\|}{1+s^2} 
< \frac{2\delta\|\tau\|}{1+s^2}\,;
\end{align}
here $\|\tau\|$ is the total variation of the measure $\tau$.   

To continue with the proof let us  fix $\delta<\e/8$ such that \eqref{e:Poisson appr} holds.

The approximation of $P_\delta*f$ by a linear combination of Poisson kernels is pretty obvious: we
just approximate the function $f$ by a finite linear combination of $\delta$-measures. Let us work
out the details.

The derivative of $P_\delta$ is uniformly bounded (remember that we fixed $\delta$), and so 
$P_\delta$ is uniformly continuous.  Therefore there exists $\alpha>0$ so that for $s,t\in\R$
\begin{align}\label{e-unifconv}
|s-t|<\alpha  \implies|P_\delta(s)-P_\delta(t)|<\e/20  .
\end{align}

Let us split the interval $[-2,2]$ into finitely many of disjoint intervals $I_k$, $|I_k|<\alpha$, 
and pick  points $t_k\in I_k$. Define the linear combination $f_\e$ of Poisson kernels as 
\begin{align*}
f_\e(s) = \sum_k a_k P_\delta (s-t_k), \qquad a_k:= \int_{I_k} f(s)\dd s. 
\end{align*}
Then for all $s\in\R$
\begin{align*}
 P_\delta*f (s)-f_\e(s)   & = \sum_k   \int_{I_k}  \bigl(  P_\delta (s-t) -P_\delta (t_k) \bigr) 
 f(t) \dd t
\intertext{so}
| P_\delta*f (s)-f_\e(s) |  & \le \sum_k   \int_{I_k}  \bigl|  P_\delta (s-t) -P_\delta (t_k) 
\bigr| \, |f(t)| \dd t \\
& \le \sum_k (\e/20 ) |I_k| =2\e/20 =\e/10.
\end{align*}
Combining this with \eqref{e:Poisson appr} we get that for all $s\in[-2,2]$
\begin{align*}
|f_\e(s) - f(s)| (1+s^2) \le 5\cdot(\e/10+\e/10) =\e. 
\end{align*}
For $|s|>2$ we will use estimate \eqref{e-estconv}. Define the measure $\tau$ by $\tau:=\sum_k 
a_k\delta_{t_k}$. 
Clearly 
\begin{align}\label{e-tau}
\|\tau\|\le \sum_k \int_{I_k} |f(t)|\dd t = \int_{-2}^2 |f(t)|\dd t \le 2,  
\end{align}
and $f_\e=P_\delta* \tau$. 
Since $\supp f\subset [-1,1]$ we get that for $|s|\ge 2$
\begin{align*}
|f_\e(s) - f(s)| =|f_\e(s)| =|P_\delta*\tau (s)|\le
\frac{2\delta \|\tau\|}{1+s^2}  \le \frac{4\delta }{1+s^2}
<\frac{\e}{2}\cdot \frac{1}{1+s^2} ;
\end{align*}
the first inequality here follows from \eqref{e-estconv}, the second one from \eqref{e-tau}, and 
the last one because we choose $\delta<\e/8$. 
\end{proof}

%%%%%%%%%%%%%%%%%%%%%%%%%%%%%%%%%%
\section{Strong continuity and measurability}\label{ap:measurability}
%%%%%%%%%%%%%%%%%%%%%%%%%%%%%%%%%%
We start by showing that  sequential convergence in strong operator topology is preserved after
taking square roots. Our proof relies on part (iii)  of the following lemma. Below $A, A_n, B,
B_n,$:s are operators on a fixed separable (complex) Hilbert space $\cH$ and $P_n$ is the orthogonal
projection onto the span of first $n$ vectors in a fixed orthonormal basis of $\cH$. The symbol
$A_n\stackrel{\rm s}{\longrightarrow} A$ means convergence in the strong operator topology.
 
\begin{lem}\label{lem:strong} \  
\begin{enumerate}
\item If $A_n\stackrel{\textup{s}}{\longrightarrow} A$ and $B_n\stackrel{\textup s}{\longrightarrow}
B$, then $A_nB_n \stackrel{\textup{s}}{\longrightarrow} AB$.

\item For any $A$ we have $P_nAP_n \stackrel{\textup{s}}{\longrightarrow} A$ as  $n\to\infty$.

\item If $A_n=A_n^*$ and $A_n\stackrel{\textup{s}}{\longrightarrow} A$, then for any continuous 
function 
$\f:\R\to \C$
\[
\f(A_n) \stackrel{\textup{s}}{\longrightarrow} \f(A). 
\]
\end{enumerate}
\end{lem}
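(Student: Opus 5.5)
We will prove the three statements in order, each using the previous ones.

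For \cond1, we fix $\bx\in\cH$ and write
\[
A_nB_n\bx-AB\bx=A_n(B_n\bx-B\bx)+(A_n-A)B\bx .
\]
The second term tends to $0$ by strong convergence of $A_n$ applied to the fixed vector $B\bx$. For the first term we use the uniform boundedness principle: a strongly convergent sequence is pointwise bounded, so $\sup_n\|A_n\|<\infty$. Hence $\|A_n(B_n\bx-B\bx)\|\le \sup_n\|A_n\|\,\|B_n\bx-B\bx\|\to0$.

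For \cond2, we note that $P_n\to\bI$ strongly, since $P_n\bx$ are the partial sums of the orthonormal expansion of $\bx$. Then \cond1, applied twice (first to $A P_n$, then to $P_n(AP_n)$), gives $P_nAP_n\to A$ strongly. Here the $P_n$ are uniformly bounded by $1$.

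Part \cond3 is the main step. By uniform boundedness, $\sup_n\|A_n\|\le C$. Since the $A_n$ are self-adjoint and $A$ is the strong limit of self-adjoint operators, $A$ is self-adjoint as well, and $\|A\|\le C$. All spectra therefore lie in the compact interval $J=[-C,C]$, and only the values of $\f$ on $J$ matter.

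We first treat polynomials. Iterating \cond1 gives $A_n^k\to A^k$ strongly for every $k$, and taking linear combinations gives $p(A_n)\to p(A)$ strongly for every polynomial $p$.

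For general continuous $\f$ we use Weierstrass approximation. Given $\e>0$, we pick a polynomial $p$ with $\sup_J|\f-p|<\e$. The spectral theorem (the functional calculus norm bound) then gives $\|\f(A_n)-p(A_n)\|\le\e$ for all $n$, and also $\|\f(A)-p(A)\|\le\e$. An $\e/3$ argument then yields
\[
\limsup_n\|\f(A_n)\bx-\f(A)\bx\|\le 2\e\|\bx\|.
\]
Since $\e$ is arbitrary, $\f(A_n)\to\f(A)$ strongly.

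The only delicate point is the uniform bound on $\|A_n\|$, which confines all the spectra to one compact set on which a single polynomial approximates $\f$. Once that is available, the rest is routine.
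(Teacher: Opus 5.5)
Your proof is correct and follows the paper's argument essentially step for step. You use the decomposition $A_nB_n-AB=A_n(B_n-B)+(A_n-A)B$ with the uniform boundedness principle for (i), $P_n\to\bI$ strongly plus (i) for (ii), and for (iii) polynomials, Weierstrass approximation on a single interval $[-C,C]$ containing all the spectra, and an $\e/3$ estimate. Your explicit remark that the strong limit $A$ is self-adjoint, so that $\f(A)$ is defined and $\sigma(A)\subset[-C,C]$, is a small detail the paper leaves implicit.
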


\begin{proof}
Statement (i) follows by using the uniform bounds (from the Uniform Boundedness Principle) for
$\|A_n\|$, $\|B_n\|$, and decomposing $A_nB_n -AB = A_n(B_n-B)+(A_n-A)B.$ Since $P_n
\stackrel{\textup{s}}{\longrightarrow} \bI$,  statement (ii) follows from part (i).

To prove \cond3 we first notice that statement \cond1 implies that for any polynomial $p$ and any 
$A_n \stackrel{\textup{s}}{\longrightarrow} A$ ($A_n$s are not necessarily self-adjoint here) 
\begin{align}\label{e:sc 01}
p(A_n) \stackrel{\textup{s}}{\longrightarrow} p(A).   
\end{align}
The rest of the proof follows by the standard $\e/3$ reasoning. Namely,  by the Uniform Boundedness 
Principle we have that $\|A_n\|, \|A\|\le M<\infty$ for all $n$, so we get the inclusion of the 
spectra, $\sigma(A_n), \sigma(A) \subset [-M,M]$. Denote by $I$ the interval $[-M,M]$. Then for any 
$f\in C(I)$
\begin{align*}
\| f(A_n)\|, \|f(A)\|\le \|f\|\ci{C(I)}
\end{align*}
(norm of a normal operator $=$ spectral radius). 

Pick arbitrary $\e>0$ and $\bx\in \cH$, $\|\bx\|=1$. By the Stone--Weierstrass Theorem we can find 
a polynomial $p$ such that $\|\f-p\|\ci{C(I)}<\e/3$. Using triangle inequality we can estimate
\begin{align*}
\| \f(A_n)\bx - \f(A)\bx\| 
&\le \| \left(\f(A_n) - p(A_n)\right)\bx\| + \|  \left( p(A_n) -p(A)  \right)\bx \|  \\ 
&\qquad + \| \left(p(A) - \f(A)\right)\bx \|  \\
& < 2\e/3 +  \|  \left( p(A_n) -p(A)  \right)\bx \|  . 
\end{align*}
From \eqref{e:sc 01} we get that there exist $N=N(\bx)<\infty$ such that for all $n>N$ there holds  
$\|  \left( p(A_n) -p(A)  \right)\bx \|<\e/3$. Thus 
\begin{align*}
\| \f(A_n)\bx - \f(A)\bx\| < \e\qquad \forall n>N
\end{align*}
so the statement \cond3 is proved. 
\end{proof}

Aside we mention that, in statements \cond1, \cond3 of the above Lemma \ref{lem:strong}, we do not
need to assume that the Hilbert space $\cH$ is separable; separability only appears in statement
\cond2, since there we assumed existence or (countable) orthonormal basis.

Let $(\fX, \cA)$ be a measurable space, and let $X$, $Y$, $Z$ be Banach spaces. 

\begin{lem}\label{l:prod meas funct}
Let $F:\fX\to B(Y;Z)$, $G:\fX\to B(X;Y)$, $g:\fX\to Y$. 
\begin{enumerate}
\item If $F$ and $g$ are strongly measurable, the the product $Fg$ \tup(i.e.~the function $s\mapsto 
F(s)g(s)$\tup) is strongly measurable
\item If $F$, $G$ are strongly measurable, then the product $FG$ \tup(i.e.~the function $s\mapsto 
F(s)G(s)$\tup) is strongly measurable. 
\end{enumerate}
\end{lem}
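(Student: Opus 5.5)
We first reduce both statements to the definition of strong measurability, i.e.\ to being a pointwise limit of simple functions. For an operator-valued function this is understood pointwise on vectors: $F$ is strongly measurable if $s\mapsto F(s)\by$ is strongly measurable for every $\by\in Y$. The plan is to prove \cond1 first and then deduce \cond2 from it.

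For \cond1, we pick simple functions $g_n=\sum_k \1\ci{E_{k}^n}\by_k^n$ converging to $g$ pointwise in $Y$. For each fixed $n$, the function
\begin{align*}
F g_n=\sum_k \1\ci{E_k^n}F(\fdot)\by_k^n
\end{align*}
is a finite sum of functions of the form $\1\ci{E}F\by$. Each such function is strongly measurable, because $F\by$ is strongly measurable and multiplying a simple function by $\1\ci E$ gives a simple function. A finite sum of strongly measurable functions is strongly measurable, since we may add the approximating simple sequences. Hence every $Fg_n$ is strongly measurable.

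Next, for each fixed $s$ the operator $F(s)$ is bounded, so $F(s)g_n(s)\to F(s)g(s)$. Thus $Fg$ is a pointwise limit of strongly measurable functions with values in the Banach space $Z$. We then invoke the standard fact that a pointwise limit of strongly measurable functions is strongly measurable; this is the main point to justify. If $Z$ is separable, as in all our applications, we use Pettis' theorem quoted in Section \ref{s-measOVF}: weak measurability passes to pointwise limits because scalar measurable functions are closed under pointwise limits, and weak and strong measurability coincide there. In general we would instead use the characterization ``strongly measurable $\iff$ weakly measurable and essentially separably valued'' from \cite{Hytoenen-book2016}. Both properties survive pointwise limits, since the range of the limit lies in the closure of the union of countably many separable ranges.

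For \cond2, we fix $\bx\in X$ and note that $(FG)(s)\bx=F(s)\bigl(G(s)\bx\bigr)$. The function $s\mapsto G(s)\bx$ is strongly measurable $Y$-valued by assumption. Applying \cond1 to $F$ and $g:=G\bx$ then shows that $s\mapsto F(s)G(s)\bx$ is strongly measurable for every $\bx$. By definition, this means $FG$ is strongly measurable.
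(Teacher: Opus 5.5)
Your proof is correct and follows the paper's argument: approximate $g$ by simple functions $g_n$, observe that each $Fg_n$ is strongly measurable and that $F(s)g_n(s)\to F(s)g(s)$ because $F(s)$ is bounded, pass to the pointwise limit, and deduce (ii) by applying (i) to $g=G\bx$ for each fixed $\bx$. You add details the paper leaves implicit, namely why $Fg_n$ is measurable and why pointwise limits of strongly measurable functions are strongly measurable; one wording fix is that on a bare measurable space (no measure) the Pettis criterion reads ``separably valued'' rather than ``essentially separably valued'', and that is exactly what your argument establishes.
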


\begin{proof}
For a strongly measurable $g$ let $g_n$, $n\ge 1$ be a sequence of simple functions (i.e.~of the 
measurable functions taking finitely many values) converging to $g$ pointwise. Then the functions 
$s\mapsto F(s) g_n(s)$ are clearly measurable. Since $F(s)$ is a bounded operator  $F(s)f_n(s)\to 
F(s)$ in norm, so $f$ is a measurable function as a pointwise limit of measurable functions.  
Statement \cond1 is proved. 

To prove statement \cond2 take arbitrary $\bx \in X$. Since $G$ is strongly measurable, the 
function $s\mapsto G(s)\bx$ is strongly measurable by definition. Applying statement \cond1 to this 
function, we get that the function $s\mapsto F(s)G(s)\bx$ is strongly measurable. Since $\bx$ is 
arbitrary, statement \cond2 is proved. 
\end{proof}

In the rest of this section all Hilbert spaces are separable, so for functions with values in 
$B(\cH)$ weak and strong measurability coincide. So we will simply  call such  functions 
measurable.

\begin{lem}\label{lem:measurable}
Let $F:\fX\to B(\cH)$, $F(s)=F(s)^*$, be $\cA$-measurable \tup(weakly, or equivalently strongly,
since $\cH$ is separable\tup), and let $\f:\R\to \C$ be a locally bounded \tup(i.e.~bounded on any
finite interval\tup) Borel measurable function.

Then the function $s\mapsto \f(F(s))$ is measurable. 
\end{lem}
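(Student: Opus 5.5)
We reduce to the case of continuous $\f$, and then pass to Borel functions by a monotone class argument in the spirit of Theorem \ref{t:monotone class}. Since $\cH$ is separable, measurability of $s\mapsto\f(F(s))$ means that for all $\bx,\by$ in a countable dense set the scalar function $s\mapsto\Jap{\f(F(s))\bx,\by}$ is $\cA$-measurable. By polarization it suffices to take $\by=\bx$.

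\emph{Localization.} Define $\fX_N:=\{s:\|F(s)\|\le N\}$. These sets are measurable, because $\|F(s)\|=\sup_{j,k}|\Jap{F(s)\be_j,\be_k}|$ over a countable dense set of unit vectors. On $\fX_N$ only the values of $\f$ on $[-N,N]$ matter. So it is enough to prove measurability of $s\mapsto\1\ci{\fX_N}(s)\f(F(s))$ for each $N$ and then take the limit $N\to\infty$. On $\fX_N$ we may replace $\f$ by $\f\1\ci{[-N-1,N+1]}$, which is a bounded Borel function.

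\emph{Polynomials and continuous functions.} For a polynomial $p$, the function $s\mapsto p(F(s))$ is measurable by Lemma \ref{l:prod meas funct}\cond2, applied repeatedly to powers and sums. For continuous $\f$, choose polynomials $p_k\to\f$ uniformly on $[-N,N]$ (Weierstrass). Since $\sigma(F(s))\subset[-N,N]$ on $\fX_N$, we get $\|p_k(F(s))-\f(F(s))\|\le\|p_k-\f\|\ci{C[-N,N]}\to0$ there. Hence $\Jap{\f(F(s))\bx,\bx}$ is a pointwise limit of measurable functions on $\fX_N$, and so is measurable.

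\emph{Bounded Borel functions.} Fix $N$. Let $\cS$ be the class of bounded real Borel $\f$ such that $s\mapsto\1\ci{\fX_N}(s)\Jap{\f(F(s))\bx,\bx}$ is measurable for every $\bx$. This class is a vector space and contains $\1$ and all of $C\ti c(\R)$. It is closed under bounded increasing limits $g_n\nearrow g$: for each fixed $s$ we have $\Jap{g_n(F(s))\bx,\bx}=\int g_n\,\dd\mu_{s,\bx}\to\int g\,\dd\mu_{s,\bx}$ by the Monotone Convergence Theorem, where $\mu_{s,\bx}$ is the scalar spectral measure of $F(s)$. Indicators of open intervals are increasing limits of nonnegative $C\ti c$ functions. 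Theorem \ref{t:monotone class} with the $\pi$-system of open intervals then gives that $\cS$ contains all bounded Borel real functions. Complex $\f$ is handled by splitting into real and imaginary parts.

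\emph{Gluing.} Take the truncations $\f\1\ci{[-N-1,N+1]}$ and use the sets $\fX_N$, which exhaust $\fX$. Then $\Jap{\f(F(s))\bx,\bx}$ is measurable on each $\fX_N\setminus\fX_{N-1}$, hence on all of $\fX$.

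The main subtle point is the monotone-limit step. There we must pass to the limit pointwise in $s$ using the spectral measures of $F(s)$, rather than any operator-norm convergence. Local boundedness of $\f$ is used only to make the truncations bounded, which the monotone class theorem requires.
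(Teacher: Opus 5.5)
Your proof is correct and follows essentially the same route as the paper. Both go from polynomials (Lemma \ref{l:prod meas funct}) to continuous functions (Weierstrass), then to bounded Borel functions (monotone class theorem, with the Monotone Convergence Theorem applied pointwise to the spectral measures of $F(s)$), and finally to locally bounded $\f$ by truncation.

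The only difference is bookkeeping. You localize up front on the measurable sets $\fX_N=\{s:\|F(s)\|\le N\}$. The paper never needs measurability of $s\mapsto\|F(s)\|$: it observes that for each fixed $s$ the polynomials $p_n$ (chosen with $\|\f-p_n\|\ci{C([-n,n])}<1/n$) and the truncations $\1\ci{[-n,n]}\f$ eventually work at that $s$.
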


\begin{proof}
By Lemma \ref{l:prod meas funct} the conclusion of the  lemma trivially holds when $\f$ is a 
polynomial. 

Let now $\f\in C(\R)$. By the Weierstrass Approximation Theorem there exists a sequence of 
polynomials $p_n$ such that 
\begin{align*}
\|\f -p_n \|\ci{C([-n,n])} < 1/n. 
\end{align*}
Therefore for any $s\in \fX$ we have $\|\f(F(s)) -p_n (F(s))\|\to 0$ as $n\to \infty$, so 
\begin{align*}
p_n (F(s)) \stackrel{\textup{s}}{\longrightarrow} \f(F(s)) \qquad \text{as }n\to \infty. 
\end{align*}
Again, since the pointwise limit of strongly measurable functions is strongly measurable, see 
\cite[Corollary 1.1.9]{Hytoenen-book2016}, we conclude that the function $s\mapsto \f(F(s))$ is 
strongly measurable. 

To extend the result to Borel measurable functions, it is sufficient to prove it for real-valued 
functions: considering $\re\f$ and $\im \f$ separately, we get the general case. For a real-valued 
$\f$  we will use the Monotone class theorem (Theorem \ref{t:monotone class}). 

Fix a measurable function $F:\fX\to B(\cH)$,  and define the class $\cS$ to be a collection of all
Borel measurable functions $\f:\R\to \R$ such that $s\mapsto \f(F(s))$ is $\cA$-measurable.  Note
that we had shown before that continuous functions belong to $\cS$.  Define also the $\pi$-system
$\cT$ to be the collection of all open bounded intervals  $I\subset \R$.

Trivially $\cS$ satisfies conditions \cond1 and \cond2 of Theorem \ref{t:monotone class}.  

Let us check condition \cond3. Take functions $g_n\in \cS$, $0\le g_n  \nearrow g $, where $g$ is a 
bounded function. Recall that the functions in $\cS$ are Borel measurable by definition, and so $g$ 
is also Borel measurable as pointwise limit of measurable functions. 

We can see from the Monotone 
Convergence Theorem that  for arbitrary self-adjoint operator $A$ and a vector $\bx\in\cH$ 
\begin{align}\label{e:monotone quad}
\Jap{g_n(A) \bx, \bx}=\int g_n\dd \mu_\bx  \nearrow \int g\dd \mu_\bx  = \Jap{g(A) \bx, \bx}  ;
\end{align}
here $\mu_\bx$ is the scalar spectral measure of the operator $A$ corresponding to the vector $\bx$,
see Section  \ref{s:Sp Thm}. Note that the boundedness of $g$ guarantee that the right hand side of
\eqref{e:monotone quad} is bounded.

Therefore for any $\bx\in\cH$ the function $s\mapsto \Jap{g(F(s))\bx, \bx}$ is measurable as a limit
of measurable functions, so via polarization the function $s\mapsto g (F(s))$ is weakly (and so
strongly) measurable.

Finally, for any open interval $I$ one can find an increasing sequence of $g_n\in C\ti{c}$, $0\le 
g_n \nearrow \1\ci{I} $, so $\1\ci{I}\in\cS$. Thus condition \cond4 of Theorem \ref{t:monotone 
class} is satisfied.

Since the $\sigma$-algebra generated by bounded open intervals is exactly the Borel 
$\sigma$-algebra, we conclude using Theorem \ref{t:monotone class} that $\cS$ contains all bounded 
Borel measurable functions. 

Let now $\f$ be a locally bounded Borel measurable function. Define 
\[
\f_n(t):= \1\ci{[-n,n]}(t) 
\f(t).
\]
Clearly $\f_n$ are bounded Borel measurable functions, so $\f_n\in\cS$. Since for any self-adjoint 
$A$ 
\begin{align*}
\|\f(A) -\f_n(A)\|\to 0\qquad \text{as } n\to\infty
\end{align*}
(the sequence $\f_n(A)$ stabilizes). 

Therefore the function $\f(F(\fdot))$ is a pointwise limit (in the norm topology of $B(\cH)$, and 
therefore in the strong operator topology) of measurable functions $\f_n(F(\fdot))$, so 
$\f(F(\fdot))$ is measurable. 
\end{proof}

%%%%%%%%%%%%%%%%%%%%%%%%%%%%%%%%
\section{Poisson Lebesgue differentiation theorem for general measures}
%%%%%%%%%%%%%%%%%%%%%%%%%%%%%%%%
Lemma \ref{l:weak type 01} below is sometimes called the Relative Fatou's  Theorem, and was proved
in \cite{Doob1959}. We needed also needed its (a bit weaker) vector-valued version, see Lemma
\ref{l:P-L differ} below, but we could not find a reference, so we present it here.

We say that a measure $\mu$ in $\R^n$ is Poisson finite if the measure $\wt\mu$, 
\begin{align}\label{e:PoissFinite}
\wt\mu (\dd s)=(1+|s|)^{-(n+1)}\mu(\dd s)
\end{align}
is finite. 

Let $\R^{n+1}_+:= \R^n\times (0,\infty)$. For a (Poisson finite) measure $\mu$ on $\R^n$ and 
$z\in\R^{n+1}_+$ slightly abusing notation denote by $\mu(z)$ the Poisson extension of $\mu$ to the 
point $z$. If $f$ is a Bochner integrable (with respect to $\mu$) function with values in a Banach 
space, we denote by $(f\mu)(z)$ the Poisson extension of vector-valued measure $f\mu$ to the point 
$z\in\R^{n+1}_+$, and similarly for a scalar-valued function. 

For $x\in \R^n$ denote by $R_x=R_{x, \alpha} $ the non-tangential approach region (cone) in 
$\R^{n+1}_+$, 
\begin{align*}
R_{x} =\left\{ (x',t)\in \R^{n+1}_+ : | x-x'|\le t \tan\alpha   \right\};
\end{align*}
here  $\alpha\in (0,\pi/2)$ is fixed (the resuts do not depend on $\alpha$) and $|\cdot|$ stands 
for the standard Euclidean norm in $\R^n$.

We say that $z\in \R^{n+1}_+$ approaches $x\in\R^n$ non-tangentially and write $z\to 
x\sphericalangle$ if $z\to x$ while remaining in $ R_x$.

\begin{lem}\label{l:P-L differ}
Let $\mu$ be a Poisson finite measure in $\R^n$, and let $\wt\mu$ be defined by
\eqref{e:PoissFinite}. If $f$ is a Bochner integrable function with respect to the measure 
$\wt \mu$ with values in a Banach space, then
\begin{align*}
\frac{(f\mu)(z)}{\mu(z)} \underset{z\to x\sphericalangle}{\longrightarrow} f(x) \qquad 
\mu\text{-a.e.}
\end{align*}
\end{lem}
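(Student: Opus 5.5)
The plan is to reduce the vector-valued statement to a scalar relative Fatou theorem with an explicit maximal-function control, and then to use density of simple functions in the Bochner space $L^1(\wt\mu;X)$.

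First, the scalar case. For a nonnegative scalar $g\in L^1(\wt\mu)$ we have $(g\mu)(z)/\mu(z)\to g(x)$ for $\mu$-a.e.\ $x$ as $z\to x\sphericalangle$. This is the scalar relative Fatou theorem of Doob, i.e.\ Lemma \ref{l:weak type 01} stated just after. If one prefers to prove it directly, the standard route is a maximal function estimate. Define
\begin{align*}
\cM_\mu g(x):=\sup_{z\in R_x}\frac{(|g|\mu)(z)}{\mu(z)} .
\end{align*}
We need a weak type $(1,1)$ bound for $\cM_\mu$ with respect to $\wt\mu$, at least locally. To get it:
\begin{enumerate}
\item Write the Poisson kernel as an average of normalized indicators of balls, layer-cake style. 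This gives
\begin{align*}
\frac{(|g|\mu)(z)}{\mu(z)}\le \sup_{B}\frac{1}{\mu(B)}\int_B|g|\,\dd\mu ,
\end{align*}
where the supremum runs over balls $B$ containing a fixed dilate of the ball $B(x',t)$, with $z=(x',t)\in R_x$. This inequality also uses the comparability of kernels within the cone.
\item Bound the right-hand side by a centered-type maximal function. That maximal function is of weak type $(1,1)$ for an arbitrary Radon measure $\mu$, by the Besicovitch covering theorem.
\item Control the tails coming from $|s|$ large using Poisson finiteness, which is exactly why the hypothesis is $g\in L^1(\wt\mu)$.
\end{enumerate}
With the weak type bound, the scalar convergence for continuous compactly supported $g$ is elementary, and density finishes the scalar case. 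If the order of the appendix permits, I would simply cite Lemma \ref{l:weak type 01} together with its maximal inequality.

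Second, the vector case. Take simple functions $f_k=\sum_j \1\ci{E_j}\bx_j$ with $\int\|f-f_k\|\,\dd\wt\mu\to0$; such functions are dense by the definition of Bochner integrability. For a simple function the claim follows from the scalar case by linearity, applied to each $\1\ci{E_j}$. For the remainder, note that for every $z\in R_x$
\begin{align*}
\left\|\frac{((f-f_k)\mu)(z)}{\mu(z)}\right\|\le \frac{(\|f-f_k\|\mu)(z)}{\mu(z)}\le \cM_\mu\|f-f_k\|(x) .
\end{align*}
Hence
\begin{align*}
\limsup_{z\to x\sphericalangle}\left\|\frac{(f\mu)(z)}{\mu(z)}-f(x)\right\|\le \cM_\mu\|f-f_k\|(x)+\|f(x)-f_k(x)\| .
\end{align*}
The weak type bound, together with Chebyshev applied to the second term, shows that the set where the left side exceeds $\lambda$ has $\wt\mu$-measure $\lesssim \lambda^{-1}\int\|f-f_k\|\,\dd\wt\mu\to0$. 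This holds for every $\lambda>0$, so the limit is $0$ $\mu$-a.e. Note that $\mu$ and $\wt\mu$ have the same null sets.

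The main obstacle is the weak type $(1,1)$ estimate for the nontangential Poisson maximal function relative to a general, possibly non-doubling, measure $\mu$. Concretely, I would need to bound the Poisson average within the cone by ball averages over balls of radius comparable to $t$ that still capture a fixed proportion of $\mu(z)$. I would try the layer-cake decomposition of $p_z$ into indicators of balls $B(x',r)$, $r\ge t$. Each such ball lies in $B(x,r(1+\tan\alpha))$ and contains $x$, so the uncentered Besicovitch-type maximal operator for balls containing $x$ applies; in $\R^1$, which is the case actually used in the paper, this is elementary.
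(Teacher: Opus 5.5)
Your overall scheme matches the paper's. It combines a weak type $(1,1)$ bound for the nontangential Poisson maximal function, the domination $\|((f-f_k)\mu)(z)\|\le(\|f-f_k\|\mu)(z)$, and a density argument. Your two-tier version (scalar indicators first, then simple functions) is redundant but harmless; the paper approximates $f$ directly by continuous vector-valued functions, so one density argument suffices.

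The step that fails as you describe it is the weak type bound for $n\ge 2$. Decomposing $p_z$, $z=(x',t)\in R_x$, into balls centered at $x'$ produces averages over balls that contain $x$ but are not centered at it. For a non-doubling $\mu$ these cannot be dominated by centered averages. Moreover, the uncentered maximal operator need not be of weak type $(1,1)$ in $\R^n$, $n\ge2$; Sj\"ogren's Gaussian-measure example shows this. Besicovitch's covering theorem only handles centered balls. In $\R^1$ your argument is fine.

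The repair, which is exactly what the paper does, is to move the vertex before the layer-cake step; you already invoke kernel comparability in the cone, so use it for this. For $(x',t)\in R_x$ one has $C^{-1}P_t(x-s)\le P_t(x'-s)\le C P_t(x-s)$ with $C=C(n,\alpha)$ (Harnack), hence $M\ut{P,nt}_\mu g\le C^2 M\ut{P}_\mu g$. The level sets of $P_t(x-\cdot)$ are balls centered at $x$. So $M\ut{P}_\mu g(x)>\lambda$ forces $M_\mu g(x)>\lambda$ for the centered maximal function $M_\mu$, which is of weak type $(1,1)$ by Besicovitch.

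There are two smaller corrections.

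First, the weak type bound is with respect to $\mu$, for data in $L^1(\mu)$, after localization. The global version with $\wt\mu$ on both sides, which you use in the vector step, is false. Take $\mu$ to be Lebesgue measure on $\R$ and $g=\1\ci{[N,N+1]}$. Then $\|g\|_{L^1(\wt\mu)}\le (N+1)^{-2}$. Taking $z=(x,N)$ gives $\cM_\mu g(x)\ge (10\pi N)^{-1}$ for all $x\in[-N,N]$. Hence $\wt\mu(\{\cM_\mu g>(20\pi N)^{-1}\})\ge 1$, while $\lambda^{-1}\|g\|_{L^1(\wt\mu)}\to 0$. So first reduce, as the paper does, to a finite compactly supported $\mu$. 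The far parts of $\mu$ and $f\mu$ contribute only $O(t)$ to the Poisson extensions near $x$, which is negligible against $\mu(z)$ for $\mu$-a.e.\ $x$. Then apply Lemma \ref{l:weak type} to $\|f-f_k\|\in L^1(\mu)$.

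Second, the scalar case cannot be taken from Lemma \ref{l:weak type 01}. The paper derives that lemma from the present one: the absolutely continuous case directly, and the singular case by applying it to $\mu+\nu$ and $\1\ci E$. Citing it here would therefore be circular. Use the independently proved maximal inequality, Lemma \ref{l:weak type}, or cite Doob's theorem as an external result.
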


\begin{proof}
Trivially, it is sufficient to prove the lemma for finite compactly supported measures, so let us 
assume that. 

The statement of the theorem is trivial for continuous functions. In the general case the crucial
step is weak type estimates of the nontangential maximal function, see Lemma \ref{l:weak type}
below.

Knowing this fact, we follow the classical textbook route:  approximating the function $f$ in 
$L^1(\mu)$ by continuous functions $f_n$ and using the weak type estimates for the scalar-valued 
functions $x\mapsto \|f(x)-f_n(x)\|$ we show that for arbitrary 
$\e>0$ the measure $\mu$ of the set of $x\in\R^n$ where 
\begin{align*}
\limsup_{{(x,t)\to x\sphericalangle}}  \int P_t(x-x') \| f(x') - f(x)\| \mu(\dd x') \ge\e
\end{align*}
is arbitrarily small; here $P_t(x) $ is the Poisson kernel. 

This immediately implies the lemma.
\end{proof}

For a Poisson finite measure $\mu$ in $\R^n$ denote by $M\ut{P,nt}_\mu$ the nontangential Poisson 
maximal functions, 
\begin{align*}
M\ut{P,nt}_\mu f(x) = \sup\left\{ \frac{(|f|\mu)(z)}{\mu(z)}: z\in R_{x}\right\},
\end{align*}
where $f\in L^1(\wt\mu)$ is a scalar-valued function.

\begin{lem}\label{l:weak type}
Let $\mu$ be a Poisson finite measure in $\R^n$. Then the maximal function $M\ut{P,nt}_\mu$ has the 
weak type $1$-$1$, i.e.~there exists $C<\infty$ such that for all $\lambda>0$
\begin{align*}
\mu\left(   \left\{  x\in\R^n: M\ut{P,nt}_\mu f(x)  > \lambda \right\}    \right) \le 
C\lambda^{-1} \|f\|\ci{L^1(\mu)}.
\end{align*}
\end{lem}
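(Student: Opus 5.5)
The plan is the classical covering argument: compare the Poisson extensions to averages over intervals (balls) and then apply a Besicovitch covering. Since $\mu$ need not be doubling, a Vitali covering with dilations is not available. Besicovitch works for arbitrary Radon measures in $\R^n$, and we will rely on that.

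\emph{Step 1: reduce to a ball maximal function.} Write the Poisson kernel at height $t$ as a superposition of normalized indicators of balls. Since $P_t(y)=c_n t(t^2+|y|^2)^{-(n+1)/2}$ is radial and decreasing in $|y|$, layer-cake gives
\begin{align*}
P_t(y)=\int_0^\infty \1\ci{B(0,r)}(y)\,\bigl(-\partial_r P_t(r)\bigr)\dd r .
\end{align*}
Hence for $z=(x',t)$ we get $(|f|\mu)(z)=\int_0^\infty (|f|\mu)(B(x',r))\,\omega_t(r)\dd r$ and $\mu(z)=\int_0^\infty \mu(B(x',r))\,\omega_t(r)\dd r$, with the same nonnegative weight $\omega_t$. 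If we only had
\begin{align*}
(|f|\mu)(B(x',r))\le \lambda\,\mu(B(x',r))\quad\text{for all } r,
\end{align*}
we would obtain the ratio $\le\lambda$. So the ratio exceeding $\lambda$ forces a ball $B(x',r)$ with average of $|f|$ over it exceeding $\lambda$. The catch is that these balls are centered at $x'$, not at $x$.

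\emph{Step 2: recenter the balls.} This is the main obstacle. Since $z\in R_x$, we have $|x-x'|\le t\tan\alpha$. The naive fix is to use $B(x,r+t\tan\alpha)$, which contains $B(x',r)$, but the measure of this larger ball is not controlled by $\mu(B(x',r))$ without doubling.

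I would instead use uncentered balls: $B(x',r)$ contains $x$ whenever $r\ge t\tan\alpha$. Radii $r<t\tan\alpha$ must then be handled separately. For those, the weight $\omega_t(r)$ on $r\lesssim t$ is comparable to its mass near $r\sim t$. Using monotonicity of $r\mapsto\mu(B(x',r))$, we can absorb the small radii into the radius $t\tan\alpha$ at the cost of a constant factor $C(\alpha,n)$. So the condition "ratio $>\lambda$" implies that some ball containing $x$, of radius $\gtrsim t$, has average of $|f|$ exceeding $\lambda/C$.

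I would double-check that this absorption is correct, and if necessary replace it by comparing with balls $B(x',r+t\tan\alpha)\ni x$ and using $P_t$ monotonicity once more.

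\emph{Step 3: weak type for the uncentered maximal function.} The uncentered maximal function with respect to an arbitrary Radon measure is not weak $(1,1)$ in general for $n\ge2$, which is why Step 2 must be done with care. The safe route is to reduce to centered balls at $x$. The set $\{M>\lambda\}$ is covered by balls $B$ with $\int_B|f|\dd\mu>\lambda\mu(B)/C$. Since each such $B$ contains its $x$ within a controlled distance relative to $t$, one applies the Besicovitch covering theorem in its generalized form for families of balls whose centers lie in the set being covered. This yields bounded overlap $N(n)$, and summing gives $\mu\{M>\lambda\}\le C N(n)\lambda^{-1}\|f\|\ci{L^1(\mu)}$.

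Compact support of $\mu$, which may be assumed as in the proof of Lemma \ref{l:P-L differ}, ensures boundedness of the radii needed for Besicovitch. Finally, the Poisson-finite case follows by splitting $\mu$ into a near part and a far part; the far part contributes only smooth, bounded ratios.
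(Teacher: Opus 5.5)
The gap is in Step 3, and Step 2 produces it. Your absorption argument in Step 2 is correct. But what it yields is a ball $B(x',r)$ with $r\ge t\tan\alpha\ge|x-x'|$, so $x$ may lie on the boundary of that ball. Since $t>0$ is free, every ball containing $x$ arises this way, and the information ``radius $\gtrsim t$'' is vacuous. So you have only shown $\{M\ut{P,nt}_\mu f>\lambda\}\subset\{\wt M_\mu f>\lambda/C\}$, where $\wt M_\mu$ is the uncentered ball maximal function. As you note yourself, that operator is not of weak type $1$-$1$ for general $\mu$ when $n\ge 2$.

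Besicovitch does not repair this. It requires each point of the set being covered to be the center of its ball. Here the points $x$ of the level set are not centers, and the centers $x'$ need not belong to the level set. Covering the centers would not control the $\mu$-measure of the set of $x$'s anyway. Your route can be rescued as follows: cut at $2t\tan\alpha$ instead of $t\tan\alpha$, which puts $x$ in the concentric half-ball $B(x',r/2)$. Then use, instead of Besicovitch, a Besicovitch-type theorem for balls containing the point in a fixed concentric sub-ball (the Morse covering theorem). For $n=1$, the only case the paper uses, the uncentered interval maximal function is weak type $1$-$1$ for every measure, so there the issue does not arise.

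The idea you are missing, and the one the paper uses, is Harnack's inequality, which makes the recentering trivial. For $|x-x'|\le t\tan\alpha$ one has $C^{-1}P_t(x-y)\le P_t(x'-y)\le C P_t(x-y)$ for all $y$, with $C=C(\alpha,n)$. Applying this to both $(|f|\mu)(z)$ and $\mu(z)$ gives $M\ut{P,nt}_\mu f\le C^2 M\ut{P}_\mu f$, where $M\ut{P}_\mu$ is the vertical Poisson maximal function. Your layer-cake Step 1, performed at $x$ itself, then gives balls centered at $x$. Hence $\{M\ut{P}_\mu f>\lambda\}\subset\{M_\mu f>\lambda\}$ for the centered maximal function, which is weak type $1$-$1$ for any Radon measure by Besicovitch.

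Separately, your closing reduction to compactly supported $\mu$ via a near/far splitting is not justified. A weak-type bound is not local in the way the a.e.\ convergence of Lemma \ref{l:P-L differ} is, and the maximal function depends nonlinearly on $\mu$. The reduction is also unnecessary: unbounded radii are handled by exhausting the level set by the bounded subsets where a witnessing radius at most $R$ exists.
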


\begin{proof}
Consider the \emph{vertical} Poisson maximal function $M\ut{P}_\mu$
\begin{align*}
M\ut{P}_\mu f(x) = \sup\left\{ \frac{(|f|\mu)(x,t)}{\mu(x,t)}: t>0\right\}. 
\end{align*}
By the standard Harnack estimates there exists $C=C_{\alpha,n}$ such that 
$M\ut{P,nt}_\mu f(x) \le C M\ut{P}_\mu f(x)$. Therefore it is sufficient to prove the weak type 
estimates for $M\ut{P}_\mu$.

We will use the fact that the centered maximal function $M_\mu$, 
\begin{align*}
M_\mu f(x):= \sup_{r>0} \mu(B(x,r))^{-1}\int_{B(x,r)} |f|\mu
\end{align*}
is of weak type $1$-$1$. This fact is proved using the Besicovitch covering theorem, although for 
$n=1$ (which we need) an elementary proof is possible.  

To prove the weak type estimates for $M\ut{P}$ take $\lambda>0$ and let $x\in\R^n$ be such that 
$M\ut{P}_\mu f(x)>\lambda$. This means that there exists $t>0$ such that 
\begin{align*}
\int P_t(x-x') |f(x')| \mu(\dd x') > \lambda \int P_t(x-x')  \mu(\dd x'). 
\end{align*}
Evaluating the above integrals via distribution functions (with respect to the measures $|f|\mu$ 
and $\mu$) and recalling that the level sets of the function $x'\mapsto P_t(x-x')$ are balls 
centered at $x$, we can find $r>0$ such that 
\begin{align*}
\int_{B(x,r)} |f|\mu >\lambda \mu(B(x,r)), 
\end{align*}
which means that $M_\mu f(x) >\lambda$. This means 
\begin{align*}
\left\{ x\in\R^n: M\ut{P}_\mu f(x) >\lambda \right\} \subset 
\left\{ x\in\R^n: M_\mu f(x) >\lambda \right\}
\end{align*}
which together with weak type 1-1 estimates for $M_\mu$ proves the lemma. 
\end{proof}

\begin{lem}\label{l:weak type 01}
Let $\mu$ and $\nu$ be \tup(positive\tup) Poisson finite measures on $\R^n$. Then 
\begin{align*}
\frac{\nu(z)}{\mu(z)} \underset{z\to x\sphericalangle}{\longrightarrow} \frac{\dd \nu}{\dd\mu}(x) 
\qquad 
\mu\text{-a.e.},
\end{align*}
where $\frac{\dd \nu}{\dd\mu}$ is the Radon--Nikodym derivative.
\end{lem}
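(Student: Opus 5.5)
We prove Lemma \ref{l:weak type 01} by splitting $\nu$ into an absolutely continuous and a singular part and treating them separately. Write the Lebesgue--Radon--Nikodym decomposition $\nu = h\mu + \nu\ti s$, where $h=\frac{\dd\nu}{\dd\mu}$ and $\nu\ti s\perp\mu$. Since $\nu$ is Poisson finite, so are $h\mu$ and $\nu\ti s$, and $h\in L^1(\wt\mu)$. Poisson extension is linear in the measure, so
\begin{align*}
\frac{\nu(z)}{\mu(z)} = \frac{(h\mu)(z)}{\mu(z)} + \frac{\nu\ti s(z)}{\mu(z)} .
\end{align*}
The first term converges non-tangentially to $h(x)$ for $\mu$-a.e.~$x$. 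This is exactly Lemma \ref{l:P-L differ} applied with the scalar (Banach space $\C$) function $f=h$. It therefore remains to show that
\begin{align*}
\frac{\nu\ti s(z)}{\mu(z)}\underset{z\to x\sphericalangle}{\longrightarrow}0 \qquad \mu\text{-a.e.}
\end{align*}

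For the singular term we localize in the usual way. We may assume $\mu$ and $\nu\ti s$ are finite and compactly supported: the contribution of mass far from $x$ to $\nu\ti s(z)$ tends to $0$ as $z\to x$, while $\mu(z)$ stays bounded below by a positive constant times $\mu$ near $x$, at least on the support of $\mu$. By the Harnack comparison used in the proof of Lemma \ref{l:weak type}, it suffices to control the vertical ratio $\nu\ti s(x,t)/\mu(x,t)$. As there, expressing the Poisson integral through level sets (balls centered at $x$) gives
\begin{align*}
\frac{\nu\ti s(x,t)}{\mu(x,t)}\le \sup_{r>0}\frac{\nu\ti s(B(x,r))}{\mu(B(x,r))} .
\end{align*}
The right-hand side is the centered maximal function of the measure $\nu\ti s$ relative to $\mu$. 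The same level-set argument bounds the non-tangential limsup by a constant times $\limsup_{r\to 0}\nu\ti s(B(x,r))/\mu(B(x,r))$, up to the contribution of large $r$, which vanishes by localization.

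The main step is then the classical fact that $\nu\ti s(B(x,r))/\mu(B(x,r))\to 0$ for $\mu$-a.e.~$x$, i.e.\ the differentiation of a singular measure with respect to $\mu$. We prove it with the Besicovitch covering theorem; for $n=1$ an elementary covering argument suffices. Fix $\e>0$ and a Borel set $S$ with $\mu(S)=0$ and $\nu\ti s(\R^n\setminus S)=0$. Choose a compact $K\subset\R^n\setminus S$ with $\nu\ti s(\R^n\setminus K)<\delta$. For $x\in K$, small balls around $x$ eventually lie in the open set $\R^n\setminus S'$ for a suitable open $S'\supset$ (remaining mass), so on $K$ we only see mass $\nu\ti s|_{\R^n\setminus K}$. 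The weak type $1$--$1$ estimate for the centered maximal function, applied to this measure of total mass $<\delta$, gives
\begin{align*}
\mu\Bigl(\Bigl\{x: \limsup_{r\to0}\tfrac{\nu\ti s(B(x,r))}{\mu(B(x,r))}>\e\Bigr\}\Bigr)\le C\delta/\e + \mu(\R^n\setminus K) .
\end{align*}
We need $\mu(\R^n\setminus K)$ small as well. This holds because $\mu(S)=0$, so $\mu$ lives on $\R^n\setminus S$ and we can pick $K$ approximating both measures simultaneously by inner regularity. Letting $\delta\to0$ and then $\e\to0$ finishes the proof. The delicate point I expect to be the main obstacle is this simultaneous approximation combined with the maximal inequality for measures rather than functions. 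I would handle it by running the Besicovitch covering argument directly with measures instead of citing the $L^1$ weak type bound.
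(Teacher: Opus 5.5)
Your treatment of the absolutely continuous part is the same as the paper's (Lemma~\ref{l:P-L differ} with $f=h$). For the singular part you take a genuinely different route, and the reduction to vertical limits via Harnack and the level-set bound by ball ratios is sound. However, the covering step, which you yourself flagged, fails as written. You choose a compact $K\subset\R^n\setminus S$ with $\nu\ti{s}(\R^n\setminus K)<\delta$. Since $\nu\ti{s}$ is carried by $S$, $\nu\ti{s}(K)=0$, so $\nu\ti{s}(\R^n\setminus K)=\nu\ti{s}(\R^n)$, and this choice is impossible unless $\nu\ti{s}$ is already small. For the same reason, no set disjoint from $S$ can approximate two mutually singular measures ``simultaneously by inner regularity''.

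The fix is to use regularity asymmetrically:
\begin{itemize}
\item Take $K$ by inner regularity of $\mu$ alone, so that $\mu(\R^n\setminus K)<\delta$.
\item Since $\nu\ti{s}(K)=0$, \emph{outer} regularity of $\nu\ti{s}$ gives an open $U\supset K$ with $\nu\ti{s}(U)<\delta$.
\item For $x\in K$, all small balls $B(x,r)$ lie in $U$. So $\limsup_{r\to0}\nu\ti{s}(B(x,r))/\mu(B(x,r))$ is dominated by the maximal function of the \emph{measure} $\nu\ti{s}|_U$ relative to $\mu$.
\item The Besicovitch argument run with measures then gives exactly your bound $C\delta/\e+\mu(\R^n\setminus K)$.
\end{itemize}
You can even skip $K$ and take an open $U\supset\R^n\setminus S$ with $\nu\ti{s}(U)<\delta$.

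A second, smaller inaccuracy concerns large radii. If you split the level-set integral at radius $\rho$, the large-radius part of $\nu\ti{s}(x,t)$ has size about $C_\rho t$. To make it negligible against $\mu(x,t)$ you need $\mu(x,t)/t\to\infty$. This does not follow from localization, and it fails at some points of $\supp\mu$: for $\mu=s^2\,\dd s$ on $[-1,1]$ one has $\mu(0,t)\asymp t$. It holds only $\mu$-a.e., for example because $\liminf_{r\to0}\mu(B(x,r))/r^n>0$ for $\mu$-a.e.\ $x$. That is another covering-type fact you would have to supply.

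With these repairs your argument is a valid proof, but it is longer than the paper's. The paper's proof of this lemma involves no balls or coverings. It takes a Borel set $E$ carrying $\mu$ with $\nu\ti{s}(E)=0$ and applies Lemma~\ref{l:P-L differ} to $\sigma=\mu+\nu\ti{s}$ and the bounded function $f=\1\ci{E}$, for which $(f\sigma)(z)=\mu(z)$. Hence $\mu(z)/(\mu(z)+\nu\ti{s}(z))\to1$ $\sigma$-a.e.\ on $E$, i.e.\ $\mu$-a.e., and taking reciprocals gives $\nu\ti{s}(z)/\mu(z)\to0$. The singular case is thus bootstrapped from the absolutely continuous case of the lemma you already used, and the only maximal inequality needed is the one for functions inside Lemma~\ref{l:weak type}. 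Your route instead re-proves the classical differentiation theorem for a measure singular to $\mu$, yielding the ball version $\nu\ti{s}(B(x,r))/\mu(B(x,r))\to0$ as a by-product. The price is a weak-type bound for measures and a separate comparison of Poisson averages with ball averages.
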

\begin{proof}
Previous Lemma \ref{l:P-L differ} covers the case when $\nu$ is absolutely continuous with respect 
to $\mu$, so it remain to show that if $\mu\perp\nu$ then 
\begin{align*}
\frac{\nu(z)}{\mu(z)} \underset{z\to x\sphericalangle}{\longrightarrow} 0 
\qquad 
\mu\text{-a.e.}
\end{align*}
The proof below is essentially the proof of Lemma 1.3 in \cite{NONTAN}, and is presented only for 
the convenience of the reader. 

Since $\mu\perp\nu$, there exists a Borel set $E\subset\R^n$ such that $\mu(E\ut{c}) = 
\varnothing$, $\nu(E) = \varnothing$, i.e.~that $\mu$ and $\nu$ are carried by $E$ and $E\ut{c}$ 
respectively. Applying Lemma \ref{l:P-L differ} to the measure $\sigma:=\mu+\nu$ and the function 
$f:=\1\ci{E}$ we get that
\begin{align*}
\frac{\mu(z)}{\mu(z)+\nu(z)} = \frac{\left(f\sigma\right)(z)}{\sigma(z)} 
\underset{z\to x\sphericalangle}{\longrightarrow}   1
\qquad \sigma\text{-a.e.~on }E  .
\end{align*}
Since $\mu$ is carried by $E$ and $\mu\perp\nu$, the qualifier ``$\sigma$-a.e.~on $E$'' is 
equivalent to ``$\mu$-a.e.'' Therefore, taking the reciprocal, 
\begin{align*}
1+ \frac{\nu(z)}{\mu(z)} = \frac{\mu(z)+\nu(z)}{\mu(z)} \underset{z\to 
x\sphericalangle}{\longrightarrow}   1 
\qquad \mu\text{-a.e.}, 
\end{align*}
so 
\begin{align*}
\frac{\nu(z)}{\mu(z)} \underset{z\to x\sphericalangle}{\longrightarrow} 0 \qquad \mu\text{-a.e.}
\end{align*}
\end{proof}

\providecommand{\bysame}{\leavevmode\hbox to3em{\hrulefill}\thinspace}
\providecommand{\MR}{\relax\ifhmode\unskip\space\fi MR }
\providecommand{\MRhref}[2]{%
  \href{http://www.ams.org/mathscinet-getitem?mr=#1}{#2}
}
\providecommand{\href}[2]{#2}

\end{document}